\newcommand{\NN}{\mathbb{N}}
\newcommand{\ZZ}{\mathbb{Z}}
\newcommand{\RR}{\mathbb{R}}
\newcommand{\N}{\mathcal{N}}
\newcommand{\T}{\mathcal{T}}
\newtheorem{thm}{Theorem}[section]
\newtheorem{cor}[thm]{Corollary}
\newtheorem{lem}[thm]{Lemma}
\newtheorem{prop}[thm]{Proposition}
\theoremstyle{definition}
\newtheorem{defn}[thm]{Definition}
\newtheorem{conv}[thm]{Convention}
\theoremstyle{remark}
\newtheorem{rem}[thm]{Remark}
\newcommand{\stab}{\mathrm{Stab}}
\newcommand{\lk}{\mathrm{Lk}}
\newcommand{\st}{\mathrm{St}}
\newcommand{\elk}{\mathrm{Elk}}
\newcommand{\supp}{\mathrm{supp}}
\newcommand{\diam}{\mathrm{diam}}
\newcommand{\asdim}{\mathrm{asdim}}
\newcommand{\girth}{\mathrm{girth}}
\newcommand{\leaf}{\mathrm{leaf}}
\newcommand{\geo}{\mathrm{Geo}}
\renewcommand{\N}{\mathcal{N}}
\newcommand{\C}{\mathscr{C}}
\newcommand{\la}{\langle}
\newcommand{\ra}{\rangle}
\renewcommand{\H}{\mathcal{H}}
\newcommand{\A}{\mathscr{A}}
\newcommand{\G}{\mathcal{G}}
\newcommand{\act}{\curvearrowright}
\newcommand{\hGammae}{\widehat{\Gamma}^e}
\newcommand\blfootnote[1]{%
  \begingroup
  \renewcommand\thefootnote{}\footnote{#1}%
  \addtocounter{footnote}{-1}%
  \endgroup
}
\begin{document}

\title{Infinite graph product of groups I: Geometry of the extension graph}

\author{Koichi Oyakawa}
\date{}

\maketitle

\vspace{-3mm}

\begin{abstract}
We introduce the extension graph of graph product of groups and study its geometry. This enables us to study properties of graph product by exploiting large scale geometry of its defining graph. In particular, we show that the extension graph is isomorphic to the crossing graph of a canonical quasi-median graph and exhibits the same phenomenon about asymptotic dimension as quasi-trees of metric spaces studied by Bestvina-Bromberg-Fujiwara. As an application of the extension graph, we prove relative hyperbolicity of graph wreath product. This provides a new construction of relatively hyperbolic groups.
\end{abstract}

\tableofcontents

\section{Introduction}
\label{sec:Introduction}
\blfootnote{\textbf{MSC 2020} Primary: 20F65. Secondary: 20F67, 51F30.}
\blfootnote{\textbf{Key words and phrases}: graph product, asymptotic dimension, relatively hyperbolic groups.}

Given a simplicial graph $\Gamma$ and a collection of groups $\G = \{G_v\}_{v \in V(\Gamma)}$ assigned to each vertex of $\Gamma$, the graph product $\Gamma\G$ is a group obtained by taking quotient of the free product $\ast_{v \in V(\Gamma)} G_v$ by setting that group elements of two adjacent vertices commute (see Definition \ref{def:graph product of groups}). Graph product is generalization of both direct product (when $\Gamma$ is a complete graph) and free product (when $\Gamma$ has no edge). Also, famous classes such as right angled Artin groups (when $|\Gamma|<\infty$ and $G_v\equiv \ZZ$) and right angled Coxeter groups (when $|\Gamma|<\infty$ and $G_v \equiv \ZZ/2\ZZ$) are obtained as graph product. Therefore, properties of graph product have attracted interest of many people and a lot of research has been done.

However, most results so far concern only the case where the defining graph $\Gamma$ is finite, or the results just hold for any defining graph. To the best of my knowledge, there is no systematic study about how the geometry of a defining graph affects properties of graph product of groups and the purpose of this paper is to investigate this direction of research. In order to capture the geometry of a defining graph, it is not effective to consider graph product as amalgamated free product or to consider Cayley graphs, which are standard methods so far to study graph product. Instead, we achieve it by studying a graph $\Gamma^e$ that remembers the geometry of the defining graph $\Gamma$ and also admits an action by the graph product. This graph $\Gamma^e$, which we call the extension graph (see Definition \ref{def:extension graph}), was introduced by various people (see \cite{Gen17, CRKdlNG24, EH24}) and generalizes the extension graph constructed for right angled Artin groups (RAAGs) by Kim and Koberda (see \cite{KK13} and \cite{KK14}).

However, there is a crucial difference in the general case of graph product from that of RAAGs. That is, the extension graph $\Gamma^e$ in general case cannot be obtained by repeatedly taking the double of a graph along a star unlike the case of RAAGs. More precisely, \cite[Lemma 22]{KK13}, which is a key lemma in \cite{KK13}, is no longer true in general (see Lemma \ref{lem:doubling is not true}). Therefore, we develop a new way to study the extension graph of general graph product, which we do in Section \ref{sec:Geometry of the extension graph}. For a technical reason, we have to assume that the girth of a defining graph is more than 20 in many of our results. Although this condition on the girth may well not be optimal, it is often essential in our applications. Also, we can increase the girth of a graph easily by taking its barycentric subdivision and assigning vertex groups to the new vertices. For various groups with features of non-positive curvature, we can also make a Cayley graph have arbitrarily large girth by choosing a nice finite generating set (see \cite{Ak05,Yam11,Nak14,Ak25}). 

Moreover, it turns out that our extension graph $\Gamma^e$ and its coned-off graph $\hGammae$ (see Definition \ref{def:coned-off extension graph}) are isomorphic to the crossing graph and the contact graph respectively of a canonical quasi-median graph associated to graph product (see Section \ref{subsec:Connection to the crossing and contact graphs}). This generalizes the observation of Kim and Koberda in the case of the extension graph of RAAGs (see \cite[Section 7]{KK14}, \cite[Section 8.5]{Gen17}). Therefore, this paper provides a new perspective to study the crossing graph and the contact graph by using the geometry of a defining graph $\Gamma$. (Another way to study these graphs is to use the geometry of a quasi-median Cayley graph of graph product groups (see \cite{Gen17, Val21}).)

One feature of the Kim-Koberda's extension graph of right angled Artin groups was that it is quasi-isometric to a tree when the defining graph is connected (see \cite[Lemma 26 (7)]{KK13}). This is no longer true for general graph product once we allow infinite defining graphs. It is because the defining graph $\Gamma$, which we assume is connected throughout this paper, isometrically embeds into the extension graph $\Gamma^e$ (see Corollary \ref{cor:Gamma is embedded into Gammae}). (Here, vertex groups are not very relevant. For example, the extension graph for infinite countable vertex groups is isomorphic to the extension graph of a RAAG with the same defining graph (see \cite[Fact 8.25]{Gen17}).) However, this fact for RAAGs means that when the asymptotic dimension of a defining graph is 0, the asymptotic dimension of the extension graph is at most 1. We generalize this as follows.
\begin{thm}\label{thm:intro asymptotic dimension}
     Suppose that $\Gamma$ is a connected simplicial graph with $\girth(\Gamma) > 20$ and that $\{G_v\}_{v \in V(\Gamma)}$ is a collection of non-trivial groups. If $\asdim(\Gamma) \le n$ with $n \in \NN \cup \{0\}$, then $\asdim(\Gamma^e) \le n+1$.
\end{thm}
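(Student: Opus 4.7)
The plan is to realize $\Gamma^e$ as a quasi-tree of metric spaces in the sense of Bestvina--Bromberg--Fujiwara whose pieces are isometric copies of $\Gamma$. By Corollary \ref{cor:Gamma is embedded into Gammae}, the defining graph $\Gamma$ sits isometrically inside $\Gamma^e$, and translating by the graph product $\Gamma\G$ yields a $\Gamma\G$-invariant family $\H$ of isometric copies of $\Gamma$ in $\Gamma^e$. Every vertex of $\Gamma^e$ is a conjugate of some vertex generator and hence lies in at least one member of $\H$, so $\H$ covers $\Gamma^e$ and each element of $\H$ has asymptotic dimension at most $n$ uniformly.

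Next, I would set up nearest-point projections $\pi_X \colon \Gamma^e \to X$ for $X \in \H$ and verify the Bestvina--Bromberg--Fujiwara projection axioms: bounded projections $\diam(\pi_X(Y)) \le \xi$ for distinct $X,Y \in \H$ together with a Behrstock-type inequality on triples. This is precisely the point at which the hypothesis $\girth(\Gamma) > 20$ must be used, replacing the role of \cite[Lemma 22]{KK13} whose analogue fails in general (Lemma \ref{lem:doubling is not true}). Heuristically, the large-girth assumption should force any two distinct members $X, Y \in \H$ to intersect in at most the star of a single vertex, and should force each $X \in \H$ to be uniformly quasi-convex in $\Gamma^e$, so that long projections between different pieces cannot occur.

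Once the projection axioms are in place, the Bestvina--Bromberg--Fujiwara theorem on asymptotic dimension of quasi-trees of metric spaces yields
\[
\asdim(\Gamma^e) \le 1 + \sup_{X \in \H}\asdim(X) \le 1 + n,
\]
where the $+1$ is the asymptotic dimension of the quasi-tree assembling the pieces, consistent with the identification of $\hGammae$ as the contact graph of a quasi-median graph in Section \ref{subsec:Connection to the crossing and contact graphs}. The main obstacle I anticipate is the verification of the projection axioms themselves: the required intersection and quasi-convexity control for copies of $\Gamma$ sitting inside $\Gamma^e$ has to be extracted by hand from the combinatorial structure developed in Section \ref{sec:Geometry of the extension graph}, since the doubling shortcut available for RAAGs is unavailable here and the girth hypothesis has to be spent carefully to rule out long paths that zig-zag through many distinct copies of $\Gamma$.
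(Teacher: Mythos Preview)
Your strategy is a plausible alternative to the paper's, but as written it is a plan with two substantial gaps rather than a proof.

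First, the gap you already flag: you do not verify the BBF projection axioms. Convexity of each copy $g.\Gamma$ in $\Gamma^e$ (Corollary~\ref{cor:Gamma is embedded into Gammae}) and the bound $\diam_{\Gamma^e}(g.\Gamma \cap h.\Gamma) \le 2$ for $g \neq h$ (Remark~\ref{rem:intersection of stabilizers}(3)) are good starting points, but bounded intersection is weaker than bounded nearest-point projection, and $\Gamma^e$ is not assumed hyperbolic here, so there is real work to do. Second, and not mentioned in your proposal: even once the axioms hold, \cite[Theorem~4.24]{BBF15} bounds the asymptotic dimension of the \emph{constructed} quasi-tree of spaces $\mathcal{C}_K(\mathbf{Y})$, not of $\Gamma^e$. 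You would still need a quasi-isometry $\Gamma^e \simeq \mathcal{C}_K(\mathbf{Y})$, which is an additional step.

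The paper sidesteps both issues by a genuinely different route. Rather than building a BBF projection complex, it uses the $1$-Lipschitz identity map $f \colon \Gamma^e \to \hGammae$ to the coned-off extension graph, which is already known to be a quasi-tree (Corollary~\ref{cor:coned off extension graph is quasitree}, via the contact-graph identification you mention). The theorem then follows from Bell--Dranishnikov's Hurewicz theorem (Theorem~\ref{thm:Hurewicz Theorem}) once one shows that the family of preimages $\bigl(f^{-1}(\N_{\hGammae}(o,R))\bigr)_{o}$ has $\asdim \le n$ uniformly. This is done by induction on $R$: the base case is trivial, and the inductive step partitions the new shell into pieces $Q_o(k,g) \subset g.\Gamma$ (indexed by the last copy of $\Gamma$ visited along a chosen geodesic) and applies the Union Theorem (Theorem~\ref{thm:union theorem}). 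The separation of distinct pieces outside a controlled neighbourhood of the previous ball is extracted from the classification of geodesic triangles (Proposition~\ref{prop: classification of geodesic triangle}) together with Remark~\ref{rem:intersection of stabilizers}(3); a key preliminary (Lemma~\ref{lem:geodesic decomposition for hGammae exists}) shows that any $\hGammae$-geodesic can be realised as a subsequence of a $\Gamma^e$-geodesic. So where you would spend the girth hypothesis on projection axioms, the paper spends it on the triangle classification and on controlling how $\Gamma^e$-geodesics thread through copies of $\Gamma$.
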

Intuitively, Theorem \ref{thm:intro asymptotic dimension} shows that the extension graph, or the crossing graph, of graph product of groups is obtained by pasting copies of the defining graph in a `tree-like' way. Moreover, Theorem \ref{thm:intro asymptotic dimension} can also be considered as an analogue of \cite[Theorem 4.24]{BBF15}, where they studied the asymptotic dimension of quasi-trees of metric spaces.

Among other geometric properties, we focus on the case where a defining graph is hyperbolic in this paper. It turns out that the extension graph of graph product exhibits similar properties as the curve complex of a surface or the coned-off Cayley graph of a relatively hyperbolic group under natural conditions. More precisely, we show Theorem \ref{thm:intro extension graph is tight} below (see Definition \ref{def:fine graph} for fine graphs and uniformly fine graphs). Examples of fine (resp. uniformly fine) hyperbolic graphs include trees, which don't need to be locally finite, and locally finite (resp. uniformly locally finite) hyperbolic graphs. In particular, Theorem \ref{thm:intro extension graph is tight} can be applied to the case of finite defining graphs as well, because finite connected graphs are uniformly fine and hyperbolic.
\begin{thm}\label{thm:intro extension graph is tight}
    Suppose that $\Gamma$ is a connected simplicial graph with $\girth(\Gamma)>20$ and that $\G = \{G_v\}_{v \in V(\Gamma)}$ is a collection of non-trivial groups. Then, the following hold.
    \begin{itemize}
        \item[(1)]
        $\Gamma$ is hyperbolic if and only if $\Gamma^e$ is hyperbolic.
        \item[(2)]
        If $\Gamma$ is uniformly fine and hyperbolic, then $\Gamma^e$ is tight in the sense of Bowditch.
        \item[(3)]
        If $\Gamma$ is fine and $\{G_v\}_{v\in V(\Gamma)}$ is a collection of non-trivial finite groups, then $\Gamma^e$ is fine.
    \end{itemize}
\end{thm}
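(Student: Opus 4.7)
The plan is to handle the three parts in the order stated, exploiting the structural description of $\Gamma^e$ from Section \ref{sec:Geometry of the extension graph} and its identification with the crossing graph of the canonical quasi-median graph associated to $\Gamma\G$.

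For part (1), the ``only if'' direction is immediate from Corollary \ref{cor:Gamma is embedded into Gammae}: since $\Gamma$ sits isometrically inside $\Gamma^e$, the four-point condition for hyperbolicity transfers from $\Gamma^e$ to $\Gamma$. For the ``if'' direction, the idea is to view $\Gamma^e$ as assembled from translates of $\Gamma$, where any two such translates are either disjoint or share the star of a single vertex. The hypothesis $\girth(\Gamma) > 20$ ensures this overlap pattern is tree-like enough that one can project $\Gamma^e$ onto an auxiliary tree whose vertices index these translates. I would then argue that any geodesic triangle in $\Gamma^e$ decomposes along this projection into a bounded number of pieces, each lying in a single translate, and invoke a combination principle in the spirit of Bestvina--Bromberg--Fujiwara (the framework of which also underlies Theorem \ref{thm:intro asymptotic dimension}) to obtain globally thin triangles with an explicit $\delta$.

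For part (2), I would use the same tree-of-translates picture together with the uniform fineness of $\Gamma$ to bound the combinatorial quantities appearing in Bowditch's definition of tightness. Any geodesic in $\Gamma^e$ between two vertices $x,y$ decomposes into a sequence of sub-geodesics, each lying in a single translate of $\Gamma$ between two ``transition vertices'' on a shared star. Uniform fineness of $\Gamma$ provides a uniform bound on the relevant count inside each translate; assembling these bounds along the at most $d_{\Gamma^e}(x,y)$ many pieces yields the tightness constant for $\Gamma^e$.

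For part (3), I would prove fineness by analyzing embedded circuits directly. Given a fixed edge $e$ of $\Gamma^e$ and a length $n$, any circuit of length $n$ through $e$ can be cut into arcs, each contained in a single translate of $\Gamma$, with consecutive arcs meeting at a vertex of a shared star. The number of possible cut patterns is bounded by $n$; within each pattern, fineness of $\Gamma$ controls the number of arcs of prescribed length inside a single translate, while finiteness of the vertex groups $\{G_v\}_{v \in V(\Gamma)}$ bounds the number of ways to transition from one translate to the next. Multiplying these bounds yields a finite count of circuits of length $n$ through $e$. The main obstacle will be the ``if'' direction of part (1): without the Kim--Koberda doubling trick, the combination argument must be set up from scratch using the quasi-median structure, and showing that adjacent translates of $\Gamma$ interact in a sufficiently controlled way to run a thin-triangles argument is precisely where the girth hypothesis $\girth(\Gamma) > 20$ is essential.
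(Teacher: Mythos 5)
Your treatment of part (1)'s ``only if'' direction matches the paper: convexity of $\Gamma$ in $\Gamma^e$ (Corollary \ref{cor:Gamma is embedded into Gammae}) transfers hyperbolicity down. For the ``if'' direction, though, the paper does not project to an auxiliary tree or invoke a Bestvina--Bromberg--Fujiwara combination principle. Instead it proves thin triangles directly from the classification of geodesic triangles in $\Gamma^e$ (Proposition \ref{prop: classification of geodesic triangle}): each side-vertex of a triangle either lies on another side or lies in a translate $g.\Gamma$ containing nearby subpaths of the other sides, so $\delta$-hyperbolicity of $\Gamma$ applied inside that single translate gives the bound. Your BBF route is not obviously wrong, but it would require verifying the full projection axioms for the translates of $\Gamma$, which is a substantial undertaking the paper deliberately avoids; absent that verification, it is a gap.

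Part (2) has a genuine error. You propose to ``assemble these bounds along the at most $d_{\Gamma^e}(x,y)$ many pieces,'' which yields a constant depending on $d_{\Gamma^e}(a,b)$. But Bowditch's tightness requires $P_0$ and $P_1$ to be \emph{uniform over all pairs $(a,b)$}. The paper's Proposition \ref{prop:extension graph is tight} achieves this by localizing: Lemma \ref{lem:bound the number of planes} bounds, purely in terms of $k$ and $\delta$, the number of translates $g.\Gamma$ that meet a geodesic deeply within distance $k$ of a fixed vertex $c$, and then uniform fineness bounds the number of vertices of short circuits through an edge near $c$ in each such translate. The total depends only on $k$, $\delta$, and the fineness function, not on $d(a,b)$. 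Your proposal is also missing the fact that $V_\T(a,b)$ ranges over \emph{all} geodesics from $a$ to $b$ (not just one), which is precisely what Corollary \ref{cor:two geodesics with the same endpoints travel through common translations of Gamma} and Lemma \ref{lem:two geodesics form a ladder} are needed for.

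Part (3) also has a genuine gap: fineness controls \emph{circuits} through an edge, not \emph{arcs}. You cannot use fineness of $\Gamma$ to ``control the number of arcs of prescribed length inside a single translate''; a fine graph can have infinitely many arcs of a given length between two vertices. The paper instead uses a Greenlinger-type surgery (Lemma \ref{lem:Greenlinger}): every circuit in $\Gamma^e$ contains a subpath $p_{[a,b]}$ in a single translate $g.\Gamma$ with $d_{\Gamma^e}(a,b)\le 4$ such that $p_{[a,b]}q^{-1}$ is a \emph{circuit} in $g.\Gamma$ for the unique short geodesic $q$. This keeps the object being counted a circuit, so fineness of $\Gamma$ applies; replacing $p_{[a,b]}$ by $q$ produces a strictly shorter circuit, and induction on length together with the bound $|P(e)|\le |G_{e_-}||G_{e_+}|<\infty$ from finiteness of vertex groups closes the argument. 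Your ``cut into arcs, count patterns, multiply'' scheme does not preserve circuit structure and so cannot invoke fineness as stated.
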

Tightness was first introduced by Masur and Minsky in \cite{MM00} in the context of the curve graph of surfaces. Bowditch later introduced the notion of tightness that is more abstract than Masur-Minsky and used it to show acylindricity of the action of the mapping class group of a surface on the curve graph (see \cite{Bow08}). This notion was also used to show that the curve complex has finite asymptotic
dimension (see \cite{BelF08}) and Yu's Property A (see \cite{Ki08}). Theorem \ref{thm:intro extension graph is tight} (2) is new even in the case of RAAGs. It is also worth mentioning that Theorem \ref{thm:intro extension graph is tight} (3) provides a new construction of fine graphs, while tress and coned-off Cayley graphs of relatively hyperbolic groups have been main examples of fine hyperbolic graphs so far.

In this paper, we present one application of Theorem \ref{thm:intro extension graph is tight} to graph wreath product. In the forthcoming paper, we present more applications to analytic properties of graph product. Given a group $G$ acting on a simplicial graph $\Gamma$ and another group $H$, we can assign vertex groups $\G = \{G_v\}_{v \in V(\Gamma)}$ by setting $G_v = H$ for every $v \in V(\Gamma)$, and the group $G$ acts on the graph product $\Gamma\G$ as group automorphisms by permuting vertex groups according to the action $G \act \Gamma$. This action $G \to \mathrm{Aut}(\Gamma\G)$ defines the semi-direct product $\Gamma\G \rtimes G$, which was called graph wreath product in \cite{KM16}. Interestingly, we can consider this construction as interpolation of wreath product and free product. Indeed, given groups $G$ and $H$, the group $G$ acts on the set $G$ by left multiplication. If $\Gamma$ is the complete graph with $V(\Gamma)=G$, then we have $\Gamma\G \rtimes G = H \wr G$. If $\Gamma$ is the graph with $V(\Gamma)=G$ having no edge, then we have $\Gamma\G \rtimes G = H \ast G \, (= \ast_{g \in G}gHg^{-1} \rtimes G)$.

In \cite{KM16}, Kropholler and Martino studied homotopical finiteness
 conditions, i.e. type $\mathrm{F}_n$, of graph wreath product. In Theorem \ref{thm:intro semidirect product is relatively hyperbolic} and Corollary \ref{cor:intro semidirect product becomes hyperbolic} below, we prove geometric properties of graph wreath product, which provide a new construction of relatively hyperbolic groups and hyperbolic groups.
\begin{thm}\label{thm:intro semidirect product is relatively hyperbolic}
    Suppose that $\Gamma$ is a fine hyperbolic graph with $\girth(\Gamma)>20$ and that a finitely generated group $G$ acts on $\Gamma$ satisfying the following two conditions.
    \begin{itemize}
        \item[(1)]
        $E(\Gamma) / G$ is finite and for any $e \in E(\Gamma)$, $\stab_G(e) \,(=\stab_G(e_-) \cap \stab_G(e_+))$ is finite.
        \item[(2)]
        For any $v \in V(\Gamma)$, $\stab_G(v)$ is finitely generated.
    \end{itemize}
    Let $H$ be a finite group and define $\G=\{G_v\}_{v \in V(\Gamma)}$ by $G_v=H$ for any $v \in V(\Gamma)$. Then, there exists a finite set $F \subset V(\Gamma)$ such that $\Gamma\G \rtimes G$ is hyperbolic relative to the collection $\{\, \la \stab_G(v), G_w \mid w \in \st_\Gamma(v) \ra \,\}_{v \in F}$ of subgroups.
\end{thm}
\begin{cor}\label{cor:intro semidirect product becomes hyperbolic}
    Suppose that $\Gamma$ is a locally finite hyperbolic graph with $\girth(\Gamma)>20$ and that a group $G$ acts on $\Gamma$ properly and cocompactly. Let $H$ be a finite group and define $\G=\{G_v\}_{v \in V(\Gamma)}$ by $G_v=H$ for any $v \in V(\Gamma)$. Then, $\Gamma\G \rtimes G$ is hyperbolic.
\end{cor}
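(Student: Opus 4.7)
The plan is to derive the corollary from Theorem \ref{thm:intro semidirect product is relatively hyperbolic} and then upgrade relative hyperbolicity to hyperbolicity by showing that every peripheral subgroup produced there is itself hyperbolic. First I would verify the hypotheses of that theorem. By assumption $\Gamma$ is hyperbolic with $\girth(\Gamma)>20$, and as noted in the introduction, locally finite hyperbolic graphs are fine. Properness of $G\act\Gamma$ forces every vertex stabilizer, and therefore every edge stabilizer, to be finite; cocompactness gives $V(\Gamma)/G$ and $E(\Gamma)/G$ finite; and the Milnor-Schwarz lemma applied to the proper cocompact action on the locally finite (hence proper geodesic) graph $\Gamma$ shows that $G$ is finitely generated. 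Thus Theorem \ref{thm:intro semidirect product is relatively hyperbolic} produces a finite set $F\subset V(\Gamma)$ such that $\Gamma\G\rtimes G$ is hyperbolic relative to $\{P_v\}_{v\in F}$, where $P_v:=\la \stab_G(v),\; G_w\mid w\in\st_\Gamma(v)\ra$.

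Next I would identify the isomorphism type of $P_v$. Since $\girth(\Gamma)>20$ there are no triangles, so distinct vertices of $\lk_\Gamma(v)$ are pairwise non-adjacent; combined with local finiteness, the induced subgraph on $\st_\Gamma(v)$ is a finite star centered at $v$. Applying the standard identification of the subgroup of a graph product generated by the vertex groups of an induced subgraph with the graph product on that subgraph, we obtain
\[
\la G_w\mid w\in\st_\Gamma(v)\ra \;\cong\; G_v\times\bigl(\ast_{w\in\lk_\Gamma(v)}G_w\bigr).
\]
Since $H$ is finite and $\lk_\Gamma(v)$ is finite, the free product on the right is a free product of finitely many finite groups, hence virtually free, hence hyperbolic; its direct product with the finite group $G_v=H$ is still hyperbolic. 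The subgroup $\stab_G(v)\subset G$ is finite by properness and permutes the factors $\{G_w\}_{w\in\lk_\Gamma(v)}$, so
\[
P_v \;\cong\; \bigl(G_v\times\ast_{w\in\lk_\Gamma(v)}G_w\bigr)\rtimes\stab_G(v)
\]
is a finite extension of a hyperbolic group, and therefore hyperbolic.

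Finally, I would invoke the classical fact that a finitely generated group which is hyperbolic relative to a finite collection of hyperbolic subgroups is itself hyperbolic (for instance via the combination results in Osin's memoir on relatively hyperbolic groups). Since $F$ is finite and each $P_v$ is hyperbolic, this yields hyperbolicity of $\Gamma\G\rtimes G$.

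The main obstacle, though routine once set up correctly, is the structural description of $P_v$: the girth hypothesis is precisely what rules out any commutation between distinct $G_w$'s for $w\in\lk_\Gamma(v)$, and without it the free product would be replaced by a larger graph product that could easily fail to be hyperbolic. Everything else is hypothesis verification plus the standard absorption principle for hyperbolic peripherals.
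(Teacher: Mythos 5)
Your proof is correct and follows essentially the same route as the paper: verify the hypotheses of Theorem \ref{thm:intro semidirect product is relatively hyperbolic}, identify each peripheral subgroup as $G_v \times \bigl((\ast_{w \in \lk_\Gamma(v)}G_w) \rtimes \stab_G(v)\bigr)$, show it is hyperbolic, and conclude by absorbing hyperbolic peripherals via Osin's Corollary 2.41. The one cosmetic difference is that the paper isolates the hyperbolicity of the peripheral subgroups as a separate lemma (Lemma \ref{lem:free product semidirect product finite group is hyperbolic}), whereas you argue it directly from the fact that a free product of finitely many finite groups is virtually free and hyperbolicity survives finite direct factors and finite extensions; the substance is identical.
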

In Corollary \ref{cor:intro semidirect product becomes hyperbolic}, both of the conditions $|H|<\infty$ and $\girth(\Gamma)>20$ are essential, because no hyperbolic group contains $\ZZ^2$ as a subgroup while $\Gamma\G$ can contain $\ZZ^2$ if either $|H|=\infty$ or $\girth(\Gamma)=4$.

It seems interesting to investigate properties of hyperbolic groups constructed in Corollary \ref{cor:intro semidirect product becomes hyperbolic}. For example, studying residual finiteness of these hyperbolic groups might shed light on the long-standing open problem asking whether every hyperbolic group is residually finite, because non residually finite groups can be easily constructed by wreath product. Indeed, wreath product of two groups $H \wr G$ is residually finite if and only if $H$ and $G$ are residually finite, and either $H$ is abelian or $G$ is finite (see \cite[Theorem 3.2]{Gru57}). Even if the hyperbolic groups in Corollary \ref{cor:intro semidirect product becomes hyperbolic} are residually finite, it is still interesting to investigate where this transition occurs as graph wreath product changes from wreath product to free product.

This paper is organized as follows. In Section \ref{sec:Preliminaries}, we explain preliminary definitions and known results that are necessary in this paper. In Section \ref{sec:Geometry of the extension graph}, we define the extension graph of graph product and study its geometry. In Section \ref{sec:Asymptotic dimension of the extension graph}, we study asymptotic dimension of the extension graph and prove Theorem \ref{thm:intro asymptotic dimension} by introducing an auxiliary graph that we call the coned-off extension graph. In Section \ref{sec:Hyperbolicity, tightness, and fineness of the extension graph}, we discuss how properties of a defining graph carry over to the extension graph and prove Theorem \ref{thm:intro extension graph is tight} by using results in Section \ref{sec:Geometry of the extension graph}. In Section \ref{sec:Relative hyperbolicity of semi-direct product}, we discusses the application to graph wreath product and prove Theorem \ref{thm:intro semidirect product is relatively hyperbolic} and Corollary \ref{cor:intro semidirect product becomes hyperbolic}.

\par

\vspace{2mm}

\noindent\textbf{Acknowledgment.}
I would like to thank Anthony Genevois and Denis Osin for helpful discussions. I would also like to thank Peter Kropholler for helpful comments on an earlier draft. I would also like to thank an anonymous referee for suggesting Corollary \ref{cor:stabilizer of two far vertices is trivial} (1) and for helpful comments, which improved exposition of the paper.

\section{Preliminaries}
\label{sec:Preliminaries}

We start with preparing necessary notations about graphs, metric spaces, and group actions. Throughout this paper, we assume that graphs are simplicial (i.e. having no loops nor multiple edges) and a group acts on a graph as graph automorphisms unless otherwise stated.

\begin{defn}\label{def:concepts in graph theory}
    A \emph{graph} $X$ is the pair of a set $V(X)$ and a subset $E(X) \subset V(X)\times V(X)$ satisfying $\forall\, x \in V(X),\, (x,x) \notin E(X)$ and $\forall\, (x,y) \in V(X)^2,\, (x,y) \in E(X) \Leftrightarrow (y,x) \in E(X)$. An element of $V(X)$ is called a \emph{vertex} and an element of $E(X)$ is called an \emph{edge}. For an edge $e=(x,y) \in E(X)$, we denote $x$ by $e_-$ and $y$ by $e_+$, that is, we have $e=(e_-,e_+)$. For a vertex $x\in V(X)$, we define $\lk_X(x), \st_X(x) \subset V(X)$ and $\elk_X(x) \subset E(X)$ by
    \begin{align*}
        \lk_X(x)&=\{y\in V(X) \mid (x,y) \in E(X)\}, \\
        \st_X(x)&=\{x\} \cup \lk_X(x), \\
        \elk_X(x) &= \{ (x,y) \in E(X) \mid y \in \lk_X(x) \}.
    \end{align*}
    We define $\leaf(X)$ by $\leaf(X) = \{x \in V(X) \mid |\lk_X(v)|\le 1 \}$. A \emph{path} $p$ in $X$ is a sequence $p=(p_0,\cdots,p_n)$ of vertices, where $n \in \NN\cup\{0\}$ and $p_i \in V(X)$, such that $(p_i,p_{i+1}) \in E(X)$ for any $i \ge 0$. Given a path $p=(p_0,\cdots,p_n)$ in $X$,
    \begin{itemize}
        \item[-]
        the \emph{length} $|p| \in \NN\cup\{0\}$ of $p$ is defined by $|p|=n$,
        \item[-]
        we denote $p_0$ (the \emph{initial vertex}) by $p_-$ and $p_n$ (the \emph{terminal vertex}) by $p_+$,
        \item[-]
        we define $V(p)=\{p_i \mid 0 \le i \le n\}$ and $E(p) = \{(p_{i-1},p_i),(p_i,p_{i-1}) \mid 1 \le i \le n\}$,
        \item[-]
        a \emph{subsequence of} $V(p)$ is $(p_{i_0},\cdots,p_{i_m})$ with $m \in \NN\cup\{0\}$ such that $i_0 \le \cdots \le i_m$,
        \item[-]
        we say that $p$ \emph{has backtracking at} $p_i$ if $p_{i-1} = p_{i+1}$.
    \end{itemize}
    A \emph{loop} $p$ in $X$ is a path such that $p_-=p_+$. A \emph{circuit} $p=(p_0,\cdots,p_n)$ in $X$ is a loop with $|p|>2$ and without self-intersection except $p_-=p_+$ i.e. $p_i\neq p_j$ for any $i,j$ with $0 \le i <j <n$. For $e \in E(X)$ and $n\in \NN$, we define $\C_X(e,n)$ to be the set of all circuits in $X$ that contain $e$ and have length at most $n$. The \emph{girth} of $X$ $\girth(X) \in \NN$ is defined by $\girth(X)=\min\{|p| \mid \text{$p$ is a circuit in $X$}\}$ if there exists a circuit in $X$. If there is no circuit in $X$, then we define $\girth(X)=\infty$ for convenience. A graph is called \emph{connected} if for any $x,y \in V(X)$, there exists a path $p$ such that $p_-=x$ and $p_+=y$. When a graph $X$ is connected, $X$ becomes a geodesic metric space by a graph metric $d_X$ (i.e. every edge has length 1), hence we also denote this metric space by $X$. Given $L \subset V(X)$, the \emph{induced subgraph on} $L$ is defined by the vertex set $L$ and the edge set $E(X)\cap L^2$.
\end{defn}

\begin{defn}
    Let $X$ be a connected graph. A path $p$ in $X$ is called \emph{geodesic} if $|p|$ is the smallest among all paths from $p_-$ to $p_+$. For $x,y \in V(X)$, we denote by $\geo_X(x,y)$ the set of all geodesic paths in $X$ from $x$ to $y$. For a path $p=(p_0, \cdots, p_n)$ without self-intersection and $i,j$ with $0 \le i \le j \le n$, we denote the subpath $(p_i, \cdots, p_j)$ of $p$ by $p_{[p_i, p_j]}$.
\end{defn}

\begin{rem}
    Since we consider only simplicial graphs throughout this paper, the girth of a graph is always at least 3.
\end{rem}

\begin{rem}\label{rem:vertex with 0 link}
    When a graph $X$ is connected, $x\in V(X)$ satisfies $|\lk_\Gamma(x)| = 0$ if and only if $V(X)=\{x\}$.
\end{rem}

\begin{defn}
    Let $(X,d_X)$ be a metric space. For a subset $A \subset X$, the \emph{diameter} of $A$ $\diam_X(A) \in [0,\infty]$ is defined by $\diam_X(A) = \sup_{x,y \in A}d_X(x,y)$. For $A \subset X$ and $r \in \RR_{>0}$, we define $\N_X(A,r)\subset X$ by $\N_X(A,r) = \{y \in X \mid \exists \, x\in A, d_X(x,y) \le r\}$. When $A$ is a singleton i.e. $A=\{x\}$ with $x \in X$, we denote $\N_X(\{x\},r)$ by $\N(x,r)$ for brevity, that is, $\N(x,r)=\{y \in X \mid d_X(x,y) \le r\}$. For two subsets $A,B \subset X$, we define $d_X(A,B) \in \RR_{\ge0}$ by $d_X(A,B)=\inf_{x\in A, y\in B}d_X(x,y)$.
\end{defn}

\begin{rem}\label{rem:diam when V(Gamma)setminus I(Gamma) is finite}
    If a connected graph $\Gamma$ satisfies $|V(X)\setminus \leaf(X)|<\infty$, then we have $\diam_X(X) < \infty$. Indeed, when $\diam_X(X) \ge 2$, for any $v \in \leaf(X)$ and $w \in \lk_X(v)$, we have $w \notin \leaf(X)$ by $\diam_X(X) \ge 2$. This implies $\diam_X(X) \le \diam_X(V(X)\setminus \leaf(X)) + 2 < \infty$.
\end{rem}

\subsection{Graph products of groups}

Readers are referred to \cite[Definition 3.5]{Gre} for details of graph product of groups.

\begin{defn}
    Let $G$ be a group. For $g,h \in G$, we define $[g,h] \in G$, by $[g,h] = ghg^{-1}h^{-1}$. For subsets $A, B\subset G$, we define $A B, [A,B] \subset G$ by $A B=\{gh \in G \mid g \in A, h \in B\}$ and $[A, B] = \{[g,h]\in G \mid g \in A, h \in B\}$. For a subset $A \subset G$, we denote by $\la A \ra$ the subgroup of $G$ generated by $A$ and also by $\la\!\la A \ra\!\ra$ the normal subgroup of $G$ generated by $A$, that is, $\la\!\la A \ra\!\ra = \la \bigcup_{g \in G}gAg^{-1} \ra$.
\end{defn}

\begin{defn}\label{def:graph product of groups}
    Let $\Gamma$ be a simplicial graph and $\G=\{G_v\}_{v\in V(\Gamma)}$ be a collection of groups. The \emph{graph product} $\Gamma\G$ is defined by
    \begin{align*}
        \Gamma\G
        =
        \ast_{v\in V(\Gamma)} \, G_v ~ / ~ \la\!\la\, \{ [g_v, g_w] \mid (v,w)\in E(\Gamma), g_v\in G_v, g_w \in G_w \}\, \ra\!\ra.
    \end{align*}
\end{defn}

\begin{rem}\label{rem:vertex subgroup}
    For any $v \in V(\Gamma)$, the group $G_v$ is a subgroup of $\Gamma\G$. We often identify $G_v$ as a subgroup of $\Gamma\G$. Also, for any $v,w \in V(\Gamma)$ with $v\neq w$, we have $G_v \cap G_w = \{1\}$.
\end{rem}

\begin{defn}\label{def:reduced form of graph product}
    Let $\Gamma$ be a simplicial graph and $\G=\{G_v\}_{v\in V(\Gamma)}$ be a collection of groups. Given $g \in \Gamma\G$, a geodesic word of $g$ in the generating set $\bigsqcup_{v\in V(\Gamma)}(G_v\setminus\{1\})$ is called a \emph{normal form of} $g$. We denote the word length of $g$ by $\|g\|$ (i.e. $\|g\|=|g|_{\bigsqcup_{v\in V(\Gamma)}(G_v\setminus\{1\})}$) and call $\|g\|$ the \emph{syllable length} of $g$. Given a normal form $g=g_1\cdots g_n$ of $g$,
    \begin{itemize}
        \item [-] 
        each letter $g_i \in \bigsqcup_{v\in V(\Gamma)}(G_v\setminus\{1\})$ is called a \emph{syllable},
        \item [-]
        and we refer to the process of obtaining the new normal form $g=g_1\cdots g_{i+1}g_i \cdots g_n$, where $1\le i <n$, $g_i \in G_{v_i}$, $g_{i+1} \in G_{v_{i+1}}$, and $(v_i,v_{i+1}) \in E(\Gamma)$, as \emph{syllable shuffling}.
    \end{itemize}
    For $h_1,\cdots,h_n \in \Gamma\G$, we say that the decomposition $h_1\cdots h_n$ is \emph{reduced} if we have $\|h_1\cdots h_n\| = \|h_1\| + \cdots + \|h_n\|$.
\end{defn}

\begin{conv}\label{conv:normal form}
    Throughout this paper, when we say that $g=g_1\cdots g_n$ is a normal form of $g \in \Gamma\G$, we assume that $g_1\cdots g_n$ is a geodesic word in $\bigsqcup_{v\in V(\Gamma)}(G_v\setminus\{1\})$ satisfying $g_i \in \bigsqcup_{v\in V(\Gamma)}(G_v\setminus\{1\})$ for each $i$, even if we don't mention it for brevity.
\end{conv}

\begin{rem}\label{rem:reduced decomposition}
    The decomposition $g=h_1\cdots h_n$ is reduced if and only if for any normal form $h_i=s_{i,1}\cdots s_{i,\|g_i\|}$ of each $g_i$, the word $g=(s_{1,1}\cdots s_{1,\|g_1\|}) \cdots (s_{n,1}\cdots s_{n,\|g_n\|})$ is a normal form of $g$.
\end{rem}

Theorem \ref{thm:normal form theorem} below follows by the same proof as \cite[Theorem 3.9]{Gre}, although the underlying graph $\Gamma$ is assumed to be finite in \cite[Theorem 3.9]{Gre}. That is, we don't need to assume that $\Gamma$ is finite.

\begin{thm}[Normal form theorem]\label{thm:normal form theorem}
    Let $\Gamma$ be a simplicial graph and $\G=\{G_v\}_{v\in V(\Gamma)}$ be a collection of groups. For any $g \in \Gamma\G$ with $g \neq 1$, $g=g_1\cdots g_n$ is a normal form of $g$ if and only if for any pair $(i,j)$ with $1 \le i < j \le n$ satisfying $v_i=v_j$, there exists $k$ with $i<k<j$ such that $(v_k,v_i) \notin E(\Gamma)$. Also, if $g=g_1\cdots g_n$ and $g=h_1\cdots h_m$ are normal forms of $g$, then $n=m$ and we obtain one from the other by finite steps of syllable shuffling.
\end{thm}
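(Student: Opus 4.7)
The plan is to reduce the infinite case to the classical result for finite defining graphs, \cite[Theorem 3.9]{Gre}. This reduction is available because any element $g \in \Gamma\G$, any normal form of $g$, and any syllable shuffling between two normal forms involve only finitely many syllables and hence only finitely many vertex groups.

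The crucial technical step I would prove first is that for any full (induced) subgraph $\Gamma' \subset \Gamma$ with $\G' = \{G_v\}_{v \in V(\Gamma')}$, the natural homomorphism $\iota : \Gamma'\G' \to \Gamma\G$ induced by inclusion is injective. I would establish this by exhibiting a retraction. Define $\rho : \ast_{v \in V(\Gamma)} G_v \to \Gamma'\G'$ by sending $G_v$ to itself (inside $\Gamma'\G'$) when $v \in V(\Gamma')$ and collapsing $G_v$ to the identity when $v \notin V(\Gamma')$. This descends to a map $\rho : \Gamma\G \to \Gamma'\G'$, because every defining relator $[g_v, g_w]$ with $(v,w) \in E(\Gamma)$ falls into one of two cases: either (a) both $v, w \in V(\Gamma')$, in which case fullness of $\Gamma'$ gives $(v,w) \in E(\Gamma')$, so $[g_v,g_w]=1$ already holds in $\Gamma'\G'$; or (b) at least one of $v, w$ lies outside $V(\Gamma')$, whence $\rho([g_v, g_w]) = 1$ holds vacuously. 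Since $\rho \circ \iota$ is the identity on each vertex subgroup $G_v \subset \Gamma'\G'$, we get $\rho \circ \iota = \mathrm{id}$, so $\iota$ is injective.

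Granting this, both halves of the theorem follow by restriction. Given a word $g_1 \cdots g_n$ (or two words $g_1 \cdots g_n$ and $h_1 \cdots h_m$) representing an element $g \in \Gamma\G$, let $\Gamma_0$ be the finite full subgraph of $\Gamma$ spanned by all vertices appearing in these words. By $\iota$-injectivity applied to $\Gamma_0 \subset \Gamma$, the words represent the same element of the finite graph product $\Gamma_0 \G_0$, and the word-length of $g$ in $\bigsqcup_{v \in V(\Gamma_0)}(G_v \setminus \{1\})$ equals its word-length in $\bigsqcup_{v \in V(\Gamma)}(G_v \setminus \{1\})$, so the notion of normal form is preserved under restriction. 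Fullness of $\Gamma_0$ also ensures that for $v_i, v_j \in V(\Gamma_0)$ one has $(v_i, v_j) \in E(\Gamma)$ iff $(v_i, v_j) \in E(\Gamma_0)$, so the characterization condition and the legality of each syllable shuffle are unchanged by passing to $\Gamma_0$. Applying \cite[Theorem 3.9]{Gre} inside $\Gamma_0 \G_0$ then yields both the stated iff-characterization and uniqueness up to finite sequences of syllable shufflings.

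The only delicate point is verifying that the retraction $\rho$ is well-defined; fullness of $\Gamma'$ is genuinely needed here, since a non-full subgraph $\Gamma'$ would inherit commutation relations from $\Gamma$ that are absent in $\Gamma'\G'$, and the corresponding would-be retraction would fail to respect the defining relators. Once this lemma is in hand, transferring the finite normal form theorem to the infinite setting is routine bookkeeping.
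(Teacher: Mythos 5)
Your argument is correct, but it takes a different route from the paper. The paper does not reduce to the finite case at all: it simply observes that the proof of \cite[Theorem 3.9]{Gre} never uses finiteness of the defining graph, so the same argument goes through verbatim for arbitrary $\Gamma$. You instead treat the finite-graph statement as a black box and reduce to it, via the lemma that for a full subgraph $\Gamma'\subset\Gamma$ the canonical map $\Gamma'\G'\to\Gamma\G$ is injective, proved by the standard retraction $\rho$ that kills the vertex groups outside $\Gamma'$; fullness is indeed exactly what makes $\rho$ well defined, and it is also what guarantees that the adjacency conditions in the characterization and in each syllable shuffle are the same whether read in $\Gamma_0$ or in $\Gamma$. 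One small point worth making explicit: the equality of syllable lengths $\|g\|_{\Gamma_0\G_0}=\|g\|_{\Gamma\G}$ is not a consequence of injectivity alone but of the retraction being length-nonincreasing (applying $\rho$ to any word over $\bigsqcup_{v\in V(\Gamma)}(G_v\setminus\{1\})$ deletes the syllables supported outside $\Gamma_0$ and fixes the rest), combined with the trivial reverse inequality; since you already have $\rho$ in hand this is a one-line addition, not a gap. Comparing the two approaches: the paper's is shorter but requires the reader to trust (or re-inspect) that Green's proof is finiteness-free, whereas yours is self-contained modulo the finite-case statement and produces along the way the genuinely useful fact that full subgraphs induce retracts of the graph product, a fact the paper implicitly uses elsewhere (e.g.\ when restricting supports) and which is worth isolating.
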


\begin{defn}\label{def:support of g}
    Let $\Gamma$ be a simplicial graph and $\G=\{G_v\}_{v\in V(\Gamma)}$ be a collection of groups. Let $g=g_1\cdots g_n$ be a normal form of $g \in \Gamma\G \setminus \{1\}$. For each syllable $g_i$, there exists a unique vertex $v_i \in V(\Gamma)$ with $g \in G_{v_i} \setminus \{1\}$. We define $\supp(g) \subset V(\Gamma)$ by $\supp(g)=\{v_i \mid 1 \le i\le n\}$ and call $\supp(g)$ the \emph{support of} $g$. We define the support of $1\in \Gamma\G$ by $\supp(1)=\emptyset$.
\end{defn}

\begin{rem}
    The support of $g \in \Gamma\G$ is well-defined by Remark \ref{rem:vertex subgroup} and Theorem \ref{thm:normal form theorem}.
\end{rem}

\begin{rem}\label{rem:notation of supp}
    When $g \in G_v\setminus\{1\}$ with $v \in V(\Gamma)$, we often consider the singleton $\supp(g) \,(=\{v\})$ as an element of $V(\Gamma)$ (and denote $\supp(g) \in \st_\Gamma(v)$ for example) by abuse of notation.
\end{rem}

\begin{rem}
    When $g=g_1\cdots g_n$ is a normal form of $g \in \Gamma\G \setminus \{1\}$, we have $\supp(g_i) \neq \supp(g_{i+1})$ for any $i$ since the word $g_1\cdots g_n$ is geodesic.
\end{rem}

Lemma \ref{lem:subsequence of reduced sequence} below easy follows from minimality of the length of a geodesic word.

\begin{lem}\label{lem:subsequence of reduced sequence}
    Let $\Gamma$ be a simplicial graph and $\G=\{G_v\}_{v\in V(\Gamma)}$ be a collection of groups. Let $g=g_1\cdots g_n$ be a normal form of $g \in \Gamma\G$, then any word obtained from $g_1\cdots g_n$ by finite steps of syllable shuffling is a normal form of $g$. Also, for any $i,j$ with $1\le i \le j \le n$, the subword $g_i\cdots g_j$ is a normal form.
\end{lem}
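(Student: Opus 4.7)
The plan is to derive both assertions directly from the definition of a normal form as a geodesic word in the syllable generating set $\bigsqcup_{v \in V(\Gamma)}(G_v \setminus \{1\})$, so that each statement reduces to a short argument about word length. The key observation I would lean on throughout is that $g = g_1 \cdots g_n$ being a normal form means exactly that $\|g\| = n$, that is, no strictly shorter syllable-word represents $g$.

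First I would address syllable shuffling. A single shuffle replaces an adjacent pair $g_i g_{i+1}$ in a normal form by $g_{i+1} g_i$, under the assumption that $(\supp(g_i), \supp(g_{i+1})) \in E(\Gamma)$. The defining relations $[G_v, G_w] = 1$ for adjacent $v,w \in V(\Gamma)$ ensure that such a swap preserves the element of $\Gamma\G$ represented by the word, while the number of syllables is clearly unchanged. Iterating finitely many swaps therefore yields a word of length $n$ in the syllable alphabet that still represents $g$. Since $\|g\| = n$, this new word is itself geodesic, hence a normal form of $g$.

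For the subword claim, I would argue by contradiction. Suppose that for some $1 \le i \le j \le n$ the word $g_i \cdots g_j$ fails to be a normal form of the element $g_i \cdots g_j \in \Gamma\G$, so that $\|g_i \cdots g_j\| < j - i + 1$. Pick any normal form $h_1 \cdots h_k$ of $g_i \cdots g_j$ with $k = \|g_i \cdots g_j\| < j - i + 1$. Then the concatenated word
\[
    g_1 \cdots g_{i-1}\, h_1 \cdots h_k\, g_{j+1} \cdots g_n
\]
is a syllable-word of length $(i-1) + k + (n - j) < n$ representing $g$, contradicting $\|g\| = n$.

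I do not anticipate a genuine obstacle here: both halves are immediate once one takes the convention \textbf{normal form $=$ geodesic syllable-word} literally, and the only step requiring a moment's care is noticing that syllable shuffling is designed to invoke exactly the commutation relations used to define $\Gamma\G$.
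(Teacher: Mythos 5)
Your proof is correct and matches the paper's approach exactly. The paper gives no written proof — it simply remarks that the lemma "easily follows from minimality of the length of a geodesic word" — and your two arguments (shuffling preserves both the group element and the syllable count, so the result stays geodesic; a shorter normal form of a subword would splice in to give a word for $g$ of length $< n = \|g\|$) are precisely the minimality argument the authors had in mind.
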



\subsection{The crossing graph and the contact graph of a quasi-median graph}\label{subsec:The crossing graph and the contact graph of a quasi-median graph}

In this section, we review notions on a quasi-median graph (see \cite{Gen17} for details). Readers are referred to \cite[Definition 2.1]{Gen17} and \cite[Definition 2.1]{Val21} for the definition of a quasi-median graph. These definitions are equivalent by \cite[Theorem 1]{BMW94}. A \emph{square} in a graph $X$ is an induced subgraph in $X$ isomorphic to a circuit of length $4$.

\begin{defn}
    Let $X$ be a quasi-median graph. A \emph{hyperplane} is an equivalence class of edges of $X$, where two edges $e$ and $f$ of $X$ are defined to be equivalent if there exists a sequence of edges $e = e_0, \cdots, e_n = f$ of $X$ such that, for every $1 \le i \le n-1$, $e_i$ and $e_{i+1}$ are either two sides of a triangle or opposite sides of a square. We denote by $\H(X)$ the set of all hyperplanes of $X$. For a hyperplane $J$ of $X$, we denote by $N'(J)$ the set of the endpoints of all the edges in $J$. A \emph{carrier} of $J \in \H(X)$, denoted by $N(J)$, is the induced subgraph on $N'(J)$. The \emph{crossing graph} of $X$, denoted by $\Delta X$, is a graph with $V(\Delta X) = \H(X)$, where two distinct vertices $J_1, J_2 \in \H(X)$ are adjacent if there exists a square in $X$ that contains both an edge in $J_1$ and an edge in $J_2$. The \emph{contact graph} of $X$, denoted by $\mathcal{C} X$, is a graph with $V(\mathcal{C} X) = \H(X)$, where two distinct vertices $J_1, J_2 \in \H(X)$ are adjacent if $N'(J_1) \cap N'(J_2) \neq \emptyset$.
\end{defn}

\begin{thm}\label{thm:contact graph is quasitree}{\rm \cite[Theorem A]{Val21}}
    For any connected quasi-median graph $X$, the contact graph $\mathcal{C}X$ of $X$ is quasi-isometric to a simplicial tree.
\end{thm}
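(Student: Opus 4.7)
The plan is to verify Manning's bottleneck criterion, which characterizes graphs quasi-isometric to simplicial trees: a connected graph is quasi-isometric to a simplicial tree if and only if there exists $\Delta \geq 0$ such that for any two vertices and any midpoint of a geodesic between them, every path joining the two vertices comes within distance $\Delta$ of that midpoint. Fixing $J_1, J_2 \in \mathcal{H}(X)$, I would take a geodesic $(J_1 = H_0, H_1, \ldots, H_n = J_2)$ in $\mathcal{C}X$, select the middle hyperplane $M = H_{\lfloor n/2 \rfloor}$, and show that every path in $\mathcal{C}X$ from $J_1$ to $J_2$ passes within a uniform distance of $M$.

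The key structural input is the separation property for hyperplanes in a quasi-median graph: each hyperplane $J$ partitions $V(X) \setminus N'(J)$ into components, and the hyperplanes dual to edges along an $X$-geodesic are precisely the hyperplanes separating its endpoints. I would first convert a path in $\mathcal{C}X$ into a walk in $X$. Given a path $\gamma = (K_0, \ldots, K_m)$ in $\mathcal{C}X$ from $J_1$ to $J_2$, pick vertices $v_i \in N'(K_{i-1}) \cap N'(K_i)$ for $1 \leq i \leq m$, together with $v_0 \in N'(J_1)$ and $v_{m+1} \in N'(J_2)$, and concatenate $X$-geodesics between consecutive $v_i$. The crucial observation is that every hyperplane dual to an edge of the geodesic from $v_i$ to $v_{i+1}$ has both endpoints of that edge on $N(K_i)$, hence is in contact with $K_i$ and lies within $\mathcal{C}X$-distance $1$ of $\gamma$.

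If one can guarantee that $M$ separates $v_0$ from $v_{m+1}$ in $X$, then the concatenated walk must traverse an edge dual to $M$, placing $M$ itself within $\mathcal{C}X$-distance $1$ of some vertex on $\gamma$, which verifies the bottleneck property with an explicit small constant. The main obstacle will be precisely this separation step: one must show that the choice of endpoints near $J_1$ and $J_2$, plus the choices of intermediate vertices on carriers, are consistent with the halfspace structure cut out by $M$, and that neither the lifting nor any attempted shortcut can avoid crossing $M$. This reduces to controlling how carriers of two hyperplanes at $\mathcal{C}X$-distance $k$ sit relative to each other in $X$, in particular that consecutive carriers along a $\mathcal{C}X$-geodesic intersect in a piece small enough to prevent creating a shortcut. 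Verifying this rests on the prism structure of quasi-median graphs (products of complete graphs play the role of cubes) and ultimately adapts Hagen's disc-diagram argument from the CAT(0) cube complex setting to the broader quasi-median framework.
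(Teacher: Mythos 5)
The paper does not prove this statement; it is quoted verbatim as Theorem~A of \cite{Val21} and used as an imported black box. So there is no internal proof to compare against, only Valiunas's argument and Hagen's earlier argument for CAT(0) cube complexes, which your outline is modelled on.

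Your high-level plan is the right one: verify Manning's bottleneck criterion, convert a $\mathcal{C}X$-path $\gamma=(K_0,\dots,K_m)$ into a walk in $X$ by threading geodesics through the carrier intersections, and observe that every hyperplane dual to an edge of that walk lies at $\mathcal{C}X$-distance at most $1$ from $\gamma$. That part is correct and is exactly how the reduction to a separation statement is set up. The gap is the separation statement itself. You write ``if one can guarantee that $M$ separates $v_0$ from $v_{m+1}$ in $X$,'' but an arbitrary midpoint $M$ of a $\mathcal{C}X$-geodesic from $J_1$ to $J_2$ does \emph{not} in general separate $N'(J_1)$ from $N'(J_2)$: being the middle hyperplane of a geodesic in the contact graph carries no a priori information about which side of $M$ those carriers lie on, and there are easy cube-complex examples where $M$ is in contact with both endpoints' carriers simultaneously. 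The statement one actually needs (and that Hagen/Valiunas prove) is weaker but harder: there exists a hyperplane $K$ that does separate $N'(J_1)$ from $N'(J_2)$ and satisfies $d_{\mathcal{C}X}(M,K)\le C$ for a universal $C$. With that in hand the walk must cross $K$, which puts $\gamma$ within $1$ of $K$ and hence within $1+C$ of $M$. Producing that nearby separator is the entire content of the theorem; it requires showing that the hyperplanes separating $N'(J_1)$ from $N'(J_2)$ already form a coarse $\mathcal{C}X$-path from $J_1$ to $J_2$ (so that the midpoint of any geodesic lands boundedly close to one of them), and this is where the disc-diagram / prism-decomposition machinery enters. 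Your sketch names this step but does not supply it, so as written the argument is incomplete at the crux.
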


Proposition \ref{prop:results on quasi-median graph} follows from \cite[Proposition 8.2, Corollary 8.10, Lemma 8.12]{Gen17}.

\begin{prop}\label{prop:results on quasi-median graph}
    Let $\Gamma$ be a simplicial graph and $\G=\{G_v\}_{v\in V(\Gamma)}$ be a collection of non-trivial groups. Let $X$ be the Cayley graph of $\Gamma\G$ with respect to the generating set $\bigsqcup_{v\in V(\Gamma)}(G_v\setminus\{1\})$. Then, the following holds.
    \begin{itemize}
        \item[(1)]
        $X$ is a quasi-median graph.
        \item[(2)]
        For each $v \in V(\Gamma)$, let $J_v \in \H(X)$ be the unique hyperplane with $G_v \subset N'(J_v)$. Then, $\H(X) = \{ gJ_v \mid g \in \Gamma\G, v \in V(\Gamma)\}$.
        \item[(3)]
        For any $v \in V(\Gamma)$, $N'(J_v) = \la G_w \mid w \in \st_\Gamma(v) \ra$.
        \item[(4)]
        If two distinct hyperplanes $gJ_v, hJ_w \in \H(X)$, where $g,h \in \Gamma\G$ and $v,w \in V(\Gamma)$, are adjacent in $\Delta X$, then $(v,w) \in E(\Gamma)$.
    \end{itemize}
\end{prop}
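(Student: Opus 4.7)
The plan is to derive all four parts from the normal form theorem (Theorem \ref{thm:normal form theorem}), which gives $d_X(g,h) = \|g^{-1}h\|$ and completely describes the equivalences between reduced expressions; this works verbatim for infinite $\Gamma$, so the finite-$\Gamma$ hypothesis in some of the cited references is unnecessary.

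For (1), I would verify the three defining conditions of a quasi-median graph directly. The triangle condition at $\{k, ks, kt\}$ with $s,t \in G_v \setminus \{1\}$ distinct is immediate since $s^{-1}t \in G_v \setminus \{1\}$ gives the third edge. The quadrangle condition follows from syllable shuffling: two normal forms of the same element differ by swaps of commuting adjacent syllables, each of which corresponds to traversing a square in $X$. The forbidden isometric subgraphs $K_4^-$ and $K_{2,3}$ are excluded by the uniqueness of syllable length. For (2) and (3), I would assign to each edge $(g,gs)$ with $s \in G_v \setminus \{1\}$ its \emph{support} $v \in V(\Gamma)$, and check that both triangle moves and opposite-sides-of-square moves preserve this support. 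Triangles use three elements of a single $G_v$. A square $\{k,ka,kb,kab\}$ with $a \in G_v$ and $b \in G_w$ is an induced $4$-cycle only when $v \neq w$ and $(v,w) \in E(\Gamma)$, because if $v = w$ then $a^{-1}b \in G_v$ yields the chord $ka \sim kb$; in the induced case, opposite edges share their support. Hence each hyperplane carries a well-defined support, and transitivity of $\Gamma\G$ on edges of a given support yields (2) after taking $J_v$ to be the hyperplane containing the edge $(1, g_v)$ for any $g_v \in G_v \setminus \{1\}$. For (3), the inclusion $\la G_w \mid w \in \st_\Gamma(v) \ra \subseteq N'(J_v)$ is proved by induction on syllable length: starting from $1 \in N'(J_v)$, right multiplication by $g_v \in G_v$ keeps us in $N'(J_v)$ via the edge $(k, kg_v)$, and for each $w \in \lk_\Gamma(v)$ and $h_w \in G_w$, the square $\{k, kg_v, kh_w, kh_w g_v\}$ shows $kh_w \in N'(J_v)$. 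The reverse inclusion follows by tracing equivalences from the edge $(1, g_v)$: triangle moves fix the edge's vertex pair and square moves translate by an element of $G_w$ with $w \in \lk_\Gamma(v)$.

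For (4), two distinct hyperplanes adjacent in $\Delta X$ share, by definition, a square $\{k, ka, kb, kab\}$ with $a \in G_v$ and $b \in G_w$. As noted above, induced-squareness forces $v \neq w$, and the relation $ab = ba$ in $\Gamma\G$ with $v \neq w$ forces $(v,w) \in E(\Gamma)$ by the normal form theorem, since otherwise $ab$ and $ba$ would be distinct normal forms of length $2$. The main obstacle is the quasi-median verification in (1); the quadrangle condition requires careful bookkeeping showing that the missing fourth vertex of a pre-quadrangle is uniquely determined by the common syllable prefix of two normal forms, a step for which the case analysis in Genevois's work is helpful but essentially formal once the normal form theorem is available.
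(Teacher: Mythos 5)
The paper does not actually prove this proposition: it imports it wholesale from \cite[Proposition 8.2, Corollary 8.10, Lemma 8.12]{Gen17}. So what you are proposing is a reconstruction of Genevois's proofs from Theorem \ref{thm:normal form theorem}, which is a legitimate route (and your remark that no finiteness of $\Gamma$ is needed is consistent with how the paper itself treats Theorem \ref{thm:normal form theorem}). For (2)--(4) your skeleton is essentially the right one, and the engine is exactly what you name: every triangle of $X$ lies in a single coset $gG_v$ (if $s\in G_v\setminus\{1\}$, $t\in G_w\setminus\{1\}$ and $s^{-1}t$ is a single syllable, then $v=w$), and an induced $4$-cycle containing the edge $(k,ka)$ with $a\in G_v\setminus\{1\}$ must have the form $\{k,ka,kb,kab\}$ with $b\in G_w\setminus\{1\}$, $v\neq w$ and $(v,w)\in E(\Gamma)$. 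Note, however, that in (4) you \emph{assume} the square has this form, whereas the definition of adjacency in $\Delta X$ only hands you some induced $4$-cycle containing one edge of each hyperplane; deriving the form (fourth vertex equal to $kab$, opposite sides carrying the same vertex group) is itself a normal-form comparison of the two length-two expressions of $k^{-1}y$, and since (2), (3), (4) all rest on it, it should be written out. Two smaller repairs: a triangle move does not ``fix the edge's vertex pair'' --- it moves both endpoints inside the coset $kG_v$ --- so the invariant to propagate for the reverse inclusion in (3) is ``both endpoints lie in $\la G_w \mid w\in\st_\Gamma(v)\ra$ and the label lies in $G_v$''; and ``the unique hyperplane with $G_v\subset N'(J_v)$'' must be read as the hyperplane dual to the clique on $G_v$ (your choice, the hyperplane of the edge $(1,g_v)$), since by (3) every $J_w$ with $w\in\lk_\Gamma(v)$ also contains $G_v$ in its carrier.

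The genuine gap is in (1). The triangle and quadrangle conditions are conditions relative to a basepoint: the triangle condition says that if $x\sim y$ and $d_X(u,x)=d_X(u,y)=k$, then $x,y$ have a common neighbour at distance $k-1$ from $u$; the quadrangle condition says that if $x,y$ are at distance $2$, $d_X(u,x)=d_X(u,y)=k$, and $x,y$ have a common neighbour at distance $k+1$ from $u$, then they also have one at distance $k-1$ from $u$. What you verify instead --- that $k,ks,kt$ span a triangle, and that a syllable shuffle traces out a square --- never involves the basepoint and is not weak modularity, so as written (1) is not proved. The real verifications are normal-form arguments: for the triangle condition with $u=1$, $y=xs$, $s\in G_v\setminus\{1\}$, the equality $\|xs\|=\|x\|=k$ forces, via Theorem \ref{thm:normal form theorem}, a normal form $x=x't$ with $t\in G_v$ and $\|x'\|=k-1$, and $x'$ is the required common neighbour (note $x'^{-1}y=ts\neq1$ since $\|y\|=k$); the quadrangle condition similarly requires extracting from $xs=yt$, $\|x\|=\|y\|=k$, $\|xs\|=k+1$ that $s,t$ lie in distinct adjacent vertex groups and that $x,y$ share a common ``prefix'' of syllable length $k-1$. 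Likewise, ruling out $K_{3,2}$ is not ``uniqueness of syllable length'': the statement to prove is that two vertices at distance $2$ have at most two common neighbours (a short computation with the possible first syllables of a length-two element), while $K_4^-$ is excluded because triangles lie in vertex-group cosets, which are cliques. These arguments are short, but they are precisely the content of \cite[Proposition 8.2]{Gen17} and are not formal consequences of what you wrote; either carry them out or, as the paper does, cite them.
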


\subsection{Asymptotic dimension of metric spaces}\label{subsec:Asymptotic dimension of metric spaces}

Readers are referred to \cite{BD08} for details of asymptotic dimension.

\begin{defn}\cite[Theorem 19 (3)]{BD08}\label{def:asymptotic dimension}
    A metric space $(X,d_X)$ is said to have \emph{asymptotic dimension at most $n \in \NN\cup\{0\}$} (and denoted by $\asdim (X) \le n$) if for any $r\in\RR_{>0}$, there exists a set $\mathcal{U}$ in $2^X$ such that
    \begin{align*}
        X = \bigcup_{U 
        \,\in\, \mathcal{U}} U,~~~~ \sup_{U \,\in\, \mathcal{U}} \diam_X(U) < \infty, {\rm~~~~and~~~~} \sup_{x \,\in\, X} |\{U \in \mathcal{U} \mid \N_X(x,r)\cap U \neq \emptyset \}| \le n+1.
    \end{align*}
\end{defn}

Definition \ref{def:uniformly asdim} below comes from \cite[p.10]{BD08}, although we consider an arbitrary family of metric spaces, not necessarily subsets of a common metric space.

\begin{defn}\label{def:uniformly asdim}
    Let $n \in \NN\cup\{0\}$. A family of metric spaces $(X_\alpha, d_\alpha)_{\alpha \in \A}$ is said to \emph{satisfy $\asdim \le n$ uniformly}, if for any $r \in \RR_{>0}$, there exist $\mathcal{U}^0_\alpha, \cdots, \mathcal{U}^n_\alpha \subset 2^{X_\alpha}$ for each $\alpha \in \A$ satisfying the following three conditions.
    \begin{itemize}
        \item[(1)]
        $X_\alpha = \bigcup_{U \,\in\, \mathcal{U}^0_\alpha \cup \cdots \cup \mathcal{U}^n_\alpha}U$ for any $\alpha\in\A$.
        \item[(2)]
        $\inf\{d_{X_\alpha}(U,V) \mid U,V \in \mathcal{U}^i_\alpha, U\neq V \}>r$ for any $i\in \{0,\cdots,n\}$ and $\alpha \in \A$.
        \item[(3)] 
        $\sup\{\diam_{X_\alpha}(U) \mid \alpha\in\A, i\in \{0,\cdots,n\}, U \in \mathcal{U}^i_\alpha\} < \infty$.
    \end{itemize}
\end{defn}

Theorem \ref{thm:union theorem} is a variant of the usual Union Theorem for a family of metric spaces, but can be proved in the same way as \cite[Theorem 25]{BD08} (see \cite[Section 4]{BD08}).

\begin{thm}[Union Theorem]\label{thm:union theorem}
    Let $(X_j, d_j)_{j \in J}$ be a family of metric spaces. Suppose that for each $i\in J$, there exists $\A_j \subset 2^{X_j}$ satisfying the following two conditions.
    \begin{itemize}
        \item[(1)]
        $X_j = \bigcup_{U \in \A_j} U$ for any $j\in J$ and the family $\bigcup_{j \in J}\A_j$ satisfies $\asdim \le n$ uniformly.
        \item[(2)]
        For any $r \in \RR_{>0}$, there exists $Y_{r,j} \subset X_j$ for each $j \in J$ such that the family $(Y_{r,j})_{j \in J}$ satisfies $\asdim \le n$ uniformly and $\inf\{d_j(U \setminus Y_{r,j}, U' \setminus Y_{r,j}) \mid U, U' \in \A_j, U\neq U' \} > r$ for any $j \in J$.
    \end{itemize}
    Then, the family $(X_j)_{j \in J}$ satisfies $\asdim \le n$ uniformly.
\end{thm}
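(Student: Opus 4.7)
The plan is to adapt the proof of the classical Union Theorem \cite[Theorem 25]{BD08}, tracking every constant uniformly across the family $(X_j)_{j \in J}$. Fix $r > 0$. The goal is, for each $j$, to exhibit decompositions $\mathcal{U}^0_j, \ldots, \mathcal{U}^n_j$ of $X_j$ that are $r$-disjoint within each color with a uniform diameter bound depending only on $r$ and on the uniform constants furnished by hypotheses (1) and (2), as in Definition \ref{def:uniformly asdim}.

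I would begin by extracting two auxiliary uniform covers. Choose auxiliary scales $R_0 \gg R_1 \gg r_V \gg r$, with exact values fixed at the end. Applying hypothesis (1) at scale $R_1$ yields, for every $U \in \A_j$ and every $j$, $R_1$-disjoint $(n{+}1)$-colored covers $\W^0_U, \ldots, \W^n_U$ of $U$ with uniform diameter bound $D_W$. Applying hypothesis (2) at scale $R_0$ produces sets $Y_{R_0, j} \subset X_j$ with $d_j(U \setminus Y_{R_0, j}, U' \setminus Y_{R_0, j}) > R_0$ for distinct $U, U' \in \A_j$, and with $(Y_{R_0, j})_{j \in J}$ itself of uniform $\asdim \leq n$; applying that last estimate at scale $r_V$ yields $r_V$-disjoint $(n{+}1)$-colored covers $\V^0_j, \ldots, \V^n_j$ of $Y_{R_0, j}$ with uniform diameter bound $D_V$. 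The color class $\mathcal{U}^i_j$ is then built by combining the same-color $V$'s (enlarged by a small radius so as to cover a neighborhood of $Y_{R_0, j}$) with the same-color $W$'s (truncated just outside $Y_{R_0, j}$).

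The main obstacle is the verification of $r$-disjointness within each color, which splits into four cases. For two same-color enlarged $V$'s, the $r_V$-separation degrades by at most twice the enlargement radius, which is negligible once $r_V \gg r$. For two same-color truncated $W$'s from the same $U$, the $R_1$-separation of $\W^i_U$ survives truncation. For two same-color truncated $W$'s from distinct $U, U' \in \A_j$, the truncated pieces lie respectively in $U \setminus Y_{R_0, j}$ and $U' \setminus Y_{R_0, j}$, hence are $>R_0$ apart by hypothesis (2). The genuinely delicate case is a same-color enlarged $V$ against a truncated $W$: these can meet at the boundary of $Y_{R_0, j}$, which would spoil the needed $r$-separation. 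As in \cite[Theorem 25]{BD08}, I would resolve this by the absorption trick: every $W \in \W^i_U$ that meets a small neighborhood $\N_{X_j}(Y_{R_0, j}, r)$ is swallowed into the unique same-color enlarged $V \in \V^i_j$ it can meet (uniqueness coming from $R_0 \gg D_W$), and the truncated $W$-pieces retained in $\mathcal{U}^i_j$ are only those that avoid $\N_{X_j}(Y_{R_0, j}, r)$ altogether. The resulting augmented $\widehat V$ has diameter at most $D_V + 2(r + D_W)$, still uniformly bounded, and the retained truncated $W$'s are more than $r$ away from any $V \subset Y_{R_0, j}$. Calibrating $R_0 \gg R_1 \gg r_V \gg r, D_W, D_V$ in the stated order makes all four disjointness estimates valid simultaneously with a uniform diameter bound, yielding the required uniform covers.
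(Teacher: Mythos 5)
Your overall route is the right one: the paper gives no proof of this statement, saying only that it can be proved in the same way as \cite[Theorem 25]{BD08}, and your two-layer scheme (covers of the pieces $U\in\A_j$, a cover of the separating set $Y$, truncation outside $Y$, absorption of sets near $Y$) is exactly that argument made uniform in $j$. The genuine gap is the calibration of scales, and it is not cosmetic: with $R_0\gg R_1\gg r_V\gg r$ the key disjointness case fails. In the Bell--Dranishnikov argument the cover $\V^i_j$ of $Y$ must be chosen \emph{after} the covers $\W^i_U$, at a separation scale larger than $2D_W+3r$, because once a set $V$ absorbs nearby $W$'s the augmented set $\widehat V$ protrudes by up to $r+D_W$ beyond $V$; two same-colour augmented sets stay $r$-separated only if the $V$'s were at least $(2D_W+3r)$-disjoint. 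You instead take the $V$'s only $r_V$-disjoint with $r_V\ll R_1$, while $D_W$ is handed to you by hypothesis (1) only after $R_1$ is fixed and will in general be at least of order $R_1$ (so you cannot also arrange $r_V\gg D_W$ without circularity). Your verification of the ``two enlarged $V$'s'' case accounts only for the small enlargement radius, not for the absorbed $W$'s, and the failure is real: for $U\neq U'$, hypothesis (2) separates $U\setminus Y_{R_0,j}$ from $U'\setminus Y_{R_0,j}$ but says nothing inside $Y_{R_0,j}$, so $W\subset U$ and $W'\subset U'$ can be adjacent there; if they are absorbed into distinct same-colour $V$'s --- which the small scale $r_V$ cannot prevent --- the two augmented sets come within distance $0$. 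For the same reason the claimed uniqueness of ``the same-colour $V$ it can meet'' is false (a $W$ of diameter $D_W\gg r_V$ can approach many same-colour $V$'s; $R_0\gg D_W$ is not the relevant comparison), and there is a coverage gap: a colour-$i$ set $W$ meeting $\N_{X_j}(Y_{R_0,j},r)$ need not be near any colour-$i$ set $V$, since a single colour class $\V^i_j$ does not cover $Y_{R_0,j}$, so such a $W$ is neither retained nor absorbable and its points can be lost.

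The repair is the standard order of quantifiers: given $r$, apply hypothesis (1) at scale $r$ to get $\W^i_U$ with bound $D_W$; apply hypothesis (2) at scale $r$ to get $Y_j:=Y_{r,j}$ and replace each $W\in\W^i_U$ by its truncation $W\cap(U\setminus Y_j)$, so that within one colour the truncated family over all $U\in\A_j$ is $r$-disjoint (same $U$ by (1), distinct $U$ by (2)); apply the uniform asymptotic dimension of $(Y_j)_{j\in J}$ at scale $R'>2D_W+3r$ to get $\V^i_j$ with bound $D_V$; finally let $\mathcal{U}^i_j$ consist of the sets $V\cup\bigcup\{W'\mid d_j(W',V)\le r\}$ for $V\in\V^i_j$ together with those truncated $W'$ lying at distance greater than $r$ from every $V\in\V^i_j$. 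Coverage is then automatic, each colour is $r$-disjoint, and the diameter bound $\max\{D_W,\,D_V+2r+2D_W\}$ depends only on $r$ and the uniform data, hence is independent of $j$, as Definition \ref{def:uniformly asdim} requires.
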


Theorem \ref{thm:Hurewicz Theorem} below is \cite[Theorem 1]{BD06}.

\begin{thm}[Bell-Dranishnikov's Hurewicz Theorem]\label{thm:Hurewicz Theorem}
Let $X$ be a geodesic metric space and $Y$ be a metric space. Let $f \colon X \to Y$ be a Lipschitz map. Suppose that for every $R \in \RR_{>0}$, the family $(f^{-1}(\N_Y(y,R)))_{y \,\in Y}$ satisfies $\asdim \le n$ uniformly. Then $\asdim (X) \le \asdim (Y) + n$.
\end{thm}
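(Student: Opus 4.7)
The plan is to verify the characterization from Definition~\ref{def:uniformly asdim} for $\asdim X \le \asdim Y + n$, applied to the singleton family $\{X\}$. Let $m = \asdim Y$ (assumed finite; otherwise the statement is vacuous) and let $L$ be a Lipschitz constant of $f$. For each $r > 0$, I aim to produce a cover of $X$ by uniformly bounded-diameter sets organized into $m + n + 1$ colored subfamilies, each $r$-disjoint.

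First I build a colored cover of the base $Y$: by $\asdim Y \le m$ applied at a scale $s$ (with $s \ge Lr$, to be enlarged below), one obtains colored families $\mathcal{V}^0, \ldots, \mathcal{V}^m$ covering $Y$, each $\mathcal{V}^i$ being $s$-disjoint, with uniform diameter bound $D_Y$ on their elements. For each $V \in \bigcup_i \mathcal{V}^i$, fix $y_V \in V$, so that $f^{-1}(V) \subset f^{-1}(\N_Y(y_V, D_Y))$. Next I use the fiber hypothesis: by the uniform $\asdim \le n$ assumption applied with $R = D_Y$, one obtains colored families $\mathcal{U}^0_V, \ldots, \mathcal{U}^n_V$ covering $f^{-1}(\N_Y(y_V, D_Y))$ for each $V$, each $r$-disjoint, with a uniform diameter bound $D_X$ independent of $V$. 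Because $f$ is $L$-Lipschitz and $s \ge Lr$, the preimages $\{f^{-1}(V)\}_{V \in \mathcal{V}^i}$ are pairwise $r$-separated in $X$ for each fixed $i$.

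The remaining step, and the main obstacle, is to assemble these ingredients into an $r$-disjoint cover of $X$ using only $m + n + 1$ colors. The naive product indexing by pairs $(i,k) \in \{0,\ldots,m\} \times \{0,\ldots,n\}$, namely $\mathcal{W}^{(i,k)} = \{U \cap f^{-1}(V) : V \in \mathcal{V}^i,\, U \in \mathcal{U}^k_V\}$, produces an $r$-disjoint cover of mesh at most $D_X$ (the within-$V$ disjointness comes from $\mathcal{U}^k_V$, the across-$V$ disjointness from Lipschitz transfer), but uses $(m+1)(n+1)$ colors, yielding only the weaker bound $\asdim X \le mn + m + n$. The sharpening to the additive bound is the content of the Bell--Dranishnikov argument: after enlarging $s$ to dominate $D_X$ (so that the preimages $f^{-1}(V)$ for $V \in \mathcal{V}^i$ are not merely $r$-separated but $(r + 2D_X)$-separated), one performs a diagonal recoloring of the product palette, exploiting the separation between distinct layers $X_i := \bigsqcup_{V \in \mathcal{V}^i} f^{-1}(V)$ to reuse fiber-palette colors across different base-colors without breaking $r$-disjointness, thereby collapsing the $(m+1)(n+1)$ pairs onto $m + n + 1$ classes.

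The technical core is this color-coordination, together with the scale bookkeeping that guarantees the reuse is valid within every $r$-ball in $X$: each such ball maps under $f$ into a set of diameter at most $2Lr$, which by the $s$-disjointness of $\mathcal{V}^i$ meets at most one $V$ from each color class $i$, and within the preimage of each such $V$ the ball meets at most $n+1$ fiber-cells, but merging colors across layers according to a consistent rule reduces the total distinct palette usage to $m + n + 1$. Once this construction is carried out and the $r$-disjointness verified under the merge, Definition~\ref{def:uniformly asdim} immediately gives $\asdim X \le \asdim Y + n$.
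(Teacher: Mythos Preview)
The paper does not prove this theorem; it is quoted from \cite{BD06} as a preliminary result, so there is no proof here to compare against beyond the citation.

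Your outline is correct through the product coloring: pulling back an $s$-disjoint $(m{+}1)$-colored cover of $Y$ and covering each $f^{-1}(V)$ via the fiber hypothesis does yield an $r$-disjoint cover of $X$ with $(m{+}1)(n{+}1)$ colors and bounded mesh. The gap is the passage to $m+n+1$ colors. You invoke ``separation between distinct layers $X_i$'', but the layers $X_i = \bigcup_{V\in\mathcal{V}^i} f^{-1}(V)$ are not separated from one another --- they jointly cover $X$ and overlap --- so there is nothing to exploit. Concretely, under the obvious diagonal rule $(i,j)\mapsto i+j$, two cells with $i_1\neq i_2$ and $i_1+j_1=i_2+j_2$ can lie over intersecting base sets $V_1\in\mathcal{V}^{i_1}$, $V_2\in\mathcal{V}^{i_2}$ and hence be arbitrarily close in $X$; enlarging $s$ does not help, since $s$ only separates cells \emph{within} a single $\mathcal{V}^i$. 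There is also a circularity in your scale choices: you want $s$ to dominate $D_X$, but $D_X$ depends on $R=D_Y$, which in turn depends on $s$. The sentence ``merging colors across layers according to a consistent rule reduces the total distinct palette usage to $m+n+1$'' is precisely the content of the theorem, and no such rule is supplied. The argument in \cite{BD06} does not proceed by a direct recoloring of the product palette; the reduction to $m+n+1$ colors requires a genuinely different mechanism, and that is what is missing here.
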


\subsection{Hyperbolic graphs, tightness, and fineness}
\label{subsec:Hyperbolic graphs, fineness, and tightness}

In this section, we review hyperbolic spaces, tightness in the sense of Bowditch, and fineness. Readers are referred to \cite{BH99} for details of hyperbolic spaces.

\begin{defn}\label{def:gromov product}
    Let $(X,d_X)$ be a metric space. For $x,y,z\in X$, we define $(x,y)_z$ by
\begin{align}\label{eq:gromov product}
    (x,y)_z=\frac{1}{2}\left( d_X(x,z)+d_X(y,z)-d_X(x,y) \right).    
\end{align}
\end{defn}

\begin{prop} \label{prop:hyp sp}
    For any geodesic metric space $(X,d_X)$, the following conditions are equivalent.
    \item[(1)]
    There exists $\delta\in\NN$ satisfying the following property. Let $x,y,z\in X$, and let $p$ be a geodesic path from $z$ to $x$ and $q$ be a geodesic path from $z$ to $y$. If two points $a\in p$ and $b\in q$ satisfy $d_X(z,a)=d_X(z,b)\le (x,y)_z$, then we have $d_X(a,b) \le \delta$.
    \item[(2)] 
    There exists $\delta\in\NN$ such that for any $w,x,y,z \in X$, we have
    \[
    (x,z)_w \ge \min\{(x,y)_w, (y,z)_w\} - \delta.
    \]
\end{prop}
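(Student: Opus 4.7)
The plan is to verify this as the standard equivalence between the insize (thin triangles) formulation and the four-point Gromov hyperbolicity condition, proved by direct manipulation of the definition of the Gromov product. The constants $\delta$ in (1) and (2) will differ, but only by a small multiplicative absolute factor, so it suffices to establish each implication with the constant allowed to grow.

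For $(1) \Rightarrow (2)$, I would fix four points $w, x, y, z$, and by symmetry assume $\alpha := (x,y)_w \le (y,z)_w$, so the right-hand side of (2) becomes $\alpha - \delta$. On fixed geodesics from $w$ to each of $x, y, z$, mark the three points $a, b, c$ at distance $\alpha$ from $w$. Condition (1) applied to the triangles with vertex $w$ and opposite pairs $(x,y)$ and $(y,z)$ yields $d_X(a,b) \le \delta$ and $d_X(b,c) \le \delta$, so $d_X(a,c) \le 2\delta$. The triangle inequality together with $d_X(a,x) = d_X(w,x) - \alpha$ and $d_X(c,z) = d_X(w,z) - \alpha$ then bounds $d_X(x,z)$ from above by $d_X(w,x) + d_X(w,z) - 2\alpha + 2\delta$, which rearranges into the desired inequality $(x,z)_w \ge \alpha - \delta$.

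For $(2) \Rightarrow (1)$, let $\delta_0$ be the constant from (2). The key observation is that if $a$ lies on a geodesic from $z$ to $x$ with $d_X(z,a) = t$, then $(a,x)_z = t$. Given the setup of (1), I would apply the four-point inequality based at $z$ twice: first with intermediate point $y$ to obtain $(x,b)_z \ge \min\{(x,y)_z, (y,b)_z\} - \delta_0 \ge t - \delta_0$ (using $(x,y)_z \ge t$ by hypothesis and $(y,b)_z = t$ since $b$ lies on $[z,y]$), and then with intermediate point $x$ to obtain $(a,b)_z \ge \min\{(a,x)_z, (x,b)_z\} - \delta_0 \ge t - 2\delta_0$. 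Since $(a,b)_z = t - \tfrac{1}{2} d_X(a,b)$ by the definition of the Gromov product, this forces $d_X(a,b) \le 4\delta_0$, so condition (1) holds with $\delta = 4\delta_0$.

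I do not anticipate any serious obstacle: the argument is a routine unwinding of the Gromov product identity, and the only care needed is keeping track of the constants between the two formulations, following essentially the outline in Bridson--Haefliger.
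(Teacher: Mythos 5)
Your proof is correct: both implications are the standard unwinding of the Gromov product, the bookkeeping checks out (in particular $\alpha=(x,y)_w\le(y,z)_w$ guarantees the three marked points exist on the respective geodesics, and $(a,b)_z=t-\tfrac12 d_X(a,b)$ gives $d_X(a,b)\le 4\delta_0$), and the constants only degrade by the factor you state. The paper itself offers no proof of this proposition --- it is recorded as a standard fact with the reader referred to Bridson--Haefliger --- so your argument simply supplies the routine textbook verification that the paper defers to \cite{BH99}.
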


\begin{defn}\label{def:hyperbolic space}
    A geodesic metric space $X$ is called \emph{hyperbolic}, if $X$ satisfies the equivalent conditions (1) and (2) in Proposition \ref{prop:hyp sp}. We call a hyperbolic space $\delta$-\emph{hyperbolic} with $\delta \in \NN$, if $\delta$ satisfies both of (1) and (2) in Proposition \ref{prop:hyp sp}. A connected graph $X$ is called \emph{hyperbolic}, if the geodesic metric space $(X,d_X)$ is hyperbolic.
\end{defn}

For a $\delta$-hyperbolic graph $X$ with $\delta>0$, there exists some constant $\delta_0>0$,
depending only on $\delta$ such that if $c \in V(X)$ lies in some geodesic
from $a \in V(X)$ to $b \in V(X)$, then every geodesic from $a$ to $b$ passes through $\N_X(c,\delta_0)$.

\begin{defn}
    Let $X$ be a $\delta$-hyperbolic graph with $\delta>0$. We say that $X$ is \emph{tight in the sense of Bowditch} if for each $(a,b) \in V(X)\times V(X)$, there exists $\T(a,b) \subset \geo_X(a,b)$ satisfying the conditions (1) and (2) below, where $V_\T(a,b)=\bigcup \{ V(p) \mid p \in \T(a,b) \}$ and $V_\T(a,b\, ;r)=\bigcup\{ V_{\T}(a',b') \mid a'\in \N_X(a,r),\, b'\in \N_X(b,r) \}$.
    \begin{itemize}
        \item [(1)]
        $\exists\, P_0 \in\NN, \forall\, a,b \in V(X), \forall\, c \in V_\T(a,b),\, |V_{\T}(a,b) \cap \N_X(c,\delta_0)| \le P_0$.
        \item [(2)] 
        $\exists\, P_1,k_1 \in\NN, \forall\, r \in\NN, \forall\, a,b \in V(X), \text{for all $c \in V_\T(a,b)$ with $d_X(c,\{a,b\}) \ge r+k_1$}, 
        \newline
        |V_{\T}(a,b\, ;r) \cap \N_X(c,\delta_0)| \le P_1$.
    \end{itemize}
    When a group $G$ acts on $X$, we say that the family $\{\T(a,b)\}_{(a,b)\in V(X)\times V(X)}$ is $G$-\emph{equivariant} if $g\T(a,b) = \T(ga,gb)$ for any $g \in G$ and $(a, b) \in V(X)\times V(X)$.
\end{defn}

See \cite[Sectionn 8]{Bow12} for details of fine graphs and their connection with relatively hyperbolic groups.

\begin{defn}\label{def:fine graph}
    Let $X$ be a simplicial graph. The graph $X$ is called \emph{fine}, if $|\C_X(e,n)|<\infty$ for any $e \in E(X)$ and $n\in\NN$ (see Definition \ref{def:concepts in graph theory}). The graph $X$ is called \emph{uniformly fine}, if there exists a function $f \colon \NN \to \NN$ such that $|\C_X(e,n)| \le f(n)$ for any $e \in E(X)$ and $n\in\NN$.
\end{defn}

\subsection{Relatively hyperbolic groups}

See \cite[Appendix]{Osi06} for the definition of coned-off Cayley graphs.

\begin{defn}
    A finitely generated group $G$ is called \emph{hyperbolic relative to} a finite collection $\{H_i\}_{i=1}^n$ of subgroups of $G$, if for some (equivalently, any) finite generating set $X$ of $G$, the coned-off Cayley graph $\widehat{\Gamma}(G,X)$ of $G$ with respect to $\{H_i\}_{i=1}^n$ is hyperbolic and fine.
\end{defn}

\begin{defn}
    Let a group $G$ act on a set $X$. We denote by $X/G$ the quotient set of the orbit equivalence relation induced by the action $G \act X$. For $x \in X$, we define $\stab_G(x)\subset G$ by $\stab_G(x)=\{g \in G \mid gx=x\}$.
\end{defn}

The following equivalent condition of relative hyperbolicity follows from \cite[Theorem 7.10]{Bow12} and \cite[Theorem 6.1]{Dah03}. See also \cite{Dah03b} for the complete version of the proof and \cite{Osi06} for more equivalent conditions.

\begin{thm}\label{thm:relatively hyperbolic groups}
    Suppose that $G$ is a finitely generated group and $\{H_i\}_{i=1}^n$ is a finite collection of finitely generated subgroups of $G$. Then, $G$ is hyperbolic relative $\{H_i\}_{i=1}^n$ if and only if there exist a fine hyperbolic graph $X$ and an action $G \act X$ such that the following two conditions hold.
    \begin{itemize}
        \item[(1)]
        $E(\Gamma) / G$ is finite and for any $e \in E(\Gamma)$, $\stab_G(e) \,(=\stab_G(e_-) \cap \stab_G(e_+))$ is finite.
        \item[(2)]
        For any $x\in V(X)$, the group $\stab_{G}(x)$ is either finite or conjugate to $H_i$ for some $i\in \{1,\cdots,n\}$.
    \end{itemize}
\end{thm}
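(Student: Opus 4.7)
The plan is to establish the two implications separately, in both cases assembling known characterizations of relative hyperbolicity from the literature rather than building the theory from scratch.

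For the forward direction, assume that $G$ is hyperbolic relative to $\{H_i\}_{i=1}^n$, pick a finite generating set $S$ of $G$, and take $X$ to be the coned-off Cayley graph $\widehat{\Gamma}(G,S)$ of $G$ with respect to $\{H_i\}_{i=1}^n$. By the definition stated just before the theorem, $X$ is fine and hyperbolic. The vertex set of $X$ is $G \sqcup \bigsqcup_{i=1}^n G/H_i$ and $G$ acts by left multiplication. I would then verify conditions (1) and (2) directly: the edges split into the $S$-edges (finitely many $G$-orbits, each with trivial stabilizer since the left regular action is free) and the cone edges (one $G$-orbit per index $i$, again with trivial stabilizer), so condition (1) holds; and the vertex stabilizers are either trivial (for vertices in $G$) or conjugate to some $H_i$ (for cone vertices at $gH_i$), so condition (2) holds.

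For the backward direction, the key input is Bowditch's characterization of relative hyperbolicity in terms of actions on fine hyperbolic graphs, namely \cite[Theorem 7.10]{Bow12}. Given the action $G \act X$ with the stated properties, that theorem produces a relatively hyperbolic structure on $G$ where the peripheral subgroups are, up to conjugacy, exactly the infinite vertex stabilizers; condition (1) guarantees finitely many such conjugacy classes, and condition (2) identifies this collection with (a subcollection of) $\{H_i\}_{i=1}^n$. Two small points remain: first, the $H_i$ must themselves be finitely generated for the statement to produce the usual notion of relative hyperbolicity used via coned-off Cayley graphs; this is exactly the content of Dahmani's \cite[Theorem 6.1]{Dah03}, which is why we import that reference. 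Second, any $H_i$ that does not arise as a vertex stabilizer in $X$ may be appended to the peripheral collection without destroying relative hyperbolicity, since the definition allows (finite or) redundant factors.

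The main obstacle, and the reason this result is cited rather than reproved, lies entirely in the backward direction: one must promote a purely combinatorial action on a fine hyperbolic graph to the analytic/geometric notion of relative hyperbolicity encoded by the coned-off Cayley graph. This is the heart of \cite{Bow12}, and any proof attempt would need to reproduce the fine-graph machinery (bounded coset penetration, tightness of geodesics, and the construction of horoball-type neighborhoods around infinite-stabilizer vertices). In the present paper the theorem is used as a black box, with verification of its hypotheses serving as the bridge between the geometry of the extension graph developed in Section \ref{sec:Geometry of the extension graph} and the relative hyperbolicity of the graph-wreath product in Theorem \ref{thm:intro semidirect product is relatively hyperbolic}.
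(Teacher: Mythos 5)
Your proposal takes the same route as the paper, which does not prove this statement but derives it directly from Bowditch's \cite[Theorem 7.10]{Bow12} and Dahmani's \cite[Theorem 6.1]{Dah03}; your verification of conditions (1) and (2) for the coned-off Cayley graph in the forward direction is correct and a sensible supplement, and you correctly identify the backward direction as the substantive content that must be imported from Bowditch.

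However, your ``second small point'' in the backward direction is a genuine error: it is not true that an $H_i$ failing to arise as a vertex stabilizer ``may be appended to the peripheral collection without destroying relative hyperbolicity.'' Any peripheral collection must be almost malnormal, and an arbitrary finitely generated subgroup can fail this. For instance, $F_2=\langle a,b\rangle$ is hyperbolic relative to $\{\langle a\rangle\}$, witnessed by the coned-off Cayley graph with respect to $\langle a\rangle$, and that graph satisfies (1) and (2) for the pair $\{\langle a\rangle, \langle b^2\rangle\}$ because the extra subgroup is simply never a stabilizer; but $F_2$ is not hyperbolic relative to $\{\langle a\rangle, \langle b^2\rangle\}$, since $b\langle b^2\rangle b^{-1}\cap\langle b^2\rangle$ is infinite while $b\notin\langle b^2\rangle$. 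What Bowditch's theorem actually gives is that $G$ is hyperbolic relative to the collection of infinite vertex stabilizers up to conjugacy; the statement at hand is then correct only if one reads condition (2) as an equality of collections (each $H_i$ arises as some $\stab_G(x)$), or one imposes separately that the $H_i$ form an almost-malnormal family. In the paper's only use of this theorem, inside the proof of Theorem \ref{thm:semidirect product is relatively hyperbolic}, the collection $\{H_i\}$ is \emph{defined} to be $\{\stab_{\Gamma\G\rtimes G}(v)\}_{v\in F}$ with $F$ exactly the set of orbit representatives having infinite stabilizer, so equality holds and the subtlety never surfaces; your attempt to patch the general ``if'' direction by freely appending subgroups does not.
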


\section{Geometry of the extension graph}
\label{sec:Geometry of the extension graph}

In this section, we introduce the extension graph of graph product of groups and study its properties. In Section \ref{subsec:Definition and basic properties of the extension graph of graph products of groups}, we define the extension graph and study orbits and stabilizers of the action of graph product on the extension graph. Important results in Section \ref{subsec:Definition and basic properties of the extension graph of graph products of groups} are Corollary \ref{cor:Stab(v)}, Corollary \ref{cor:vertex orbits are disjoint}, and Lemma \ref{lem:edges of Gammae come from Gamma}, which are used implicitly throughout this paper. In Section \ref{subsec:Connection to the crossing and contact graphs}, we show that the extension graph is isomorphic to the crossing graph. In Section \ref{subsec:Admissible paths}, we introduce the notion of an admissible path and study its properties. This notion contains geodesic paths in the extension graph and is an analogue of a path without backtracking in a tree under the intuition that the extension graph is obtained by assembling copies of a defining graph in a `tree-like' way. The key result in Section \ref{subsec:Admissible paths} is Proposition \ref{prop:Gamma is embedded into Gammae}. In Section \ref{subsec:Classification of geodesic bigons and triangles}, we study geodesic bigons and triangles in the extension graph by using admissible paths. It turns out that we can prove similar classification results to Strebel's classification of geodesic bigons and triangles for small cancellation groups, which are Proposition \ref{prop:paths must penetrate long} and Proposition \ref{prop: classification of geodesic triangle}.

\subsection{Definition and basic properties of the extension graph of graph product}
\label{subsec:Definition and basic properties of the extension graph of graph products of groups}

In this section, we record basic properties of the extension graph. Some of the lemmas here are already available or can be easily deduced from the literature (e.g. \cite{AM15}).

\begin{defn}\label{def:extension graph}
Let $\Gamma$ be a simplicial graph and $\G=\{G_v\}_{v\in V(\Gamma)}$ be a collection of non-trivial groups. The \emph{extension graph} $\Gamma^e$ is defined as follows.
\begin{align*}
    V(\Gamma^e)
    &=
    \{gG_vg^{-1} \in 2^{\Gamma\G} \mid v\in V(\Gamma), g\in \Gamma\G \}, \\
    E(\Gamma^e)
    &=
    \{ (gG_vg^{-1},hG_wh^{-1}) \in V(\Gamma^e)^2 \mid \text{$gG_vg^{-1}\neq hG_wh^{-1}$ and $[gG_vg^{-1},hG_wh^{-1}]=\{1\}$} \}.
\end{align*}
\end{defn}

\begin{rem}
The group $\Gamma\G$ acts on $\Gamma^e$ by $(g,x)\mapsto gxg^{-1}$ for each $x\in V(\Gamma^e)$ and $g\in \Gamma\G$. Although the action of $\Gamma\G$ on $\Gamma^e$ is a right action in \cite{KK13}, my personal preference is a left action. For brevity, given $g \in \Gamma\G$ and $x \in V(\Gamma^e)$, we denote $gxg^{-1}$ by $g.x$ (i.e. $g.x=gxg^{-1}$). For each $x\in V(\Gamma^e)$, we define $\stab_{\Gamma\G}(x)$ by $\stab_{\Gamma\G}(x)=\{g\in \Gamma\G \mid g.x=x\}$.
\end{rem}

From Lemma \ref{lem:support of product} up to Lemma \ref{lem:leaf in Gamma is leaf in extension graph}, let $\Gamma$ be a simplicial graph and $\G=\{G_v\}_{v\in V(\Gamma)}$ be a collection of non-trivial groups.

We first study how the support of elements of graph product behaves under the conjugate action, which is from Lemma \ref{lem:support of product} up to Corollary \ref{cor:support for conjugation}. This is used to clarify orbits and stabilizers of the action of graph product on the extension graph.

\begin{lem}\label{lem:support of product}
    For any $g,h\in \Gamma\G$, we have $\supp(gh) \subset \supp(g)\cup\supp(h)$.
\end{lem}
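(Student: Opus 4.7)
The plan is to proceed by strong induction on $\|g\|+\|h\|$. The case where either factor is the identity is immediate ($\supp(1) = \emptyset$), so I take nontrivial $g, h$ with normal forms $g = g_1\cdots g_m$ and $h = h_1\cdots h_n$ and form the concatenation $w = g_1\cdots g_m h_1\cdots h_n$. This word represents $gh$ in the generating set $\bigsqcup_v (G_v\setminus\{1\})$, and every letter of $w$ has support lying in $\supp(g)\cup\supp(h)$.

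If $w$ is itself a normal form of $gh$, then the Normal Form Theorem (Theorem~\ref{thm:normal form theorem}) identifies $\supp(gh)$ with the set of supports of the letters of $w$, giving the desired inclusion at once. Otherwise, the characterization in Theorem~\ref{thm:normal form theorem} produces positions $p < q$ in $w$ with the same support $v$ such that every letter strictly between them has support adjacent to $v$ in $\Gamma$. Because each half of $w$ is already a normal form, such a pair must straddle the split, so $p\le m < q$, and the two letters in question are $g_p$ and $h_{q-m}$, both lying in $G_v$.

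Next I would use syllable shuffling, whose legitimacy is recorded by Lemma~\ref{lem:subsequence of reduced sequence}, to commute $g_p$ rightward past all intermediate letters (each of whose support is adjacent to $v$) until it sits next to $h_{q-m}$, and then merge the two into a single element of $G_v$ (possibly trivial, in which case both letters disappear). This produces a new word for $gh$ whose syllable length is strictly less than $m+n$ and all of whose letters still have support in $\supp(g)\cup\supp(h)$. Regrouping this word as a product $gh = g'h'$ with $\|g'\|+\|h'\| < m+n$ and $\supp(g')\cup\supp(h') \subset \supp(g)\cup\supp(h)$, the inductive hypothesis applied to $g'$ and $h'$ closes the induction.

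The only nontrivial bookkeeping lies in bounding the supports of the regrouped factors $g'$ and $h'$ by $\supp(g) \cup \supp(h)$: after the merge step the new syllable in $G_v$ sits inside the tail half, and that tail, viewed as a single element, is no longer presented by a word we have already certified to be in normal form. This is handled by a second, smaller application of the inductive hypothesis: since $\|g\|\ge 1$, the tail factor can be written as a product of two elements whose total syllable length is at most $n < m+n$, so the induction absorbs the issue and no genuine obstacle arises.
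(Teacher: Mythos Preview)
Your proof is correct and follows essentially the same route as the paper's: induction on $\|g\|+\|h\|$, invoking Theorem~\ref{thm:normal form theorem} to locate a mergeable pair straddling the split, then shuffling, merging, and reducing to a shorter product $g'h'$. The only cosmetic difference is that the paper places the merged syllable $g_ih_j$ on the $g$-side rather than the $h$-side; more substantively, your nested-induction workaround for bounding $\supp(h')$ is unnecessary, since the resulting words for both $g'$ and $h'$ are in fact already normal forms (either the support sequence is unchanged, or the deleted letter has support adjacent to all later/earlier letters and hence could never have served as the blocking index in the normal-form criterion), which is what the paper uses implicitly.
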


\begin{proof}
    We will show the statement by induction on $\|g\|+\|h\|$. When $\|g\|+\|h\|=0$, we have $g=h=1$, hence $\supp(gh) \subset \supp(g)\cup\supp(h)$. Next, given $N\in\NN$, we assume that the statement is true for any $g,h \in\Gamma\G$ satisfying $\|g\|+\|h\|<N$. Let $g,h \in \Gamma\G$ satisfy $\|g\|+\|h\|=N$. Let $g=g_1\cdots g_n$ and $h=h_1\cdots h_m$ be normal forms of $g$ and $h$ respectively. Note $n+m=N$. If $\|gh\| = \|g\| + \|h\|$, then $gh=g_1\cdots g_n h_1 \cdots h_m$ is a normal form of $gh$. Hence, we have $\supp(gh)=\supp(g)\cup\supp(h)$. If $\|gh\| < \|g\| + \|h\|$, then by Theorem \ref{thm:normal form theorem} there exists $g_i$ and $h_j$ such that $\supp(g_i) = \supp(h_j)$ and $\{\supp(g_{i'}) \mid i<i'\}\cup\{\supp(h_{j'}) \mid j'<j\} \subset \lk_\Gamma(\supp(g_i))$. Define $g',h' \in \Gamma\G$ by $g'=g_1\cdots g_{i-1}(g_ih_j)g_{i+1} \cdots g_n$ and $h' = h_1 \cdots h_{j-1}h_{j+1} \cdots h_m$, then we have $gh=g'h'$. Since we have $\|g'\|+\|h'\| \le n + (m-1) = N-1$, by applying our assumption on induction to $g'$ and $h'$, we get $\supp(gh)=\supp(g'h')\subset \supp(g')\cup\supp(h') \subset \supp(g)\cup\supp(h)$.
\end{proof}

\begin{lem}\label{lem:conjugation of vertex group}
    Let $v\in V(\Gamma)$, $a\in G_v\setminus\{1\}$, and $g\in \Gamma\G$, then there exist $h_1,h_2,h_3 \in \Gamma\G$ satisfying the four conditions (i)-(iv): (i) $g=h_1h_2h_3$, (ii) $\supp(h_1)\cup\{v\}=\supp(gag^{-1})$, (iii) $\supp(h_2) \subset \lk_\Gamma(v)$, (iv) $\supp(h_3) \subset \{v\}$.
\end{lem}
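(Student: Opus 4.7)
The plan is to prove the lemma by induction on the syllable length $\|g\|$. The base case $\|g\|=0$ is immediate: take $h_1=h_2=h_3=1$, and note $\supp(gag^{-1}) = \supp(a) = \{v\}$.

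For the inductive step, fix any normal form $g=g_1\cdots g_n$ of $g$ and split into two main cases according to whether shuffling permits a useful reduction.

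\textbf{Case 1 (a reducible suffix exists).} Suppose there is some $i$ with $\supp(g_i) \in \st_\Gamma(v)$ and $\supp(g_j) \in \lk_\Gamma(\supp(g_i))$ for every $j>i$. By Lemma \ref{lem:subsequence of reduced sequence} we may shuffle $g_i$ to the end, obtaining a reduced decomposition $g=g'\cdot s$ with $w:=\supp(s) \in \st_\Gamma(v)$.
If $w=v$, set $a':=sas^{-1} \in G_v\setminus\{1\}$. Apply the inductive hypothesis to $(g',a')$ to get $g'=h_1'h_2'h_3'$, then define $h_1:=h_1'$, $h_2:=h_2'$, $h_3:=h_3's$. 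Properties (iii), (iv) follow from Lemma \ref{lem:support of product}, and (ii) follows from the identity $gag^{-1}=g'a'g'^{-1}$.
If instead $w\in\lk_\Gamma(v)$, then $s$ commutes with $a$, so $gag^{-1}=g'ag'^{-1}$. Apply the inductive hypothesis to $(g',a)$, obtaining $g'=h_1'h_2'h_3'$. Since $\supp(h_3')\subset\{v\}$ and $w\in\lk_\Gamma(v)$, the factor $h_3'$ commutes with $s$, so $g=h_1'h_2'sh_3'$; set $h_1:=h_1'$, $h_2:=h_2's$, $h_3:=h_3'$.

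\textbf{Case 2 (no such suffix).} Take $h_1:=g$ and $h_2:=h_3:=1$; conditions (i), (iii), (iv) are trivial. The content is to prove
\[
\supp(g)\cup\{v\}=\supp(gag^{-1}).
\]
The inclusion $\subseteq$ reduces to proving that the concatenated word
\[
W := g_1\cdots g_n\cdot a\cdot g_n^{-1}\cdots g_1^{-1}
\]
is already a normal form, since then $\supp(gag^{-1})=\supp(W)=\supp(g)\cup\{v\}$; the other inclusion follows from Lemma \ref{lem:support of product}. I would verify the normal form property via Theorem \ref{thm:normal form theorem}, checking every pair of equal-support syllables in $W$. Pairs lying entirely in the prefix $g_1\cdots g_n$ or entirely in the suffix are handled by the assumption that $g_1\cdots g_n$ is normal. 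For a mixed pair $g_i$ and $g_j^{-1}$ with $\supp(g_i)=\supp(g_j)=:u$ and $i\neq j$, whichever of $g_j$ (when $i<j$) or $g_i^{-1}$ (when $i>j$) lies strictly between them has support $u\notin\lk_\Gamma(u)$, which blocks the merger. The only pairs that meaningfully use the Case 2 hypothesis are the diagonal $(g_i,g_i^{-1})$ and those involving $a$ with a $G_v$-syllable of $g$: a potential merger there would force the whole intermediate block of $g$-syllables to have support in $\lk_\Gamma(\supp(g_i))$ (respectively $\lk_\Gamma(v)$), in which case $g_i$ (respectively that $G_v$-syllable) could be shuffled to the right end of the normal form of $g$ with support in $\st_\Gamma(v)$, contradicting the Case 2 hypothesis.

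The main obstacle is Case 2, where the Normal Form Theorem must be applied carefully to the doubled word $W$ across several types of candidate mergers; once that bookkeeping is complete the result follows, and Case 1 is essentially routine once the shuffle in Lemma \ref{lem:subsequence of reduced sequence} is invoked.
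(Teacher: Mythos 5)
Your proof is correct and follows essentially the same route as the paper: induction on $\|g\|$, with the reduced case ($h_1=g$, $h_2=h_3=1$) and the two cancellation patterns (a syllable of $g$ merging with its inverse, forcing its support into $\lk_\Gamma(v)$, or a $G_v$-syllable merging with $a$), each handled by peeling that syllable off to the right and applying the inductive hypothesis to the shorter element. The only difference is presentational: you organize the dichotomy by whether a shuffleable suffix exists and then verify via Theorem \ref{thm:normal form theorem} that the doubled word is a normal form, whereas the paper starts from whether $gag^{-1}$ is reduced and leaves that verification implicit.
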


\begin{proof}
    We will show the statement by induction on $\|g\|$. When $\|g\|=0$, we have $g=1$. Hence, $h_1,h_2,h_3$ defined by $h_1=h_2=h_3=1$ satisfy the statement. Next, given $n\in\NN$, we assume that the statement is true for any $v,a,g$ satisfying $\|g\|<n$ in addition. Let $g\in \Gamma\G$ satisfy $\|g\|=n$. Let $g=g_1\cdots g_n$ be a normal form of $g$. Note that $g^{-1}=g_n^{-1}\cdots g_1^{-1}$ is a normal of $g^{-1}$. If the decomposition $gag^{-1}$ is reduced, then $gag^{-1}=(g_1\cdots g_n) a (g_n^{-1}\cdots g_1^{-1})$ is a normal form of $gag^{-1}$. Hence, $h_1,h_2,h_3$ defined by $h_1=g$ and $h_2=h_3=1$ satisfy the statement. If the decomposition $gag^{-1}$ is not reduced, then by Theorem \ref{thm:normal form theorem}, there exists $i$ with $1\le i\le n$ that satisfies one of (1) or (2): (1) $\{v\}\cup\{\supp(g_j) \mid i<j\} \subset \lk_\Gamma(\supp(g_i))$, (2) $\supp(g_i)=v$ and $\{\supp(g_j) \mid i<j\} \subset \lk_\Gamma(v)$.
    
    In case (1), we have $gg_i^{-1}=g_1\cdots g_{i-1}g_{i+1}\cdots g_n$ and $gag^{-1} = gg_i^{-1}ag_ig^{-1}$. By $\|gg_i^{-1}\| = \|g_1\cdots g_{i-1}g_{i+1}\cdots g_n\| \le n-1$, we can apply our assumption of induction to $a$ and $gg_i^{-1}$ and see that there exists $h'_1,h'_2,h'_3 \in \Gamma\G$ such that $gg_i^{-1}=h'_1h'_2h'_3$, $\supp(h'_1)\cup\{v\}=\supp(gg_i^{-1}ag_ig^{-1})=\supp(gag^{-1})$, $\supp(h'_2) \subset \lk_\Gamma(v)$, and $\supp(h'_3) \subset \{v\}$. Since we have $g=h'_1h'_2g_ih'_3$ by $(\supp(g_i),v) \in E(\Gamma)$, we can check that $h_1,h_2,h_3$ defined by $h_1=h'_1$, $h_2=h'_2g_i$, $h_3=h'_3$ satisfy the statement.
    
    In case (2), we have $gg_i^{-1}=g_1\cdots g_{i-1}g_{i+1}\cdots g_n$ and $g_iag_i^{-1} \in G_v\setminus \{1\}$ by $a\neq 1$. By $\|gg_i^{-1}\| \le n-1$, we can apply our assumption of induction to $g_iag_i^{-1}$ and $gg_i^{-1}$ and see that there exists $h'_1,h'_2,h'_3 \in \Gamma\G$ such that $gg_i^{-1}=h'_1h'_2h'_3$, $\supp(h'_1)\cup\{v\}=\supp(gg_i^{-1} (g_iag_i^{-1}) (gg_i^{-1})^{-1})=\supp(gag^{-1})$, $\supp(h'_2) \subset \lk_\Gamma(v)$, and $\supp(h'_3) \subset \{v\}$. Since we have $g=h'_1h'_2h'_3g_i$ and $h'_3g_i \in G_v$ by $\supp(g_i)=v$, we can check that $h_1,h_2,h_3$ defined by $h_1=h'_1$, $h_2=h'_2$, $h_3=h'_3g_i$ satisfy the statement.
\end{proof}

\begin{cor}\label{cor:support for conjugation}
    Suppose that $v,w\in V(\Gamma)$, $a\in G_v\setminus\{1\}$, $b\in G_w\setminus\{1\}$, and $g,h\in \Gamma\G$ satisfy $gag^{-1}=hbh^{-1} \in \Gamma\G$, then $v=w$ and $\supp(h^{-1}g) \subset \st_\Gamma(v)$.
\end{cor}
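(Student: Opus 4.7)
The plan is to reduce the two-sided conjugacy relation to a one-sided conjugation, so that Lemma \ref{lem:conjugation of vertex group} becomes directly applicable. Setting $k = h^{-1}g \in \Gamma\G$, the hypothesis $gag^{-1} = hbh^{-1}$ rewrites as $k a k^{-1} = b$. Since $b \in G_w \setminus \{1\}$ is a single non-trivial syllable, its normal form is just $b$ itself, so
\[
\supp(kak^{-1}) = \supp(b) = \{w\}.
\]

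Next I would apply Lemma \ref{lem:conjugation of vertex group} to the pair $(a,k)$, where $a \in G_v \setminus \{1\}$. This produces a factorization $k = h_1 h_2 h_3$ with $\supp(h_1) \cup \{v\} = \supp(kak^{-1})$, $\supp(h_2) \subset \lk_\Gamma(v)$, and $\supp(h_3) \subset \{v\}$. Combining with the previous display gives the set equality
\[
\supp(h_1) \cup \{v\} = \{w\}.
\]
Since $\{v\}$ is non-empty, this forces $v \in \{w\}$ and $\supp(h_1) \subset \{w\}$; the former yields $v = w$.

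Finally, I would conclude the support bound by iterating Lemma \ref{lem:support of product}, namely
\[
\supp(h^{-1}g) \;=\; \supp(h_1 h_2 h_3) \;\subset\; \supp(h_1) \cup \supp(h_2) \cup \supp(h_3) \;\subset\; \{v\} \cup \lk_\Gamma(v) \;=\; \st_\Gamma(v),
\]
which gives the second assertion. There is no genuine obstacle here: the corollary is essentially packaging Lemma \ref{lem:conjugation of vertex group} under the substitution $k = h^{-1}g$. The only point requiring care is the set-theoretic deduction that a union of the form $\supp(h_1) \cup \{v\} = \{w\}$ is a singleton identity (not a multiset identity), which immediately forces $v = w$; this uses only that the syllable $b$ has support the singleton $\{w\}$.
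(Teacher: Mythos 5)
Your proof is correct and follows essentially the same route as the paper's: the substitution $k=h^{-1}g$ is the same normalization the paper makes (via ``WLOG $h=1$''), and both proofs then read off $v=w$ and the support bound from a single application of Lemma~\ref{lem:conjugation of vertex group} followed by Lemma~\ref{lem:support of product}.
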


\begin{proof}
    Without loss of generality, we can assume $h=1$. By $gag^{-1}=b$ and Lemma \ref{lem:conjugation of vertex group}, we have $v \in \supp(gag^{-1})=\supp(b)=\{w\}$. This implies $v=w$. By Lemma \ref{lem:conjugation of vertex group}, there exist $h_1,h_2,h_3 \in \Gamma\G$ such that $g=h_1h_2h_3$, $\supp(h_1)\cup\{v\}=\supp(gag^{-1})$, $\supp(h_2) \subset \lk_\Gamma(v)$, and $\supp(h_3) \subset \{v\}$. By this and $v=w$, we have $\supp(h_1)\subset\supp(gag^{-1})=\{w\}=\{v\}$. Hence, we have $\supp(g) \subset \supp(h_1)\cup\supp(h_2)\cup\supp(h_3) \subset \st_\Gamma(v)$ by Lemma \ref{lem:support of product}.
\end{proof}

Corollary \ref{cor:support for conjugation} has two important consequences, Corollary \ref{cor:Stab(v)} and Corollary \ref{cor:vertex orbits are disjoint}.

\begin{cor}\label{cor:Stab(v)}
    Let $v\in V(\Gamma)$ and $g\in\stab_{\Gamma\G}(G_v)$, then we have $\supp(g) \subset \st_\Gamma(v)$. In particular, for any $v\in V(\Gamma)$, we have $\stab_{\Gamma\G}(G_v)=\la G_w \mid w\in \st_\Gamma(v) \ra$.
\end{cor}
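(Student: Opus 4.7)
The plan is to derive the first claim as a direct application of Corollary \ref{cor:support for conjugation}, and then to combine it with Theorem \ref{thm:normal form theorem} to identify the stabilizer.

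First, since every $G_w$ is assumed non-trivial, I pick some $a \in G_v \setminus \{1\}$. For $g \in \stab_{\Gamma\G}(G_v)$, the element $b := gag^{-1}$ lies in $G_v \setminus \{1\}$, and the equality $gag^{-1} = 1 \cdot b \cdot 1^{-1}$ fits the hypothesis of Corollary \ref{cor:support for conjugation} with $w = v$ and $h = 1$. That corollary immediately gives $\supp(g) = \supp(1^{-1} g) \subset \st_\Gamma(v)$, which is the first assertion.

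For the ``in particular'' statement, I verify both inclusions. For $\supseteq$, every generator of $\la G_w \mid w \in \st_\Gamma(v) \ra$ stabilizes $G_v$ under conjugation: elements of $G_v$ itself act as inner automorphisms of $G_v$, while for $w \in \lk_\Gamma(v)$ the defining relations of $\Gamma\G$ force each $h \in G_w$ to commute elementwise with $G_v$, so $h G_v h^{-1} = G_v$. Thus $\la G_w \mid w \in \st_\Gamma(v) \ra \subseteq \stab_{\Gamma\G}(G_v)$. For $\subseteq$, I take $g \in \stab_{\Gamma\G}(G_v)$ and a normal form $g = g_1 \cdots g_n$; the first part yields $\supp(g_i) \in \st_\Gamma(v)$ for every $i$, so each syllable $g_i$ lies in some $G_w$ with $w \in \st_\Gamma(v)$, and therefore $g \in \la G_w \mid w \in \st_\Gamma(v) \ra$.

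There is essentially no obstacle here beyond confirming that the hypothesis of Corollary \ref{cor:support for conjugation} is met; the non-triviality of $G_v$ is what lets me produce a witness $a$ on which to test conjugation, and the remainder is routine bookkeeping with normal forms.
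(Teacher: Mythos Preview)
Your proof is correct and follows essentially the same approach as the paper: pick a nontrivial $a\in G_v$, observe $gag^{-1}\in G_v\setminus\{1\}$, and apply Corollary~\ref{cor:support for conjugation} with $h=1$. The paper leaves the ``in particular'' clause implicit, whereas you spell out both inclusions explicitly, but the underlying argument is identical.
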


\begin{proof}
    Since $G_v$ is non-trivial, take $a \in G_v\setminus\{1\}$. By $g \in \stab_{\Gamma\G}(G_v)$, we have $gag^{-1} \in G_v\setminus\{1\}$. This implies $\supp(g) \subset \st_\Gamma(v)$ by Corollary \ref{cor:support for conjugation}.
\end{proof}

Corollary \ref{cor:vertex orbits are disjoint} below means that every vertex in $\Gamma^e$ has a unique vertex in $\Gamma$ associated to it, which is formulated in Definition \ref{def:v(x)}.

\begin{cor}\label{cor:vertex orbits are disjoint}
    For any $v,w\in V(\Gamma)$ with $v\neq w$, we have $\Gamma\G.G_v \cap \Gamma\G.G_w = \emptyset \subset V(\Gamma^e)$.
\end{cor}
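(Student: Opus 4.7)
The plan is a short direct contradiction argument built on top of Corollary \ref{cor:support for conjugation}, which has just been established. Suppose, toward a contradiction, that $v, w \in V(\Gamma)$ with $v \neq w$ yet there exist $g, h \in \Gamma\G$ with $g.G_v = h.G_w$, i.e.\ $gG_vg^{-1} = hG_wh^{-1}$ as subsets of $\Gamma\G$.

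Since the collection $\G = \{G_u\}_{u \in V(\Gamma)}$ consists of non-trivial groups, pick any $a \in G_v \setminus \{1\}$. Then $gag^{-1} \in gG_vg^{-1} = hG_wh^{-1}$, so there exists $b \in G_w$ with $gag^{-1} = hbh^{-1}$. Because conjugation is a bijection of $\Gamma\G$ and $a \neq 1$, we also have $b \neq 1$, so $b \in G_w \setminus \{1\}$. Now Corollary \ref{cor:support for conjugation} applies to $a \in G_v \setminus \{1\}$, $b \in G_w \setminus \{1\}$ and the equation $gag^{-1} = hbh^{-1}$, and yields $v = w$, contradicting our choice of $v \neq w$.

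Hence no such $g, h$ exist, and $\Gamma\G.G_v \cap \Gamma\G.G_w = \emptyset$ in $V(\Gamma^e)$. The only step that requires any genuine input is the appeal to Corollary \ref{cor:support for conjugation}; everything else is a one-line translation between the conjugation action on vertex subgroups and the conjugation action on their non-trivial elements, so I do not anticipate any real obstacle here.
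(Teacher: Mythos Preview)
Your proof is correct and essentially identical to the paper's. The paper first normalizes to $h=1$ (writing $gG_vg^{-1}=G_w$ with $g$ replaced by $h^{-1}g$) before invoking Corollary~\ref{cor:support for conjugation}, but since that corollary already allows arbitrary $g,h$, this is purely cosmetic.
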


\begin{proof}
    Let $v,w\in V(\Gamma)$ satisfy $\Gamma\G.G_v \cap \Gamma\G.G_w \neq \emptyset$. There exists $g\in \Gamma\G$ such that $gG_vg^{-1}=G_w$. Since $G_v$ and $G_w$ are non-trivial, we have $v=w$ by Corollary \ref{cor:support for conjugation}.
\end{proof}

Lemma \ref{lem:edges of Gammae come from Gamma} below means that every edge in $\Gamma^e$ comes by translating an edge in $\Gamma$ by the action of the graph product $\Gamma\G$. It also means that we can consider $\Gamma$ as an induced subgraph of $\Gamma^e$, which we elaborate in Convention \ref{conv:Gamma is embedded to Gammae}.

\begin{lem}\label{lem:edges of Gammae come from Gamma}
    Let $v,w\in V(\Gamma)$ and $g,h\in \Gamma\G$. If $(g.G_v, h.G_w)\in E(\Gamma)$, then we have $(v,w)\in E(\Gamma)$ and there exists $k \in \Gamma\G$ such that $(k.G_v, k.G_w) = (g.G_v, h.G_w)$.
\end{lem}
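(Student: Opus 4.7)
The plan is to first reduce to the case $g = 1$ by translating the whole edge: if I can find $k' \in \Gamma\G$ realising $(k'.G_v, k'.G_w) = (G_v, (g^{-1}h).G_w)$, then $k := gk'$ realises the original edge, since the action of $\Gamma\G$ on $\Gamma^e$ is by graph automorphisms. Under this reduction, the hypothesis becomes $G_v \neq h.G_w$ together with $[G_v, hG_wh^{-1}] = \{1\}$. Picking nontrivial $a \in G_v$ and $b \in G_w$ (which exist because the vertex groups are non-trivial), the commutation $[a, hbh^{-1}] = 1$ rearranges as $(h^{-1}ah)\,b\,(h^{-1}ah)^{-1} = 1 \cdot b \cdot 1^{-1}$, and Corollary \ref{cor:support for conjugation}, applied to this equality of conjugates of $b \in G_w\setminus\{1\}$, forces $\supp(h^{-1}ah) \subset \st_\Gamma(w)$.

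Next I would apply Lemma \ref{lem:conjugation of vertex group} with $h^{-1}$ in the role of the conjugating element: this yields a factorisation $h^{-1} = h_1 h_2 h_3$ with $\supp(h_1) \cup \{v\} = \supp(h^{-1}ah) \subset \st_\Gamma(w)$, $\supp(h_2) \subset \lk_\Gamma(v)$, and $\supp(h_3) \subset \{v\}$. The inclusion $\{v\} \subset \st_\Gamma(w)$ already gives $v \in \st_\Gamma(w)$. To rule out $v = w$: if $v = w$, then each $h_i$ has support in $\st_\Gamma(v)$, so Lemma \ref{lem:support of product} plus Corollary \ref{cor:Stab(v)} put $h^{-1} \in \stab_{\Gamma\G}(G_v)$, forcing $h.G_w = h.G_v = G_v$ and contradicting distinctness of the endpoints. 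Hence $v \neq w$, so $(v,w) \in E(\Gamma)$.

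For the element $k$, I set $k := h_3^{-1}h_2^{-1}$. By Lemma \ref{lem:support of product}, $\supp(k) \subset \supp(h_3) \cup \supp(h_2) \subset \st_\Gamma(v)$, so Corollary \ref{cor:Stab(v)} yields $k \in \stab_{\Gamma\G}(G_v)$ and thus $k.G_v = G_v$. Rewriting $h^{-1} = h_1 h_2 h_3$ as $h_3^{-1}h_2^{-1} = h h_1$, we get $k = hh_1$; since $\supp(h_1) \subset \st_\Gamma(w)$ gives $h_1 \in \stab_{\Gamma\G}(G_w)$ (again by Corollary \ref{cor:Stab(v)}), we conclude $k.G_w = h.(h_1.G_w) = h.G_w$, as required.

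The main subtlety is choosing the correct argument for Lemma \ref{lem:conjugation of vertex group}: the commutation hypothesis gives information about $h^{-1}ah$, not about $hah^{-1}$, so one must feed in $h^{-1}$ rather than $h$. Once that choice is made, the three factors $h_1,h_2,h_3$ line up perfectly with what is needed: $h_1$ absorbs the $\stab_{\Gamma\G}(G_w)$-piece that must be moved past on the right, while $h_2 h_3$ assembles into a $\stab_{\Gamma\G}(G_v)$-piece on the left, producing the element $k$ that fixes $G_v$ and moves $G_w$ to $h.G_w$.
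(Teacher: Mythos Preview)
Your proof is correct and follows essentially the same approach as the paper's own proof. The only cosmetic difference is that the paper reduces to $h=1$ (working with $(g.G_v,G_w)$ and applying Lemma~\ref{lem:conjugation of vertex group} to $g$) while you reduce to $g=1$ (working with $(G_v,h.G_w)$ and applying Lemma~\ref{lem:conjugation of vertex group} to $h^{-1}$); the two arguments are mirror images of each other, using Corollary~\ref{cor:support for conjugation} and Lemma~\ref{lem:conjugation of vertex group} in exactly the same way.
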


\begin{proof}
     We can assume $h=1$ without loss of generality, because we can consider $(h^{-1}g.G_v, G_w) \in E(\Gamma^e)$ in general case. Since $G_v$ and $G_w$ are non-trivial and we have $(g.G_v, G_w)\in E(\Gamma)$, there exist $a \in G_v\setminus\{1\}$ and $b\in G_w\setminus\{1\}$ such that $[gag^{-1},b]=1$. By $(gag^{-1})b(gag^{-1})^{-1} = b$ and Corollary \ref{cor:support for conjugation}, we have $\supp(gag^{-1}) \subset \st_\Gamma(w)$. This and Lemma \ref{lem:conjugation of vertex group} imply $v\in \supp(gag^{-1}) \subset \st_\Gamma(w)$. Suppose $v = w$ for contradiction, then we have $\supp(gag^{-1}) \subset \st_\Gamma(w) = \st_\Gamma(v)$. This implies $\supp(g) \subset \st_\Gamma(v)$ by Lemma \ref{lem:support of product} and Lemma \ref{lem:conjugation of vertex group}. Hence, $g.G_v = G_v = G_w$. This contradicts $g.G_v \neq G_w$ by $(g.G_v, G_w) \in E(\Gamma^e)$. Hence, $v\neq w$. By this and $v\in \st_\Gamma(w)$, we have $(v,w) \in E(\Gamma)$. By Lemma \ref{lem:conjugation of vertex group}, there exist $h_1,h_2,h_3 \in \Gamma\G$ such that $g=h_1h_2h_3$, $\supp(h_1)\cup\{v\}=\supp(gag^{-1})$, $\supp(h_2) \subset \lk_\Gamma(v)$, and $\supp(h_3) \subset \{v\}$. By $\supp(h_2h_3) \subset \supp(h_2) \cup \supp(h_3) \subset \st_\Gamma(v)$, we have $g.G_v=h_1.G_v$. On the other hand, by $\supp(h_1) \subset \supp(gag^{-1}) \subset \st_\Gamma(w)$, we have $G_w=h_1.G_w$.
\end{proof}

\begin{conv}\label{conv:Gamma is embedded to Gammae}
Define the map $\iota \colon \Gamma \to \Gamma^e$ by $\iota(v)=G_v$ for each $v\in V(\Gamma)$, then by Corollary \ref{lem:edges of Gammae come from Gamma}, $\iota$ is a graph isomorphism from $\Gamma$ to the induced subgraph on $\iota(V(\Gamma))$ in $\Gamma^e$. In what follows, we consider $\Gamma$ as a subgraph of $\Gamma^e$ by this embedding and denote $G_v$ by $v$ for each $v\in V(\Gamma)$. Note also that the graph $g.\Gamma$ is isomorphic to $\Gamma$ for any $g\in \Gamma\G$ since $\Gamma\G$ acts on $\Gamma^e$ as graph automorphism.    
\end{conv}

\begin{defn}\label{def:v(x)}
    For $x \in V(\Gamma^e)$, we define $v(x) \in V(\Gamma)$ to be the unique vertex in $V(\Gamma)$ such that $x \in \Gamma\G.v(x)$.
\end{defn}

\begin{rem}
    Uniqueness of $v(x)$ in Definition \ref{def:v(x)} follows from Corollary \ref{cor:vertex orbits are disjoint}.
\end{rem}

\begin{rem}\label{rem:intersection of stabilizers}
    Let $a,b\in V(\Gamma)$. If $\girth(\Gamma)>4$, then the following hold from Corollary \ref{cor:Stab(v)}.
    \begin{itemize}
        \item[(1)]
        If $d_\Gamma(a,b)=1$, then $\stab_{\Gamma\G}(a)\cap\stab_{\Gamma\G}(b) = G_a \times G_b$.
        \item[(2)]
        If $d_\Gamma(a,b)=2$, then there exists a unique vertex $c\in V(\Gamma)$ satisfying $\st_\Gamma(a)\cap\st_\Gamma(b)=\{c\}$ and we have $\stab_{\Gamma\G}(a)\cap\stab_{\Gamma\G}(b) = G_c$.
        \item[(3)]
        If $d_\Gamma(a,b) \ge 3$, then $\stab_{\Gamma\G}(a)\cap\stab_{\Gamma\G}(b)=\{1\}$. In particular, if $g,h \in \Gamma\G$ satisfy $\diam_{\Gamma^e}(g.\Gamma\cap h.\Gamma) \ge 3$, then $g=h$. Indeed, take $x,y \in V(g.\Gamma)\cap V(h.\Gamma)$ with $d_{\Gamma^e}(x,y) \ge 3$, then we have $x=g.v(x)=h.v(x)$ and $y=g.v(y)=h.v(y)$ by Corollary \ref{cor:vertex orbits are disjoint}, hence $h^{-1}g \in \stab_{\Gamma\G}(v(x))\cap\stab_{\Gamma\G}(v(y))=\{1\}$ by $d_{\Gamma}(v(x), v(y)) \ge d_{\Gamma^e}(x,y) \ge 3$.
    \end{itemize}
\end{rem}

Lemma \ref{lem:Gammae is connected if Gamma is connected} below means that we can handle both $\Gamma$ and $\Gamma^e$ as geometric objects when $\Gamma$ is connected.

\begin{lem}\label{lem:Gammae is connected if Gamma is connected}
    If $\Gamma$ is connected, then $\Gamma^e$ is connected.
\end{lem}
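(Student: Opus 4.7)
The plan is to prove by induction on the syllable length $\|g\|$ that for every $g \in \Gamma\G$ and every $v \in V(\Gamma)$, the vertex $g.G_v$ lies in the same connected component of $\Gamma^e$ as the subgraph $\Gamma$ (embedded via Convention \ref{conv:Gamma is embedded to Gammae}). Since every vertex of $\Gamma^e$ is of the form $g.G_v$ and since $\Gamma$ itself is connected by hypothesis, this will immediately give connectivity of $\Gamma^e$.

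The base case $\|g\|=0$ is trivial because $g=1$ and $g.G_v = G_v$ is already a vertex of $\Gamma$. For the inductive step, fix a normal form $g = g_1 \cdots g_n$ with $g_i \in G_{v_i}\setminus\{1\}$, and put $h = g_1 \cdots g_{n-1}$ so that $\|h\| = n-1$. If $v_n = v$, then $g_n \in G_v$ normalizes $G_v$, so $g_n G_v g_n^{-1} = G_v$ and hence $g.G_v = h.G_v$; the inductive hypothesis applied to $h$ and $v$ gives a path from this vertex to $\Gamma$. If $v_n \neq v$, I use connectivity of $\Gamma$ to choose a path $v = u_0, u_1, \ldots, u_k = v_n$ in $\Gamma$. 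Because $\Gamma\G$ acts on $\Gamma^e$ as graph automorphisms, translating this path by $g$ produces a path $g.G_{u_0}, g.G_{u_1}, \ldots, g.G_{u_k}$ inside $g.\Gamma \subset \Gamma^e$ from $g.G_v$ to $g.G_{v_n}$. Since $g_n \in G_{v_n}$, we have $g.G_{v_n} = h g_n G_{v_n} g_n^{-1} h^{-1} = h.G_{v_n}$, and the inductive hypothesis applied to $h$ and $v_n$ yields a path from $h.G_{v_n}$ back to $\Gamma$. Concatenating these three paths connects $g.G_v$ to $\Gamma$ in $\Gamma^e$.

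No serious obstacle arises: the argument is bookkeeping with normal forms together with the observation that the last syllable $g_n \in G_{v_n}$ normalizes $G_{v_n}$, which lets us strip off one syllable at the cost of a detour through a translated copy of $\Gamma$. The only step worth double-checking is the identity $g.G_{v_n} = h.G_{v_n}$, since it is what makes the induction actually decrease the syllable length while keeping us in the same component.
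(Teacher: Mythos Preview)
Your proof is correct and follows essentially the same approach as the paper. The paper builds the connecting path directly by listing waypoints $x_i = s_1\cdots s_i.\supp(s_i) = s_1\cdots s_{i-1}.\supp(s_i)$ with consecutive waypoints lying in a common translate of $\Gamma$; your induction, when unrolled, produces exactly this sequence in reverse, and both arguments hinge on the same key identity $g.G_{v_n} = h.G_{v_n}$ (equivalently $s_i \in \stab_{\Gamma\G}(\supp(s_i))$).
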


\begin{proof}
    Let $a,b \in V(\Gamma)$ and $g \in \Gamma\G$. Let $g = s_1 \cdots s_n$ be a normal form of $g$. For each $i \in \{1, \cdots, n\}$, define $x_i \in V(\Gamma^e)$ by $x_i = s_1 \cdots s_i.\supp(s_i)$ for brevity. Note $\forall\, i \in \{1, \cdots, n\},\, x_i = s_1 \cdots s_{i-1}.\supp(s_i)$ by $s_i \in \stab_{\Gamma\G} (\supp(s_i))$. This implies $\{a, x_1\} \subset \Gamma$, $\{x_n, g.b\} \subset g.\Gamma$, and $\{x_i, x_{i+1}\} \subset s_1 \cdots s_i. \Gamma$ for any $i \in \{1,\cdots,n-1\}$. For every $h \in \Gamma\G$, any two vertices in $h.\Gamma$ are connected by a path in $h.\Gamma$ since $\Gamma$ is connected. Hence, $a$ and $g.b$ are connected by a path obtained by concatenating paths from $a$ to $x_1$, from $x_i$ to $x_{i+1}$ for each $i$, and from $x_n$ to $b$. This implies that $\Gamma^e$ is connected.
\end{proof}

Next, we study the link of a vertex in the extension graph. Lemma \ref{lem:leaf in Gamma is leaf in extension graph} (1) means that leaves in $\Gamma$ remain being leaves even in $\Gamma^e$.

\begin{lem}\label{lem:leaf in Gamma is leaf in extension graph}
The following hold.
\begin{itemize}
    \item[(1)]
    For any $v,w \in V(\Gamma)$ satisfying $\lk_{\Gamma}(v) = \{w\}$, we have $\lk_{\Gamma^e}(v) = \{w\}$.
    \item[(2)] 
    If distinct vertices $x,y,z \in V(\Gamma)$ satisfy $\{(x,y), (x,z)\} \subset E(\Gamma)$ and $(y,z) \notin E(\Gamma)$, then we have $|\lk_{\Gamma^e}(x)|=\infty$.
\end{itemize}
\end{lem}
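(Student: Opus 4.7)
For (1), the plan is to use Lemma~\ref{lem:edges of Gammae come from Gamma} to restrict the possible neighbors of $v = G_v$ in $\Gamma^e$ and then pin them down exactly via Corollary~\ref{cor:Stab(v)}. Let $y \in \lk_{\Gamma^e}(v)$. By Lemma~\ref{lem:edges of Gammae come from Gamma}, there exist $u \in V(\Gamma)$ with $(v,u) \in E(\Gamma)$ and $k \in \Gamma\G$ with $k.G_v = G_v$ and $y = k.G_u$; since $\lk_\Gamma(v) = \{w\}$ we must have $u = w$. Then $k \in \stab_{\Gamma\G}(v)$, so Corollary~\ref{cor:Stab(v)} gives $k \in \langle G_v, G_w\rangle$. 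Because $(v,w) \in E(\Gamma)$ this subgroup equals $G_v \times G_w$, so writing $k = k_v k_w$ we compute $k.G_w = k_v G_w k_v^{-1} = G_w$, using that elements of $G_v$ commute with elements of $G_w$. Hence $y = w$. The reverse inclusion is immediate since $\Gamma$ embeds as an induced subgraph of $\Gamma^e$ (Convention~\ref{conv:Gamma is embedded to Gammae}).

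For (2), the plan is to exhibit an explicit infinite family of distinct neighbors of $x$ in $\Gamma^e$. Fix $a \in G_y \setminus \{1\}$ and $b \in G_z \setminus \{1\}$ and, for each $n \in \NN$, set $x_n := (ab)^n . G_z$. Adjacency of $x_n$ to $x = G_x$ in $\Gamma^e$ follows from the hypothesis $(x,y), (x,z) \in E(\Gamma)$, which forces every element of $G_y$ and every element of $G_z$, and hence every power $(ab)^n$, to commute elementwise with $G_x$; combined with $[G_x, G_z] = \{1\}$ this yields $[G_x, x_n] = \{1\}$. Moreover, $x_n \ne x$ since $v(x_n) = z \ne x$ by Corollary~\ref{cor:vertex orbits are disjoint}.

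The main task --- and the step requiring the only nontrivial work --- is to show the $x_n$ are pairwise distinct. Suppose $x_n = x_m$ with $n < m$. Then $(ab)^{m-n} \in \stab_{\Gamma\G}(z)$, so by Corollary~\ref{cor:Stab(v)} we have $\supp((ab)^{m-n}) \subseteq \st_\Gamma(z)$. The key observation is that the alternating word $abab\cdots ab$ of length $2(m-n)$ is already a normal form: by Theorem~\ref{thm:normal form theorem}, any two syllables with the same support (both $y$ at odd positions, or both $z$ at even positions) are separated by the immediately following syllable, whose support is the other vertex, and this support is non-adjacent to the repeated vertex precisely because $(y,z) \notin E(\Gamma)$. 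Hence $\supp((ab)^{m-n}) = \{y, z\}$, which forces $y \in \st_\Gamma(z)$, contradicting $y \ne z$ together with $(y,z) \notin E(\Gamma)$. Everything else in the argument is bookkeeping of supports and stabilizers via the corollaries of Section~\ref{subsec:Definition and basic properties of the extension graph of graph products of groups}.
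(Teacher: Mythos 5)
Both parts are correct and follow the paper's own route: your proof of (1) is essentially verbatim the paper's argument, using Lemma \ref{lem:edges of Gammae come from Gamma} to reduce to $u=w$ and then $\stab_{\Gamma\G}(v)=G_v\times G_w$ from Corollary \ref{cor:Stab(v)} to conclude $k.G_w=G_w$. For (2) the paper observes that the orbit $\la G_y,G_z\ra.y$ lies in $\lk_{\Gamma^e}(x)$ and is infinite because $\la G_y,G_z\ra\cap\stab_{\Gamma\G}(y)=G_y$ has infinite index in $G_y\ast G_z$; your explicit family $(ab)^n.G_z$, with pairwise distinctness checked via the normal form theorem and Corollary \ref{cor:Stab(v)}, is the same argument with that infinite-index step verified by hand, and it goes through.
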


\begin{proof}
    (1) Let $x \in V(\Gamma)$ and $g \in \Gamma\G$ satisfy $(v,g.x) \in E(\Gamma^e)$. By Lemma \ref{lem:edges of Gammae come from Gamma} and $\lk_{\Gamma}(v) = \{w\}$, we have $x=w$ and there exists $k \in \Gamma\G$ such that $(k.v, k.w)=(v, g.x)$. This implies $k \in \stab_{\Gamma\G}(v)$. Hence, we have $g.x=k.w=w$ since we have $k \in \stab_{\Gamma\G}(v)=G_v\times G_w$ by Lemma \ref{cor:Stab(v)}. Thus, $\lk_{\Gamma^e}(v) = \{w\}$.

    (2) By $(y,z) \notin E(\Gamma)$, $G_y$ has infinite index in $\la G_y,G_z \ra \,(\cong G_y\ast G_z)$. By Corollary \ref{cor:Stab(v)}, $\la G_y,G_z \ra \cap \stab_{\Gamma\G}(y) = G_y$. Hence, we have $|\lk_{\Gamma^e}(x)|=\infty$ by $|\la 
G_y,G_z \ra .y|=|\la 
G_y,G_z \ra / G_y|=\infty$ and $\la 
G_y,G_z \ra .y \subset \lk_{\Gamma^e}(x)$.
\end{proof}

Finally, we study the intersection of stabilizers of two distinct vertices in the extension graph in Corollary \ref{cor:stabilizer of two far vertices is trivial}. Lemma \ref{lem:conjugation of stabilizer} is an auxiliary lemma for this.

\begin{lem}\label{lem:conjugation of stabilizer}
    Let $\Gamma$ be a simplicial graph with $\girth(\Gamma) > 4$ and $\G=\{G_v\}_{v\in V(\Gamma)}$ be a collection of non-trivial groups. Let $g,h\in \Gamma\G$ and $v\in V(\Gamma)$. If $\emptyset \neq \supp(h) \subset \st_\Gamma(v)$, then we have $\supp(h)\cap\supp(ghg^{-1})\neq \emptyset$ and there exist $k_1,k_2,k_3 \in\Gamma\G$ and $w\in \st_\Gamma(v)$ such that $g=k_1k_2k_3$, $\supp(k_1) \subset \supp(ghg^{-1})$, $\supp(k_2) \subset \st_\Gamma(w)$, and $\supp(k_3) \subset \st_\Gamma(v)$.
\end{lem}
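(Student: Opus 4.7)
I will prove this by induction on $\|g\|$, adapting the approach of Lemma \ref{lem:conjugation of vertex group}---which handles the special case $h \in G_v$---to the more general $h \in \la G_u \mid u \in \st_\Gamma(v) \ra$. The base case $\|g\| = 0$ is immediate: $g = 1$ gives $ghg^{-1} = h$, so $\supp(h) \cap \supp(ghg^{-1}) = \supp(h) \ne \emptyset$, and we may take $k_1 = k_2 = k_3 = 1$, $w = v$.

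For the inductive step, fix normal forms $g = g_1 \cdots g_n$ and $h = h_1 \cdots h_m$ with $v_i := \supp(g_i)$ and $u_j := \supp(h_j) \in \st_\Gamma(v)$, and form the concatenated word $W = g_1 \cdots g_n \, h_1 \cdots h_m \, g_n^{-1} \cdots g_1^{-1}$. If $W$ is already a normal form of $ghg^{-1}$, then $\supp(ghg^{-1}) = \supp(g) \cup \supp(h) \supset \supp(h)$, the intersection is nonempty, and we take $k_1 = g$, $k_2 = k_3 = 1$, $w = v$. Otherwise, since each of the three segments of $W$ is itself a normal form, Theorem \ref{thm:normal form theorem} forces two syllables of equal support from different segments to be shufflable adjacent to each other. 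Up to the left-right symmetry $g \leftrightarrow g^{-1}$, this yields an index $i$ with $\{v_j \mid j > i\} \subset \lk_\Gamma(v_i)$ (so $g_i$ can be shuffled to the rightmost position of $g$) for which either (I) $\supp(h) \subset \lk_\Gamma(v_i)$, whence $g_i$ commutes past $h$ and eventually cancels $g_i^{-1}$, or (II) some $j$ satisfies $u_j = v_i$ and $\{u_k \mid k < j\} \subset \lk_\Gamma(v_i)$, whence $g_i$ merges with the $j$-th syllable of $h$.

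In either alternative, write $g = \bar g \, g_i$ with $\bar g := g_1 \cdots g_{i-1} g_{i+1} \cdots g_n$ of syllable length $n-1$, so that $ghg^{-1} = \bar g \, \tilde h \, \bar g^{-1}$ with $\tilde h := g_i h g_i^{-1}$ nontrivial and supported in $\st_\Gamma(v)$ (equal to $h$ in case (I), differing from $h$ only in its $j$-th syllable in case (II)). Applying the inductive hypothesis to $(\bar g, \tilde h, v)$ yields $\bar g = \bar k_1 \bar k_2 \bar k_3$ and $\bar w \in \st_\Gamma(v)$ with the required support conditions and $\supp(\tilde h) \cap \supp(\bar g \tilde h \bar g^{-1}) \ne \emptyset$. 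The intersection conclusion $\supp(h) \cap \supp(ghg^{-1}) \ne \emptyset$ then transfers: directly in case (I), since $\tilde h = h$; and in case (II), by noting that the surviving $u_j$-syllable after the merger lies in both $\supp(h)$ and $\supp(ghg^{-1})$.

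The main obstacle is the final assembly of $k_1, k_2, k_3$ from $\bar k_1, \bar k_2, \bar k_3$, and $g_i$. The routine scenario is $v_i \in \st_\Gamma(v)$---always true in case (II), and also in case (I) whenever $v \in \supp(h)$---in which I take $k_3 := \bar k_3 g_i$ and leave $k_1, k_2, w$ unchanged, which works since $\supp(k_3) \subset \st_\Gamma(v) \cup \{v_i\} = \st_\Gamma(v)$. The delicate remaining scenario is case (I) with $v_i \notin \st_\Gamma(v)$; then $v \notin \supp(h)$ and $\supp(h) \subset \lk_\Gamma(v) \cap \lk_\Gamma(v_i)$ are both forced, so for any $w_0 \in \supp(h)$ we have $v_i \in \st_\Gamma(w_0)$ and $w_0 \in \st_\Gamma(v)$, and one wants to absorb $g_i$ into $k_2$ with star-vertex $w_0$. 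The technical heart of the proof is ensuring the inductive decomposition of $\bar g$ can be chosen compatibly with this $w_0$: I would strengthen the inductive statement to permit the star-vertex to be selected from a prescribed subset of $\st_\Gamma(v)$ (in particular allowing $\bar w = w_0$), and invoke the hypothesis $\girth(\Gamma) > 4$---which forbids 3-cycles and 4-cycles through $v$---to constrain the supports of syllables of $\bar k_3$ enough to legitimize the rearrangement $\bar k_1 \bar k_2 \bar k_3 g_i = \bar k_1 (\bar k_2 g_i) \bar k_3$ that yields $k_2 = \bar k_2 g_i$.
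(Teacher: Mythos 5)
Your induction on $\|g\|$, the reduction via non-reducedness of $g\,h\,g^{-1}$, and the treatment of the merge cases and of case (I) when $\supp(g_i)\in\st_\Gamma(v)$ all match the paper's argument. But the one case you yourself flag as the ``technical heart'' --- case (I) with $v_i=\supp(g_i)\notin\st_\Gamma(v)$ --- is left genuinely unresolved, and the fix you sketch does not work as stated. Prescribing the star-vertex $\bar w=w_0$ in a strengthened inductive hypothesis constrains only $\supp(\bar k_2)$; it gives no control over $\supp(\bar k_3)$ beyond $\supp(\bar k_3)\subset\st_\Gamma(v)$. To legitimize $\bar k_1\bar k_2\bar k_3 g_i=\bar k_1(\bar k_2 g_i)\bar k_3$ you need $g_i$ to commute past $\bar k_3$, i.e.\ $\supp(\bar k_3)\subset\lk_\Gamma(v_i)$; since $\girth(\Gamma)>4$ and $v_i\notin\st_\Gamma(v)$ give $\lk_\Gamma(v_i)\cap\st_\Gamma(v)=\{w_0\}$, this would force $\supp(\bar k_3)\subset\{w_0\}$, which nothing in your (even strengthened) induction guarantees. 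Also note a small slip in case (II): the ``surviving $u_j$-syllable'' need not survive (the merged syllable $g_ih_j$ can be trivial or cancel further); the correct transfer is simply $\supp(g_ihg_i^{-1})\subset\supp(h)$ combined with the inductive conclusion for $\tilde h=g_ihg_i^{-1}$.

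The paper sidesteps the delicate case entirely, and your own observations nearly get you there: if $v_i\notin\st_\Gamma(v)$ then $v\notin\supp(h)$ and $\supp(h)\subset\lk_\Gamma(v)\cap\lk_\Gamma(v_i)$, so $\girth(\Gamma)>4$ (no squares) forces $|\supp(h)|=1$, say $h\in G_{w_0}\setminus\{1\}$ with $w_0\in\lk_\Gamma(v)$. The paper therefore splits off the case $|\supp(h)|=1$ at the very start and settles it by applying Lemma \ref{lem:conjugation of vertex group} with the vertex $w_0$: writing $g=k_1'k_2'k_3'$ with $\supp(k_1')\cup\{w_0\}=\supp(ghg^{-1})$, $\supp(k_2')\subset\lk_\Gamma(w_0)$, $\supp(k_3')\subset\{w_0\}$, one takes $k_1=k_1'$, $k_2=k_2'k_3'$, $k_3=1$, $w=w_0$, and $w_0\in\supp(h)\cap\supp(ghg^{-1})$. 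Then, in the inductive step one may assume $|\supp(h)|\ge 2$, and in the commuting case (I) the conditions $\supp(h)\subset\st_\Gamma(v)\cap\lk_\Gamma(v_i)$, $|\supp(h)|\ge2$, and the absence of triangles and squares force $v_i=v\in\st_\Gamma(v)$, so $g_i$ is always absorbable into $k_3$ exactly as in your routine scenario. Incorporating this split (and the girth argument that eliminates your delicate scenario when $|\supp(h)|\ge2$) is what is missing from your write-up; without it the proof is incomplete.
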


\begin{proof}
    We will show the statement by induction on $\|g\|$. When $\|g\|=0$, we have $g=1$. Hence, $\supp(h)\cap\supp(ghg^{-1})\neq \emptyset$ and $k_1,k_2,k_3 \in \Gamma\G$ defined by $k_1=k_2=k_3=1$ satisfy the statement. Next, given $n\in\NN$, we assume that the statement is true for any $g,h,v$ satisfying $\|g\|<n$ in addition. Let $g\in \Gamma\G$ satisfy $\|g\|=n$. If $|\supp(h)|=1$, then there exists $w \in \st_\Gamma(v)$ such that $\supp(h) =\{w\}$. By Lemma \ref{lem:conjugation of vertex group}, there exist $k'_1,k'_2,k'_3 \in \Gamma\G$ such that $g=k'_1k'_2k'_3$, $\supp(k'_1)\cup\{w\}=\supp(ghg^{-1})$, $\supp(k'_2) \subset \lk_\Gamma(w)$, and $\supp(k'_3) \subset \{w\}$. Hence, $k_1,k_2,k_3 \in \Gamma\G$ defined by $k_1=k'_1$, $k_2=k'_2k'_3$, and $k_3=1$ satisfy the conclusion and we also have $w \in \supp(h)\cap\supp(ghg^{-1})$. 
    
    Thus, we assume $|\supp(h)|\ge 2$ in what follows. Let $g=g_1\cdots g_n$ and $h=h_1\cdots h_m$ be normal forms of $g$ and $h$ respectively. If the decomposition $ghg^{-1}$ is reduced, then by Theorem \ref{thm:normal form theorem}, $ghg^{-1} = (g_1\cdots g_n)(h_1\cdots h_m)(g_n^{-1}\cdots g_1^{-1})$ is a normal form of $ghg^{-1}$. Hence, $k_1,k_2,k_3 \in \Gamma\G$ defined by $k_1=g$ and $k_2=k_3=1$ satisfy the conclusion and we also have $\emptyset \neq \supp(h)\subset\supp(ghg^{-1})$. If the decomposition $ghg^{-1}$ is not reduced, then one of (1)-(3) holds: (1) there exists $i$ such that $\{\supp(g_{i'}) \mid i<i' \}\cup\supp(h) \subset \lk_\Gamma(\supp(g_i))$, (2) there exist $i$ and $j$ such that $\supp(g_i)=\supp(h_j)$ and $\{\supp(g_{i'}) \mid i<i' \}\cup\{\supp(h_{j'}) \mid j'<j \} \subset \lk_\Gamma(\supp(g_i))$ (3) there exist $i$ and $j$ such that $\supp(g_i)=\supp(h_j)$ and $\{\supp(h_{j'}) \mid j<j' \}\cup\{\supp(g_{i'}) \mid i<i' \} \subset \lk_\Gamma(\supp(g_i))$.

    In case (1), since we have $\supp(h)\subset \st_\Gamma(v)$ and $|\supp(h)| \ge 2$ and $\Gamma$ has no triangle nor square, we have $\supp(g_i)=v$ by $\supp(h)\subset \lk_\Gamma(\supp(g_i))$. We have $gg_i^{-1} = g_1\cdots g_{i-1}g_{i+1} \cdots g_n$ and $ghg^{-1}=gg_i^{-1}hg_ig^{-1}$. By $\|gg_i^{-1}\|\le n-1$ and by applying our assumption of induction to $gg_i^{-1}$ and $h$, we have $\supp(h)\cap\supp(ghg^{-1}) = \supp(h)\cap\supp(gg_i^{-1}hg_ig^{-1}) \neq \emptyset$ and there exist $k'_1,k'_2,k'_3 \in \Gamma\G$ and $w \in \st_\Gamma(v)$ such that $gg_i^{-1}=k'_1k'_2k'_3$, $\supp(k'_1)\subset \supp(gg_i^{-1}hg_ig^{-1}) = \supp(ghg^{-1})$, $\supp(k'_2) \subset \st_\Gamma(w)$, and $\supp(k'_3) \subset \st_\Gamma(v)$. By $\supp(g_i)=v$, we have $\supp(k'_3g_i) \subset \st_\Gamma(v)$. Hence, $k_1,k_2,k_3 \in \Gamma\G$ defined by $k_1=k'_1$, $k_2=k'_2$, and $k_3=k'_3g_i$ satisfy the conclusion.

    In case (2), we have $gg_i^{-1} = g_1\cdots g_{i-1}g_{i+1} \cdots g_n$ and $\emptyset \neq \supp(g_i^{-1}hg_i) \subset \st_\Gamma(v)$ by $\supp(g_i)=\supp(h_j) \in \st_\Gamma(v)$ and $\emptyset \neq \supp(h) \subset \st_\Gamma(v)$. By $\|gg_i^{-1}\|\le n-1$ and by applying our assumption of induction to $gg_i^{-1}$ and $g_ihg_i^{-1}$, we have $\supp(h)\cap\supp(ghg^{-1}) = \supp(h)\cap\supp(gg_i^{-1}g_ihg_i^{-1}g_ig^{-1}) \neq \emptyset$ and there exist $k'_1,k'_2,k'_3 \in \Gamma\G$ and $w \in \st_\Gamma(v)$ such that $gg_i^{-1}=k'_1k'_2k'_3$, $\supp(k'_1)\subset \supp(gg_i^{-1}g_ihg_i^{-1}g_ig^{-1}) = \supp(ghg^{-1})$, $\supp(k'_2) \subset \st_\Gamma(w)$, and $\supp(k'_3) \subset \st_\Gamma(v)$. By $\supp(g_i)=\supp(h_j) \in \st_\Gamma(v)$, we have $\supp(k'_3g_i) \subset \st_\Gamma(v)$. Hence, $k_1,k_2,k_3 \in \Gamma\G$ defined by $k_1=k'_1$, $k_2=k'_2$, and $k_3=k'_3g_i$ satisfy the conclusion. In case (3), we get the conclusion in the same way as case (2).
\end{proof}

\begin{cor}\label{cor:stabilizer of two far vertices is trivial}
    Let $\Gamma$ be a simplicial graph with $\girth(\Gamma) > 4$ and $\G=\{G_v\}_{v\in V(\Gamma)}$ be a collection of non-trivial groups. The following hold.
    \begin{itemize}
        \item[(1)]
        For any $a,b\in V(\Gamma^e)$, we have $\stab_{\Gamma\G}(a)\cap\stab_{\Gamma\G}(b) = \la H \mid H \in \st_{\Gamma^e}(a) \cap \st_{\Gamma^e}(b) \ra$. In particular, $\stab_{\Gamma\G}(a)\cap\stab_{\Gamma\G}(b) \neq \{1\}$ if and only if $d_{\Gamma^e}(a,b) \le 2$.
        \item[(2)] 
        For any $v \in V(\Gamma)$ and $g \in \Gamma\G \setminus \stab_{\Gamma\G}(v)$, there exist $w \in \st_\Gamma(v)$  and $g_1 \in \Gamma\G$ such that $\stab_{\Gamma\G}(v) \cap g \stab_{\Gamma\G}(v) g^{-1} \subset g_1 G_w g_1^{-1}$.
        \item[(3)]
        If, in addition, the collection $\G=\{G_v\}_{v\in V(\Gamma)}$ is non-trivial finite groups, then for any $v,w \in V(\Gamma)$ with $v \neq w$ and any $g \in \Gamma\G$, the group $\stab_{\Gamma\G}(v) \cap g \stab_{\Gamma\G}(w) g^{-1}$ is finite.
    \end{itemize}
\end{cor}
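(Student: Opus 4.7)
All three parts follow a uniform strategy: pick a nontrivial $h$ in the relevant stabilizer intersection and apply Lemma~\ref{lem:conjugation of stabilizer} to the pair $(g^{-1}, h)$. With $w = v(b)$ in (1) and (3), or $w = v$ in (2), the hypotheses $\supp(h) \subset \st_\Gamma(v)$ and $\supp(g^{-1}hg) \subset \st_\Gamma(w)$ let the lemma produce a factorization $g^{-1} = k_1 k_2 k_3$ with $k_1 \in \stab_{\Gamma\G}(w)$ (since $\supp(k_1) \subset \supp(g^{-1}hg)$), $k_2 \in \stab_{\Gamma\G}(w'')$ for some $w'' \in \st_\Gamma(v)$, and $k_3 \in \stab_{\Gamma\G}(v)$; the lemma also asserts $\supp(h) \cap \supp(g^{-1}hg) \neq \emptyset$.

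For (1), translate so that $a = v \in V(\Gamma)$ and $b = g.w$. The nonemptiness of $\supp(h) \cap \supp(g^{-1}hg) \subset \st_\Gamma(v) \cap \st_\Gamma(w)$ forces $d_\Gamma(v, w) \le 2$ by Remark~\ref{rem:intersection of stabilizers}. Using the isometric action $\Gamma\G \act \Gamma^e$ together with $k_1.w = w$, $k_2.w'' = w''$, $k_3.v = v$, and the inclusion $\Gamma \hookrightarrow \Gamma^e$, the triangle inequality gives
\[
d_{\Gamma^e}(v, g.w) = d_{\Gamma^e}(k_2.v, w) \le d_{\Gamma^e}(v, w'') + d_\Gamma(w'', w) \le 1 + d_\Gamma(w'', v) + d_\Gamma(v, w) \le 4.
\]

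For (2), the hypothesis $g \notin \stab_{\Gamma\G}(v)$ forces $w' := w'' \in \lk_\Gamma(v)$, since $w'' = v$ would entail $g \in \stab_{\Gamma\G}(v)$. Taking $g_1 := k_3^{-1}$ and $w := w'$ and using that $k_1, k_3 \in \stab_{\Gamma\G}(v)$ normalize $\stab_{\Gamma\G}(v)$, one reduces to showing $\stab_{\Gamma\G}(v) \cap k_2^{-1}\stab_{\Gamma\G}(v) k_2 \subset G_{w'}$. The hypothesis $\girth(\Gamma) > 4$ makes $\lk_\Gamma(w')$ edgeless and forces $\lk_\Gamma(v) \cap \lk_\Gamma(w') = \emptyset$, yielding the structural identity $\stab_{\Gamma\G}(w') = G_{w'} \times \bigl(G_v \ast \bigl(\ast_{u \in \lk_\Gamma(w') \setminus \{v\}} G_u\bigr)\bigr)$. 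Peeling off $G_{w'}$- and $G_v$-prefactors of $k_2$ (both of which normalize $\stab_{\Gamma\G}(v)$) reduces the question to a reduced word $b = c_1 \cdots c_m$ in $\ast_{u \in \lk_\Gamma(w') \setminus \{v\}} G_u$. A syllable-by-syllable analysis of the normal form of $b^{-1}hb$ then shows that the outermost syllable $c_m^{-1} \in G_{u_m}$ can commute with a syllable of $h$ lying in $G_{u'}$ only when $u' \in \st_\Gamma(u_m) \cap \st_\Gamma(v) = \{w'\}$ (again by $\girth > 4$, which excludes triangles and $4$-cycles through $v, w', u_m$); hence any non-$G_{w'}$ syllable of $h$ obstructs the cancellation of $c_m^{-1}$ and forces $h \in G_{w'}$.

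For (3), the argument from (1) forces $d_\Gamma(v, w) \le 2$, else $H = \{1\}$ is already finite. In the remaining cases, repeating the reduction of (2) gives $H = k_3^{-1}\bigl(\stab_{\Gamma\G}(v) \cap k_2^{-1}\stab_{\Gamma\G}(w)k_2\bigr)k_3$, and one adapts the syllable-by-syllable analysis: the stronger hypothesis $\girth(\Gamma) > 5$ rules out $4$- and $5$-cycles among $v, w, w''$ and their $\Gamma$-neighbors, giving enough commutation control to confine the inner intersection within a conjugate of a product of at most two vertex groups $G_a, G_b$. Since each $G_u$ is finite by hypothesis, $H$ is finite. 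The main obstacle throughout is the syllable-by-syllable normal form analysis underlying (2) (and its refinement in (3)); the girth hypotheses are used essentially to preclude short cycles that would otherwise permit additional commutations and invalidate the cancellation count.
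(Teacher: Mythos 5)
Your part (1) is correct and is essentially the paper's own argument run in mirror image: the paper applies Lemma \ref{lem:conjugation of stabilizer} to $g$ and an element of $\stab_{\Gamma\G}(v(b))$, while you apply it to $g^{-1}$ and an element of $\stab_{\Gamma\G}(a)$; the resulting distance estimate is the same. Likewise, the opening reduction in (2) and (3) --- fix one nontrivial $h$, obtain $g^{-1}=k_1k_2k_3$ from the lemma, and conjugate the whole intersection by $k_3\in\stab_{\Gamma\G}(v)$ so that everything hinges on $\stab_{\Gamma\G}(v)\cap k_2^{-1}\stab_{\Gamma\G}(v)k_2$ (resp.\ $\stab_{\Gamma\G}(v)\cap k_2^{-1}\stab_{\Gamma\G}(w)k_2$) --- is sound, and it handles the uniformity in $h$ (a single $w$ and $g_1$ for the whole subgroup) more transparently than the paper's ``we can take $g_1$ independent of $h$''.

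The key step of (2), however, has a genuine gap. You claim that after peeling off $G_{w'}$- and $G_v$-prefactors one may assume the remaining word $b=c_1\cdots c_m$ lies in $\ast_{u\in\lk_\Gamma(w')\setminus\{v\}}G_u$. That is false: conjugation only lets you delete a left prefactor lying in $\stab_{\Gamma\G}(v)$ (together with the central $G_{w'}$-component of $\stab_{\Gamma\G}(w')=G_{w'}\times(\ast_{u\in\lk_\Gamma(w')}G_u)$), so an element such as $k_2=c_1\,g_v\,c_2$ with $c_1,c_2$ in factors $G_u$, $u\in\lk_\Gamma(w')\setminus\{v\}$, and $g_v\in G_v\setminus\{1\}$ retains its interior $G_v$-syllable, and $b\notin\ast_{u\neq v}G_u$. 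The ensuing syllable analysis is stated only for such $b$, and it also focuses on the wrong end of the word: what forces the conclusion is the syllable of $k_2$ \emph{closest to} $h$ whose support $z$ lies outside $\st_\Gamma(v)$. Writing $k_2=w_1sw_2$ with $s\in G_z$ and $\supp(w_2)\subset\st_\Gamma(v)$, the requirement $\supp(k_2hk_2^{-1})\subset\st_\Gamma(v)$ forces $s$ to cancel against $s^{-1}$, hence $\supp(w_2hw_2^{-1})\subset\lk_\Gamma(z)\cap\st_\Gamma(v)=\{w'\}$ by $\girth(\Gamma)>4$; and since $\supp(w_2)\subset\st_\Gamma(v)\cap\st_\Gamma(w')=\{v,w'\}$, the element $w_2$ normalizes $G_{w'}$, so $h\in G_{w'}$. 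This is exactly the paper's maneuver (there the conjugator $g_1=k_1w_1s$ absorbs the leftover piece instead). So your target inclusion $\stab_{\Gamma\G}(v)\cap k_2^{-1}\stab_{\Gamma\G}(v)k_2\subset G_{w'}$ is true, but your proof of it does not go through as written. Part (3) is only a sketch: it inherits the flawed reduction, and ``enough commutation control to confine the inner intersection within a conjugate of a product of at most two vertex groups'' is precisely the assertion that needs proof; the paper establishes it by a case analysis on $d_\Gamma(v,w)\in\{1,2\}$ and on whether the auxiliary vertex $z$ equals $v$, arriving at conclusions of the form $h\in k_1(G_v\times G_u)k_1^{-1}$ or $h\in k_1k_2(G_v\times G_w)k_2^{-1}k_1^{-1}$, and none of this case analysis appears in your proposal.
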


\begin{proof}
    (1) We first show that $d_{\Gamma^e}(a,b) > 2$ implies $\stab_{\Gamma\G}(a)\cap\stab_{\Gamma\G}(b) = \{1\}$. To show the contraposition, suppose that $a,b\in V(\Gamma^e)$ satisfy $\stab_{\Gamma\G}(a)\cap\stab_{\Gamma\G}(b) \neq \{1\}$. Without loss of generality, we assume $a\in V(\Gamma)$. Let $g\in\Gamma\G$ satisfy $b=g.v(b)$. By $\stab_{\Gamma\G}(a)\cap\stab_{\Gamma\G}(b) \neq \{1\}$, there exist $k \in \stab_{\Gamma\G}(a)\setminus\{1\}$ and $h \in \stab_{\Gamma\G}(v(b))\setminus\{1\}$ such that $k=ghg^{-1}$. By Corollary \ref{cor:Stab(v)}, we have $\emptyset\neq\supp(k)\subset\st_\Gamma(a)$ and $\emptyset\neq\supp(h) \subset \st_\Gamma(v(b))$. By Lemma \ref{lem:conjugation of stabilizer}, we have $\supp(h)\cap\supp(ghg^{-1}) \neq \emptyset$ and there exist $k_1,k_2,k_3 \in \Gamma\G$ and $w \in \st_\Gamma(v(b))$ such that $g=k_1k_2k_3$, $\supp(k_1) \subset \supp(ghg^{-1}) = \supp(k)\subset \st_\Gamma(a)$, $\supp(k_2) \subset \st_\Gamma(w)$, and $\supp(k_3) \subset \st_\Gamma(v(b))$. Here, we retake $k_2$ and $k_3$ so that $\|k_2\|$ is the minimum among all $\|k_2'\|$, where $k_2',k_3' \in \Gamma\G$ satisfy $k_2k_3 = k_2'k_3'$, $\supp(k_2') \subset \st_\Gamma(w)$, and $\supp(k_3') \subset \st_\Gamma(v(b))$. In particular, $\|k_2(k_3hk_3^{-1})\| = \|k_2\| + \|k_3hk_3^{-1}\|$ and $\|(k_3hk_3^{-1})k_2^{-1}\| = \|k_3hk_3^{-1}\|+\|k_2^{-1}\|$ hold. Also, by $\emptyset \neq \supp(h)\cap\supp(ghg^{-1}) \subset \st_\Gamma(v(b))\cap\st_\Gamma(a)$, we have $d_\Gamma(a,v(b)) \le 2$. We will discuss two cases (i) and (ii).

    (i) When $\|k_2(k_3hk_3^{-1})k_2^{-1}\| < \|k_2\|+ \|k_3hk_3^{-1}\|+\|k_2^{-1}\|$, by $\|k_2(k_3hk_3^{-1})\| = \|k_2\| + \|k_3hk_3^{-1}\|$ and $\|(k_3hk_3^{-1})k_2^{-1}\| = \|k_3hk_3^{-1}\|+\|k_2^{-1}\|$, there exist a syllable $s$ of $k_2$ and a syllable $s'$ of $k_2^{-1}$ that cancel in $k_2(k_3hk_3^{-1})k_2^{-1}$. Hence, we have $\supp(k_3hk_3^{-1}) \subset \lk_\Gamma(\supp(s))$ and also $\supp(s) \notin \st_\Gamma(v(b))$ by the minimality of $\|k_2\|$ (see Remark \ref{rem:notation of supp}). By $\supp(k_3hk_3^{-1}) \subset \lk_\Gamma(\supp(s)) \cap \st_\Gamma(v(b))$, $\supp(s) \in \st_\Gamma(w) \setminus \st_\Gamma(v(b))$, and $\girth(\Gamma)>4$, we have $\supp(k_3hk_3^{-1}) = \{w\}$. By this, $k=(k_1k_2)(k_3hk_3^{-1})(k_1k_2)^{-1}$, and Lemma \ref{lem:conjugation of vertex group}, we have $w \in \supp(k) \subset \st_\Gamma(a)$. Thus, by $b=k_1k_2k_3.v(b)=k_1k_2.v(b)$, $k_1k_2.w=k_1.w$, and $a=k_1.a$, we have $d_{\Gamma^e}(a,b)
        =
        d_{\Gamma^e}(a, k_1k_2.v(b))
        \le
        d_{\Gamma^e}(k_1.a, k_1.w) + d_{\Gamma^e}(k_1k_2.w ,k_1k_2.v(b))
        \le
        1+1=2$.

    (ii) When $\|k_2(k_3hk_3^{-1})k_2^{-1}\| = \|k_2\|+ \|k_3hk_3^{-1}\|+\|k_2^{-1}\|$, we have $\supp(k_2) \cup \supp(k_3hk_3^{-1}) = \supp(k_2(k_3hk_3^{-1})k_2^{-1}) = \supp(k_1^{-1}kk_1) \subset \st_\Gamma(a)$. This implies $a=k_1k_2.a$. By this and $b=k_1k_2k_3.v(b)=k_1k_2.v(b)$, we have $d_{\Gamma^e}(a,b) = d_{\Gamma^e}(k_1k_2.a, k_1k_2.v(b)) = d_{\Gamma^e}(a,v(b)) \le 2$. 
    
    By (i) and (ii), we have shown $\stab_{\Gamma\G}(a)\cap\stab_{\Gamma\G}(b) \neq \{1\}$ implies $d_{\Gamma^e}(a,b) \le 2$.

    Next, suppose $d_{\Gamma^e}(a,b) = 2$ and let $c \in V(\Gamma^e)$ satisfy $\{(a,c),(c,b)\} \subset E(\Gamma^e)$. By Lemma \ref{lem:edges of Gammae come from Gamma}, we may assume $\{a,c\}\subset V(\Gamma)$ without loss of generality. Let $g\in\Gamma\G$ satisfy $(c,b)=g.(v(c),v(b))$ (such $g$ exists by Lemma \ref{lem:edges of Gammae come from Gamma}). By $c \in V(\Gamma)$ and Corollary \ref{cor:vertex orbits are disjoint}, we have $v(c) = c$, hence $g \in \stab_{\Gamma\G}(c)$. 
    
    Let $k \in \stab_{\Gamma\G}(a)\cap\stab_{\Gamma\G}(b)$. By $\stab_{\Gamma\G}(b)=g\stab_{\Gamma\G}(v(b))g^{-1}$, there exists $h \in \stab_{\Gamma\G}(v(b))$ such that $k = ghg^{-1}$. Note $\supp(k) \subset \st_\Gamma(a)$, $\supp(g) \subset \st_\Gamma(c)$, and $\supp(h) \subset \st_\Gamma(v(b))$ by Corollary \ref{cor:Stab(v)}. We will discuss two cases (i') and (ii').
    
    (i') When $a\neq v(b)$, we have $\st_\Gamma(a)\cap \big(\st_\Gamma(c)\cup \st_\Gamma(v(b))\big) = \{a,c\}$ and $\st_\Gamma(c)\cap \st_\Gamma(v(b)) = \{c,v(b)\}$ by $\girth(\Gamma)>4$. Hence, we can see $\supp(k) \subset \{a,c\}$ and $\supp(h) \subset \{c,v(b)\}$ by comparing the support of the syllables on each side of the equality $k = ghg^{-1}$. In particular, $\{k,g,h\} \subset \stab_{\Gamma\G}(c) = G_c \times \big(*_{v \in \lk_\Gamma(c)}G_v\big)$ (note $\girth(\Gamma)>4$). Hence, we have $\supp(h) \subset \{c\}$, which implies $\supp(k) \subset \{c\}$. Indeed, $v(b) \in \supp(h)$ would imply $v(b) \in \supp(k)$, which contradicts $\supp(k) \subset \{a,c\}$. 

    (ii') When $a =v(b)$, by $a\neq b ~(=g.a)$, there exists a syllable $s$ of $g$ such that $\supp(s) \notin \{a,c\}$. Since $s$ must cancel in $ghg^{-1}$ by $\supp(s) \cap \supp(k) \subset \supp(s) \cap \st_\Gamma(a) =\emptyset$ but cannot cancel with any syllable of $h$ by $\supp(s) \cap \supp(h) \subset \supp(s) \cap \st_\Gamma(a) =\emptyset$, we can see $\supp(h) \subset \{a,c\}$. In particular, $\{g,h\} \subset \stab_{\Gamma\G}(c) = G_c \times \big(*_{v \in \lk_\Gamma(c)}G_v\big)$. Hence, by $\supp(s) \notin \{a,c\}$ and $\supp(k) = \supp(ghg^{-1})\subset \st_\Gamma(a)\cap \st_\Gamma(c) = \{a,c\}$, we can see $\supp(h) \subset \{c\}$, which implies $\supp(k) \subset \{c\}$.
    
    By (i') and (ii'), we have shown $\stab_{\Gamma\G}(a)\cap\stab_{\Gamma\G}(b) \subset G_c$. The converse inclusion is straightforward. By a similar argument on the support of syllables, we can also show $\girth(\Gamma^e)>4$ using $\girth(\Gamma)>4$. Hence, we have $\{G_c\}=\st_{\Gamma^e}(a) \cap \st_{\Gamma^e}(b)$.

    Finally, when $d_{\Gamma^e}(a,b) = 1$, we can see $\stab_{\Gamma\G}(a)\cap\stab_{\Gamma\G}(b) = a \times b$ by Lemma \ref{lem:edges of Gammae come from Gamma} and Remark \ref{rem:intersection of stabilizers} (1). Here, note $\st_{\Gamma^e}(a) \cap \st_{\Gamma^e}(b) = \{a,b\}$ by $\girth(\Gamma^e)>4$. The case $d_{\Gamma^e}(a,b) = 0$ follows from Corollary \ref{cor:Stab(v)}. Indeed, assume $a \in V(\Gamma)$ without loss of generality, then we have $\stab_{\Gamma\G}(a) = \la G_v \mid v \in \st_{\Gamma}(a) \ra = \la H \mid H \in \st_{\Gamma^e}(a) \ra$. Here, the second equality holds since for any $g \in \Gamma\G$ and $v \in V(\Gamma)$ satisfying $gG_vg^{-1} \in \lk_{\Gamma^e}(a)$, we have $g \in \stab_{\Gamma\G}(a)$ and $v \in \lk_\Gamma(a)$ by Corollary \ref{cor:vertex orbits are disjoint} and Lemma \ref{lem:edges of Gammae come from Gamma}.
    
    (2) By $g \in \Gamma\G \setminus \stab_{\Gamma\G}(v)$ and Lemma \ref{lem:edges of Gammae come from Gamma}, we have $d_{\Gamma^e}(v,g.v) \ge 2$. By the proof of Corollary \ref{cor:stabilizer of two far vertices is trivial} (1), the group $\stab_{\Gamma\G}(v) \cap \stab_{\Gamma\G}(g.v)$ is $\{1\}$ if $d_{\Gamma^e}(v,g.v)>2$ and $c$ if $d_{\Gamma^e}(v,g.v)=2$ where $\{c\}=\st_{\Gamma^c}(v) \cap \st_{\Gamma^c}(g.v)$.

    (3) Note $g \stab_{\Gamma\G}(w) g^{-1} = \stab_{\Gamma\G}(g.w)$. By $v \neq w$ and Corollary \ref{cor:vertex orbits are disjoint}, we have $v \neq g.w$. Hence, by the proof of Corollary \ref{cor:stabilizer of two far vertices is trivial} (1), the group $\stab_{\Gamma\G}(v) \cap \stab_{\Gamma\G}(g.w)$ is $\{1\}$ if $d_{\Gamma^e}(v,g.w)>2$, $c$ if $d_{\Gamma^e}(v,g.w)=2$ where $\{c\}=\st_{\Gamma^c}(v) \cap \st_{\Gamma^c}(g.w)$, and $v \times g.w$ if $d_{\Gamma^e}(v,g.w)=1$. Here, $v$, $c$, and $g.w$ are finite subgroups of $\Gamma\G$ by the assumption on $\G$.
\end{proof}

\subsection{Connection to the crossing graph and the contact graph}
\label{subsec:Connection to the crossing and contact graphs}

In this section, we show that the extension graph and its coned-off graph are isomorphic to two famous graphs associated to a quasi-median graph, namely the crossing graph and the contact graph, which were studied by Genevois in \cite{Gen17}. See Section \ref{subsec:The crossing graph and the contact graph of a quasi-median graph} for notations related to a quasi-median graph. We first introduce the coned-off graph of the extension graph in Definition \ref{def:coned-off extension graph}, which plays an important role in the proof of Theorem \ref{thm:intro asymptotic dimension}. It is an analogous object to the projection complex in \cite{BBF15}.

\begin{defn}\label{def:coned-off extension graph}
    Let $\Gamma$ be a simplicial graph and $\G=\{G_v\}_{v\in V(\Gamma)}$ be a collection of non-trivial groups. We define the graph $\hGammae$ by
    \begin{align*}
    V(\hGammae) &= V(\Gamma^e), \\
    E(\hGammae) &= \{ (x,y) \in V(\Gamma^e)\times V(\Gamma^e) \mid \text{ $x \neq y$ and $\exists\, g \in \Gamma\G, \{x,y\} \subset g.\Gamma$} \}
    \end{align*}
    (see Definition \ref{def:extension graph} for $\Gamma^e$). We call $\hGammae$ the \emph{coned-off extension graph} of $\Gamma\G$.
\end{defn}

\begin{rem}
    By Lemma \ref{lem:edges of Gammae come from Gamma}, $\Gamma^e$ is a subgraph of $\hGammae$.
\end{rem}

\begin{prop}\label{prop:connection to crossing and contact graphs}
    Let $\Gamma$ be a simplicial graph and $\G=\{G_v\}_{v\in V(\Gamma)}$ be a collection of non-trivial groups. Let $X$ be the Cayley graph of $\Gamma\G$ with respect to $\bigsqcup_{v\in V(\Gamma)}(G_v\setminus\{1\})$. Then, there exists a $\Gamma\G$-equivariant bijection $F \colon V(\Gamma^e) \to \H(X)$ such that the maps $F \colon \Gamma^e \to \Delta X$ and $F \colon \hGammae \to \mathcal{C} X$ are both graph isomorphisms.
\end{prop}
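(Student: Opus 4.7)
The plan is to define $F \colon V(\Gamma^e) \to \H(X)$ by $F(g.v) = gJ_v$ and then verify in turn well-definedness, $\Gamma\G$-equivariance, bijectivity, and the two edge correspondences. The entire argument rests on the algebraic identity
\[
N'(J_v) \;=\; \la G_w \mid w \in \st_\Gamma(v) \ra \;=\; \stab_{\Gamma\G}(G_v),
\]
which combines Proposition \ref{prop:results on quasi-median graph}(3) with Corollary \ref{cor:Stab(v)} and lets us translate every question about hyperplane carriers into a question about vertex-group stabilizers.

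For well-definedness, suppose $g.v = h.w$ in $V(\Gamma^e)$; picking a nontrivial $a \in G_v$, Corollary \ref{cor:support for conjugation} yields $v = w$ and $h^{-1}g \in \stab_{\Gamma\G}(G_v) = N'(J_v)$. The key sub-claim is that every $k \in N'(J_v)$ satisfies $kJ_v = J_v$: I would prove by induction on the syllable length of $k$ (written in the generators of $\la G_u \mid u \in \st_\Gamma(v) \ra$) that the edge $(k, kg_v)$ lies in $J_v$ for each $g_v \in G_v \setminus \{1\}$, using triangles when the next syllable lies in $G_v$ and using squares (via commutation of $G_v$ with $G_u$ for $u \in \lk_\Gamma(v)$) otherwise; since each edge of $X$ lies in a unique hyperplane and $kJ_v$ already contains $(k, kg_v)$ by definition, this forces $kJ_v = J_v$, whence $h^{-1}gJ_v = J_v$. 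Equivariance is immediate, surjectivity of $F$ is Proposition \ref{prop:results on quasi-median graph}(2), and injectivity goes through by reversing the previous argument: $gJ_v = hJ_w$ gives $h^{-1}gJ_v = J_w$, and since left-multiplication preserves the ``transition type'' of edges (both the triangle and square equivalences respect the labeling vertex in $V(\Gamma)$), one gets $v = w$; then $h^{-1}g \in N'(J_v) = \stab_{\Gamma\G}(G_v)$ yields $g.v = h.w$.

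For the two edge correspondences I would reduce to statements at the identity by $\Gamma\G$-equivariance. For $F \colon \Gamma^e \to \Delta X$: given $(g.v, h.w) \in E(\Gamma^e)$, Lemma \ref{lem:edges of Gammae come from Gamma} produces $(v,w) \in E(\Gamma)$ and $k \in \Gamma\G$ with $(k.v, k.w) = (g.v, h.w)$; then for any $g_v \in G_v \setminus\{1\}$ and $g_w \in G_w \setminus \{1\}$ the four vertices $k, kg_v, kg_vg_w, kg_w$ form a square in $X$ whose two pairs of opposite sides lie in $kJ_v$ and $kJ_w$ respectively, so $F(g.v)$ and $F(h.w)$ are adjacent in $\Delta X$. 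Conversely, from adjacent hyperplanes $gJ_v, hJ_w$, Proposition \ref{prop:results on quasi-median graph}(4) gives $(v,w) \in E(\Gamma)$, and a corner $a$ of a witnessing square lies in $N'(gJ_v) \cap N'(hJ_w)$; writing $a = g\alpha = h\beta$ with $\alpha \in \stab_{\Gamma\G}(G_v)$, $\beta \in \stab_{\Gamma\G}(G_w)$ yields $gG_vg^{-1} = aG_va^{-1}$ and $hG_wh^{-1} = aG_wa^{-1}$, so $[gG_vg^{-1}, hG_wh^{-1}] = a[G_v,G_w]a^{-1} = 1$. For $F \colon \hGammae \to \C X$, the condition $\{g.v, h.w\} \subset k.\Gamma$ unwinds via the uniqueness of $v(\cdot)$ from Definition \ref{def:v(x)} to $k^{-1}g \in \stab_{\Gamma\G}(G_v)$ and $k^{-1}h \in \stab_{\Gamma\G}(G_w)$, equivalently $k \in N'(gJ_v) \cap N'(hJ_w)$, matching the defining condition for $E(\C X)$ exactly; the distinctness conditions transfer through the bijectivity of $F$.

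The main obstacle I anticipate is the syllable-length induction showing $N'(J_v)$ is contained in the setwise stabilizer of $J_v$: this is the only step that genuinely unpacks the hyperplane equivalence relation, whereas everything else becomes formal once the identity $N'(J_v) = \stab_{\Gamma\G}(G_v)$ is combined with Lemma \ref{lem:edges of Gammae come from Gamma} and Proposition \ref{prop:results on quasi-median graph}.
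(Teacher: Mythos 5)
Your proposal is correct and follows essentially the same route as the paper: define $F(g.v) = gJ_v$, verify well-definedness and injectivity via the identity $N'(J_v) = \la G_w \mid w \in \st_\Gamma(v) \ra = \stab_{\Gamma\G}(G_v)$ (Proposition \ref{prop:results on quasi-median graph}(3) plus Corollary \ref{cor:Stab(v)}), get surjectivity from Proposition \ref{prop:results on quasi-median graph}(2), and handle both edge relations by reducing to a common translate via Lemma \ref{lem:edges of Gammae come from Gamma} in one direction and via a common carrier point $gs = ht$ in the other. The one place you do more work than the paper is the syllable-length induction showing that $k \in N'(J_v)$ implies $kJ_v = J_v$; the paper treats this as already contained in Proposition \ref{prop:results on quasi-median graph}(3) (an element of the carrier is an endpoint of an edge of $J_v$, and two hyperplanes sharing an edge coincide), so your induction is a self-contained re-derivation of that standard fact rather than a different proof strategy.
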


\begin{proof}
    Let $g,h \in \Gamma\G$ and $v,w \in V(\Gamma)$. Suppose $g.v = h.w \in V(\Gamma)$, then $v=w$ and $\supp(g^{-1}h) \subset \st_\Gamma(v)$ by Corollary \ref{cor:vertex orbits are disjoint} and Corollary \ref{cor:Stab(v)}. This implies $hJ_w = g(g^{-1}h)J_v = gJ_v$ by Proposition \ref{prop:results on quasi-median graph} (3). Hence, the map $F \colon V(\Gamma^e) \to \H(X)$ defined by $F(g.v) = gJ_v$ ($g \in \Gamma\G$, $v \in V(\Gamma)$) is well-defined and $\Gamma\G$-equivariant. By Proposition \ref{prop:results on quasi-median graph} (2), the map $F$ is surjective. To show that $F$ is injective, suppose $gJ_v = hJ_w$. Since all edges in $gJ_v$ (resp. $hJ_w$) are labeled by elements in $G_v$ (resp. $G_w$) in the Cayley graph $X$, we have $v = w$. Also, $g^{-1}h \in N'(J_v)$ by $1 \in N'(J_w)$. This implies $\supp(g^{-1}h) \subset \st_\Gamma(v)$ by Proposition \ref{prop:results on quasi-median graph} (3). Hence, $h.w = g(g^{-1}h).v = g.v$ by Corollary \ref{cor:Stab(v)}. Thus, the map $F$ is bijective.

    Suppose $(g.v,h.w) \in E(\Gamma^e)$, then $(v,w) \in E(\Gamma)$ and $(g.v,h.w) = (k.v,k.w)$ for some $k \in \Gamma\G$ by Lemma \ref{lem:edges of Gammae come from Gamma}. This implies $F(g.v) = kJ_v$ and $F(h.w) = kJ_w$. By taking $a \in G_v\setminus\{1\}$ and $b \in G_w\setminus\{1\}$, we get the square induced by the vertices $\{k, ka,kb,kab\}$ in $X$. Hence, $F(g.v)$ and $F(h.w)$ are adjacent in $\Delta X$. Conversely, suppose that $gJ_v$ and $hJ_w$ are adjacent in $\Delta X$. By $N'(gJ_v) \cap N'(hJ_w) \neq \emptyset$ and Proposition \ref{prop:results on quasi-median graph} (3), there exist $s,t \in \Gamma\G$ with $\supp(s) \subset \st_\Gamma(v)$ and $\supp(t) \subset \st_\Gamma(w)$ such that $gs = ht$. This implies $g.v = gs.v$ and $h.w = g(g^{-1}h).w = g(st^{-1}).w = gs.w$. Also, $(v,w) \in E(\Gamma)$ by Proposition \ref{prop:results on quasi-median graph} (4). Hence, $(g.v,h.w) = gs.(v,w) \in E(\Gamma^e)$. Thus, $F$ is an isomorphism from $\Gamma^e$ to $\Delta X$.

    Suppose $(g.v,h.w) \in E(\hGammae)$, then $(g.v,h.w) = (k.v,k.w)$ for some $k \in \Gamma\G$. This implies $k \in N'(gJ_v) \cap N'(hJ_w)$. Hence, $gJ_v$ and $hJ_w$ are adjacent in $\mathcal{C}X$. The converse direction can be shown in the same way as the argument for $\Gamma^e$. Thus, $F \colon \hGammae \to \mathcal{C} X$ is isomorphic.
\end{proof}

\begin{cor}\label{cor:coned off extension graph is quasitree}
    Let $\Gamma$ be a simplicial graph and $\G=\{G_v\}_{v\in V(\Gamma)}$ be a collection of non-trivial groups. Then, $\hGammae$ is quasi-isometric to a tree.
\end{cor}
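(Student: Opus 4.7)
The plan is to combine the isomorphism just established with the quasi-tree theorem for contact graphs of quasi-median graphs. Concretely, let $X$ denote the Cayley graph of $\Gamma\G$ with respect to the generating set $\bigsqcup_{v\in V(\Gamma)}(G_v\setminus\{1\})$. Note that $X$ is connected (it is a Cayley graph) and by Proposition \ref{prop:results on quasi-median graph} (1) it is a quasi-median graph.

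The main step is to invoke Proposition \ref{prop:connection to crossing and contact graphs}, which provides a graph isomorphism $F\colon \hGammae \to \mathcal{C}X$ from the coned-off extension graph onto the contact graph of $X$. In particular, $\hGammae$ and $\mathcal{C}X$ are isometric as graphs, hence quasi-isometric as metric spaces.

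Finally, I would apply Theorem \ref{thm:contact graph is quasitree} of Valette: the contact graph of any connected quasi-median graph is quasi-isometric to a simplicial tree. Composing this quasi-isometry with the isomorphism $F$ yields a quasi-isometry from $\hGammae$ to a simplicial tree, which is exactly the claim.

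There is no real obstacle here; the corollary is essentially a one-line consequence of Proposition \ref{prop:connection to crossing and contact graphs} together with Theorem \ref{thm:contact graph is quasitree}, and the only thing to check carefully is that $X$ satisfies the hypotheses of the latter theorem (connectedness and being quasi-median), both of which have already been recorded.
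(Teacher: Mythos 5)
Your proposal is correct and is exactly the paper's argument: combine the isomorphism $F\colon \hGammae \to \mathcal{C}X$ from Proposition \ref{prop:connection to crossing and contact graphs} with Theorem \ref{thm:contact graph is quasitree} (due to Valiunas, not Valette) applied to the quasi-median Cayley graph $X$. No gaps; the extra check of connectedness of $X$ is harmless.
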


\begin{proof}
    This follows from Proposition \ref{prop:connection to crossing and contact graphs} and Theorem \ref{thm:contact graph is quasitree}.
\end{proof}

\subsection{Admissible paths}\label{subsec:Admissible paths}

In this section, we introduce the notion of an admissible path. This notion is useful to prove classification of geodesic bigons and triangles in the extension graph in Section \ref{subsec:Classification of geodesic bigons and triangles}. 

In Section \ref{subsec:Admissible paths}, suppose that $\Gamma$ is a connected simplicial graph with $\girth(\Gamma) > 20$ and $\G=\{G_v\}_{v \in V(\Gamma)}$ is a collection of non-trivial groups. Since $\Gamma$ is connected, $\Gamma^e$ is also connected by Lemma \ref{lem:Gammae is connected if Gamma is connected}. We begin with defining admissible paths below.

\begin{defn}\label{def: admissible path}
    Let $a,b\in V(\Gamma^e)$ with $a\neq b$. Let $p=(p_0,\cdots,p_N)$ be a path in $\Gamma^e$ from $a$ to $b$ and $\mathbf{x}=(x_0,\cdots,x_n)$ be a subsequence of $V(p)$. We call $p$ \emph{admissible with respect to} $\mathbf{x}$ (and denote $(p, \mathbf{x})$) if we have $n \in \NN$, $x_0=a$, and $x_n=b$ and the following three conditions (1)-(3) hold.
    \begin{itemize}
    \item [(1)]
        For any $i\in\{1,\cdots,n\}$, the subpath $p_{[x_{i-1},x_i]}$ is geodesic in $\Gamma^e$ and there exists $g_i \in \Gamma\G$ such that $p_{[x_{i-1},x_i]} \subset g_i.\Gamma$.
    \item [(2)]
        For any $i\in\{1,\cdots,n-1\}$, no $g\in \Gamma\G$ satisfies $p_{[x_{i-1},x_{i+1}]} \subset g.\Gamma$.
    \item [(3)]
        For any $i\in\{1,\cdots,n-1\}$, if $\max\{ d_{\Gamma^e}(x_{i-1},x_i), d_{\Gamma^e}(x_i,x_{i+1}) \} \le 4$, then the subpath $p_{[x_{i-1},x_{i+1}]}$ has no backtracking.
    \end{itemize}
    We call a path $p$ in $V(\Gamma^e)$ from $a$ to $b$ \emph{admissible} if there exists a subsequence $\mathbf{x}=(x_0,\cdots,x_n)$ of $V(p)$ such that $p$ is admissible with respect to $\mathbf{x}$. For convenience, we define every path $p=(a)$ of length 0, where $a \in V(\Gamma^e)$, to be admissible with respect to the sequence $(a)$.
\end{defn}

\begin{rem}
    For $v \in V(p_{[x_{i-1},x_i]})$ and $w \in V(p_{[x_{j-1},x_j]})$ with $1 \le i < j \le n$, we will denote by $p_{[v,w]}$ the subpath $p_{[v,x_i]}p_{[x_i,x_{i+1}]}\cdots p_{[x_{j-2},x_{j-1}]} p_{[x_{j-1},w]}$ of $p$ by abuse of notation.
\end{rem}

The following lemma provides a natural example of an admissible path.

\begin{lem}\label{lem: geodesics are admissible}
    Any geodesic path in $\Gamma^e$ is admissible.
\end{lem}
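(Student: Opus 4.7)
The plan is to construct the subsequence $\mathbf{x}$ greedily along the geodesic. If $|p|=0$, then $p$ is admissible by the convention in Definition \ref{def: admissible path}, so assume $|p|\ge 1$ and write $p=(p_0,\ldots,p_N)$ with $p_-=a$ and $p_+=b$. The idea is to start at $x_0=p_0$ and, once $x_i=p_{k_i}$ with $k_i<N$ has been chosen, define $k_{i+1}$ to be the \emph{largest} index $k\in\{k_i+1,\ldots,N\}$ such that there exists some $g\in\Gamma\G$ with $p_{[p_{k_i},p_k]}\subset g.\Gamma$, and set $x_{i+1}=p_{k_{i+1}}$. Stop once $x_i=p_N=b$, and call $n$ the final index.

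First I would check that the procedure is well-defined and terminates. For each step, the edge $(p_{k_i},p_{k_i+1})\in E(\Gamma^e)$ lies in some $g.\Gamma$ by Lemma \ref{lem:edges of Gammae come from Gamma}, so the set of admissible $k$ is non-empty and $k_{i+1}\ge k_i+1$. Since $k_i$ strictly increases and $N<\infty$, the process halts after finitely many steps with $k_n=N$, and $n\ge 1$ because $a\ne b$.

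Next I would verify the three conditions of Definition \ref{def: admissible path} in order. For condition (1), each $p_{[x_{i-1},x_i]}$ is a subpath of the geodesic $p$, hence itself a geodesic in $\Gamma^e$, and by construction it is contained in some $g_i.\Gamma$. For condition (2), suppose for contradiction that some $g\in\Gamma\G$ satisfied $p_{[x_{i-1},x_{i+1}]}\subset g.\Gamma$; then in particular $p_{[p_{k_{i-1}},p_{k_i+1}]}\subset g.\Gamma$, contradicting the maximality of $k_i$ as the largest index with $p_{[p_{k_{i-1}},p_{k_i}]}$ contained in a single translate of $\Gamma$. For condition (3), the subpath $p_{[x_{i-1},x_{i+1}]}$ is again a subpath of the geodesic $p$, so it is a geodesic in $\Gamma^e$ and therefore automatically has no backtracking, regardless of the lengths of the adjacent segments.

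I do not foresee any serious obstacle: the only content is the greedy maximality argument for condition (2), while conditions (1) and (3) are immediate from the well-known fact that subpaths of geodesics are geodesics, combined with Lemma \ref{lem:edges of Gammae come from Gamma} to produce the translates $g_i.\Gamma$. In particular, the girth hypothesis $\girth(\Gamma)>20$ is not needed for this lemma; it only enters when one studies finer properties of admissible paths later in the section.
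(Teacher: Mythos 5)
Your proof is correct, but it uses a greedy construction where the paper uses an extremal argument. The paper considers the set $\mathcal{A}$ of all subsequences of $V(p)$ satisfying Definition~\ref{def: admissible path}~(1), picks one of minimal length, and observes that minimality forces~(2): if $p_{[x_{i-1},x_{i+1}]}\subset g.\Gamma$, then deleting $x_i$ would give a shorter element of $\mathcal{A}$ (since $p_{[x_{i-1},x_{i+1}]}$ is still a geodesic, being a subpath of $p$), a contradiction. You instead build the subsequence by iteratively extending each segment as far as possible, and deduce~(2) from that maximality; as a by-product you make the non-emptiness of $\mathcal{A}$ explicit via Lemma~\ref{lem:edges of Gammae come from Gamma}, which the paper leaves implicit. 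Both approaches are equally valid here, and in fact the greedy decomposition is one of minimal length, so the two agree; yours is slightly more constructive, the paper's slightly shorter. Your treatment of~(1) and~(3), and your observation that the girth hypothesis plays no role in this particular lemma (it is a section-wide standing assumption), are both correct.
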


\begin{proof}
    Let $p$ be a geodesic from $a\in V(\Gamma^e)$ to $b\in V(\Gamma^e)$. Since the case $a=b$ is obvious, we assume $a\neq b$. Define $\mathcal{A}$ to be the set of all pairs $(n,\mathbf{x})$, where $n \in \NN$ and $\mathbf{x} = (x_0,\cdots,x_n)$ is a subsequence of $V(p)$ with $x_0 = a$ and $x_n = b$ satisfying Definition \ref{def: admissible path} (1). Take $(N,\mathbf{x}) \in \mathcal{A}$ such that $N = \min\{n' \mid (n',\mathbf{x'}) \in \mathcal{A}\}$. Definition \ref{def: admissible path} (2) is satisfied with $\mathbf{x}$ by minimality of $N$. Definition \ref{def: admissible path} (3) is satisfied since geodesic paths have no backtracking. Thus, $(p,\mathbf{x})$ is admissible.
\end{proof}

Next, we introduce the notion describing how an admissible path travels through copies of a defining graph in Definition \ref{def: admissible decomposition}. The set $\A_1(p,\mathbf{x})$ below can be considered as refinement of $\A_0(p,\mathbf{x})$ and will be used to prove Proposition \ref{prop:Gamma is embedded into Gammae}.

\begin{defn}\label{def: admissible decomposition}
    Let $a\in V(\Gamma)$ and $b\in V(\Gamma^e)$ with $a \neq b$. Let $p$ be an admissible path in $\Gamma^e$ from $a$ to $b$ with respect to a subsequence $\mathbf{x}=(x_0,\cdots,x_n)$ of $V(p)$. Define the sets $\A_0(p,\mathbf{x}), \A_1(p,\mathbf{x}) \subset (\Gamma\G)^n$ by
    \begin{align*}
        \A_0(p,\mathbf{x}) &= \{ (g_1,\cdots,g_n) \in (\Gamma\G)^n \mid \forall\, i\in\{1,\cdots,n\},\, p_{[x_{i-1},x_i]} \subset g_1\cdots g_i. \Gamma\}, \\
        \A_1(p,\mathbf{x}) &= \{ (g_1,\cdots,g_n) \in \A_0(p,\mathbf{x}) \mid\, \|g_1\cdots g_n\|=\|g_1\|+\cdots+\|g_n\| \,\}.
    \end{align*}
\end{defn}

\begin{rem}\label{rem:propoerty of A_0(p,x)}
It's straightforward to see that the following (1)-(3) hold for every $(g_1,\cdots,g_n) \in \A_0(p,\mathbf{x})$. These facts will be used often without mention throughout this paper.
    \begin{itemize}
        \item[(1)]
        By Definition \ref{def: admissible path} (2), we have $g_i\neq1$ for any $i \ge 2$ and $v(x_{i-1}) \neq v(x_i)$ for any $i \ge 1$.
        \item[(2)]
        We have $x_{i-1}=g_1\cdots g_{i-1}.v(x_{i-1})=g_1\cdots g_{i-1}g_i.v(x_{i-1})$ for any $i\ge 1$ by Corollary \ref{cor:vertex orbits are disjoint} (where we define $g_{n+1}=1$ for convenience). Hence, $g_i\in \stab_{\Gamma\G}(v(x_{i-1}))$ for any $i \ge 1$. By this and Corollary \ref{cor:Stab(v)}, $\supp(g_i) \subset \st_\Gamma(v(x_{i-1}))$ for any $i \ge 1$.
        \item[(3)]
        For each $i\ge1$, the path $(g_1\cdots g_i)^{-1}.p_{[x_{i-1},x_i]}$ is a geodesic from $v(x_{i-1})$ to $v(x_i)$ in $\Gamma$ since $p_{[x_{i-1},x_i]}$ is geodesic in $\Gamma^e$.
    \end{itemize}
\end{rem}

We study how an element in $\A_0(p,\mathbf{x})$ can be transformed in Lemma \ref{lem:transformation of admissible decomposition} below.

\begin{lem}\label{lem:transformation of admissible decomposition}
    Let $a\in V(\Gamma)$ and $b\in V(\Gamma^e)$ with $a \neq b$. Let $p$ be an admissible path in $\Gamma^e$ from $a$ to $b$ with respect to a subsequence $\mathbf{x}=(x_0,\cdots,x_n)$ of $V(p)$. Let $(g_1,\cdots,g_n) \in \A_0(p,\mathbf{x})$, then the following hold.
    \begin{itemize}
        \item[(1)]
        If there exists $v \in \st_{\Gamma}(v(x_{i-1}))\cap \st_{\Gamma}(v(x_i))$ with $i \in \{1,\cdots,n\}$, then for any $h \in G_v$, we have $(g_1,\cdots,g_ih,h^{-1}g_{i+1}, \cdots,g_n) \in \A_0(p,\mathbf{x})$, where this means $(g_1,\cdots,g_nh) \in \A_0(p,\mathbf{x})$ when $i=n$.
        \item[(2)]
        If there exists $v \in \st_{\Gamma}(v(x_{i-1}))\cap \st_{\Gamma}(v(x_{i+1}))$ with $i \in \{1,\cdots,n-1\}$ such that $\supp(g_{i+1}) \subset \lk_\Gamma(v)$, then $\{v(x_{i-1}), v(x_{i+1})\} \subset \lk_\Gamma(v(x_i))$ and for any $h \in G_v$, we have $(g_1,\cdots,g_ih, g_{i+1}, h^{-1}g_{i+2}, \cdots,g_n) \in \A_0(p,\mathbf{x})$, where this means $(g_1,\cdots,g_{n-1}h,g_n) \in \A_0(p,\mathbf{x})$ when $i=n-1$.
        \item[(3)]
        If there exists $v \in \st_{\Gamma}(v(x_{i-1}))\cap \st_{\Gamma}(v(x_{j-1}))$ with $1 \le i < j \le n+1$ such that $\bigcup_{i<k<j} \supp(g_k) \subset \lk_\Gamma(v)$, then $j \le i+2$.
    \end{itemize}
\end{lem}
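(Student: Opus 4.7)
My plan is to verify all three parts by careful bookkeeping on products and by translating containments back to the base graph~$\Gamma$. Part~(1) is a direct translation argument, part~(2) combines a commutator identity with an admissibility-driven case analysis, and part~(3) is a proof by contradiction that reduces (with iterated use of part~(2)) to an admissibility violation.

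For part~(1), I first observe that for $k<i$ or $k>i+1$ the product $g_1\cdots g_k$ is literally unchanged, and at $k=i+1$ the factors $(g_ih)(h^{-1}g_{i+1})=g_ig_{i+1}$ collapse. The only nontrivial check is $p_{[x_{i-1},x_i]}\subset g_1\cdots g_ih.\Gamma$. Translating by $(g_1\cdots g_i)^{-1}$ turns this into: the geodesic $q$ in $\Gamma$ from $v(x_{i-1})$ to $v(x_i)$ lies in $h.\Gamma$. Since $h\in G_v$ fixes $y\in V(\Gamma^e)$ iff $v\in\st_\Gamma(v(y))$, this amounts to every vertex of $q$ lying in $\st_\Gamma(v)$. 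From $v\in\st(v(x_{i-1}))\cap\st(v(x_i))$ and $v(x_{i-1})\neq v(x_i)$, one gets $d_\Gamma(v(x_{i-1}),v(x_i))\in\{1,2\}$; in the distance-2 case, $\girth(\Gamma)>4$ forces the unique common neighbor to be $v$ itself, so $q$ passes through $v$ and every vertex of $q$ lies in $\st(v)$.

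For part~(2), the commutator identity $[h,g_{i+1}]=1$ follows from $\supp(g_{i+1})\subset\lk(v)$ and $h\in G_v$, which makes the products at indices $k\ge i+2$ unchanged after cancellation. The heart of the proof is the link condition. I pick $c\in\supp(g_{i+1})\subset\lk(v)\cap\st(v(x_i))$, which is nonempty since $g_{i+1}\neq1$. In Case~A with $c=v(x_i)$ I immediately get $v\in\st(v(x_i))$. In Case~B with $c\neq v(x_i)$, since $v\sim v(x_i)$ would give a triangle $v\text{--}c\text{--}v(x_i)$ against $\girth>3$, we have $v\not\sim v(x_i)$, $v\neq v(x_i)$, and $c$ is the unique common neighbor of $v$ and $v(x_i)$ by $\girth>4$; thus $g_{i+1}\in G_c$. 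I rule out Case~B by translating $p_{[x_{i-1},x_{i+1}]}$ by $(g_1\cdots g_i)^{-1}$ into a concatenation $q_1q_2'$ with $q_1\subset\Gamma$ and $q_2'=g_{i+1}.q_2\subset g_{i+1}.\Gamma$ (where $q_2$ is the geodesic in $\Gamma$ from $v(x_i)$ to $v(x_{i+1})$), and running a sub-case analysis on the positions of $v(x_{i-1})$ and $v(x_{i+1})$ relative to $v$. In each sub-case the explicit action of $g_{i+1}\in G_c$ on $\Gamma$ produces either backtracking at $v(x_i)$ in the concatenation (contradicting admissibility~(3), which applies as all relevant $\Gamma^e$-distances are bounded by~$2$ here) or a containment of $p_{[x_{i-1},x_{i+1}]}$ in a single translate of $\Gamma$ (contradicting admissibility~(2)). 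So Case~A holds and $v\in\st(v(x_i))$. A parallel argument shows $d_\Gamma(v(x_{i-1}),v(x_i))=2$ is impossible (the would-be geodesic $v(x_{i-1})\text{--}v\text{--}v(x_i)$ glues with $q_2'$ to give backtracking or single-copy containment), yielding $v(x_{i-1})\in\lk(v(x_i))$; symmetry gives $v(x_{i+1})\in\lk(v(x_i))$. The transformation in~(b) now follows from two applications of the part~(1) argument: at index $i$ using $v\in\st(v(x_{i-1}))\cap\st(v(x_i))$, and at index $i+1$ using $v\in\st(v(x_i))\cap\st(v(x_{i+1}))$.

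For part~(3) I proceed by contradiction from $j\ge i+3$. Since every $\supp(g_k)$ for $i<k<j$ lies in $\lk(v)$, each such $g_k$ commutes with $G_v$. The plan is to apply the Case~A/B analysis of part~(2) at each interior position to propagate $v\in\st(v(x_k))$ across the range $i-1\le k\le j-1$, and then to invoke part~(2) at each interior index to absorb and slide an element $h\in G_v$ through the sequence. Doing so produces, for some $k$, two distinct decompositions in $\A_0(p,\mathbf{x})$ that force the subpath $p_{[x_{k-1},x_{k+1}]}$ to lie inside a single translate of $\Gamma$, contradicting admissibility~(2); equivalently the argument can be organized as an induction on $j-i$, reducing $j-i\ge3$ to the $j-i=2$ situation covered by part~(2). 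The main obstacle is the Case~B ruling out in part~(2): each of the sub-configurations of $(v,v(x_{i-1}),v(x_i),v(x_{i+1}))$ permitted by the hypotheses must be defeated by exhibiting either a backtracking in the translated concatenation or a single-copy containment, and it is essentially here that admissibility~(2)+(3) together with the girth condition (via uniqueness of common neighbors from $\girth>4$ and the absence of triangles from $\girth>3$) are used; the stronger $\girth>20$ hypothesis is not needed for this lemma itself but is what makes the admissibility conditions tractable elsewhere in Section~\ref{sec:Geometry of the extension graph}.
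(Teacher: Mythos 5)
Your part~(1) matches the paper's argument. The issues start in part~(2), where your case analysis does not locate the key structural fact. The paper proves $v=v(x_i)$, by showing that each of $d_\Gamma(v,v(x_i))=1$ and $d_\Gamma(v,v(x_i))=2$ forces backtracking of $p_{[x_{i-1},x_{i+1}]}$ (the relevant geodesic vertex is fixed by $g_{i+1}$, so the same $\Gamma^e$-vertex occurs on both sides of $x_i$); the link condition $\{v(x_{i-1}),v(x_{i+1})\}\subset\lk_\Gamma(v(x_i))$ is then immediate. Your Case~A/Case~B split does not reproduce this. In Case~B you assert ``$v\neq v(x_i)$'' with no justification: $c\neq v(x_i)$ is perfectly consistent with $v=v(x_i)$ (then $c$ is just any element of $\lk_\Gamma(v)$), so your ``rule out Case~B'' step is aimed at a case that \emph{contains} the correct conclusion $v=v(x_i)$ and therefore cannot be ruled out. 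Your Case~A, where $c=v(x_i)$, is exactly the sub-case $d_\Gamma(v,v(x_i))=1$ that the paper shows to be impossible, yet you treat it as the surviving outcome and read off only $v\in\st_\Gamma(v(x_i))$. The subsequent ``parallel argument'' then has to carry the entire weight of the link condition, and it is stated too loosely (what plays the role of $q_2'$, which subpath backtracks, why the $\Gamma^e$-distances satisfy the $\le 4$ hypothesis of Definition~\ref{def: admissible path}~(3), etc.) to be checkable. The right fix is to set up the trichotomy $d_\Gamma(v,v(x_i))\in\{0,1,2\}$ directly and eliminate $1$ and $2$ as the paper does.

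For part~(3), the proposal is too vague to evaluate. The paper's argument runs the same translation-to-$\Gamma$ mechanism: all $v(x_{k-1})$ for $i\le k\le j$ sit within distance $2$ of $v$, the translated geodesics $q_k$ are unique by the girth hypothesis, and $j>i+2$ forces some $q_kq_{k+1}$ to backtrack at a vertex $v(x_k)\neq v$, which lifts to a violation of Definition~\ref{def: admissible path}~(3). Your ``reduce $j-i\ge 3$ to the $j-i=2$ case covered by part~(2)'' is not a reduction in the usual sense: part~(2) draws a (non-contradictory) conclusion under hypotheses that need not hold for a sub-window, whereas part~(3) needs an outright contradiction; it is not explained how one gets from the former to the latter. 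Finally, your closing remark that only $\girth>3$ and $\girth>4$ are used is not accurate: the paper's proof of this lemma already invokes $\girth>6$ (uniqueness of the common neighbor $w$ in part~(2)) and $\girth>8$ (uniqueness of the $q_k$ in part~(3)).
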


\begin{proof}
    (1) Let $h \in G_v$. We denote $(g'_1,\cdots,g'_n)=(g_1,\cdots,g_ih,h^{-1}g_{i+1}, \cdots,g_n)$, that is, $g'_i=g_ih$, $g'_{i+1}=h^{-1}g_{i+1}$, and $\forall k\notin \{i,i+1\}, g'_k=g_k$. Since we have $g'_1\cdots g'_k = g_1\cdots g_k$ for any $k\neq i$, it's enough to show $p_{[x_{i-1},x_i]} \subset g'_1\cdots g'_i.\Gamma$. For brevity, define the path $q$ in $\Gamma$ by $q=(g_1\cdots g_i)^{-1}.p_{[x_{i-1},x_i]}$. Since $q$ is a geodesic in $\Gamma$ from $v(x_{i-1})$ to $v(x_i)$ and we have $\{v(x_{i-1}), v(x_i)\} \subset \st_\Gamma(v)$ by $v \in \st_{\Gamma}(v(x_{i-1}))\cap \st_{\Gamma}(v(x_i))$, we have $q \subset \st_{\Gamma}(v)$ by $\girth(\Gamma)>4$. This implies $q=h.q \subset h.\Gamma$. Hence, $p_{[x_{i-1},x_i]} = g_1\cdots g_i.q \subset g_1\cdots g_ih. \Gamma = g'_1\cdots g'_i.\Gamma$.

    (2) Let $h \in G_v$. We denote $(g'_1,\cdots,g'_n)=(g_1,\cdots,g_ih, g_{i+1}, h^{-1}g_{i+2}, \cdots,g_n)$, that is, $g'_i=g_ih$, $g'_{i+2}=h^{-1}g_{i+2}$, and $\forall k\notin \{i,i+2\}, g'_k=g_k$. Since we have $g'_1\cdots g'_k = g_1\cdots g_k$ for any $k\notin \{i,i+1\}$, it's enough to show $p_{[x_{k-1},x_k]} \subset g'_1\cdots g'_k.\Gamma$ for $k=i,i+1$. For brevity, define the paths $q_i$ and $q_{i+1}$ in $\Gamma$ by $q_k=(g_1\cdots g_k)^{-1}.p_{[x_{k-1},x_k]}$ for $k=i,i+1$. The paths $q_i$ and $q_{i+1}$ are geodesic in $\Gamma$. Also, we have $\{v(x_{i-1}), v(x_{i+1})\} \subset \st_\Gamma(v)$ and $d_\Gamma(v,v(x_i)) \le 2$ by $v \in \st_{\Gamma}(v(x_{i-1}))\cap \st_{\Gamma}(v(x_{i+1}))$ and $\emptyset \neq \supp(g_{i+1}) \subset \lk_\Gamma(v)$. Hence, we can see $v(x_i)=v$ by Definition \ref{def: admissible path} (3). 
    
    Indeed, if $d_{\Gamma}(v,v(x_i))=1$, then we have $v \in q_i \cap q_{i+1}$ and $\supp(g_{i+1}) \subset \st_\Gamma(v(x_i)) \cap \lk_\Gamma(v) = \{v(x_i)\}$ by $\girth(\Gamma) > 4$. Hence, $g_{i+1}.v=v$. This implies $g_1\cdots g_{i+1}.v \in p_{[x_{i-1},x_i]} \cap p_{[x_i,x_{i+1}]}$, which contradicts that the path $p_{[x_{i-1},x_{i+1}]}$ has no backtracking by $\max\{ d_{\Gamma^e}(x_{i-1},x_i), d_{\Gamma^e}(x_i,x_{i+1}) \} \le 4$. 
    
    Similarly, if $d_{\Gamma}(v,v(x_i))=2$ with $(v,w,v(x_i))$ being geodesic in $\Gamma$, then we have $w \in q_i \cap q_{i+1}$ and $\supp(g_{i+1}) \subset \st_\Gamma(v(x_i)) \cap \lk_\Gamma(v) = \{w\}$ by $\girth(\Gamma) > 6$. Hence, $g_{i+1}.w=w$. This implies $g_1\cdots g_{i+1}.w \in p_{[x_{i-1},x_i]} \cap p_{[x_i,x_{i+1}]}$, which again contradicts that the path $p_{[x_{i-1},x_{i+1}]}$ has no backtracking. 
    
    Thus, $v(x_i)=v$ and $\{v(x_{i-1}), v(x_{i+1})\} \subset \lk_\Gamma(v)$. This implies $q_k=h.q_k \subset h.\Gamma$ for any $k \in \{i,i+1\}$. Hence, $p_{[x_{i-1},x_i]} = g_1\cdots g_i.q_i \subset g_1\cdots g_ih. \Gamma$ and $p_{[x_i,x_{i+1}]} = g_1\cdots g_{i+1}.q_{i+1} \subset g_1\cdots g_{i+1}h. \Gamma = g_1\cdots g_ihg_{i+1}. \Gamma$ by $\supp(g_{i+1}) \subset \lk_\Gamma(v)$.

    (3) Suppose $j > i+2$ for contradiction. We have $\{v(x_{i-1}), v(x_{j-1})\} \subset \st_\Gamma(v)$ and $d_\Gamma(v,v(x_{k-1})) \le 2$ for each $k$ with $i<k<j$ by $v \in \st_{\Gamma}(v(x_{i-1}))\cap \st_{\Gamma}(v(x_{j-1}))$ and $\emptyset \neq \supp(g_k) \subset \lk_\Gamma(v)$ for each $k$ with $i<k<j$. For each $k\in \{i,\cdots,j\}$, define the path $q_k$ in $\Gamma$ by $q_k=(g_1\cdots g_k)^{-1}.p_{[x_{k-1},x_k]}$. The path $q_k$ is a unique geodesic in $\Gamma$ from $v(x_{k-1})$ to $v(x_k)$ by $\girth(\Gamma) > 8$. By $j > i+2$, we can see that there exists $k$ with $i<k<j$ such that $v(x_k) \neq v$ and the paths $q_kq_{k+1}$ has backtracking at $v(x_k)$. Hence, in the same way as the proof of Lemma \ref{lem:transformation of admissible decomposition} (2), we can see that the path $p_{[x_{k-1}, x_{k+1}]}$ has backtracking. Indeed, when $d_{\Gamma}(v,v(x_k))=1$, we have $v \in q_k \cap q_{k+1}$ and $\supp(g_{k+1}) \subset \st_\Gamma(v(x_k)) \cap \lk_\Gamma(v) = \{v(x_k)\}$, hence $g_{k+1}.v=v$. This implies $g_1\cdots g_{k+1}.v \in p_{[x_{k-1},x_k]} \cap p_{[x_k,x_{k+1}]}$. Similarly, if $d_{\Gamma}(v,v(x_k))=2$ with $(v,w,v(x_k))$ being geodesic in $\Gamma$, then we have $w \in q_k \cap q_{k+1}$ and $\supp(g_{k+1}) \subset \st_\Gamma(v(x_k)) \cap \lk_\Gamma(v) = \{w\}$, hence $g_{k+1}.w=w$. This implies $g_1\cdots g_{k+1}.w \in p_{[x_{k-1},x_k]} \cap p_{[x_k,x_{k+1}]}$. Since we have $\max\{ d_{\Gamma^e}(x_{k-1},x_k), d_{\Gamma^e}(x_k,x_{k+1}) \} = \max\{ d_{\Gamma^e}(v(x_{k-1}),v(x_k)), d_{\Gamma^e}(v(x_k),v(x_{k+1})) \} \le 4$, the existence of backtracking of $p_{[x_{k-1}, x_{k+1}]}$ contradicts Definition \ref{def: admissible path} (3).
\end{proof}

Lemma \ref{lem:transformation of admissible decomposition} induces a useful property of $\A_1(p,\mathbf{x})$ in Lemma \ref{lem: properties of A_0 A_1} below.

\begin{lem}\label{lem: properties of A_0 A_1}
    Let $a\in V(\Gamma)$ and $b\in V(\Gamma^e)$ with $a \neq b$. Let $p$ be an admissible path in $\Gamma^e$ from $a$ to $b$ with respect to a subsequence $\mathbf{x}=(x_0,\cdots,x_n)$ of $V(p)$. Then, for any $(g_1,\cdots,g_n) \in \A_0(p,\mathbf{x})$, there exists $(h_1,\cdots,h_n) \in \A_1(p,\mathbf{x})$ such that $g_1\cdots g_n = h_1\cdots h_n$. In particular, $\A_1(p,\mathbf{x}) \neq \emptyset$.
\end{lem}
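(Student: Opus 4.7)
The plan is to argue by induction on the non-negative integer $\sigma(g_1,\ldots,g_n) := \sum_{k=1}^n \|g_k\| - \|g_1\cdots g_n\|$. When $\sigma = 0$ the tuple $(g_1,\ldots,g_n)$ already lies in $\A_1(p,\mathbf{x})$ by Definition \ref{def: admissible decomposition}, so one may take $(h_1,\ldots,h_n) := (g_1,\ldots,g_n)$. The inductive step reduces to producing a tuple $(g'_1,\ldots,g'_n) \in \A_0(p,\mathbf{x})$ with identical product $g'_1\cdots g'_n = g_1\cdots g_n$ but strictly smaller value of $\sigma$; the inductive hypothesis applied to this new tuple then yields the required element of $\A_1(p,\mathbf{x})$.

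To execute the inductive step, the first task is to locate a reducible pair of syllables. Fix a normal form of each $g_k$; by Remark \ref{rem:reduced decomposition} and $\sigma > 0$, the concatenated word fails to be a normal form of $g_1\cdots g_n$, so Theorem \ref{thm:normal form theorem} produces syllables $s,s'$ in the concatenation sharing a common support $v \in V(\Gamma)$ with every intervening syllable having support in $\lk_\Gamma(v)$. Since each individual normal form is already reduced, $s$ and $s'$ must come from distinct factors $g_i$ and $g_j$ with $i < j$. Remark \ref{rem:propoerty of A_0(p,x)} (2) gives $\supp(g_k) \subset \st_\Gamma(v(x_{k-1}))$ for each $k$, whence $v \in \st_\Gamma(v(x_{i-1})) \cap \st_\Gamma(v(x_{j-1}))$ and $\supp(g_k) \subset \lk_\Gamma(v)$ for every $i < k < j$; Lemma \ref{lem:transformation of admissible decomposition} (3) then forces $j \in \{i+1,\, i+2\}$.

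The second task is to slide $s$ across using Lemma \ref{lem:transformation of admissible decomposition} (1) or (2) with $h = s^{-1}$. Writing $g_i = g_i^{(1)} s g_i^{(2)}$ with $\supp(g_i^{(2)}) \subset \lk_\Gamma(v)$ (by placing the syllables of $g_i$ that lie after $s$ into $g_i^{(2)}$), the commutation between $s$ and $g_i^{(2)}$ yields $g_i s^{-1} = g_i^{(1)} g_i^{(2)}$, so $\|g_i s^{-1}\| \le \|g_i\| - 1$; a symmetric decomposition $g_j = g_j^{(1)} s' g_j^{(2)}$ with $\supp(g_j^{(1)}) \subset \lk_\Gamma(v)$ gives $s g_j = g_j^{(1)}(ss')g_j^{(2)}$, and $ss' \in G_v$ with $\|ss'\| \le 1$ provides $\|s g_j\| \le \|g_j\|$. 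When $j = i+1$, the condition $v \in \st_\Gamma(v(x_{i-1})) \cap \st_\Gamma(v(x_i))$ is satisfied and Lemma \ref{lem:transformation of admissible decomposition} (1) delivers the new tuple $(g_1,\ldots, g_i s^{-1}, s g_{i+1}, \ldots, g_n) \in \A_0(p,\mathbf{x})$. When $j = i+2$, the condition $\supp(g_{i+1}) \subset \lk_\Gamma(v)$ was derived in the preceding paragraph, so Lemma \ref{lem:transformation of admissible decomposition} (2) applies and yields $(g_1,\ldots, g_i s^{-1}, g_{i+1}, s g_{i+2}, \ldots, g_n) \in \A_0(p,\mathbf{x})$. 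In either case $\sigma$ strictly decreases, closing the induction.

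The hardest step, I expect, is the translation of ``the decomposition is not reduced'' into an explicit pair of cancelling syllables supported on positions $i < j$ with $j - i \le 2$; this is where the girth hypothesis $\girth(\Gamma) > 20$ enters through Lemma \ref{lem:transformation of admissible decomposition} (3). Without that bound, a would-be cancellation could span three or more consecutive factors and a purely local move on $(g_i, g_{i+1}, g_{i+2})$ would no longer suffice, forcing a more elaborate rewriting procedure.
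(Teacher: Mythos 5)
Your proposal is correct and follows essentially the same route as the paper's proof: find a cancelling pair of syllables via Theorem \ref{thm:normal form theorem}, use Remark \ref{rem:propoerty of A_0(p,x)}(2) to place the common support in $\st_\Gamma(v(x_{i-1}))\cap\st_\Gamma(v(x_{j-1}))$, invoke Lemma \ref{lem:transformation of admissible decomposition}(3) to get $j\le i+2$, and then slide the syllable with Lemma \ref{lem:transformation of admissible decomposition}(1) or (2), dropping $\sum_k\|g_k\|$ by at least one each time. Your phrasing as an induction on $\sigma$ is a cleaner packaging of the paper's iteration-until-stable argument, but the underlying descent is identical.
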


\begin{proof}
    Let $\mathbf{g}=(g_1,\cdots,g_n) \in \A_0(p,\mathbf{x})$. Let $g_i=h_{i,1}\cdots h_{i, N_i}$ be a normal form of $g_i$ for each $i\in\{1,\cdots,n\}$. Suppose $\|g_1\cdots g_n\| < \sum_{k=1}^n \|g_k\|$, then by Theorem \ref{thm:normal form theorem} and Remark \ref{rem:reduced decomposition}, there exist $i,j$ with $1 \le i < j \le n$ and syllables $h_{i,\ell}$ and $h_{j,m}$ such that $\supp(h_{i,\ell}) = \supp(h_{j,m})$ and $\{\supp(h_{k,k'}) \mid (k=i \wedge \ell < k') \vee (i<k<j) \vee (k=j \wedge k'<m) \} \subset \lk_\Gamma(\supp(h_{i,\ell}))$. By Remark \ref{rem:propoerty of A_0(p,x)} (2), we have $\supp(h_{i,\ell}) \in \supp(g_i)\cap\supp(g_j) \subset \st_\Gamma(v(x_{i-1}))\cap\st_\Gamma(v(x_{j-1}))$. By Lemma \ref{lem:transformation of admissible decomposition} (3), we have $j \le i+2$. 
    
    When $j=i+1$, the sequence $\mathbf{g}_1 \in (\Gamma\G)^n$ defined by $\mathbf{g}_1 = (g^{(1)}_1,\cdots,g^{(1)}_n) = (g_1,\cdots,g_ih_{i,\ell}^{-1},h_{i,\ell}g_{i+1}, \cdots,g_n)$ satisfies $\mathbf{g}_1 \in \A_0(p,\mathbf{x})$ by Lemma \ref{lem:transformation of admissible decomposition} (1). We have $\sum_{k=1}^n\|g^{(1)}_k\|<\sum_{k=1}^n\|g_k\|$ by $g_ih_{i,\ell}^{-1}=h_{i,1}\cdots h_{i,\ell-1}h_{i,\ell+1} \cdots h_{i, N_i}$ and $h_{i,\ell}g_{i+1} = h_{i+1,1}\cdots (h_{i,\ell}h_{i+1,m}) \cdots h_{i+1, N_{i+1}}$. Also, $g^{(1)}_1\cdots g^{(1)}_n = g_1\cdots g_n$ is obvious.
    
    When $j=i+2$, the sequence $\mathbf{g}_1 \in (\Gamma\G)^n$ defined by $\mathbf{g}_1 = (g^{(1)}_1,\cdots,g^{(1)}_n) = (g_1,\cdots,g_ih_{i,\ell}^{-1}, g_{i+1}, h_{i,\ell}g_{i+2}, \cdots,g_n)$ satisfies $\mathbf{g}_1 \in \A_0(p,\mathbf{x})$ by Lemma \ref{lem:transformation of admissible decomposition} (2). We can also see $\sum_{k=1}^n\|g^{(1)}_k\|<\sum_{k=1}^n\|g_k\|$ and $g^{(1)}_1\cdots g^{(1)}_n = g_1\cdots g_n$ by $\supp(g_{i+1}) \subset \lk_\Gamma(\supp(h_{i,\ell}))$. 
    
    Suppose $\|g^{(1)}_1\cdots g^{(1)}_n\| < \sum_{k=1}^n \|g^{(1)}_k\|$, then we can repeat this process and eventually get a sequence $\mathbf{g}, \mathbf{g}_1, \cdots, \mathbf{g}_M  \in \A_0(p,\mathbf{x})$ such that $\sum_{k=1}^n\|g^{(m+1)}_k\|<\sum_{k=1}^n\|g^{(m)}_k\|$, $g^{(m+1)}_1\cdots g^{(m+1)}_n = g^{(m)}_1\cdots g^{(m)}_n$ for any $m \ge 1$ and $\|g^{(M)}_1\cdots g^{(M)}_n\| = \sum_{k=1}^n\|g^{(M)}_k\|$ since we cannot continue this process infinitely by $\forall m \ge 1, \sum_{k=1}^n\|g^{(m)}_k\| \ge 0$. Thus, we have $\mathbf{g}_M \in \A_1(p,\mathbf{x})$ and $g^{(M)}_1\cdots g^{(M)}_n = g_1\cdots g_n$. Hence, $\A_1(p,\mathbf{x})\neq \emptyset$ also follows from $\A_0(p,\mathbf{x})\neq \emptyset$.
\end{proof}

We now prove the key result about admissible paths. Proposition \ref{prop:Gamma is embedded into Gammae} verifies that the notion of an admissible path is an analogue of a path without backtracking in a tree.

\begin{prop}\label{prop:Gamma is embedded into Gammae}
    Let $x,y \in V(\Gamma)$. If $p$ is an admissible path in $\Gamma^e$ from $x$ to $y$, then we have $p \subset \Gamma$ and $p$ is geodesic in $\Gamma^e$.
\end{prop}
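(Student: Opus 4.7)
The plan is to show that any admissible pair $(p, \mathbf{x})$ with $\mathbf{x} = (x_0,\cdots,x_n)$ and endpoints $x_0 = x$, $x_n = y$ in $V(\Gamma)$ must have $n \le 1$, and then to dispatch the cases $n \le 1$ by hand. First, pick $(g_1,\cdots,g_n) \in \A_1(p, \mathbf{x})$ via Lemma \ref{lem: properties of A_0 A_1}. Since $x_n = y = v(y)$, Remark \ref{rem:propoerty of A_0(p,x)} (2) gives $g_1\cdots g_n \in \stab_{\Gamma\G}(v(y))$, and Corollary \ref{cor:Stab(v)} then yields $\supp(g_1\cdots g_n) \subset \st_\Gamma(v(y))$. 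Because $(g_1,\cdots,g_n) \in \A_1$, the product is reduced, so $\bigcup_i \supp(g_i) = \supp(g_1\cdots g_n) \subset \st_\Gamma(v(y))$; combined with $\supp(g_n) \subset \st_\Gamma(v(x_{n-1}))$, this yields $\supp(g_n) \subset \st_\Gamma(v(x_{n-1})) \cap \st_\Gamma(v(y))$.

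Assuming $n \ge 2$, the plan is to derive a contradiction with Definition \ref{def: admissible path} (2) at index $n-1$ by showing $p_{[x_{n-2},x_n]} \subset g_1\cdots g_{n-1}.\Gamma$. By Remark \ref{rem:propoerty of A_0(p,x)} (3), the path $q := (g_1\cdots g_n)^{-1}.p_{[x_{n-1},x_n]}$ is a geodesic in $\Gamma$ of length $d := d_\Gamma(v(x_{n-1}), v(y)) \ge 1$, and the key step is to show that $g_n$ fixes every vertex of $q$. The endpoints are fixed because $\supp(g_n)$ lies in both their stars; for an intermediate vertex a case split on $d$ finishes the job: when $d = 2$, $\girth(\Gamma) > 4$ collapses $\st_\Gamma(v(x_{n-1})) \cap \st_\Gamma(v(y))$ to the single unique common neighbor $w$, so $g_n \in G_w$ fixes $w$; when $d \ge 3$, the triangle inequality forces the star intersection to be empty, so $g_n = 1$; and when $d = 1$ there is no intermediate vertex. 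Consequently $p_{[x_{n-1},x_n]} = g_1\cdots g_n.q = g_1\cdots g_{n-1}.q \subset g_1\cdots g_{n-1}.\Gamma$, and together with $p_{[x_{n-2},x_{n-1}]} \subset g_1\cdots g_{n-1}.\Gamma$ this contradicts admissibility (2).

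For the remaining cases, $n = 0$ is trivial, and for $n = 1$ the path $p$ is already geodesic in $\Gamma^e$ by admissibility (1) and sits in $g_1.\Gamma$. An analogous support calculation gives $\supp(g_1) \subset \st_\Gamma(x) \cap \st_\Gamma(y)$, and the same case split on $d = d_\Gamma(x, y)$ finishes it: if $d \ge 3$, Remark \ref{rem:intersection of stabilizers} (3) applied to the pair $g_1.\Gamma, \Gamma$ (whose intersection contains $x$ and $y$ at distance $d \ge 3$) forces $g_1 = 1$; if $d = 1$, Lemma \ref{lem:edges of Gammae come from Gamma} places the single edge of $p$ inside $\Gamma$; and if $d = 2$, $\supp(g_1) \subset \{w\}$ for the unique common neighbor $w$ of $x$ and $y$ in $\Gamma$, hence $g_1 \in G_w$ fixes $w$, so the midpoint $g_1.w = w$ of $p$ lies in $V(\Gamma)$, placing $p$ inside $\Gamma$.

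The main obstacle is the $n \ge 2$ step, specifically the claim that $g_n$ acts trivially on the interior of the geodesic $q$. This is where the girth hypothesis $\girth(\Gamma) > 4$ (available via $\girth(\Gamma) > 20$) is indispensable: it pins down the unique common neighbor when $d = 2$ and forces the star intersection to be empty when $d \ge 3$, both of which are needed to conclude $g_n.q = q$ and thereby collapse the last segment of $p$ back into $g_1\cdots g_{n-1}.\Gamma$.
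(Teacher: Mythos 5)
Your proof is correct, and while it opens exactly as the paper's does (pick $(g_1,\dots,g_n)\in\A_1(p,\mathbf{x})$, use $g_1\cdots g_n\in\stab_{\Gamma\G}(y)$ plus reducedness to get $\bigcup_i\supp(g_i)\subset\st_\Gamma(y)$), your endgame for $n\ge 2$ is genuinely different and cleaner. The paper splits into the cases $n=2$ with $g_1=1$, $n=2$ with $g_1\neq 1$, and $n\ge 3$, and in the sub-cases where the translated geodesics backtrack it derives a contradiction with Definition \ref{def: admissible path}~(3); you instead observe once and for all that $\supp(g_n)\subset\st_\Gamma(v(x_{n-1}))\cap\st_\Gamma(y)$ together with $\girth(\Gamma)>4$ forces $g_n$ to fix every vertex of the geodesic $q=(g_1\cdots g_n)^{-1}.p_{[x_{n-1},x_n]}$ (length $1$: both endpoints; length $2$: the unique common neighbour; length $\ge 3$: empty intersection, $g_n=1$), so $p_{[x_{n-1},x_n]}\subset g_1\cdots g_{n-1}.\Gamma$ and Definition \ref{def: admissible path}~(2) is violated at $i=n-1$, uniformly in $n$. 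This buys brevity and removes the backtracking bookkeeping; note, though, that condition (3) is not avoided globally, since your appeal to $\A_1(p,\mathbf{x})\neq\emptyset$ goes through Lemma \ref{lem: properties of A_0 A_1} and Lemma \ref{lem:transformation of admissible decomposition}, whose proofs use it. Your treatment of $n\le 1$ matches the paper's. One cosmetic point: in the $n=1$, $d_\Gamma(x,y)\ge 3$ case you cite the ``in particular'' clause of Remark \ref{rem:intersection of stabilizers}~(3), which as stated needs $\diam_{\Gamma^e}$ of the intersection to be at least $3$, something not yet available before Corollary \ref{cor:Gamma is embedded into Gammae}; but your own containment $\supp(g_1)\subset\st_\Gamma(x)\cap\st_\Gamma(y)=\emptyset$ already gives $g_1=1$ directly, so nothing is lost.
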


\begin{proof}
Since the case $x = y$ is trivial, we assume $x \neq y$. Let $p$ be admissible with respect to a subsequence $\mathbf{x} = (x_0,\cdots,x_n)$ of $V(p)$. By Lemma \ref{lem: properties of A_0 A_1}, there exists $(g_1,\cdots,g_n) \in \A_1(p,\mathbf{x})$. Define $g$ by $g=g_1 \cdots g_n$. Note $v(y) = y$ by $y \in V(\Gamma)$ and Corollary \ref{cor:vertex orbits are disjoint}. This implies $g.y = g.v(y) = y$. Hence, $\supp(g) \subset \st_\Gamma(y)$ by Corollary \ref{cor:Stab(v)}. By this and $\|g\| = \sum_{i = 1}^n \|g_i\|$, we have $\bigcup_{i=1}^n \supp(g_i)=\supp(g) \subset \st_\Gamma(y)$. This implies $d_\Gamma(v(x_i), y) \le 2$ for any $i\ge1$ since we have $\supp(g_{i+1})\subset\st_\Gamma(v(x_i))\cap\st_\Gamma(y)$ and $g_{i+1}\neq1$ by Remark \ref{rem:propoerty of A_0(p,x)} (1) and (2). We claim $n=1$. Suppose $n > 1$ for contradiction, then one of the following three cases holds: (1) $n=2$ and $g_1=1$, (2) $n=2$ and $g_1\neq1$, (3) $n\ge3$.

In case (1), $g_1=1$ implies $v(x_1)\in \Gamma$. By $d_\Gamma(v(x_1),y) \le 2$ and $\supp(g_2)\subset\st_\Gamma(v(x_1))\cap\st_\Gamma(y)$, $g_2$ fixes the unique geodesic in $\Gamma$ from $v(x_1)$ to $y$. This implies $p_{[x_1,y]} \subset \Gamma$. Since we also have $p_{[x,x_1]} \subset \Gamma$ by $g_1=1$, this contradicts $n=2$ and Definition \ref{def: admissible path} (2).

In case (2), we have $d_\Gamma(x,y) \le 2$ by $\emptyset \neq \supp(g_1) \subset \st_\Gamma(x) \cap \st_\Gamma(y)$. By $\max\{d_\Gamma(x,v(x_1)), d_\Gamma(v(x_1),y)\} \le 4$ and $\girth(\Gamma) > 8$, we have $|\geo_\Gamma(x,v(x_1))| = |\geo_\Gamma(v(x_1),y)|=1$. Hence, either (2-a) or (2-b) below holds, (2-a) the unique geodesic $g_1^{-1}.p_{[x,x_1]}$ in $\Gamma$ from $x$ to $v(x_1)$ and the unique geodesic $g^{-1}.p_{[x_1,y]}$ in $\Gamma$ from $v(x_1)$ to $y$ has a backtracking at $v(x_1)$, (2-b) the sequence $(x,v(x_1),y)$ is a geodesic path in $\Gamma$.

In case (2-a), let $c \in V(\Gamma)$ satisfy $(v(x_1),c) \in E(g_1^{-1}.p_{[x,x_1]}) \cap E(g^{-1}.p_{[x_1,y]})$, then we have $g_2.c = c$. Indeed, if $d_\Gamma(v(x_1),y)=1$, then we have $c=y$ and $\supp(g_2)\subset \st_\Gamma(v(x_1))\cap\st_\Gamma(y)=\{v(x_1),y\}$. Also if the sequence $(v(x_1),c',y)$ is geodesic, then we have $c=c'$ and $\supp(g_2)\subset \st_\Gamma(v(x_1))\cap\st_\Gamma(y)=\{c\}$. By $g_2.c = c$, we have $g_1.c = g_1g_2.c \in p_{[x,x_1]} \cap p_{[x_1,y]}$. Hence, the subpaths $p_{[x,x_1]}$ and $p_{[x_1,y]}$ have backtracking at $x_1$. We also have $\max\{d_{\Gamma^e}(x,x_1), d_{\Gamma^e}(x_1,y)\} \le 4$. This contradicts Definition \ref{def: admissible path} (3).

In case (2-b), we have $x_1=g_1.v(x_1)=v(x_1) \in \Gamma$ by $\supp(g_1) \subset \st_\Gamma(x)\cap\st_\Gamma(y)=\{v(x_1)\}$. This implies $p \subset \Gamma$, hence contradicts $n=2$ and Definition \ref{def: admissible path} (2).

In case (3), as in case (2), either (3-a) or (3-b) below holds, (3-a) the unique geodesic in $\Gamma$ from $v(x_{n-2})$ to $v(x_{n-1})$ and the unique geodesic in $\Gamma$ from $v(x_{n-1})$ to $y$ has a backtracking at $v(x_{n-1})$, (3-b) the sequence $(v(x_{n-2}),v(x_{n-1}),y)$ is a geodesic path in $\Gamma$. In case (3-a), we get a contradiction to Definition \ref{def: admissible path} (3) in the same way as case (2-a). In case (3-b), we have $x_{n-1}=g_1\cdots g_{n-1}.v(x_{n-1})=g_1\cdots g_{n-2}.v(x_{n-1})$ by $\supp(g_{n-1}) \subset \st_\Gamma(v(x_{n-2}))\cap\st_\Gamma(y)=\{v(x_{n-1})\}$. By $n\ge3$, this implies $p_{[x_{n-3},x_{n-1}]} \subset g_1\cdots g_{n-2}.\Gamma$. This contradicts Definition \ref{def: admissible path} (2).

Thus, $n = 1$. By this and Definition \ref{def: admissible path} (1), $p$ is geodesic. If $g_1 = 1$, then $p \subset g_1.\Gamma = \Gamma$. Assume $g_1 \neq 1$ in the following, then we have $\emptyset \neq \supp(g_1) \subset \st_\Gamma(x)\cap\st_\Gamma(y)$. This implies $d_\Gamma(x,y) \le 2$. If $d_\Gamma(x,y) = 1$, then $p = (x,y) \subset \Gamma$. If $d_\Gamma(x,y) = 2$, then let the sequence $(x,c,y)$ with $c \in V(\Gamma)$ be the unique geodesic path in $\Gamma$ from $x$ to $y$. By $\supp(g_1) \subset \st_\Gamma(x)\cap\st_\Gamma(y) = \{c\}$, we have $g_1.c = c$. Thus, $p = g_1.(x,c,y) = (x,c,y) \subset \Gamma$
\end{proof}

Proposition \ref{prop:Gamma is embedded into Gammae} has the following immediate corollary. Corollary \ref{cor:Gamma is embedded into Gammae} implies that $\Gamma$ is convex in $\Gamma^e$ and is embedded in $\Gamma^e$ not only as a graph but also as a metric space.

\begin{cor}\label{cor:Gamma is embedded into Gammae}
    If $x,y \in V(\Gamma)$, then for any geodesic $p$ in $\Gamma^e$ from $x$ to $y$, we have $p \subset \Gamma$. In particular, $\Gamma$ is convex in $\Gamma^e$.
\end{cor}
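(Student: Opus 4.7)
The plan is to derive this corollary directly from Proposition \ref{prop:Gamma is embedded into Gammae} together with Lemma \ref{lem: geodesics are admissible}, with essentially no new work required.

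First I would fix $x, y \in V(\Gamma)$ and let $p$ be an arbitrary geodesic in $\Gamma^e$ from $x$ to $y$. By Lemma \ref{lem: geodesics are admissible}, every geodesic in $\Gamma^e$ is admissible, so $p$ is admissible. Then Proposition \ref{prop:Gamma is embedded into Gammae}, applied to the endpoints $x, y \in V(\Gamma)$, yields $p \subset \Gamma$. This proves the first assertion.

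For the ``in particular'' clause, convexity of $\Gamma$ inside $\Gamma^e$ means exactly that every $\Gamma^e$-geodesic between two vertices of $\Gamma$ is contained in $\Gamma$, which is what we just established. It is also worth noting that since $\Gamma$ is an induced subgraph of $\Gamma^e$ (Convention \ref{conv:Gamma is embedded to Gammae}), the fact that a $\Gamma^e$-geodesic between $x,y \in V(\Gamma)$ lies in $\Gamma$ implies $d_{\Gamma^e}(x,y) = d_\Gamma(x,y)$, so $\Gamma$ isometrically embeds into $\Gamma^e$ as claimed in the introduction.

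There is no real obstacle here; the entire content lies in Proposition \ref{prop:Gamma is embedded into Gammae}, and the corollary is simply the specialization of that proposition to admissible paths of the special form ``geodesic.'' The only thing to verify carefully is that we are entitled to apply Lemma \ref{lem: geodesics are admissible} without a non-triviality hypothesis, which is immediate since the lemma covers the degenerate case $x = y$ as well (every length-$0$ path is defined to be admissible in Definition \ref{def: admissible path}).
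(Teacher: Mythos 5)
Your proof is correct and is essentially identical to the paper's own proof, which likewise just cites Lemma \ref{lem: geodesics are admissible} and Proposition \ref{prop:Gamma is embedded into Gammae}. Your extra remarks about the degenerate case $x=y$ and the resulting isometric embedding of $\Gamma$ into $\Gamma^e$ are accurate but not needed for the statement.
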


\begin{proof}
    This follows from Lemma \ref{lem: geodesics are admissible} and Proposition \ref{prop:Gamma is embedded into Gammae}.
\end{proof}

\subsection{Classification of geodesic bigons and triangles}\label{subsec:Classification of geodesic bigons and triangles}

In Section \ref{subsec:Classification of geodesic bigons and triangles}, suppose that $\Gamma$ is a connected simplicial graph with $\girth(\Gamma) > 20$ and $\G=\{G_v\}_{v \in V(\Gamma)}$ is a collection of non-trivial groups.

The goal of this section is to prove Proposition \ref{prop:paths must penetrate long} and Proposition \ref{prop: classification of geodesic triangle}. We start with introducing a useful property of an admissible path, which we need in the proofs of Lemma \ref{lem:classification of geodesic bigon}, Proposition \ref{prop:paths must penetrate long}, Proposition \ref{prop: classification of geodesic triangle}, and Lemma \ref{lem:geodesic decomposition for hGammae exists}.

\begin{defn}\label{def:straight}
    We say that an admissible path $p$ in $\Gamma^e$ is \emph{straight} if any subpath $q$ of $p$ such that $q \subset g.\Gamma$ with some $g \in \Gamma\G$ is geodesic.
\end{defn}

\begin{rem}\label{rem:geodesic is a straight admissible path}
    By Lemma \ref{lem: geodesics are admissible}, any geodesic path in $\Gamma^e$ is a straight admissible path.
\end{rem}

\begin{rem}\label{rem:subpath for straightness}
    If $p$ is an admissible path in $\Gamma^e$ with respect to a subsequence $\mathbf{x}=(x_0,\cdots,x_n)$ of $V(p)$ and $q$ is a subpath of $p$ such that $q \subset g.\Gamma$ with some $g \in \Gamma\G$, then by Definition \ref{def: admissible path} (2) there exists $i \in \{0, \cdots, n-2\}$ such that $q$ is a subpath of $p_{[x_i, x_{i+2}]}$.
\end{rem}

The reason straight admissible paths are useful is because it has permanence property when we take its subpaths, which we prove in Lemma \ref{lem:subpath of straight admissible path} below. Note that a subpath of an admissible path is not necessarily admissible in general. In Lemma \ref{lem:subpath of straight admissible path} and Lemma \ref{lem:classification of geodesic bigon}, the point of the condition that a path $p$ has no backtracking is that Definition \ref{def: admissible path} (3) is always satisfied when we consider subpaths of $p$.

\begin{lem}\label{lem:subpath of straight admissible path}
    Let $p$ be a straight admissible path in $\Gamma^e$ without backtracking, then any subpath of $p$ is a straight admissible path.
\end{lem}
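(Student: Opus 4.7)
The plan is to construct a subsequence $\mathbf{y}$ of $V(q)$ certifying admissibility, and to verify straightness separately. Write $p$ as admissible with respect to $\mathbf{x}=(x_0,\cdots,x_n)$, and let $q=p_{[a,b]}$ be a subpath (the case $a=b$ is handled by the length-zero convention at the end of Definition \ref{def: admissible path}). Note that $q$ inherits no backtracking from $p$.

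Straightness of $q$ is immediate: any subpath of $q$ lying in some $g.\Gamma$ is also a subpath of $p$ lying in $g.\Gamma$, hence geodesic by straightness of $p$. For admissibility, I would first exhibit a subsequence satisfying condition (1) of Definition \ref{def: admissible path} and then prune it until (2) is forced. Let $x_k,x_{k+1},\cdots,x_\ell$ denote those $x_j$'s appearing strictly between $a$ and $b$ on $p$ (a possibly empty list), and set $\mathbf{y}^{(0)}=(a,x_k,x_{k+1},\cdots,x_\ell,b)$, or simply $(a,b)$ if the list is empty. Each consecutive pair in $\mathbf{y}^{(0)}$ is either an entire segment $p_{[x_{j-1},x_j]}\subset g_j.\Gamma$ or a sub-arc of such a segment, hence lies in a single copy of $\Gamma$; straightness of $p$ then promotes each such piece to a geodesic, so $\mathbf{y}^{(0)}$ satisfies (1).

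Next, pick $\mathbf{y}$ to be a subsequence of $\mathbf{y}^{(0)}$ with first term $a$, last term $b$, satisfying (1), and with the minimum possible number of terms. Condition (2) is then automatic: if for some middle index $i$ there were $g\in\Gamma\G$ with $p_{[y_{i-1},y_{i+1}]}\subset g.\Gamma$, then straightness of $p$ would force $p_{[y_{i-1},y_{i+1}]}$ to be geodesic, so removing $y_i$ would yield a shorter subsequence still satisfying (1), contradicting minimality. Condition (3) is vacuous because $q$ has no backtracking. Thus $q$ is admissible with respect to $\mathbf{y}$.

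The main conceptual point, and the only place where straightness does real work, is that ``contained in one copy of $\Gamma$'' upgrades to ``geodesic'' for subpaths of $p$; this lets a single minimality argument handle (1) and (2) simultaneously, without any delicate case analysis on distances in $\Gamma^e$ or on whether $a$ or $b$ happens to coincide with some $x_j$. The bookkeeping over such corner cases is precisely what is absorbed into the a priori choice of $\mathbf{y}^{(0)}$, and is the only mild obstacle I anticipate.
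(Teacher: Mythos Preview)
Your argument is correct. Both your approach and the paper's hinge on the same key observation---that straightness upgrades ``contained in one copy of $\Gamma$'' to ``geodesic''---but you deploy it differently. The paper does an explicit case analysis on how many of the original breakpoints $x_i$ lie between $q_-$ and $q_+$ (the cases $j=i+1$, $j=i+2$, $j\ge i+3$), and in each case writes down a specific admissible subsequence by hand, checking whether certain merged pieces still fit in one copy. Your approach instead mirrors the proof of Lemma~\ref{lem: geodesics are admissible}: produce any subsequence satisfying condition~(1), then minimize its length; straightness is exactly what lets a merged piece $p_{[y_{i-1},y_{i+1}]}\subset g.\Gamma$ remain geodesic, so minimality forces~(2) without casework. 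Your route is shorter and more uniform; the paper's is more explicit about what the resulting subsequence actually looks like, which is occasionally handy downstream but not essential here.

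One small remark: your phrase ``condition (3) is vacuous'' is slightly imprecise---the hypothesis of~(3) may well be met---but the conclusion (no backtracking on the relevant subpath) holds automatically since $q$ inherits no backtracking from $p$, so the condition is trivially satisfied rather than vacuous. This does not affect the argument.
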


\begin{proof}
    Let $p$ be admissible with respect to a subsequence $(x_0,\cdots,x_n)$. Let $q$ be a subpath of $p$ and let $i,j$ with $1 \le i \le j \le n$ satisfy $q_- \in p_{[x_{i-1}, x_i]}$ and $q_+ \in p_{[x_{j-1}, x_j]}$. If there exists $h \in \Gamma\G$ satisfying $q \subset h.\Gamma$, then $q$ is geodesic in $\Gamma^e$ since $p$ is straight, hence $q$ is admissible with respect to $(q_-, q_+)$. In the following, we assume that there exists no $h \in \Gamma\G$ satisfying $q \subset h.\Gamma$. This implies $j \ge i+1$.
    
    When $j = i+1$, $q$ is admissible with respect to $(q_-, x_i, q_+)$.
    
    When $j = i+2$, if there exists $h \in \Gamma\G$ satisfying $q_{[q_-, x_{i+1}]} \subset h.\Gamma$, then $q_{[q_-, x_{i+1}]}$ is geodesic in $\Gamma^e$ since $p$ is straight. Hence, $q$ is admissible with respect to $(q_-, x_{i+1}, q_+)$. Similarly, if there exists $h \in \Gamma\G$ satisfying $q_{[x_i, q_+]} \subset h.\Gamma$, then $q$ is admissible with respect to $(q_-, x_i, q_+)$. If there exists no $h \in \Gamma\G$ satisfying $q_{[q_-, x_{i+1}]} \subset h.\Gamma$ nor $q_{[x_i, q_+]} \subset h.\Gamma$, then $q$ is admissible with respect to $(q_-, x_i, x_{i+1}, q_+)$.

    When $j \ge i+3$, define $i' \in \{i,i+1\}$ by $i' = i+1$ if there exists $h \in \Gamma\G$ satisfying $q_{[q_-, x_{i+1}]} \subset h.\Gamma$ and by $i' = i$ if there exists no $h \in \Gamma\G$ satisfying $q_{[q_-, x_{i+1}]} \subset h.\Gamma$. Since $p$ is straight, $q_{[q_-, x_{i'}]}$ is geodesic in $\Gamma^e$. Similarly, define $j' \in \{j-2,j-1\}$ by $j' = j-2$ if there exists $h \in \Gamma\G$ satisfying $q_{[x_{j-2}, q_+]} \subset h.\Gamma$ and by $j' = j-1$ if there exists no $h \in \Gamma\G$ satisfying $q_{[x_{j-2}, q_+]} \subset h.\Gamma$. Then, $q$ is admissible with respect to $(q_-, x_{i'}, \cdots, x_{j'}, q_+)$.

    It's straightforward to show that $q$ is straight.
\end{proof}

First, we prove Proposition \ref{prop:paths must penetrate long}. Lemma \ref{lem:classification of geodesic bigon} below is an auxiliary lemma for this, which is also used in the proof of Proposition \ref{prop: classification of geodesic triangle}.

\begin{lem}\label{lem:classification of geodesic bigon}
    Let $a,b,c \in V(\Gamma^e)$ and let $p,q$ be straight admissible paths in $\Gamma^e$ without backtracking respectively from $a$ to $b$ and from $a$ to $c$. If $V(p)\cap V(q) = \{a\}$ and there exists $g\in \Gamma\G$ such that $\{b,c\}\subset g.\Gamma$, then there exist $k\in\NN$, $g_1,\cdots,g_k \in \Gamma\G$, a subsequence $(a=)z_0,\cdots,z_k(=b)$ of $V(p)$, and a subsequence $(a=)w_0,\cdots,w_k(=c)$ of $V(q)$ such that
    \begin{align}\label{eq:bigon}
        \forall\, i \in \{1,\cdots,k\},\, p_{[z_{i-1},z_i]} \cup q_{[w_{i-1},w_i]} \subset g_i. \Gamma.
    \end{align}
\end{lem}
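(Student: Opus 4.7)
The plan is to proceed by induction on $|p|+|q|$, leaning on Proposition~\ref{prop:Gamma is embedded into Gammae} and Lemma~\ref{lem:subpath of straight admissible path}. The base case $|p|+|q|=0$ forces $a=b=c$, so I take $k=1$, $g_1=g$, and $z_0=z_1=w_0=w_1=a$. For the inductive step, first suppose $a\in g.\Gamma$: then both $p$ and $q$ have endpoints in $g.\Gamma$, and applying Proposition~\ref{prop:Gamma is embedded into Gammae} to the straight admissible paths $g^{-1}.p$ and $g^{-1}.q$ yields $p,q\subset g.\Gamma$, so I again take $k=1$, $g_1=g$.

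Assume now that $a\notin g.\Gamma$. Define $z$ as the earliest vertex of $p$ (in the order from $a$ to $b$) with $p_{[z,b]}\subset g.\Gamma$; this exists since $z=b$ always works, and I define $w$ analogously on $q$. If $z\neq a$, then the predecessor $z^-$ of $z$ on $p$ must lie outside $g.\Gamma$: otherwise $g^{-1}.p_{[z^-,b]}$, which is straight admissible by Lemma~\ref{lem:subpath of straight admissible path}, would have both endpoints in $V(\Gamma)$, so Proposition~\ref{prop:Gamma is embedded into Gammae} would force $p_{[z^-,b]}\subset g.\Gamma$, contradicting the minimality of $z$. In particular, $a\notin g.\Gamma$ gives $z\neq a$, and likewise $w\neq a$. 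If $z\neq b$ or $w\neq c$, then $|p_{[a,z]}|+|q_{[a,w]}|<|p|+|q|$, and I apply the inductive hypothesis to the pair $(p_{[a,z]},q_{[a,w]})$ with common copy $g$: these subpaths are straight admissible without backtracking by Lemma~\ref{lem:subpath of straight admissible path}, their $V$-intersection is $\{a\}$ (inherited from $V(p)\cap V(q)$), and $\{z,w\}\subset g.\Gamma$. Appending the final segment with $z_k=b$, $w_k=c$, $g_k=g$ completes the decomposition.

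The remaining case $z=b$ and $w=c$ is where I expect the main obstacle: here $V(p)\cap g.\Gamma=\{b\}$ and $V(q)\cap g.\Gamma=\{c\}$, so $p$ and $q$ touch $g.\Gamma$ only at their endpoints and the straightforward reduction above fails. To handle this, I would examine the admissible decompositions of $p,q$, focusing on the last copies $h_n.\Gamma$ and $h'_m.\Gamma$; since the penultimate vertices $p^-,q^-$ lie in these copies but not in $g.\Gamma$, necessarily $h_n,h'_m\neq g$. If $c\in h_n.\Gamma$, then $\{b,c\}\subset h_n.\Gamma$ supplies an alternative common copy for $b$ and $c$, and rerunning the analysis with $g$ replaced by $h_n$ yields a new entry vertex at most $x_{n-1}$ on $p$ (hence distinct from $b$), restoring the reduction; the symmetric case $b\in h'_m.\Gamma$ is analogous. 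The delicate residual sub-case is when $c\notin h_n.\Gamma$ and $b\notin h'_m.\Gamma$: here I would exploit the intersection-diameter bound $\diam_{\Gamma^e}(h.\Gamma\cap g.\Gamma)\le 2$ for $h\neq g$ from Remark~\ref{rem:intersection of stabilizers}, the girth hypothesis $\girth(\Gamma)>20$, and the no-backtracking condition on $p,q$, either to construct the decomposition directly via the short bridging geodesic guaranteed in $g.\Gamma$ from $b$ to $c$, or to argue that this configuration cannot occur under the stated hypotheses. This last sub-case is where I expect the heart of the argument to lie.
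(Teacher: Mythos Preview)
Your proposal is incomplete exactly where you flag it, and the missing idea is not a matter of bookkeeping with the $\diam \le 2$ bound. In the residual sub-case $z=b$, $w=c$, take $\alpha\in\geo_{\Gamma^e}(b,c)$; by Corollary~\ref{cor:Gamma is embedded into Gammae} we have $\alpha\subset g.\Gamma$. Now consider the loop $p\,\alpha\,q^{-1}$ from $a$ back to $a$, together with the concatenated sequence $(x_0,\ldots,x_{n-1},b,c,y_{m-1},\ldots,y_0)$ built from the given admissible subsequences of $p$ and $q$. If no single copy $h.\Gamma$ contains $p_{[x_{n-1},b]}\cup\alpha$, and none contains $\alpha\cup q_{[y_{m-1},c]}$, then this loop satisfies all three conditions of Definition~\ref{def: admissible path}: condition~(1) is clear, condition~(2) at the two new junctions is precisely the hypothesis just stated (and at $x_{n-1}$, $y_{m-1}$ it follows from admissibility of $p,q$ since $z=x_n$, $w=y_m$), and condition~(3) holds because $p,q$ have no backtracking and your own argument gives $p^-\notin g.\Gamma$, $q^-\notin g.\Gamma$, so there is no backtracking at $b$ or $c$ either. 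But a non-trivial admissible path from $a$ to $a$ contradicts Proposition~\ref{prop:Gamma is embedded into Gammae}. Hence some $h$ contains, say, $p_{[x_{n-1},b]}\cup\alpha$; then $\{x_{n-1},c\}\subset h.\Gamma$, and you apply the inductive hypothesis to the strictly shorter pair $(p_{[a,x_{n-1}]},q)$ with common copy $h$, appending $h$ and then $g$ at the end.

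Two further remarks. First, the copy $h$ produced above need not be $h_n$: your dichotomy ``$c\in h_n.\Gamma$ or not'' is too coarse, and the residual configuration $c\notin h_n.\Gamma$, $b\notin h'_m.\Gamma$ genuinely occurs, so the intersection-diameter bound from Remark~\ref{rem:intersection of stabilizers} will not by itself exclude it. Second, the paper inducts on $n+m$ (the lengths of fixed admissible subsequences) rather than on $|p|+|q|$; your parameter also decreases under the above reduction since $x_{n-1}\neq b$, so this is not a defect, but carrying the admissible subsequences explicitly is what makes the verification of Definition~\ref{def: admissible path}~(2) at the new junctions immediate.
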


\begin{proof}
    Let $p$ and $q$ be admissible with respect to subsequences $\mathbf{x}=(x_0,\cdots,x_n)$ and $\mathbf{y}=(y_0,\cdots,y_m)$ respectively. We'll show the statement by induction on $n+m$. When $n = 0$ (i.e. $a=b$), by $\{a,c\} = \{a,b\} \subset g.\Gamma$ and Proposition \ref{prop:Gamma is embedded into Gammae}, we have $m = 1$ and the statements holds by $p \cup q \subset g.\Gamma$. When $m = 0$, the statement holds similarly by $n = 1$. Define $N \in \NN \cup \{0\}$ by $N = n+m$ and assume that the statement holds for any straight admissible paths $(p', (x'_0, \cdots, x'_{n'}))$ and $(q', (y'_0, \cdots, y'_{m'}))$ with $n'+m' < N$. Let $z \in V(p_{[x_{n-1}, x_n]}) \cap g.\Gamma$ and $w \in V(q_{[y_{m-1}, y_m]}) \cap g.\Gamma$ be the vertices respectively satisfying $d_{\Gamma^e}(x_{n-1},z) = \min\{d_{\Gamma^e}(x_{n-1},z') \mid z' \in V(p_{[x_{n-1}, x_n]}) \cap g.\Gamma\}$ and $d_{\Gamma^e}(y_{m-1},w) = \min\{d_{\Gamma^e}(y_{m-1},w') \mid w' \in V(q_{[y_{m-1}, y_m]}) \cap g.\Gamma\}$. Note $p_{[z,x_n]} \cup q_{[w,y_m]} \subset g.\Gamma$ by Corollary \ref{cor:Gamma is embedded into Gammae}.
    
    If there exists $h \in \Gamma\G$ such that $p_{[x_{n-2}, z]} \subset h.\Gamma$, then $p_{[x_{n-2}, z]}$ is geodesic in $\Gamma^e$ since $p$ is straight. Hence, $p_{[a, z]}$ is a straight admissible path with respect to $(x_0, \cdots, x_{n-2}, z)$. Thus, we can apply our assumption of induction to $(p_{[a, z]}, (x_0, \cdots, x_{n-2},z))$ and $(q, \mathbf{y})$ and see that there exist $g_1,\cdots,g_k \in \Gamma\G$, a subsequence $(a=)z_0,\cdots,z_k(=z)$ of $V(p_{[a, z]})$, and a subsequence $(a=)w_0,\cdots,w_k(=b)$ of $V(q)$ that satisfy the condition \eqref{eq:bigon}. The sequences $(g_1,\cdots,g_k,g)$, $(z_0,\cdots,z_k,x_n)$, and $(w_0,\cdots,w_k,y_m)$ satisfy the condition \eqref{eq:bigon} for $p$ and $q$. If there exists $h \in \Gamma\G$ such that $q_{[y_{m-2}, w]} \subset h.\Gamma$, then similarly we can apply our assumption of induction to $(p, \mathbf{x})$ and $(q_{[a, w]}, (y_0, \cdots, y_{m-2}, w))$ and show the statement for $p$ and $q$.

    Hence, in the following we assume that there exists no $h \in \Gamma\G$ satisfying $p_{[x_{n-2}, z]} \subset h.\Gamma$ nor $q_{[y_{m-2}, w]} \subset h.\Gamma$. In particular, $z \neq x_{n-1}$ and $w \neq y_{m-1}$. Fix $\alpha \in \geo_{\Gamma^e}(z,w)$. Note $\alpha \subset g.\Gamma$ by Corollary \ref{cor:Gamma is embedded into Gammae}. By minimality of $d_{\Gamma^e}(x_{n-1},z)$ and $d_{\Gamma^e}(y_{m-1},w)$, the paths $p_{[x_0, x_{n-1}]}\alpha$ and $q_{[y_0, y_{m-1}]}\alpha^{-1}$ have no backtracking. If there exists no $h \in \Gamma\G$ satisfying $p_{[x_{n-1}, z]} \cup \alpha \subset h.\Gamma$ nor $\alpha \cup q_{[y_{m-1}, w]} \subset h.\Gamma$, then the path $p_{[a,z]}\alpha q^{-1}_{[w,a]}$ becomes admissible with respect to $(x_0, \cdots, x_{n-1}, z, w, y_{m-1}, \cdots, y_0)$. This contradicts Proposition \ref{prop:Gamma is embedded into Gammae}. Hence, there exists $h \in \Gamma\G$ satisfying either $p_{[x_{n-1}, z]} \cup \alpha \subset h.\Gamma$ or $\alpha \cup q_{[y_{m-1}, w]} \subset h.\Gamma$. Assume $p_{[x_{n-1}, z]} \cup \alpha \subset h.\Gamma$ without loss of generality. By $\{x_{n-1}, w\} \subset h.\Gamma$, we can apply our assumption of induction to the straight admissible paths $(p_{[a,x_{n-1}]}, (x_0, \cdots, x_{n-1}))$ and $(q_{[a,w]}, (y_0, \cdots, y_{n-1}, w))$ and see that there exist $g_1,\cdots,g_k \in \Gamma\G$, a subsequence $(a=)z_0,\cdots,z_k(=x_{n-1})$ of $V(p_{[a, x_{n-1}]})$, and a subsequence $(a=)w_0,\cdots,w_k(=w)$ of $V(q_{[a,w]})$ that satisfy the condition \eqref{eq:bigon}. The sequences $(g_1,\cdots,g_k,h,g)$, $(z_0,\cdots,z_k,z,x_n)$, and $(w_0,\cdots,w_k,w, y_m)$ satisfy the condition \eqref{eq:bigon} for $p$ and $q$.
\end{proof}

\begin{figure}[htbp]
\begin{center}
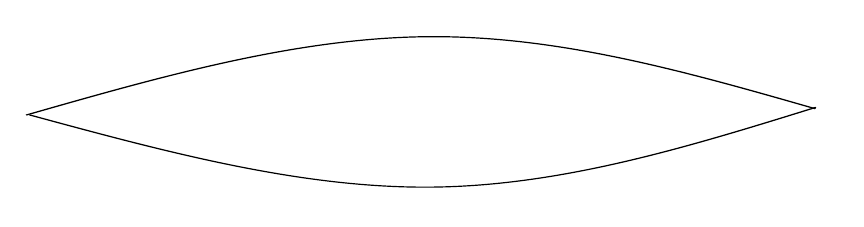
 \caption{Bigon $pq^{-1}$ in Proposition \ref{prop:paths must penetrate long}} 
 \label{fig:bigon}
\end{center} 
\end{figure}

\begin{prop}\label{prop:paths must penetrate long}
    Let $a,b\in V(\Gamma^e)$ be distinct. Suppose that $p$ and $q$ are straight admissible paths in $\Gamma^e$ from $a$ to $b$ without backtracking such that the loop $pq^{-1}$ is a circuit. Then, there exist $n\in\NN$, $g_1,\cdots,g_n \in \Gamma\G$, a subsequence $(x_0,\cdots,x_n)$ of $V(p)$, and a subsequence $(y_0,\cdots,y_n)$ of $V(q)$ with $a = x_0 = y_0$ and $b = x_n = y_n$ that satisfy the following three conditions.
    \begin{itemize}
        \item[(1)]
        $p_{[x_{i-1},x_i]} \cup q_{[y_{i-1},y_i]} \subset g_i. \Gamma$ for any $i \in \{1,\cdots,n\}$.
        \item[(2)] 
        $\min\{|p_{[x_{i-1},x_i]}|,|q_{[y_{i-1},y_i]}|\} \ge 7$ for any $i \in \{1,\cdots,n\}$.
        \item[(3)]
        $g_i \neq g_{i+1}$ for any $i \in \{1,\cdots,n-1\}$.
    \end{itemize}
\end{prop}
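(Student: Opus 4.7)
The plan is to establish conditions (1), (3), (2) in turn via three phases. For (1), I would adapt Lemma~\ref{lem:classification of geodesic bigon}: the obstruction is that $V(p)\cap V(q)=\{a,b\}$ (since $pq^{-1}$ is a circuit), whereas Lemma~\ref{lem:classification of geodesic bigon} requires $V(p)\cap V(q)=\{a\}$. I would remove the shared endpoint $b$ by truncating $q$: letting $y_{m-1}$ denote the vertex of $q$ just before $b$, we have $V(p)\cap V(q_{[a,y_{m-1}]})=\{a\}$, and since $(y_{m-1},b)\in E(\Gamma^e)$, Lemma~\ref{lem:edges of Gammae come from Gamma} supplies $g\in\Gamma\G$ with $\{y_{m-1},b\}\subset g.\Gamma$. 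Applying Lemma~\ref{lem:classification of geodesic bigon} to $p$, $q_{[a,y_{m-1}]}$, and $g$ yields translates $g_1,\dots,g_k$ and matching subsequences of $V(p),V(q_{[a,y_{m-1}]})$ satisfying (1) for the truncated bigon. Since $g_k.\Gamma$ is an induced subgraph of $\Gamma^e$ containing both $y_{m-1}$ and $b$, the edge $(y_{m-1},b)$ lies in $g_k.\Gamma$, so appending it to the last piece extends the decomposition to the full bigon $(p,q)$.

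Next, I enforce (3) by merging: whenever $g_i=g_{i+1}$, the union of the two adjacent pieces still lies in $g_i.\Gamma$, so they can be combined into a single piece. Finitely many merges yield a decomposition with $g_i\neq g_{i+1}$ for every $i$, while preserving (1).

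Finally, to secure (2), I would invoke $\girth(\Gamma)>20$ together with Remark~\ref{rem:intersection of stabilizers}(3), which gives $\diam_{\Gamma^e}(g.\Gamma\cap h.\Gamma)\le 2$ whenever $g\neq h$. After the previous phase, for each interior index $i$ the pairs $\{z_{i-1},w_{i-1}\}\subset g_{i-1}.\Gamma\cap g_i.\Gamma$ and $\{z_i,w_i\}\subset g_i.\Gamma\cap g_{i+1}.\Gamma$ each have diameter at most $2$ in $\Gamma^e$. Inside $g_i.\Gamma$, which is isometrically embedded in $\Gamma^e$ by Corollary~\ref{cor:Gamma is embedded into Gammae} and has girth exceeding $20$, the geodesic subpaths $p_{[z_{i-1},z_i]}$ and $q_{[w_{i-1},w_i]}$ close up via two short geodesic connectors to form a loop of length at most $|p_{[z_{i-1},z_i]}|+|q_{[w_{i-1},w_i]}|+4$. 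If both subpaths were shorter than $7$, this loop would have length at most $16<\girth(\Gamma)$ and hence contain no simple circuit; combining this with the straightness of $p,q$ and $V(p)\cap V(q)=\{a,b\}$ would then force an interior intersection between $p$ and $q$, contradicting the circuit assumption. The hardest step is the asymmetric case where only one of the two subpaths of a piece is short, for which the combined loop may exceed the girth and no direct contradiction is available. To handle this I expect to absorb the offending piece into a neighboring one by rerouting the boundary vertices through $g_{i-1}.\Gamma\cap g_{i+1}.\Gamma$ (again of diameter at most $2$ when $g_{i-1}\neq g_{i+1}$), strictly reducing $n$; iterating the procedure terminates with (2) holding everywhere.
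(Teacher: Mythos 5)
Your Phase 1 and Phase 2 are essentially sound and run parallel to the paper's argument: the paper also reduces condition (1) to Lemma \ref{lem:classification of geodesic bigon} (it produces a common translate containing the terminal pieces of $p$ and $q$ via Proposition \ref{prop:Gamma is embedded into Gammae}, whereas you truncate $q$ at its penultimate vertex and invoke Lemma \ref{lem:edges of Gammae come from Gamma}; both routes are legitimate, and your appended edge does lie in $g_k.\Gamma$ because translates of $\Gamma$ are induced subgraphs), and it obtains condition (3) by choosing a decomposition with minimal $n$, which amounts to your merging step.

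The gap is in Phase 3, which is the actual content of the proposition. First, your claimed contradiction in the ``both subpaths short'' case is not valid: a piece in which $p_{[x_{i-1},x_i]}$ and $q_{[y_{i-1},y_i]}$ are disjoint, each of length at most $2$, with connectors of length at most $2$ at both ends, is perfectly compatible with $\girth(\Gamma)>20$ (the union of the two subpaths and the connectors can simply be a tree inside $g_i.\Gamma$), so girth considerations do not force an interior intersection of $p$ and $q$, and there is no contradiction with $pq^{-1}$ being a circuit. Such short pieces genuinely can occur in a non-minimal decomposition; the correct contradiction is with minimality of $n$, not with the circuit hypothesis. Concretely, one must show that a short piece forces one of a small list of configurations (an endpoint degeneracy $x_{i-1}=x_i$ or $y_{i-1}=y_i$; the four endpoints lying on a common short geodesic of $g_i.\Gamma$ in one of two orders; a tripod), and in each configuration the boundary vertices are forced into a neighboring translate, so that adjacent pieces merge, e.g.\ $p_{[x_{i-2},x_i]}\cup q_{[y_{i-2},y_i]}\subset g_{i-1}.\Gamma$, reducing $n$. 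Second, you explicitly leave the asymmetric case (which is where this case analysis is unavoidable; note that once one side is $\le 6$, the other side has length at most $10$, so both are unique geodesics) to an absorption scheme you only ``expect'' to work, and as described it reroutes through $g_{i-1}.\Gamma\cap g_{i+1}.\Gamma$, which is not controlled by condition (3): only consecutive translates are known to be distinct, so $g_{i-1}$ and $g_{i+1}$ may coincide, and even when they differ this is not the relevant intersection. The correct absorption uses $g_{i-1}.\Gamma\cap g_i.\Gamma$ and $g_i.\Gamma\cap g_{i+1}.\Gamma$ together with convexity (Corollary \ref{cor:Gamma is embedded into Gammae}) and uniqueness of short geodesics, and it must also treat the boundary pieces $i=1$ and $i=n$ (where $x_0=y_0=a$ and $x_n=y_n=b$, so there are no connectors) and the case $n=1$, where $|p|\le 6$ forces $p=q$ and only there does one contradict the circuit hypothesis directly. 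As written, condition (2) is not established.
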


\begin{proof}
    Define $\mathcal{A}$ to be the set of all quadruples $(n, \mathbf{g}, \mathbf{x}, \mathbf{y})$, where $n\in\NN$, $\mathbf{g} = (g_1,\cdots,g_n)$ is a sequence in $ \Gamma\G$, $\mathbf{x} = (x_0,\cdots,x_n)$ is a subsequence of $V(p)$, and $\mathbf{y} = (y_0,\cdots,y_n)$ is a subsequence of $V(q)$ with $a = x_0 = y_0$ and $b = x_n = y_n$, that satisfy the condition (1).
    
    We first show $\mathcal{A} \neq \emptyset$. Let $p$ and $q$ be admissible with respect to subsequences $\mathbf{x}=(x_0,\cdots,x_n)$ and $\mathbf{y}=(y_0,\cdots,y_m)$ respectively. Suppose for contradiction that there exists no $g\in \Gamma\G$ satisfying $p_{[x_{n-1},x_n]} \cup q_{[y_{m-1},y_m]} \subset g.\Gamma$, then the path $pq^{-1}$ becomes admissible with respect to $(x_0,\cdots,x_{n-1},b,y_{m-1}\cdots,y_0)$ since $pq^{-1}$ is a circuit. This contradicts Proposition \ref{prop:Gamma is embedded into Gammae} by $p_-=q_-=a$. Hence, there exists $g\in \Gamma\G$ such that $p_{[x_{n-1},x_n]} \cup q_{[y_{m-1},y_m]} \subset g.\Gamma$. By $V(p_{a,x_{n-1}})\cap V(q_{[a,y_{m-1}]}) = \{a\}$ and $\{x_{n-1}, y_{m-1}\} \subset g.\Gamma$, we can apply Lemma \ref{lem:classification of geodesic bigon} to $p_{[a,x_{n-1}]}$ and $q_{[a,y_{m-1}]}$ and see that there exist $k\in\NN$, $h_1,\cdots,h_k \in \Gamma\G$, a subsequence $(a=)z_0,\cdots,z_k(=x_{n-1})$ of $V(p_{[a,x_{n-1}]})$, and a subsequence $(a=)w_0,\cdots,w_k(=y_{m-1})$ of $V(q_{[a,y_{m-1}]})$ satisfying the condition \eqref{eq:bigon} in Lemma \ref{lem:classification of geodesic bigon}. Define the quadruple $(k+1, \mathbf{h}, \mathbf{z}, \mathbf{w})$ by $\mathbf{h} = (h_1,\cdots,h_k,g)$, $\mathbf{z} = (z_0,\cdots,z_k,b)$, and $\mathbf{w} = (w_1,\cdots,w_k,b)$, then we have $(k+1, \mathbf{h}, \mathbf{z}, \mathbf{w}) \in \mathcal{A}$. Thus, $\mathcal{A} \neq \emptyset$.
    
    Let $(n, \mathbf{g}, \mathbf{x}, \mathbf{y}) \in \mathcal{A}$ satisfy $n = \min\{n' \mid (n', \mathbf{g'}, \mathbf{x'}, \mathbf{y'}) \in \mathcal{A}\}$, where $\mathbf{g} = (g_1,\cdots,g_n)$, $\mathbf{x} = (x_0,\cdots,x_n)$, and $\mathbf{y} = (y_0,\cdots,y_n)$. By minimality of $n$, $(n, \mathbf{g}, \mathbf{x}, \mathbf{y})$ satisfy the condition (3). In the following, we'll show that $(n, \mathbf{g}, \mathbf{x}, \mathbf{y})$ satisfy the condition (2). Note that the subpaths $p_{[x_{i-1},x_i]}$ and $q_{[y_{i-1},y_i]}$ are geodesic in $\Gamma^e$ for any $i \ge 1$ since $p$ and $q$ are straight.

    When $n=1$, suppose $|p|\le 6$ for contradiction. Since $p$ has no backtracking, this implies that $p$ is a unique geodesic from $a$ to $b$ by $p \subset g_1.\Gamma$ and $\girth(\Gamma) > 20$. Hence, we have $p=q$ since $q$ is geodesic by $q=q_{[x_0, x_1]} \subset g_1.\Gamma$. This contradicts that $pq^{-1}$ is a circuit.
    
    In what follows, we assume $n \ge 2$. By the condition (3) and Remark \ref{rem:intersection of stabilizers} (3), we have $d_{\Gamma^e}(x_i,y_i) \le 2$ for any $i\in \{0,\cdots,n\}$. In particular, for each $i \in \{1,\cdots,n-1\}$, there exists a unique geodesic in $\Gamma^e$ from $x_i$ to $y_i$, which is contained in $g_i.\Gamma \cap g_{i+1}.\Gamma$ by Corollary \ref{cor:Gamma is embedded into Gammae}. Suppose for contradiction that we have $|p_{[x_{i-1},x_i]}| \le 6$ for some $i$ with $1 \le i \le n$. This implies that $p_{[x_{i-1},x_i]}$ is a unique geodesic in $\Gamma^e$ from $x_{i-1}$ to $x_i$ by $\girth(\Gamma) > 20$.
    
    When $2 \le i \le n-1$, we have $V(p_{[x_{i-1},x_i]}) \cap V(q_{[y_{i-1},y_i]}) = \emptyset$. Since we have $d_{\Gamma^e}(y_{i-1},y_i) \le d_{\Gamma^e}(y_{i-1},x_{i-1}) + d_{\Gamma^e}(x_{i-1},x_i) + d_{\Gamma^e}(x_i,y_i) \le 10$, the subpath $q_{[y_{i-1},y_i]}$ is a unique geodesic in $\Gamma^e$ form $y_{i-1}$ to $y_i$ by $\girth(\Gamma) > 20$. Hence, one of (i) - (iv) below holds, (i) $y_{i-1}=y_i$, (ii) $x_{i-1}=x_i$, (iii) the sequence $(x_{i-1},x_i,y_{i-1},y_i)$ is a geodesic path in $g_i.\Gamma$, (iv) the sequence $(y_{i-1},y_i,x_{i-1},x_i)$ is a geodesic path in $g_i.\Gamma$.
    
    In case (i), we have $x_{i-1} \neq x_i$ by minimality of $n$. Since $p_{[x_{i-1},x_i]}$ is a unique geodesic in $\Gamma^e$ from $x_{i-1}$ to $x_i$ and we have $V(p_{[x_{i-1},x_i]}) \cap V(q_{[y_{i-1},y_i]}) = \emptyset$, one of (i-1) - (i-3) must hold, (i-1) the sequence $(x_{i-1},x_i,y_{i-1})$ is a geodesic path in $g_i.\Gamma$, (i-2) the sequence $(x_i,x_{i-1},y_{i})$ is a geodesic path in $g_i.\Gamma$, (i-3) there exists $z \in V(\Gamma^e)$ such that $\{(x_i,z),(x_{i-1},z),(y_i,z)\} \subset E(\Gamma^e)$, that is, the vertices $x_i,x_{i-1},y_i$ form a tripod whose center is $z$. In case (i-1), we have $(x_{i-1},x_i,y_{i-1}) \subset g_{i-1}.\Gamma$, hence $p_{[x_{i-2},x_i]}\cup q_{[y_{i-2},y_i]} \subset g_{i-1}.\Gamma$. This contradicts minimality of $n$. In case (i-2), we have $(x_i,x_{i-1},y_{i}) \subset g_{i+1}.\Gamma$, hence $p_{[x_{i-1},x_{i+1}]}\cup q_{[y_{i-1},y_{i+1}]} \subset g_{i+1}.\Gamma$. This again contradicts minimality of $n$. In case (i-3), we have $p_{[x_{i-1},x_i]} = (x_{i-1},z,x_i)$ since $p_{[x_{i-1},x_i]}$ is a unique geodesic in $\Gamma^e$ from $x_{i-1}$ to $x_i$. We also have $(x_{i-1},z,y_{i-1}) \subset g_{i-1}.\Gamma$ and $(x_i,z,y_i) \subset g_{i+1}.\Gamma$. This implies $p_{[x_{i-2},z]} \cup q_{[y_{i-2},y_i]} \subset g_{i-1}.\Gamma$ and $p_{[z,x_{i+1}]}\cup q_{[y_i,y_{i+1}]} \subset g_{i+1}.\Gamma$, which contradicts minimality of $n$.
    
    In case (ii), we get the same contradiction as case (i).
    
    In case (iii), we have $(x_{i-1},x_i,y_{i-1}) \subset g_{i-1}.\Gamma$ and $(x_i,y_{i-1},y_i) \subset g_{i+1}.\Gamma$. Note $p_{[x_{i-1},x_i]} = (x_{i-1},x_i)$ and $q_{[y_{i-1},y_i]} = (y_{i-1},y_i)$. Hence, we have $p_{[x_{i-2},x_i]} \cup q_{[y_{i-2},y_{i-1}]} \subset g_{i-1}.\Gamma$ and $p_{[x_i,x_{i+1}]} \cup q_{[y_{i-1},y_{i+1}]} \subset g_{i+1}.\Gamma$, which contradicts minimality of $n$.
    
    In case (iv), we have $(y_{i-1}, y_i, x_{i-1}) \subset g_{i-1}.\Gamma$ and $(y_i, x_{i-1}, x_i) \subset g_{i+1}.\Gamma$. As in case (iii), this implies $p_{[x_{i-2},x_{i-1}]} \cup q_{[y_{i-2},y_i]} \subset g_{i-1}.\Gamma$ and $p_{[x_{i-1},x_{i+1}]} \cup q_{[y_i,y_{i+1}]} \subset g_{i+1}.\Gamma$, which contradicts minimality of $n$.

    When $i=1$, we have $V(p_{[x_0,x_1]}) \cap V(q_{[y_0,y_1]}) = \{a\}$ by $n \ge 2$. Since $p_{[x_0,x_1]}$ and $q_{[y_0,y_1]}$ are both geodesic and contained in $g_1.\Gamma$, one of (i') - (iii') must hold, (i') $y_0=y_1$, (ii') $x_0=x_1$, (iii') the sequence $(x_1,a,y_1)$ is a geodesic path in $g_1.\Gamma$. In case (i'), we have $p_{[x_0,x_1]} \subset g_2.\Gamma$ by $\{x_0, x_1\} = \{y_1, x_1\} \subset g_2.\Gamma$ and Corollary \ref{cor:Gamma is embedded into Gammae}. This implies $p_{[x_0,x_2]} \cup q_{[y_0,y_2]} \subset g_2.\Gamma$, which contradicts minimality of $n$. In case (ii'), we get the same contradiction as case (1'). In case (iii'), we have $(x_1,a,y_1) \subset g_2.\Gamma$. This implies $p_{[x_0,x_2]} \cup q_{[y_0,y_2]} \subset g_2.\Gamma$ by $p_{[x_0,x_1]} = (a,x_1)$ and $q_{[y_0,y_1]} = (a,y_1)$, which contradicts minimality of $n$. When $i=n$, we get the same contradiction as when $i=1$.
\end{proof}

We record the following immediate corollary of Proposition \ref{prop:paths must penetrate long}, which is how Proposition \ref{prop:paths must penetrate long} is used in most of this paper.

\begin{cor}\label{cor:two geodesics with the same endpoints travel through common translations of Gamma}
    Let $a,b\in V(\Gamma^e)$ be distinct and let $p, q \in \geo_{\Gamma^e} (a,b)$. Suppose that the loop $pq^{-1}$ is a circuit. Then, there exist $n\in\NN$, $g_1,\cdots,g_n \in \Gamma\G$, a subsequence $(x_0,\cdots,x_n)$ of $V(p)$, and a subsequence $(y_0,\cdots,y_n)$ of $V(q)$ with $a = x_0 = y_0$ and $b = x_n = y_n$ that satisfy the three conditions (1)-(3) in Proposition \ref{prop:paths must penetrate long}.
\end{cor}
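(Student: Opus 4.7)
The plan is to verify that the hypotheses of Proposition \ref{prop:paths must penetrate long} are satisfied by the given geodesics $p$ and $q$, and then simply invoke that proposition. The only thing to check is that geodesic paths in $\Gamma^e$ are straight admissible paths without backtracking; everything else (the circuit condition, same endpoints $a,b$) is already part of the hypothesis.

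First I would note that $p$ and $q$ have no backtracking, since any geodesic path does: backtracking would immediately yield a shorter path with the same endpoints. Next, by Lemma \ref{lem: geodesics are admissible}, every geodesic in $\Gamma^e$ is admissible, and by Remark \ref{rem:geodesic is a straight admissible path} every geodesic is in fact a straight admissible path (indeed, if a subpath $r$ of a geodesic satisfies $r \subset g.\Gamma$ for some $g \in \Gamma\G$, then $r$ is itself geodesic as a subpath of a geodesic, so the straightness condition of Definition \ref{def:straight} holds automatically).

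With these observations, $p$ and $q$ meet all hypotheses of Proposition \ref{prop:paths must penetrate long}: they are straight admissible paths in $\Gamma^e$ without backtracking, sharing endpoints $a,b$, and the loop $pq^{-1}$ is a circuit. Applying Proposition \ref{prop:paths must penetrate long} directly produces the desired $n \in \NN$, elements $g_1,\dots,g_n \in \Gamma\G$, and subsequences $(x_0,\dots,x_n)$ of $V(p)$ and $(y_0,\dots,y_n)$ of $V(q)$ with $a = x_0 = y_0$ and $b = x_n = y_n$ satisfying conditions (1)--(3) of that proposition, which completes the proof. There is no genuine obstacle here; the corollary is essentially a restatement of Proposition \ref{prop:paths must penetrate long} specialized to the case where the two admissible paths happen to be geodesics.
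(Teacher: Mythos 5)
Your proof is correct and takes essentially the same approach as the paper, which simply cites Remark \ref{rem:geodesic is a straight admissible path} together with Proposition \ref{prop:paths must penetrate long}. Your extra verification that geodesics have no backtracking and are straight admissible paths is the same content spelled out in slightly more detail.
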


\begin{proof}
    This follows from Remark \ref{rem:geodesic is a straight admissible path} and Proposition \ref{prop:paths must penetrate long}.
\end{proof}

Next, we prove Proposition \ref{prop: classification of geodesic triangle}. Lemma \ref{lem:triangle all of whose sides are on the same planes is on the same plane} is an auxiliary lemma for this.

 \begin{lem}\label{lem:triangle all of whose sides are on the same planes is on the same plane}
    Let $a,b,c\in V(\Gamma^e)$. If there exist $g_1,g_2,g_3\in \Gamma\G$ such that $\{a,b\}\subset g_1.\Gamma$, $\{b,c\}\subset g_2.\Gamma$, and $\{c,a\}\subset g_3.\Gamma$, then there exists $g\in \Gamma\G$ such that $\{a,b,c\}\subset g.\Gamma$.
\end{lem}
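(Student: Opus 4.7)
The plan is to reduce the statement by admissibility to Proposition~\ref{prop:Gamma is embedded into Gammae}. First, if any two of $a,b,c$ coincide then the conclusion is immediate from one of the three hypotheses, and if any two of $g_1,g_2,g_3$ coincide, say $g_1=g_2$, then $\{a,b\}\cup\{b,c\}\subset g_1.\Gamma$ gives $\{a,b,c\}\subset g_1.\Gamma$. So I may assume $a,b,c$ are pairwise distinct and $g_1,g_2,g_3$ are pairwise distinct, and, after conjugating the configuration by $g_1^{-1}$, that $g_1=1$, so $a,b\in V(\Gamma)$ and $g_2,g_3\neq 1$. By Corollary~\ref{cor:Gamma is embedded into Gammae} applied to each translate $g_i.\Gamma$, I pick geodesics $p\in\geo_{\Gamma^e}(a,c)\subset g_3.\Gamma$ and $q\in\geo_{\Gamma^e}(c,b)\subset g_2.\Gamma$. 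The main strategy is to make the concatenation $pq$ admissible with respect to $(a,c,b)$: then Proposition~\ref{prop:Gamma is embedded into Gammae} forces $pq\subset\Gamma$ and hence $c\in V(\Gamma)$, giving $\{a,b,c\}\subset\Gamma$.

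For the admissibility of $pq$, condition~(1) of Definition~\ref{def: admissible path} is immediate since $p,q$ are geodesic and lie in single translates; if condition~(2) fails then $pq\subset g.\Gamma$ for some $g\in\Gamma\G$, which directly gives $\{a,b,c\}\subset g.\Gamma$; and condition~(3) can fail only when $\max\{d_{\Gamma^e}(a,c),d_{\Gamma^e}(c,b)\}\le 4$ and $pq$ has backtracking at $c$, yielding a common neighbor $c'\in V(p)\cap V(q)\subset g_2.\Gamma\cap g_3.\Gamma$ with $c'\sim c$ in $\Gamma^e$. I proceed by induction on $d_{\Gamma^e}(a,c)+d_{\Gamma^e}(c,b)$, the base case being trivial. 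If $c'\in\{a,b\}$ then either $a\in g_2.\Gamma$ or $b\in g_3.\Gamma$, so one of $g_2.\Gamma,g_3.\Gamma$ already contains $\{a,b,c\}$; otherwise $(a,b,c')$ satisfies the lemma's hypothesis with the same $(g_1,g_2,g_3)$ and strictly smaller distance sum, so by induction there is some $g'$ with $\{a,b,c'\}\subset g'.\Gamma$. The cases $g'=g_2$ and $g'=g_3$ immediately give $\{a,b,c\}\subset g_2.\Gamma$ and $\{a,b,c\}\subset g_3.\Gamma$ respectively.

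The main obstacle is the remaining case, when $g'\notin\{g_2,g_3\}$ (most importantly when $g'=1$, so $c'\in V(\Gamma)$): the inductive plane $g'.\Gamma$ contains $c'$ but need not contain $c$. The plan here is to apply Lemma~\ref{lem:edges of Gammae come from Gamma} to the edge $(c',c)$, which lies in both $g_2.\Gamma$ and $g_3.\Gamma$, and to combine it with Corollary~\ref{cor:Stab(v)} and Remark~\ref{rem:intersection of stabilizers}~(1) (whose hypotheses are available thanks to $\girth(\Gamma)>4$) to extract strong constraints on the supports of $g_2,g_3$ relative to $\st_\Gamma(v(c))$ and $\st_\Gamma(v(c'))$; in particular this forces $g_3^{-1}g_2\in G_{v(c')}\times G_{v(c)}$. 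Writing $g_3^{-1}g_2=\alpha\beta$ with $\alpha\in G_{v(c')}$ and $\beta\in G_{v(c)}$, and combining with the information provided by $g'$, one then builds an explicit $g\in\Gamma\G$ of the form $g_2\cdot m$ (for a suitable element $m\in G_{v(c')}\times G_{v(c)}$) chosen so that simultaneously $\supp(g)\subset\st_\Gamma(a)\cap\st_\Gamma(b)$ and $g\in h\,\stab_{\Gamma\G}(v(c))$, where $h$ is any representative of $c$; this ensures $\{a,b,c\}\subset g.\Gamma$. Carrying out this explicit construction uniformly across the various subcases, so that the girth assumption and the stabilizer-intersection calculus do all the work, is the central technical difficulty.
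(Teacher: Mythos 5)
Your reduction and skeleton are fine as far as they go: normalizing $g_1=1$, taking the geodesics $p\subset g_3.\Gamma$, $q\subset g_2.\Gamma$ (unique since $\girth(\Gamma)>20$ and the distances involved are small), and observing that either $pq$ is admissible (whence Proposition \ref{prop:Gamma is embedded into Gammae} gives $c\in V(\Gamma)$), or Definition \ref{def: admissible path} (2) fails (whence you are done), or there is backtracking at $c$ producing $c'\in g_2.\Gamma\cap g_3.\Gamma$ adjacent to $c$ — all of this is correct, as are the easy cases $c'\in\{a,b\}$ and $g'\in\{g_2,g_3\}$, and the identity $g_3^{-1}g_2\in G_{v(c')}\times G_{v(c)}$ via Lemma \ref{lem:edges of Gammae come from Gamma} and Remark \ref{rem:intersection of stabilizers} (1). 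The genuine gap is the remaining case $g'\notin\{g_2,g_3\}$, which you yourself defer as ``the central technical difficulty.'' Note that this case is not a smaller instance of anything: you again have three translates $g'.\Gamma$, $g_2.\Gamma$, $g_3.\Gamma$ containing $\{a,b\}$, $\{b,c\}$, $\{a,c\}$ respectively, i.e.\ exactly the hypothesis of the lemma for the original triple $(a,b,c)$. So the induction on $d_{\Gamma^e}(a,c)+d_{\Gamma^e}(c,b)$ does not reduce the problem there; all of the content is concentrated in the step you have not carried out. Moreover the sketched recipe is not obviously realizable: you ask for $g=g_2m$ with $m\in G_{v(c')}\times G_{v(c)}$ and $\supp(g)\subset\st_\Gamma(a)\cap\st_\Gamma(b)$, but when $d_\Gamma(a,b)\ge 3$ one has $\st_\Gamma(a)\cap\st_\Gamma(b)=\emptyset$, so this condition forces $g=1$, and the only translate through $a$ and $b$ is $\Gamma$ itself (Remark \ref{rem:intersection of stabilizers} (3)); your construction would then have to prove outright that $c\in V(\Gamma)$, equivalently that $g_2$ stabilizes $v(c)$, and nothing in the stabilizer-intersection calculus you cite yields this without a support/normal-form argument that is absent from the proposal.

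For comparison, the paper's proof avoids all of this path geometry: it normalizes $g_3=1$, considers all pairs $(g_1,g_2)$ with $\{a,b\}\subset g_1.\Gamma$ and $\{b,c\}\subset g_1g_2.\Gamma$, and takes one minimizing $\|g_1\|+\|g_2\|$. The normal form theorem (Theorem \ref{thm:normal form theorem}) together with Corollary \ref{cor:Stab(v)} shows that at the minimum the decomposition $g_1g_2$ is reduced; since $g_1g_2$ fixes $c$, this gives $\supp(g_1)\cup\supp(g_2)=\supp(g_1g_2)\subset\st_\Gamma(c)$, hence $g_2.c=c$ and $c=g_1.c\in g_1.\Gamma$. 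Your missing case could plausibly be closed by exactly this kind of syllable-cancellation argument, but as written the proposal does not prove the lemma.
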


\begin{proof}
    Without loss of generality, we assume $g_3=1$. Hence, $\{c,a\}\subset \Gamma$. Define $\mathcal{A}$ to be the set of all pairs $(g_1,g_2) \in \Gamma\G\times \Gamma\G$ such that $\{a,b\} \subset g_1.\Gamma$ and $\{b,c\} \subset g_1g_2.\Gamma$. Note $\mathcal{A} \neq \emptyset$ by the hypothesis. Take $(g_1, g_2) \in \mathcal{A}$ such that $\|g_1\|+\|g_2\| = \min\{\|g'_1\|+\|g'_2\| \mid (g'_1,g'_2) \in \mathcal{A} \}$. If $g_1=1$, then $\{a,b\}\subset\Gamma$, hence $\{a,b,c\}\subset\Gamma$. If $g_2=1$, then $\{b,c\}\subset g_1\Gamma$, hence $\{a,b,c\} \subset g_1\Gamma$. Assume $g_1\neq 1$ and $g_2\neq 1$ in the following. We have $g_1\in \stab_{\Gamma\G}(a)$ and $g_2\in \stab_{\Gamma\G}(v(b))$ by Corollary \ref{cor:vertex orbits are disjoint}. This implies $\supp(g_1) \subset \st_\Gamma(a)$ and $\supp(g_2) \subset \st_\Gamma(b)$ by Corollary \ref{cor:Stab(v)}. For each $i=1,2$, let $g_i=h_{i,1}\cdots h_{i,N_i}$ be a normal form of $g_i$.
    
    Suppose $\|g_1g_2\| < \|g_1\| + \|g_2\|$ for contradiction, then by Theorem \ref{thm:normal form theorem}, there exist syllables $h_{1,i}$ and $h_{2,j}$ such that $\supp(h_{1,i}) = \supp(h_{2,j})$ and $\{\supp(h_{1,i'}) \mid i<i' \}\cup\{\supp(h_{2,j'}) \mid j'<j \} \subset \lk_\Gamma(\supp(h_{1,i}))$. Define $g'_1,g'_2 \in \Gamma\G$ by $g'_1= h_{1,1}\cdots h_{1,i-1}h_{1,i+1} \cdots h_{1,N_1}$ and $g'_2= h_{2,1}\cdots h_{2,j-1}(h_{1,i}h_{2,j})h_{2,j+1} \cdots h_{2,N_2}$. By $\supp(h_{1,i}) \in \supp(g_1) \cap \supp(g_2) \subset \st_\Gamma(a)\cap \st_\Gamma(v(b))$, we have $\{a,b\} = g_1h_{1,i}^{-1}.\{a,v(b)\} \subset g'_1.\Gamma$. We also have $\{b,c\}\subset g_1g_2.\Gamma=g'_1g'_2.\Gamma$ and $\|g'_1\|+\|g'_2\|<\|g_1\|+\|g_2\|$. This contradicts minimality of $\|g_1\|+\|g_2\|$.
    
    Hence, $\|g_1g_2\| = \|g_1\| + \|g_2\|$. This implies $\supp(g_1)\cup\supp(g_2)=\supp(g_1g_2)\subset \st_\Gamma(c)$ since we have $g_1g_2.c=c$ by $c \in g_1g_2.\Gamma \cap \Gamma$ and Corollary \ref{cor:vertex orbits are disjoint}. Hence, $g_2.c = c$. This implies $c=g_1g_2.c=g_1.c \in g_1.\Gamma$. Thus, $\{a,b,c\} \subset g_1.\Gamma$.
\end{proof}

\begin{figure}[tbp]
\begin{center}
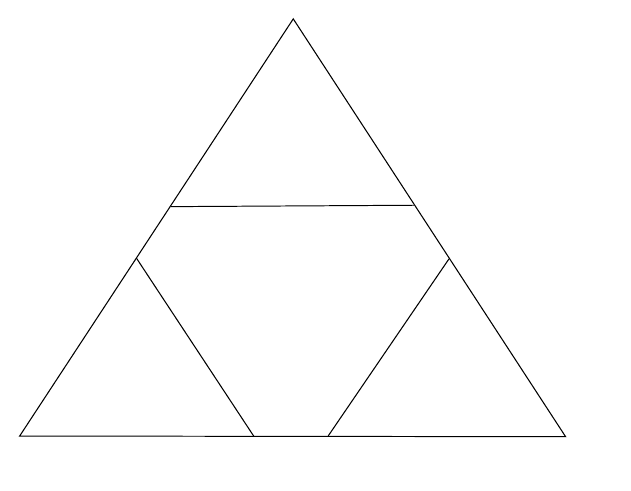
 \caption{Geodesic triangle $pqr^{-1}$ in Proposition \ref{prop: classification of geodesic triangle}} 
 \label{fig:geodesic triangle}
\end{center} 
\end{figure}

\begin{prop}\label{prop: classification of geodesic triangle}
    Let $a,b,c\in V(\Gamma^e)$ be distinct and let $p \in \geo_{\Gamma^e}(a,b)$, $q \in \geo_{\Gamma^e}(b,c)$, and $r \in \geo_{\Gamma^e}(a,c)$ such that the triangle $pqr^{-1}$ is a circuit. Then, there exist a subsequence $(a \! =)\, x_0,\cdots,x_n,x'_0,\cdots,x'_m \, (=\! b)$ of $V(p)$, a subsequence $(b \! =) \, y_m,\cdots,y_0,y'_0,\cdots,y'_\ell \, (= \! c)$ of $V(q)$, a subsequence $(a \! =) \, z_0,\cdots,z_n,z'_0,\cdots,z'_\ell \, (= \! c)$ of $ V(r)$, and $f_1,\cdots,f_n,g_1\cdots,g_m,h_1\cdots,h_\ell,k \in \Gamma\G$ that satisfy the following 8 conditions.
    \begin{itemize}
        \item[(1)]
        $p_{[x_{i-1},x_i]} \cup r_{[z_{i-1},z_i]} \subset f_i. \Gamma$ for any $i \in \{1,\cdots,n\}$.
        \item[(2)] 
        $p_{[x'_{i-1},x'_i]} \cup q^{-1}_{[y_{i-1},y_i]} \subset g_i. \Gamma$ for any $i \in \{1,\cdots,m\}$.
        \item[(3)]
        $q_{[y'_{i-1},y'_i]} \cup r_{[z'_{i-1},z'_i]} \subset h_i. \Gamma$ for any $i \in \{1,\cdots,\ell\}$.
        \item[(4)]
        $p_{[x_n,x'_0]} \cup q_{[y_0,y'_0]} \cup r_{[z_n,z'_0]} \subset k. \Gamma$.
        \item[(5)]
        $\min\{ |p_{[x_{i-1},x_i]}|, |r_{[z_{i-1},z_i]}| \} \ge 7$ for any $i \in \{1,\cdots,n\}$. Also, $f_n \neq k$ and $f_i \neq f_{i+1}$ for any $i \in \{1,\cdots,n-1\}$.
        \item[(6)]
        $\min\{ |p_{[x'_{i-1},x'_i]}|, |q^{-1}_{[y_{i-1},y_i]}| \} \ge 7$ for any $i \in \{1,\cdots,m\}$. Also, $g_1 \neq k$ and $g_i \neq g_{i+1}$ for any $i \in \{1, \cdots, m-1\}$.
        \item[(7)]
        $\min\{ |q_{[y'_{i-1},y'_i]}|, |r_{[z'_{i-1},z'_i]}| \} \ge 7$ for any $i \in \{1,\cdots,\ell\}$. Also, $h_1 \neq k$ and $h_i \neq h_{i+1}$ for any $i \in \{1, \cdots, \ell-1\}$.
        \item[(8)]
        When $n=m=0$ and $d_{\Gamma^e}(a,b)\le 2$, we have $\min\{ |q_{[y_0,y'_0]}|, |r_{[z_n,z'_0]}| \} \ge 7$.
    \end{itemize}
\end{prop}

\begin{proof}
    Define $\mathcal{A}$ to be the set of all tuples $(n, m, \ell, \mathbf{x}, \mathbf{y}, \mathbf{z}, \mathbf{g})$, where $n,m,\ell \in \NN\cup\{0\}$, $\mathbf{x} = (x_0,\cdots,x_n,x'_0,\cdots,x'_m)$ is a subsequence of $V(p)$ with $x_0 = a$ and $x'_m = b$, $\mathbf{y} = (y_m,\cdots,y_0,y'_0,\cdots,y'_\ell)$ is a subsequence of $V(q)$ with $y_m = b$ and $y'_\ell = c$, $\mathbf{z} = (z_0,\cdots,z_n,z'_0,\cdots,z'_\ell)$ is a subsequence of $V(r)$ with $z_0 = a$ and $z'_\ell = c$, and $\mathbf{g} = (f_1,\cdots,f_n,g_1\cdots,g_m,h_1\cdots,h_\ell,k) \subset \Gamma\G$, that satisfy the four conditions (1)-(4).

    First, we show $\mathcal{A} \neq \emptyset$. For this, it's enough to show that there exist $k \in \Gamma\G$, $\xi \in V(p)$, $\eta \in V(q)$, and $\zeta \in V(r)$ such that $\{\xi,\eta,\zeta\} \subset k.\Gamma$. Indeed, by $\{\xi,\eta\} \subset k.\Gamma$ and Remark \ref{rem:geodesic is a straight admissible path}, we can apply Lemma \ref{lem:classification of geodesic bigon} to $p_{[a,\xi]}$ and $r_{[a,\zeta]}$ and see that there exist $f_1,\cdots,f_n \in \Gamma\G$, $(a =) x_0,\cdots,x_n (= \xi) \in V(p_{[a,\xi]})$, and $(a =) z_0,\cdots,z_n (= \zeta) \in V(r_{[a,\zeta]})$ such that $\forall \, i \in \{1,\cdots,n\},\, p_{[x_{i-1},x_i]} \cup r_{[z_{i-1},z_i]} \subset f_i. \Gamma$. We can also apply Lemma \ref{lem:classification of geodesic bigon} to the pair $p^{-1}_{[b, \xi]}$ and $q_{[b,\eta]}$ and the pair $q^{-1}_{[c, \eta]}$ and $r^{-1}_{[c, \zeta]}$, and create an element of $\mathcal{A}$ by combining all the sequences obtained from Lemma \ref{lem:classification of geodesic bigon}. Hence, we'll show the existence of the above $k,\xi,\eta,\zeta$ in the following.

    If there exists no $x \in V(p) \setminus \{b\}$ and $y \in V(q) \setminus \{b\}$ such that $\{x, y\} \subset k.\Gamma$ with some $k \in \Gamma\G$, then the path $pq$ is a straight admissible path. Indeed, let $p$ and $q$ be admissible with respect to subsequences $(u_0,\cdots,u_s)$ of $V(p)$ and $(v_0,\cdots,v_t)$ of $V(q)$ respectively, which is possible by Lemma \ref{lem: geodesics are admissible}. Then, $pq$ is admissible with respect to $(u_0,\cdots,u_{s-1}, b, v_1,\cdots,v_t)$ since there exists no $g \in \Gamma$ satisfying $p_{[u_{s-1}, b]} \cup q_{[b, v_1]} \subset g.\Gamma$ by the non-existence of $x$ and $y$. Let $\gamma$ be a subpath of $pq$ such that $\gamma \subset g.\Gamma$ for some $g \in \Gamma\G$. By $\gamma \subset g.\Gamma$ and the non-existence of $x$ and $y$, we have either $\gamma \subset p$ or $\gamma \subset q$. Hence, $\gamma$ is geodesic. This implies that $pq$ is a straight admissible path without backtracking. Hence, we can see $\mathcal{A} \neq \emptyset$ by applying Proposition \ref{prop:paths must penetrate long} to $pq$ and $r$ (note $m=0$ in this case).

    Hence, in the following we assume that the set $\mathcal{B}$ of all pairs $(x,y)$, where $x \in V(p) \setminus \{b\}$ and $y \in V(q) \setminus \{b\}$ such that $\{x,y\} \subset g.\Gamma$ with some $g \in \Gamma\G$, is non-empty. Take $(x,y) \in \mathcal{B}$ satisfying $d_{\Gamma^e}(x,b) + d_{\Gamma^e}(y,b) = \max\{ d_{\Gamma^e}(x',b) + d_{\Gamma^e}(y',b) \mid (x',y') \in \mathcal{B} \}$. Let $g \in \Gamma\G$ satisfy $\{x,y\} \subset g.\Gamma$. Fix $\alpha \in \geo_{\Gamma^e}(x,y)$. Note $\alpha \subset g.\Gamma$ by Corollary \ref{cor:Gamma is embedded into Gammae}. If $x=a$, then the existence of $k,\xi,\eta,\zeta$ follows by setting $\xi=\zeta=a$, $\eta=y$, and $k=g$. Similarly, if $y=c$, then we can set $\xi=x$, $\eta=\zeta=c$, and $k=g$. Hence, we assume $x \neq a$ and $y \neq c$ in the following. By maximality of $d_{\Gamma^e}(x,b) + d_{\Gamma^e}(y,b)$, the path $p_{[a,x]} \alpha q_{[y,c]}$ has no backtracking.

    We claim that the set $\mathcal{B'}$ of all pairs $(y',z)$, where $y' \in V(q_{[y,c]}) \setminus \{c\}$ and $z \in V(r) \setminus \{c\}$ such that $\{y',z\} \subset h.\Gamma$ with some $h \in \Gamma\G$, is non-empty. Suppose $\mathcal{B'} = \emptyset$ for contradiction. Let $p_{[a,x]}$, $q_{[y,c]}$, and $r$ be admissible with respect to subsequences $(u_0,\cdots,u_s)$ of $V(p_{[a,x]})$, $(v_0,\cdots,v_t)$ of $V(q_{[y,c]})$, and $(w_0,\cdots, w_N)$ of $V(r)$ respectively. By $\mathcal{B'} = \emptyset$ and maximality of $d_{\Gamma^e}(x,b) + d_{\Gamma^e}(y,b)$, the path $p_{[a,x]} \alpha q_{[y,c]} r^{-1}$ is admissible with respect to $(u_0,\cdots,u_s, v_0,\cdots,v_{t-1}, c, w_{N-1},\cdots, w_0)$. This contradicts Proposition \ref{prop:Gamma is embedded into Gammae}. Thus, $\mathcal{B'} \neq \emptyset$.

    Take $(y',z) \in \mathcal{B'}$ satisfying $d_{\Gamma^e}(y',c) + d_{\Gamma^e}(z,c) = \max\{ d_{\Gamma^e}(y'',c) + d_{\Gamma^e}(z',c) \mid (y'',z') \in \mathcal{B'} \}$. Let $h \in \Gamma\G$ satisfy $\{y',z\} \subset h.\Gamma$. Fix $\beta \in \geo_{\Gamma^e}(z,y')$. Note $\beta \subset h.\Gamma$ by Corollary \ref{cor:Gamma is embedded into Gammae}. If $z=a$, then the existence of $k,\xi,\eta,\zeta$ follows by setting $\xi=\zeta=a$, $\eta=y'$, and $k=h$. Hence, we assume $z \neq a$.
    
    We claim $y=y'$. Suppose $y \neq y'$ for contradiction. By maximality of $d_{\Gamma^e}(y',c) + d_{\Gamma^e}(z,c)$, the path $q_{[y,y']} \beta^{-1} r^{-1}_{[z,a]}$ has no backtracking. Let $p_{[a,x]}$, $q_{[y,y']}$, and $r_{[a,z]}$ be admissible with respect to subsequences $(u_0,\cdots,u_s)$ of $V(p_{[a,x]})$, $(v_0,\cdots,v_t)$ of $V(q_{[y,y']})$, and $(w_0,\cdots, w_N)$ of $V(r_{[a,z]})$ respectively. By maximality of $d_{\Gamma^e}(x,b) + d_{\Gamma^e}(y,b)$ and $d_{\Gamma^e}(y',c) + d_{\Gamma^e}(z,c)$, the path $p_{[a,x]} \alpha q_{[y,y']} \beta^{-1} r^{-1}_{[z,a]}$ is admissible with respect to $(u_0,\cdots,u_s, v_0,\cdots,v_t, w_N,\cdots, w_0)$. This contradicts Proposition \ref{prop:Gamma is embedded into Gammae}. Thus, $y=y'$.

    Let $v \in V(\alpha)\cap V(\beta)$ satisfy $\alpha_{[v,y]} = \beta_{[v,y]}$ and $d_{\Gamma^e}(v,y) = \max\{d_{\Gamma^e}(v',y) \mid v' \in V(\alpha)\cap V(\beta), \alpha_{[v',y]} = \beta_{[v',y]}\}$. If $v = x$, then by $x \in \beta \subset h.\Gamma$ the existence of $k,\xi,\eta,\zeta$ follows by setting $\xi=x$, $\eta=y$, $\zeta=z$, and $k=h$. Similarly, if $v = z$, then we can set $\xi=x$, $\eta=y$, $\zeta=z$, and $k=g$. Hence, we assume $v \notin \{x,z\}$ in the following. By maximality of $d_{\Gamma^e}(v,y)$, the path $\alpha_{[x,v]} \beta^{-1}_{[v,y]}$ has no backtracking. Recall that $p_{[a,x]}$ and $r_{[a,z]}$ are admissible with respect to $(u_0,\cdots,u_s)$ and $(w_0,\cdots, w_N)$ respectively.
    
    Define the admissible path $(p',\mathbf{u'})$ as follows depending on the three cases (P1)-(P3). (P1) If there exists no $g' \in \Gamma\G$ satisfying $p_{[u_{s-1}, x]} \cup \alpha_{[x,v]} \subset g'.\Gamma$, then define $p'$ by $p'=p$ and $\mathbf{u'}$ by $\mathbf{u'} = (u_0,\cdots,u_s,v)$. (P2) If there exists $g' \in \Gamma\G$ satisfying $p_{[u_{s-1}, x]} \cup \alpha_{[x,v]} \subset g'.\Gamma$ and the path $p_{[u_{s-1}, x]} \alpha_{[x,v]}$ is geodesic in $\Gamma^e$, then define $p'$ by $p'=p$ and $\mathbf{u'}$ by $\mathbf{u'} = (u_0,\cdots,u_{s-1},v)$. (P3) If there exists $g' \in \Gamma\G$ satisfying $p_{[u_{s-1}, x]} \cup \alpha_{[x,v]} \subset g'.\Gamma$ and the path $p_{[u_{s-1}, x]} \alpha_{[x,v]}$ is not geodesic in $\Gamma^e$, then we have $|p_{[u_{s-1}, x]} \alpha_{[x,v]}| > 10$ by $\girth(\Gamma) > 20$ since the path $p_{[u_{s-1}, x]} \alpha_{[x,v]}$ has no backtracking and is in $g'.\Gamma$. If $|\alpha_{[x,v]}| \ge 3$, then we have $g'=g$ by $\alpha_{[x,v]} \subset g'.\Gamma \cap g.\Gamma$ and Remark \ref{rem:intersection of stabilizers} (3), which implies $\{u_{s-1}, y\} \subset g.\Gamma$ and contradicts maximality of $d_{\Gamma^e}(x,b) + d_{\Gamma^e}(y,b)$. Hence, by $|\alpha_{[x,v]}| \le 2$, we have $|p_{[u_{s-1}, x]}| > 8$. This implies
    \[
    d_{\Gamma^e}(u_{s-1}, v) 
    \ge d_{\Gamma^e}(u_{s-1}, x) - d_{\Gamma^e}(x, v)
    > 8 - 2 
    =6. 
    \]
    Take $\alpha' \in \geo_{\Gamma^e}(u_{s-1}, v)$. Note $\alpha' \subset g'.\Gamma$. Suppose for contradiction that there exists $g'' \in \Gamma\G$ satisfying $p_{[u_{s-2}, u_{s-1}]} \cup \alpha' \subset g''.\Gamma$, then we have $g''=g'$ by $d_{\Gamma^e}(u_{s-1}, v) > 6$ and Remark \ref{rem:intersection of stabilizers} (3). This implies $p_{[u_{s-2}, x]} \subset g'.\Gamma$, hence contradicts Definition \ref{def: admissible path} (2). Thus, define $p'$ by $p' = p_{[a,u_{s-1}]} \alpha'$ and $\mathbf{u'}$ by $\mathbf{u'} = (u_0,\cdots,u_{s-1},v)$, and we can see that $(p',\mathbf{u'})$ is admissible. Here, Definition \ref{def: admissible path} (3) is satisfied by $d_{\Gamma^e}(u_{s-1}, v) > 6$. For brevity, define $s' \in \{0,\cdots,s\}$ by $s' = s$ in case (P1) and by $s' = s-1$ in case (P2) and (P3).

    In the same way, we can define the admissible path $(r',\mathbf{w'})$ and $N' \in \{1,\cdots,N\}$ from $r_{[a,z]}\beta_{[z,v]}$ by using maximality of $d_{\Gamma^e}(y',c) + d_{\Gamma^e}(z,c)$ (recall $y'=y$). If there exists no $f \in \Gamma\G$ satisfying $p'_{[u_{s'}, v]} \cup r'_{[w_{N'}, v]} \subset f.\Gamma$, then we can see that the path $p'r^{\prime -1}$ is admissible with respect to $(u_0,\cdots,u_{s'}, v, w_{N'}, \cdots, w_0)$. This contradicts Proposition \ref{prop:Gamma is embedded into Gammae} by $p'_-=r'_-=a$. Hence, there exists $f \in \Gamma\G$ satisfying $p'_{[u_{s'}, v]} \cup r'_{[w_{N'}, v]} \subset f.\Gamma$. In the cases (P1) and (P2), this implies $x \in V(p'_{[u_{s'}, v]}) \subset f.\Gamma$. In case (P3), since we have $d_{\Gamma^e}(u_{s'}, v) = d_{\Gamma^e}(u_{s-1}, v) > 6$ and $p'_{[u_{s'}, v]} = \alpha' \subset g'.\Gamma \cap f.\Gamma$, we have $g'=f$ by Remark \ref{rem:intersection of stabilizers} (3). This implies $x \in g'.\Gamma = f.\Gamma$. We can argue similarly for $z$ and $r'$, and see $z \in f.\Gamma$.
    
    Thus, we have $\{x,z\} \subset f.\Gamma$. Recall $\{x,y\} \subset g.\Gamma$ and $\{y,z\}=\{y',z\} \subset h.\Gamma$. By applying Lemma \ref{lem:triangle all of whose sides are on the same planes is on the same plane} to $x,y,z$, there exists $k' \in \Gamma\G$ such that $\{x,y,z\} \subset k'.\Gamma$. Hence, the existence of $k,\xi,\eta,\zeta$ follows by setting $\xi=x$, $\eta=y$, $\zeta=z$, and $k=k'$. This finishes the proof of $\mathcal{A} \neq \emptyset$.
   
    Take $(n, m, \ell, \mathbf{x}, \mathbf{y}, \mathbf{z}, \mathbf{g}) \in \mathcal{A}$ satisfying
    \[
    n+m+\ell = \min\{n'+m'+\ell' \mid (n', m', \ell', \mathbf{x'}, \mathbf{y'}, \mathbf{z'}, \mathbf{g'}) \in \mathcal{A}\}.
    \]
    By minimality of $n+m+\ell$, we can show the conditions (5), (6), and (7) in the same way as the proof of Proposition \ref{prop:paths must penetrate long}.
    
    In the following, we'll show the condition (8). We have $z_n=a$ and $y_0=b$ by $n=m=0$. By $\{a,b\}\subset k.\Gamma$ and $d_{\Gamma^e}(a,b)\le 2$, $p$ is a unique geodesic in $\Gamma^e$ from $a$ to $b$.

    When $\ell = 0$, the geodesic triangle $pqr^{-1}$ is contained in $k.\Gamma$ and has no self-intersection. This implies $|p|+|q|+|r| > 20$ by $\girth(\Gamma) > 20$. Since both $q$ and $r$ are geodesic and we have $|p| \le 2$, we have $\min\{|q|, |r|\} > 8$.
    
    When $\ell \ge 1$, we have $d_{\Gamma^e}(y'_0, z'_0) \le 2$ by $\{y'_0, z'_0\} \subset k.\Gamma \cap h_1.\Gamma$ and Remark \ref{rem:intersection of stabilizers} (3). Suppose $|q_{[y_0,y'_0]}| \le 6$ for contradiction, then in the same way as the proof of Proposition \ref{prop:paths must penetrate long}, we can see that one of (1) - (4) must hold, (1) $b=y'_0$, (2) $a=z'_0$, (3) the sequence $(b,y'_0,a,z'_0)$ is a geodesic path in $k.\Gamma$, (4) the sequence $(a,z'_0,b,y'_0)$ is a geodesic path in $k.\Gamma$.
    
    In case (1), in the same way as the proof of Proposition \ref{prop:paths must penetrate long}, we have $a \neq z'_0$ and one of (1-1)-(1-3) must hold: (1-1) the sequence $(b,a,z'_0)$ is a geodesic path in $k.\Gamma$, (1-2) the sequence $(b,z'_0,a)$ is a geodesic path in $k.\Gamma$, (1-3) there exists $w \in V(\Gamma^e)$ such that $\{(a,w),(b,w),(z'_0,w)\} \subset E(\Gamma^e)$, that is, the vertices $a,b,z'_0$ form a tripod whose center is $w$. In case (1-1), by $b=y'_0$, we have $(y'_0,a,z'_0) \subset h_1.\Gamma$. This implies $q_{[b,y'_1]} \cup r_{[a,z'_1]} \subset h_1.\Gamma$, hence contradicts minimality of $N$. In case (1-2), by $p=(b,z'_0,a)$, we have $z'_0 \in V(p) \cap V(r)$. This contradicts that the loop $pqr^{-1}$ is a circuit. In case (1-3), by $p=(a,w,b)$ and $r_{[a,z'_0]} = (a,w,z'_0)$, we have $w \in V(p)\cap V(r)$. This contradicts that the loop $pqr^{-1}$ is a circuit.
    
    In case (2), we get the same contradiction as case (1). In case (3), by $p=(a,y'_0,b)$, we have $y'_0 \in V(p)\cap V(q)$. This contradicts that the loop $pqr^{-1}$ is a circuit. In case (4), by $p=(a,z'_0,b)$, we have $z'_0 \in V(p)\cap V(r)$. This again contradicts that the loop $pqr^{-1}$ is a circuit.
\end{proof}

Lemma \ref{lem:two geodesics form a ladder} below means that two geodesic paths in $\Gamma^e$ whose endpoints are close to each other travel though common copies of $\Gamma$. This is used in the proof of Proposition \ref{prop:extension graph is tight}.

\begin{figure}[htbp]
\begin{center}
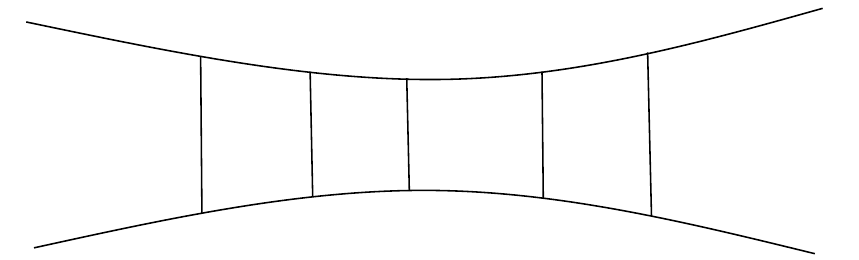
 \caption{Paths $p$ and $q$ in Lemma \ref{lem:two geodesics form a ladder}} 
 \label{fig:Fellow traveling paths}
\end{center} 
\end{figure}

\begin{lem}\label{lem:two geodesics form a ladder}
    Let $a,b \in V(\Gamma^e)$, $r \in \NN$, $a' \in \N_{\Gamma^e}(a,r)$, and $b' \in \N_{\Gamma^e}(b,r)$. Let $p \in \geo_{\Gamma^e}(a,b)$ and $q \in \geo_{\Gamma^e}(a',b')$. Suppose $d_{\Gamma^e}(a,b) \ge 2r + 32$ and $V(p)\cap V(q) = \emptyset$, then there exist $n \in \NN$ a subsequece $(x_0,\cdots,x_n)$ in $V(p)$, a subsequece $(y_0,\cdots,y_n)$ in $V(q)$, and $g_1,\cdots, g_n \in \Gamma\G$ that satisfy the following four conditions.
    \begin{itemize}
        \item[(1)]
        $\max\{d_{\Gamma^e}(a,x_0), d_{\Gamma^e}(a',y_0), d_{\Gamma^e}(b,x_n), d_{\Gamma^e}(b',y_n)\} \le r+22$.
        \item[(2)]
        $p_{[x_{i-1}, x_i]} \cup q_{[y_{i-1}, y_i]} \subset g_i.\Gamma$ for any $i\in \{1,\cdots,n\}$.
        \item[(3)]
        $\min\{|p_{[x_{i-1}, x_i]}|, |q_{[y_{i-1}, y_i]}|\} \ge 5$ for any $i\in \{2,\cdots,n-1\}$.
        \item[(4)]
        $g_i \neq g_{i+1}$ for any $i\in \{1,\cdots,n-1\}$.
    \end{itemize}
\end{lem}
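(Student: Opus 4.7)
The plan is to apply Proposition \ref{prop: classification of geodesic triangle} twice, via an auxiliary diagonal, and then to align the two resulting ladders to extract a common $p$-$q$ ladder. Fix geodesics $\alpha \in \geo_{\Gamma^e}(a, a')$ and $\beta \in \geo_{\Gamma^e}(b, b')$ (both of length $\le r$) and a diagonal $\gamma \in \geo_{\Gamma^e}(a', b)$. The triangle inequality gives $|\gamma| \ge |p| - |\alpha| \ge r + 32$. Form the geodesic triangles $\Delta_1$ with sides $\alpha$ (from $a$ to $a'$), $\gamma$ (from $a'$ to $b$), $p$ (from $a$ to $b$), and $\Delta_2$ with sides $\gamma$ (from $a'$ to $b$), $\beta$ (from $b$ to $b'$), $q$ (from $a'$ to $b'$). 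Using $V(p)\cap V(q)=\emptyset$, we can trim the triangles so that each is a circuit and Proposition \ref{prop: classification of geodesic triangle} applies.

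\textbf{Two ladders on $\gamma$.} From $\Delta_1$, Proposition \ref{prop: classification of geodesic triangle} produces a long ``side ladder'' of copies $h^{(1)}_1.\Gamma, \ldots, h^{(1)}_{\ell_1}.\Gamma$ shared between $p$ and $\gamma$ on the $b$-side (condition (3) with $\gamma$- and $p$-rung lengths $\ge 7$), together with an $\alpha$-$p$ side ladder on the $a$-side, an $\alpha$-$\gamma$ side ladder on the $a'$-side, and a central copy $k_1.\Gamma$ (conditions (1), (2), (4)). Since consecutive rung-copies are distinct (conditions (5)--(7)), Remark \ref{rem:intersection of stabilizers} (3) bounds every ``corner jump'' between rungs by $2$, which lets us bound the total $\gamma$-length covered by the $\alpha$-involving pieces linearly in $|\alpha|\le r$. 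Symmetrically, $\Delta_2$ yields a long side ladder of copies $h^{(2)}_1.\Gamma, \ldots, h^{(2)}_{n_2}.\Gamma$ shared between $q$ and $\gamma$ on the $a'$-side, while all $\beta$-involving pieces of $\gamma$ sit near the $b$-side with total $\gamma$-length bounded linearly in $|\beta|\le r$.

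\textbf{Matching along $\gamma$.} Because $|\gamma| \ge r + 32$ dominates the two bounded contributions, the long $p$-$\gamma$ ladder and the long $q$-$\gamma$ ladder overlap along a substantial middle portion of $\gamma$. On this overlap, every rung of either ladder contributes a $\gamma$-subsegment of length $\ge 7$; hence if a $\gamma$-subsegment of length $\ge 3$ lies simultaneously in $h^{(1)}_i.\Gamma$ and $h^{(2)}_j.\Gamma$, Remark \ref{rem:intersection of stabilizers} (3) forces $h^{(1)}_i = h^{(2)}_j$. This identifies a common sequence of copies $g_1.\Gamma, \ldots, g_n.\Gamma$ through which $p$, $q$, and $\gamma$ all travel in order. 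Choosing $x_i \in V(p)\cap g_i.\Gamma$ and $y_i \in V(q)\cap g_i.\Gamma$ at entry/exit of each $g_i.\Gamma$ gives conditions (2) and (4) immediately; condition (3)'s bound $\ge 5$ follows from the $\ge 7$ bound of Proposition \ref{prop: classification of geodesic triangle} after at most $\pm 2$ adjustment at each end of a rung (again by Remark \ref{rem:intersection of stabilizers} (3) applied at boundaries $g_i.\Gamma\cap g_{i+1}.\Gamma$); and condition (1) follows by summing $|\alpha|$ (or $|\beta|$), a bounded number of length-$\le 2$ corner jumps, and a bounded central-triangle contribution, producing the stated constant $22$.

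\textbf{Main obstacle.} The delicate step is the alignment in Step 3 and the constant tracking in condition (1). One must carefully control how a rung of the $p$-$\gamma$ ladder can straddle a rung boundary of the $q$-$\gamma$ ladder near the transition between the $\alpha$- or $\beta$-involving parts of $\gamma$ and the shared middle portion, and how the two ``central triangles'' $k_1.\Gamma$ and $k_2.\Gamma$ interact with the two long side ladders. The combination of $\girth(\Gamma) > 20$, the $\ge 7$ rung-length bound from Proposition \ref{prop: classification of geodesic triangle}, and the $\le 2$ intersection-diameter bound from Remark \ref{rem:intersection of stabilizers} (3) provides the rigidity needed to keep all these boundary adjustments uniformly small, which is what yields the explicit constant $r+22$.
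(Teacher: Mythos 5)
Your high-level plan is the same as the paper's: introduce a diagonal geodesic, apply Proposition~\ref{prop: classification of geodesic triangle} to the two resulting triangles, and then match the two $\gamma$-ladders against each other using the $\ge 7$ rung bound and Remark~\ref{rem:intersection of stabilizers}~(3). (You place the diagonal from $a'$ to $b$ while the paper takes it from $a$ to $b'$; that is cosmetic.) However, as written the proposal has a genuine gap precisely at the step you flag as the ``main obstacle.''

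First, the paper does essential preprocessing that your proposal skips. It chooses $\gamma$ and then reparametrizes it so that $\gamma$ \emph{coincides} with $p$ on an initial segment up to some $z_0 \in V(p)\cap V(\gamma)$, and coincides with $q$ on a terminal segment from some $w_0 \in V(q)\cap V(\gamma)$, with $p_{[z_0,b]}\cap\gamma_{[z_0,b']}=\{z_0\}$, $q_{[a',w_0]}\cap\gamma_{[a,w_0]}=\{w_0\}$, and crucially $d_{\Gamma^e}(a,z_0) < d_{\Gamma^e}(a,w_0)$ because $V(p)\cap V(q)=\emptyset$. This gives a common parametrization of $\gamma$ along which the two ladders can be compared and guarantees they overlap in the correct order. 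You assert ``we can trim the triangles so that each is a circuit,'' but making $\Delta_1$ (sides $\alpha,\gamma,p$) a circuit has nothing to do with $V(p)\cap V(q)=\emptyset$, and the trimming the paper actually performs is of a different and more structured kind than simply removing self-intersections.

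Second, your matching step is incomplete where it matters. You claim that a common $\gamma$-subsegment of length $\ge 3$ forces $h^{(1)}_i = h^{(2)}_j$ and that this ``identifies a common sequence of copies.'' That observation alone does not align the ladders: a single $p$-$\gamma$ rung can straddle a boundary of the $q$-$\gamma$ ladder, and one must then show inductively that once the ladders agree at one rung they continue to agree, with the only $\le 2$ slippage occurring at the boundary (not at every rung). The paper's proof runs a detailed case analysis (cases (i)--(ii), (i$'$)--(ii$'$), and then (A1)--(A3) with further subcases) to establish exactly this, and that case analysis is the content of the lemma; your proposal defers it entirely to the ``main obstacle'' paragraph.

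Third, the arithmetic you give for condition~(3) does not produce the claimed bound. You write that the $\ge 5$ follows from the $\ge 7$ bound ``after at most $\pm 2$ adjustment at each end of a rung.'' Two adjustments of $\le 2$ give $\ge 7-4 = 3$, not $\ge 5$. The paper's case analysis shows that, for each interior rung, only \emph{one} end of the $\gamma$-segment is clipped by $\le 2$ (the other end is an exact boundary match between consecutive rungs), which is why the bound is $\ge 7-2=5$. Establishing that only one clip occurs per rung is precisely what the matching argument provides, so this is not a cosmetic imprecision but a sign that the alignment has not actually been carried out.
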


\begin{proof}
    Fix $\alpha \in \geo_{\Gamma^e}(a,a')$, $\beta \in \geo_{\Gamma^e}(b,b')$, and $\gamma \in \geo_{\Gamma^e}(a,b')$. By replacing subpaths of $\gamma$ if necessary, we may assume that there exist $z_0 \in V(r)\cap V(p)$ and $w_0 \in V(\gamma)\cap V(q)$ such that $p_{[a, z_0]}=\gamma_{[a, z_0]}$, $p_{[z_0, b]}\cap \gamma_{[z_0, b']} = \{z_0\}$, $q_{[w_0, b']}=\gamma_{[w_0, b']}$, and $q_{[a', w_0]}\cap \gamma_{[a, w_0]} = \{w_0\}$. Note $d_{\Gamma^e}(a,z_0) < d_{\Gamma^e}(a,w_0)$ by $V(p) \cap V(q) = \emptyset$. We claim that either (i) or (ii) below holds.
    \begin{itemize}
        \item[(i)]
        $\gamma_{[z_0, b']} \subset \N_{\Gamma^e}(b, r+18)$.
        \item[(ii)]
        There exist $n \in \NN$, a subsequence $(z_0 \!=)\,x_0,\cdots ,x_n$ of $V(p_{[z_0, a]})$, a subsequence $z_0,\cdots ,z_n$ of $V(\gamma_{[z_0,b']})$, and $g_1,\cdots, g_n \in \Gamma\G$ such that 
        \begin{itemize}
            \item[-]
            $p_{[x_{i-1}, x_i]} \cup \gamma_{[z_{i-1}, z_i]} \subset g_i.\Gamma$ for any $i \in \{1,\cdots,n\}$,
            \item[-]
            $\min\{|p_{[x_{i-1}, x_i]}|, |\gamma_{[z_{i-1}, z_i]}| \} \ge 7$ and $g_i \neq g_{i+1}$ for any $i \in \{1,\cdots,n-1\}$,
            \item[-]
            $|\gamma_{[z_{n-1},z_n]}| \ge 7$, and $p_{[x_n,b]} \cup \gamma_{[z_n, b']} \subset \N_{\Gamma^e}(b, r+18)$.
        \end{itemize}
    \end{itemize}
\begin{figure}[htbp]
\begin{minipage}[c]{0.5\hsize}
\begin{center}
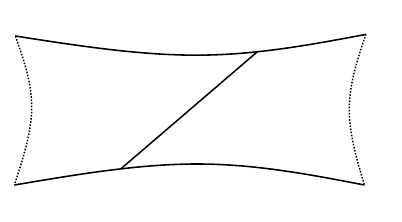
 \caption{Case (ii)} 
 \label{fig:quadrilateral1}
\end{center}
\end{minipage} 
\hfill 
\begin{minipage}[c]{0.5\hsize}
\begin{center}
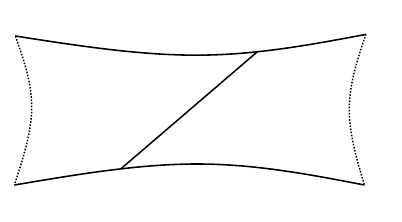
 \caption{Case (ii')}
 \label{fig:quadrilateral2}
\end{center}
\end{minipage} 
\end{figure}
    Indeed, by applying Proposition \ref{prop: classification of geodesic triangle} to the simple geodesic triangle formed by subpaths of $p_{[z_0, b]}$, $\gamma_{[z_0, b']}$, and $\beta$, there exist $N \in \NN\cup\{0\}$, a subsequence $(x_i)_{i=0}^{N+1}$ of $V(p_{[z_0, a]})$ with $x_0=z_0$, a subsequence $(z_i)_{i=0}^{N+1}$ of $V(\gamma_{[z_0,b']})$, $x',z' \in V(\beta)$, and $(g_i)_{i=0}^{N+1}$ in $\Gamma\G$ such that
    \begin{itemize}
        \item[-]
        for any $i\in \{1,\cdots,N\}$, $p_{[x_{i-1}, x_i]} \cup \gamma_{[z_{i-1}, z_i]} \subset g_i.\Gamma$, $\min\{|p_{[x_{i-1}, x_i]}|, |\gamma_{[z_{i-1}, z_i]}| \} \ge 7$, and $g_i \neq g_{i+1}$,
        \item[-] 
        $p_{[x_N, x_{N+1}]} \cup \gamma_{[z_N, z_{N+1}]}\cup \beta_{[x',z']} \subset g_{N+1}.\Gamma$, and $\max\{d_{\Gamma^e}(x_{N+1}, x'), d_{\Gamma^e}(z_{N+1}, z')\} \le 2$.
    \end{itemize}
    By $d_{\Gamma^e}(z_{N+1}, z') \le 2$, we have for any $v \in V(\gamma_{[z_{N+1}, b']})$,
    \begin{align*}
        d_{\Gamma^e}(b,v)
        &\le d_{\Gamma^e}(b,z_{N+1}) + d_{\Gamma^e}(z_{N+1}, v) 
        \le d_{\Gamma^e}(b, z_{N+1}) + d_{\Gamma^e}(z_{N+1}, b') \\
        &\le d_{\Gamma^e}(b, z') + 2 + 2 + d_{\Gamma^e}(z',b')
        =d_{\Gamma^e}(b,b')+4 \le r+4.~~~~~~~(\ast)
    \end{align*}
    Also, for any $v \in V(p_{[x_{N+1}, b']})$, $d_{\Gamma^e}(b,v) \le d_{\Gamma^e}(b, x_{N+1}) \le d_{\Gamma^e}(b, x') + d_{\Gamma^e}(x', x_{N+1}) \le r+2$. Hence, when $|\gamma_{[z_N, z_{N+1}]}| \ge 7$, (ii) holds by defining $n \in \NN$ by $n=N+1$.
    
    When $|\gamma_{[z_N, z_{N+1}]}| < 7$ and $N \ge 1$, we have $d_{\Gamma^e}(z_N,z') \le d_{\Gamma^e}(z_N, z_{N+1}) + d_{\Gamma^e}(z_{N+1}, z') \le 2+7=9$. This implies $\gamma_{[z_N, b']} \subset \N_{\Gamma^e}(b, r+18)$ in the same way as $(\ast)$. Note $d_{\Gamma^e}(z_N,x_N) \le 2$ by $g_N\neq g_{N+1}$ and Remark \ref{rem:intersection of stabilizers} (3). Hence, for any $v \in p_{[x_N, b]}$, we have $d_{\Gamma^e}(b, v) \le d_{\Gamma^e}(b,x_N) \le d_{\Gamma^e}(b,z') + d_{\Gamma^e}(z',z_N) + d_{\Gamma^e}(z_N,x_N) \le r+9+2 = r+11$. Thus, (ii) holds by defining $n$ by $n=N$.
    
    When $|\gamma_{[z_N, z_{N+1}]}| < 7$ and $N = 0$, we have $d_{\Gamma^e}(z_0,z') \le d_{\Gamma^e}(z_0, z_1) + d_{\Gamma^e}(z_1, z') \le 9$, hence (i) holds by the same computation as $(\ast)$. 
    
    Similarly, by applying Proposition \ref{prop: classification of geodesic triangle} to the simple geodesic triangle formed by subpaths of $q^{-1}_{[w_0, a']}$, $\gamma^{-1}_{[w_0, a]}$, and $\alpha$, we can also show that either (i') or (ii') below holds.
    \begin{itemize}
    \item[(i')]
    $d_{\Gamma^e}(a,w_0) \le r+9$.
    \item[(ii')]
    There exist $m \in \NN$, a subsequence $(w_0 \!=)\,y_0,\cdots,y_m$ of $V(q^{-1}_{[w_0, a']})$, a subsequence $w_0,\cdots,w_m$ of $V(\gamma^{-1}_{[w_0,a]})$, and $h_1,\cdots,h_m \in \Gamma\G$ such that
    \begin{itemize}
        \item[-]
        $q^{-1}_{[y_{i-1}, y_i]} \cup \gamma^{-1}_{[w_{i-1}, w_i]} \subset h_i.\Gamma$ for any $i \in \{1,\cdots,m\}$,
        \item[-]
        $\min\{|q^{-1}_{[y_{i-1}, y_i]}|, |\gamma^{-1}_{[w_{i-1}, w_i]}| \} \ge 7$ and $h_i \neq h_{i+1}$ for any $i \in \{1,\cdots,m-1\}$,
        \item[-]
        $|\gamma_{[w_{m-1},w_m]}| \ge 7$, and $\max\{d_{\Gamma^e}(y_m, a), d_{\Gamma^e}(w_m, a)\} \le r+11$.
    \end{itemize}
    \end{itemize}
    Suppose for contradiction that (i) and (i') hold. By $d_{\Gamma^e}(a,z_0) < d_{\Gamma^e}(a,w_0)$, this implies
    $
         d_{\Gamma^e}(a,b)
         \le d_{\Gamma^e}(a,z_0) + d_{\Gamma^e}(z_0,b)
         < d_{\Gamma^e}(a,w_0) + d_{\Gamma^e}(z_0,b)
         \le (r+9)+(r+18)
         = 2r+27
    $, which contradicts the assumption $d_{\Gamma^e}(a,b) \ge 2r + 32$.

    When (i) and (ii') hold, we have $d_{\Gamma^e}(a,w_m) < d_{\Gamma^e}(a,z_0)$. Indeed, if $d_{\Gamma^e}(a,z_0) \le d_{\Gamma^e}(a,w_m)$, then we have $d_{\Gamma^e}(a,b) \le d_{\Gamma^e}(a,z_0) + d_{\Gamma^e}(z_0,b) \le d_{\Gamma^e}(a,w_m) + d_{\Gamma^e}(z_0,b) \le (r+11)+(r+18)$,  which contradicts $d_{\Gamma^e}(a,b) \ge 2r + 32$. By $d_{\Gamma^e}(a,w_m) < d_{\Gamma^e}(a,z_0) < d_{\Gamma^e}(a,w_0)$, there exists $i \in \{1,\cdots,m\}$ such that $z_0 \in \gamma^{-1}_{[w_{i-1}, w_i]}$. By $p_{[a,z_0]}=\gamma_{[a,z_0]}$, we have $w_m, \cdots, w_i, z_0 \in V(p)$ and $p_{[w_i,z_0]}\cup q_{[y_i,y_{i-1}]} \subset h_i.\Gamma$. Since Remark \ref{rem:intersection of stabilizers} (3) implies $d_{\Gamma^e}(y_{i-1}, w_{i-1}) \le 2$, we also have $d_{\Gamma^e}(y_{i-1}, b) \le d_{\Gamma^e}(y_{i-1}, w_{i-1}) + d_{\Gamma^e}(w_{i-1}, b) \le 2+(r+18)$ by $w_{i-1} \in \gamma_{[z_0, b']}$. Thus, the sequences $(w_m,\cdots,w_i,z_0) \subset V(p)$, $(y_m,\cdots,y_{i-1}) \subset V(q)$, and $(h_m,\cdots,h_i) \subset \Gamma\G$ satisfy the statement.

    When (ii) and (i') hold, we have $d_{\Gamma^e}(a,w_0) < d_{\Gamma^e}(a,z_n)$. Indeed, if $d_{\Gamma^e}(a,z_n) \le d_{\Gamma^e}(a,w_0)$, then $d_{\Gamma^e}(a,b) \le d_{\Gamma^e}(a,z_n) + d_{\Gamma^e}(z_n,b) \le d_{\Gamma^e}(a,w_0) + d_{\Gamma^e}(z_n,b) \le (r+9)+(r+18)$, which contradicts $d_{\Gamma^e}(a,b) \ge 2r + 32$. By $d_{\Gamma^e}(a,z_0) < d_{\Gamma^e}(a,w_0) < d_{\Gamma^e}(a,z_n)$, there exists $i \in \{1,\cdots,n\}$ such that $w_0 \in \gamma_{[z_{i-1}, z_i]}$. By $q_{[w_0, b']}=\gamma_{[w_0, b']}$, we have $w_0, z_i, \cdots, z_n \in V(q)$ and $p_{[x_{i-1},x_i]} \cup q_{[w_0,z_i]} \subset g_i.\Gamma$. Since Remark \ref{rem:intersection of stabilizers} (3) implies $d_{\Gamma^e}(x_{i-1}, z_{i-1}) \le 2$, we also have $d_{\Gamma^e}(x_{i-1}, a) \le d_{\Gamma^e}(x_{i-1}, z_{i-1}) + d_{\Gamma^e}(z_{i-1}, a) \le 2+(r+9)$ by $z_{i-1} \in \gamma_{[a, w_0]}$. Thus, the sequences $(x_{i-1},\cdots,x_n) \subset V(p)$, $(w_0,z_i,\cdots,z_n) \subset V(q)$, and $(g_i,\cdots,g_n) \subset \Gamma\G$ satisfy the statement.

    In the following, assume that (ii) and (ii') hold. We'll discuss three cases (A1)-(A3), (A1) $w_m \in \gamma_{[z_{n-1}, b']}$, (A2) $n \ge 2$ and there exists $i \in \{1,\cdots, n-1\}$ such that $w_m \in \gamma_{[z_{i-1}, z_i]}$, (A3) $w_m \in \gamma_{[a, z_0]}$.
    
    In case (A1), suppose $h_m\neq g_n$ for contradiction. This implies $d_{\Gamma^e}(a,z_n)-2 \le d_{\Gamma^e}(a,w_m)$ by $|\gamma_{[w_{m-1},w_m]}| \ge 7$ and Remark \ref{rem:intersection of stabilizers} (3). Hence, $d_{\Gamma^e}(a,b) \le d_{\Gamma^e}(a,z_n) + d_{\Gamma^e}(z_n,b) \le d_{\Gamma^e}(a,w_m) + 2 + d_{\Gamma^e}(z_n,b) \le (r+11)+2+(r+18)$, which contradicts $d_{\Gamma^e}(a,b) \ge 2r + 32$. By $h_m=g_n$, we have $p_{[x_{n-1}, x_n]} \cup q_{[y_m, y_{m-1}]} \subset g_n.\Gamma$. Note $d_{\Gamma^e}(x_{n-1}, z_{n-1}) \le 2$ by Remark \ref{rem:intersection of stabilizers} (3). Hence, we have $d_{\Gamma^e}(a, x_{n-1}) \le d_{\Gamma^e}(a, z_{n-1}) + d_{\Gamma^e}(z_{n-1}, x_{n-1}) \le d_{\Gamma^e}(a, w_m) + d_{\Gamma^e}(z_{n-1}, x_{n-1}) \le (r+11)+2$ by $z_{n-1} \in \gamma_{[a,w_m]}$. Similarly, by $d_{\Gamma^e}(w_{m-1}, y_{m-1}) \le 2$, we also have $d_{\Gamma^e}(b, y_{m-1})\le d_{\Gamma^e}(b, w_{m-1}) + d_{\Gamma^e}(w_{m-1}, y_{m-1}) \le (r+20)+2$. Here, we used $d_{\Gamma^e}(b, w_{m-1}) \le (r+18)+2$, which follows from $d_{\Gamma^e}(a,z_n)-2 \le d_{\Gamma^e}(a,w_m)$ and $\gamma_{[z_n, b']} \subset \N_{\Gamma^e}(b, r+18)$ in (ii). Recall the conditions $d_{\Gamma^e}(b,x_n) \le r+18$ in (ii) and $d_{\Gamma^e}(a,y_m) \le r+11$ in (ii'). Thus, the sequences $(x_{n-1},x_n) \subset V(p)$, $(y_m, y_{m-1}) \subset V(q)$, and $(g_n) \subset \Gamma\G$ satisfy the statement.
    
    In case (A2), if $h_m = g_i$, then we have $w_{m-1}\in \gamma_{[z_{i-1}, z_{i+1}]}$. Indeed, if $w_{m-1} \notin \gamma_{[z_{i-1}, z_{i+1}]}$, then $\gamma_{[z_i, z_{i+1}]} \subset \gamma_{[w_m, w_{m-1}]}$, hence we have $g_{i+1} = h_m \,(= g_i)$ by $|\gamma_{[z_i, z_{i+1}]}| \ge 7$ and Remark \ref{rem:intersection of stabilizers} (3), which contradicts $g_i \neq g_{i+1}$. 
    
    If $h_m \neq g_i$, then we have $w_{m-1} \in \gamma_{[z_i,b']}$ and $|\gamma_{[w_m, z_i]}| \le 2$ by $|\gamma_{[w_m,w_{m-1}]}| \ge 7$ and Remark \ref{rem:intersection of stabilizers} (3). This implies $|\gamma_{[z_i, w_{m-1}]}| = |\gamma_{[w_m, w_{m-1}]}| - |\gamma_{[w_m, z_i]}| \ge 7-2 = 5$. Hence, we have $h_m = g_{i+1}$ by $|\gamma_{[z_i, z_{i+1}]}| \ge 7$ and Remark \ref{rem:intersection of stabilizers} (3). This and $w_{m-1} \in \gamma_{[z_i,b']}$ implies $w_{m-1} \in \gamma_{[z_i,z_{i+2}]}$. Indeed, if $w_{m-1} \notin \gamma_{[z_i,z_{i+2}]}$, then $\gamma_{[z_{i+1}, z_{i+2}]} \subset \gamma_{[w_m, w_{m-1}]}$, hence we have $g_{i+2} = h_m \,(= g_{i+1})$ by $|\gamma_{[z_{i+1}, z_{i+2}]}| \ge 7$ and Remark \ref{rem:intersection of stabilizers} (3), which contradicts $g_{i+1} \neq g_{i+2}$. 
    
    Thus, in either case, there exists $k$ (i.e. $k=i$ or $k=i+1$) such that $h_m = g_k$, $d_{\Gamma^e}(a,z_{k-1}) \le d_{\Gamma^e}(a,w_m) + 2$, and $w_{m-1} \in \gamma_{[z_{k-1}, z_{k+1}]}$. Note $d_{\Gamma^e}(a,x_{k-1}) \le d_{\Gamma^e}(a,z_{k-1}) + d_{\Gamma^e}(z_{k-1},x_{k-1}) \le d_{\Gamma^e}(a,w_m) + 2 + 2 \le (r+11)+4 = r+15$. 
    
    If $w_{m-1} \in \gamma_{[z_{k-1}, z_k]}$, then by $h_{m-1} \neq h_m = g_k$ we can see $h_{m-1} = g_{k+1}$ and $w_{m-2} \in \gamma_{[z_{k}, z_{k+2}]}$ in the same way as we discussed the case $h_m \neq g_i$ above. 
    
    If $w_{m-1} \in \gamma_{[z_k, z_{k+1}]}$, then we have $|\gamma_{[z_k, w_{m-1}]}| \le 2$ by $g_{k+1} \neq g_k = h_m$ and Remark \ref{rem:intersection of stabilizers} (3). This and $|\gamma_{[w_{m-1}, w_{m-2}]}| \ge 7$ imply  $|\gamma_{[z_k, w_{m-2}]}| \ge 5$. Hence, we have $h_{m-1} = g_{k+1}$ by $|\gamma_{[z_k, z_{k+1}]}| \ge 7$ and Remark \ref{rem:intersection of stabilizers} (3). We can also see $w_{m-2} \in \gamma_{[z_{k}, z_{k+2}]}$ in the same way as above. 
    
    Thus, we have $h_{m-1} = g_{k+1}$ and $w_{m-2} \in \gamma_{[z_{k}, z_{k+2}]}$ in either case. We can repeat this argument up to $h_m,\cdots,h_{m - \min\{m-1,n-k\}}$ and see $h_{m-j}=g_{k+j}$ for any $0 \le j \le \min\{m-1,n-k\}$. 
    
    When $m - 1 > n - k$, we can also see $w_{m-(n-k)-1} \in \gamma_{[z_{n-1}, b']}$. If $w_{m-(n-k)-1} \in \gamma_{[z_{n-1}, z_n]}$, then we have $|\gamma_{[w_{m-(n-k)-1}, z_n]}| \le 2$ by $|\gamma_{[w_{m-(n-k)-1}, w_{m-(n-k)-2}]}| \ge 7$ and Remark \ref{rem:intersection of stabilizers} (3). Indeed, $|\gamma_{[w_{m-(n-k)-1}, z_n]}| \ge 3$ would imply $h_{m-(n-k)-1} = g_n (=h_{m-(n-k)})$, which contradicts $h_{m-(n-k)-1} \neq h_{m-(n-k)}$. Hence, we have $d_{\Gamma^e}(w_{m-(n-k)-1}, b) \le (r+18)+2$ by $\gamma_{[z_n, b']} \subset \N_{\Gamma^e}(b,r+18)$ in (ii). This and $d_{\Gamma^e}(y_{m-(n-k)-1},w_{m-(n-k)-1}) \le 2$ imply $d_{\Gamma^e}(y_{m-(n-k)-1},b) \le d_{\Gamma^e}(y_{m-(n-k)-1},w_{m-(n-k)-1}) + d_{\Gamma^e}(y_{m-(n-k)-1},b) \le r+22$. Recall $d_{\Gamma^e}(b,x_n) \le r+18$ in (ii) and $d_{\Gamma^e}(a,y_m) \le r+11$ in (ii'). Thus, the sequences $(x_{k-1},\cdots,x_n) \subset V(p)$, $(y_m, \cdots, y_{m-(n-k)-1}) \subset V(q)$, and $(g_k,\cdots,g_n) \subset \Gamma\G$ satisfy the statement. 
    
    When $m - 1 = n - k$, we can also see $w_0 \in \gamma_{[z_{n-1}, b']}$. If $w_0 \in \gamma_{[z_n, b']}$, then we have $d_{\Gamma^e}(b, y_0) = d_{\Gamma^e}(b, w_0) \le r + 18$ by (ii), hence the sequences $(x_{k-1},\cdots,x_n) \subset V(p)$, $(y_m, \cdots, y_0) \subset V(q)$, and $(g_k,\cdots,g_n) \subset \Gamma\G$ satisfy the statement. If $w_0 \in \gamma_{[z_{n-1}, z_n]}$, then we have $z_n \in \gamma_{[w_0,b']}=q_{[w_0,b']}=q_{[y_0,b']}$, which implies $|q_{[y_1,z_n]}| \ge |q_{[y_1,y_0]}| \ge 7$ and $p_{[x_{n-1},x_n]} \cup q_{[y_1,z_n]} \subset g_n.\Gamma$ by $\{y_1,z_n\} \subset h_1.\Gamma=g_n.\Gamma$ and Corollary \ref{cor:Gamma is embedded into Gammae}. Hence, the sequences $(x_{k-1},\cdots,x_n) \subset V(p)$, $(y_m, \cdots,y_1, z_n) \subset V(q)$, and $(g_k,\cdots,g_n) \subset \Gamma\G$ satisfy the statement. 
    
    When $m - 1 < n - k$, we can also see $w_0 \in \gamma_{[z_{k+m-2},z_{k+m}]}$ and $w_1 \in \gamma_{[a,z_{k+m-1}]}$. If $w_0 \in \gamma_{[z_{k+m-2},z_{k+m-1}]}$, then we have $z_{k+m-1}, \cdots, z_n \in \gamma_{[w_0,b']}=q_{[w_0,b']}=q_{[y_0,b']}$, which implies $|q_{[y_1,z_{k+m-1}]}| \ge |q_{[y_1,y_0]}| \ge 7$ and $p_{[x_{k+m-2},x_{k+m-1}]} \cup q_{[y_1,z_{k+m-1}]} \subset g_{k+m-1}.\Gamma$ by $\{y_1,z_{k+m-1}\} \subset h_1.\Gamma=g_{k+m-1}.\Gamma$ and Corollary \ref{cor:Gamma is embedded into Gammae}. Hence, the sequences $(x_{k-1},\cdots,x_n) \subset V(p)$, $(y_m, \cdots,y_1, z_{k+m-1}, \cdots, z_n) \subset V(q)$, and $(g_k,\cdots,g_n) \subset \Gamma\G$ satisfy the statement. If $w_0 \in \gamma_{[z_{k+m-1},z_{k+m}]}$, then by $w_1 \in \gamma_{[a,z_{k+m-1}]}$, $g_{k+m} \neq g_{k+m-1}=h_1$, and Remark \ref{rem:intersection of stabilizers} (3), we have $|\gamma_{[z_{k+m-1}, y_0]}| = |\gamma_{[z_{k+m-1}, w_0]}| \le 2$. We also have $z_{k+m}, \cdots, z_n \in \gamma_{[w_0,b']}=q_{[w_0,b']}=q_{[y_0,b']}$. Hence, $|q_{[y_0, z_{k+m}]}| = |\gamma_{[y_0, z_{k+m}]}| = |\gamma_{[z_{k+m-1}, z_{k+m}]}| - |\gamma_{[z_{k+m-1}, w_0]}| \ge 7-2 =5$. Hence, the sequences $(x_{k-1},\cdots,x_n) \subset V(p)$, $(y_m, \cdots,y_0, z_{k+m}, \cdots, z_n) \subset V(q)$, and $(g_k,\cdots,g_n) \subset \Gamma\G$ satisfy the statement.
    
    In case (A3), by $d_{\Gamma^e}(a,z_0) < d_{\Gamma^e}(a,w_0)$, there exists $i \in \{1,\cdots,m\}$ such that $x_0=z_0 \in \gamma_{[w_i, w_{i-1}]}$. Note $w_m,\cdots,w_i \in \gamma_{[a,z_0]} = p_{[a,z_0]}$. 
    
    If $h_i=g_1$, then we have $p_{[w_i,x_1]} \cup q_{[y_i,y_{i-1}]} \subset h_i.\Gamma$ by $\{w_i,x_1\}\subset h_i.\Gamma$ and Corollary \ref{cor:Gamma is embedded into Gammae}. Note that we have $|p_{[w_i,x_1]}| \ge |p_{[x_0,x_1]}| \ge 7$ when $n>1$. In the same way as case (A2), we can show $h_{i-(j-1)} = g_j$ for any $1 \le j \le \min\{i,n\}$. Hence, when either $i>n$ or $[i=n ~\wedge~ y_0=w_0 \in \gamma_{[z_n,b']}]$ holds, the sequences $(w_m, \cdots, w_i, x_1,\cdots,x_n) \subset V(p)$, $(y_m, \cdots,y_{i-n}) \subset V(q)$, and $(h_m,\cdots,h_{i-(n-1)}) \subset \Gamma\G$ satisfy the statement. When $i\le n$ and $y_0=w_0 \in \gamma_{[z_{i-1},z_i]})$, the sequences $(w_m, \cdots, w_i, x_1,\cdots,x_n) \subset V(p)$, $(y_m, \cdots,y_1, z_i, \cdots,z_n) \subset V(q)$, and $(h_m,\cdots,h_1,g_{i+1},\cdots,g_n) \subset \Gamma\G$ satisfy the statement. When $i<n$ and $y_0=w_0 \in \gamma_{[z_i,z_{i+1}]}$, the sequences $(w_m, \cdots, w_i, x_1,\cdots,x_n) \subset V(p)$, $(y_m, \cdots,y_0, z_{i+1}, \cdots,z_n) \subset V(q)$, and $(h_m,\cdots,h_1,g_{i+1},\cdots,n) \subset \Gamma\G$ satisfy the statement.

    If $h_i \neq g_1$, then we have $|\gamma_{[z_0,w_{i-1}]}| \le 2$ by Remark \ref{rem:intersection of stabilizers} (3). Hence, $|\gamma_{[w_i,x_0]}|=|\gamma_{[w_i,z_0]}|=|\gamma_{[w_i,w_{i-1}]}|-|\gamma_{[z_0,w_{i-1}]}| \ge 7-2=5$. Note $p_{[w_i,x_0]} \cup q_{[y_i,y_{i-1}]} \subset h_i.\Gamma$. In the same way as case (A2), when $i>1$, we can show $h_{i-j} = g_j$ for any $1 \le j \le \min\{i-1,n\}$. Hence, when either $i-1>n$ or $[i-1=n ~\wedge~ y_0=w_0 \in \gamma_{[z_n,b']}]$ holds, the sequences $(w_m, \cdots, w_i, x_0,\cdots,x_n) \subset V(p)$, $(y_m, \cdots,y_{i-n-1}) \subset V(q)$, and $(h_m,\cdots,h_{i-n}) \subset \Gamma\G$ satisfy the statement. When $i-1 \le n$ and $y_0=w_0 \in \gamma_{[z_{i-2},z_{i-1}]})$, the sequences $(w_m, \cdots, w_i, x_0,\cdots,x_n) \subset V(p)$, $(y_m, \cdots,y_1, z_{i-1}, \cdots,z_n) \subset V(q)$, and $(h_m,\cdots,h_1,g_i,\cdots,g_n) \subset \Gamma\G$ satisfy the statement. When $i-1<n$ and $y_0=w_0 \in \gamma_{[z_{i-1},z_i]}$, the sequences $(w_m, \cdots, w_i, x_0,\cdots,x_n) \subset V(p)$, $(y_m, \cdots,y_0, z_i, \cdots,z_n) \subset V(q)$, and $(h_m,\cdots,h_1,g_i,\cdots,g_n) \subset \Gamma\G$ satisfy the statement.
\end{proof}

\section{Asymptotic dimension of the extension graph}
\label{sec:Asymptotic dimension of the extension graph}

The goal of this section is to prove Theorem \ref{thm:intro asymptotic dimension}, which corresponds to Theorem \ref{thm:asymptotic dimension}. For the rest of Section \ref{sec:Asymptotic dimension of the extension graph}, suppose that $\Gamma$ is a connected simplicial graph with $\girth(\Gamma) > 20$ and $\{G_v\}_{v \in V(\Gamma)}$ is a collection of non-trivial groups.

In order to prove Theorem \ref{thm:asymptotic dimension}, we first study the relation between geodesic paths in $\hGammae$ and those in $\Gamma^e$ in Lemma \ref{lem:geodesic decomposition for hGammae exists} (see Definition \ref{def:coned-off extension graph} for $\hGammae$). It turns out that a geodesic path in $\hGammae$ can be obtained by decomposing a geodesic path in $\Gamma^e$.

\begin{figure}[htbp]
\begin{center}
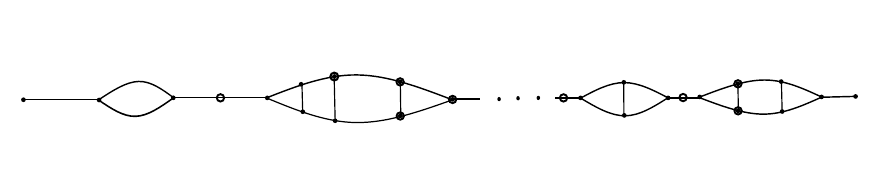
 \caption{Paths $p$ and $q$ in the proof of Lemma \ref{lem:geodesic decomposition for hGammae exists}} 
 \label{fig:}
\end{center} 
\end{figure}

\begin{lem}\label{lem:geodesic decomposition for hGammae exists}
    Let $a,b \in V(\Gamma^e)$. For any $p \in \geo_{\Gamma^e} (a,b)$ and any $\alpha \in \geo_{\hGammae} (a,b)$, there exists a subsequence $\widetilde{p}=(x_0,\cdots,x_n)$ of $V(p)$ with $x_0=a$ and $x_n=b$ such that $\widetilde{p}$ is a geodesic path in $\hGammae$ from $a$ to $b$ (i.e. $n = d_{\hGammae}(a,b)$ and $\forall\, i \ge 1, d_{\hGammae}(x_{i-1},x_i)=1$) and the Hausdorff distance of $\alpha$ and $\widetilde{p}$ in $\hGammae$ is at most 1.
\end{lem}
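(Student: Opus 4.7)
The plan is to compare the $\Gamma^e$-geodesic $p$ with a $\Gamma^e$-path $\gamma$ built from $\alpha$ via Proposition \ref{prop:paths must penetrate long}, and then extract $\widetilde p$ from the resulting common structure.

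First I would expand $\alpha=(a=y_0,y_1,\ldots,y_n=b)$ with $n=d_{\hGammae}(a,b)$, pick $g_i\in\Gamma\G$ with $\{y_{i-1},y_i\}\subset g_i.\Gamma$, and use convexity (Corollary \ref{cor:Gamma is embedded into Gammae}) to choose $\gamma_i\in\geo_{\Gamma^e}(y_{i-1},y_i)$ inside $g_i.\Gamma$; set $\gamma=\gamma_1\cdots\gamma_n$. The subdivision $(y_0,\ldots,y_n)$ makes $\gamma$ admissible in the sense of Definition \ref{def: admissible path}: condition (1) holds by construction, and condition (2) holds because $\alpha$ has no length-$2$ shortcut in $\hGammae$. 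Condition (3) and straightness of $\gamma$ can be enforced by relocating each intermediate $y_i$ inside $g_i.\Gamma\cap g_{i+1}.\Gamma$ to eliminate any backtracking, using $\girth(\Gamma)>20$ and Remark \ref{rem:intersection of stabilizers} to see that such a replacement stays in both translates and does not create new length-$2$ shortcuts.

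Second, I would apply Proposition \ref{prop:paths must penetrate long} to the straight admissible paths $p$ and $\gamma$, splitting at any vertices they share so that each resulting sub-bigon satisfies the circuit hypothesis. This produces subsequences $(x_0,\ldots,x_N)$ of $V(p)$ and $(w_0,\ldots,w_N)$ of $V(\gamma)$ with $x_0=w_0=a$, $x_N=w_N=b$, and $p_{[x_{i-1},x_i]}\cup\gamma_{[w_{i-1},w_i]}\subset f_i.\Gamma$ for some $f_i\in\Gamma\G$ with $f_i\ne f_{i+1}$. Set $\widetilde p=(x_0,\ldots,x_N)$.

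Since each pair $(x_{i-1},x_i)$ lies in $f_i.\Gamma$, $\widetilde p$ is a $\hGammae$-path of length $N$ from $a$ to $b$, so $N\ge n$. For $N\le n$, I would track each $w_i$ inside the admissible decomposition of $\gamma$: each $w_i$ lies in some piece $\gamma_{k(i)}$, the sequence $k(i)$ is nondecreasing, and the non-coincidence $f_i\ne f_{i+1}$ together with condition (2) of admissibility of $\gamma$ forces $k(i+1)\ge k(i)+1$, whence $N\le n$. For the Hausdorff bound, each $y_i$ is a vertex $w_{j(i)}$ of $\gamma$, giving $\{x_{j(i)},y_i\}\subset f_{j(i)}.\Gamma$ and hence $d_{\hGammae}(x_{j(i)},y_i)\le 1$; conversely, each $x_i$ lies in $f_i.\Gamma\cap f_{i+1}.\Gamma$, which by Remark \ref{rem:intersection of stabilizers} forces it to be within $\hGammae$-distance $1$ of some $y_j$. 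The main obstacle will be the identification $N=n$: the monotonicity argument relating the $w_i$'s to the pieces of the admissible decomposition of $\gamma$ is subtle and uses crucially both condition (3) of Proposition \ref{prop:paths must penetrate long} and that $\alpha$ is a $\hGammae$-geodesic, so $\gamma$ has exactly $n$ pieces that cannot be merged.
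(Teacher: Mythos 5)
Your overall skeleton is the same as the paper's: replace $\alpha$ by a $\Gamma^e$-path $\gamma$ threading translates $g_i.\Gamma$ that contain consecutive vertices of $\alpha$, arrange that $\gamma$ is a straight admissible path, compare $p$ with $\gamma$ via Proposition \ref{prop:paths must penetrate long}, and transfer the resulting subdivision to $p$. However, two steps have genuine gaps. First, straightness of $\gamma$: a concatenation of geodesics $\gamma_i\subset g_i.\Gamma$ need not be straight, because a subpath lying in a single translate can straddle a junction and fail to be geodesic, and ``relocating each intermediate $y_i$'' is not yet an argument --- moving $y_i$ changes the two adjacent pieces and can create new violations of Definition \ref{def: admissible path} (3) or of straightness at the neighbouring junctions, so you need a termination/global argument. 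The paper sidesteps this by a global minimality device: it considers all paths $q$ subdivided into $n$ pieces, each contained in a translate containing some vertex of $\alpha$ and with every vertex of $\alpha$ represented, and takes $q$ of minimal length; minimality yields admissibility and straightness simultaneously, and the representation condition is precisely what later gives the Hausdorff bound.

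Second, and more seriously, the shared segments of $p$ and $\gamma$. Proposition \ref{prop:paths must penetrate long} only applies to the sub-bigons that are circuits; on a maximal common subpath of $p$ and $\gamma$ it gives nothing, and such a subpath need not lie in any single translate. So your assertion that \emph{every} piece satisfies $p_{[x_{i-1},x_i]}\cup\gamma_{[w_{i-1},w_i]}\subset f_i.\Gamma$ --- and hence that $\widetilde p$ is an $\hGammae$-path of length $N$ --- fails as stated; you must insert, on each shared segment, the subdivision points coming from $\gamma$'s own decomposition and then redo both the count and the Hausdorff estimate with the two kinds of pieces mixed. This is exactly what the (B1)/(B2) induction in the paper's proof accomplishes. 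Relatedly, your counting inequality $k(i+1)\ge k(i)+1$ for the vertices $w_i$ is too strong: with $w_0$ in the first piece of $\gamma$ and $w_N$ in the $n$-th it gives $N\le n-1$, contradicting your own bound $N\ge n$. The correct mechanism is that each bigon piece, being of length at least $7$, determines a unique admissible piece of $\gamma$ by Remark \ref{rem:intersection of stabilizers} (3); the resulting assignment of bigon pieces to admissible pieces is strictly increasing because the $g_j$ are pairwise distinct (this is where geodesicity of $\alpha$ in $\hGammae$ enters), and the shared segments account for the remaining admissible pieces. Only after this corrected bookkeeping do you get $N=n$, and only then does every vertex of $\alpha$ share a translate with some $x_k$, which is what the Hausdorff-distance claim requires.
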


\begin{proof}
    We assume $a \neq b$ since the case $a = b$ is trivial. Let $\alpha=(y_0,\cdots,y_n) \in \geo_{\hGammae}(a,b)$. Note $n = d_{\hGammae}(a,b)$, $y_0=a$, and $y_n=b$. For each $j\in \{0,\cdots,n\}$, define $\mathcal{A}_j$ by $\mathcal{A}_j=\{g \in \Gamma\G \mid y_j \in g.\Gamma \}$. Define $\mathcal{B}$ to be the set of all tuples $(q, \mathbf{z}, \mathbf{g})$, where $q$ is a path in $\Gamma^e$ from $a$ to $b$, $\mathbf{z} = (z_0,\cdots,z_n)$ is a subsequence of $V(q)$ with $z_0=a$ and $z_n=b$, and $\mathbf{g}=(g_1,\cdots,g_n)$ is a sequence in $\bigcup_{j=0}^n \mathcal{A}_j$ such that $\forall\, i \in \{1,\cdots,n\},\, q_{[z_{i-1},z_i]} \subset g_i.\Gamma$ and $\forall\, j \in\{0,\cdots,n\}, A_j\cap \{g_1,\cdots,g_n\} \neq \emptyset$. The set $\mathcal{B}$ is non-empty. Indeed, for each $i \ge 1$ take $q'_i \in \geo_{\Gamma^e}(y_{i-1}, y_i)$ and $g'_i\in \Gamma\G$ with $q'_i\subset g'_i.\Gamma$, which is possible by $d_{\hGammae}(y_{i-1}, y_i)=1$ and Corollary \ref{cor:Gamma is embedded into Gammae}, then we have $(q'_1\cdots q'_n, (y_0, \cdots, y_n), (g'_1, \cdots, g'_n)) \in \mathcal{B}$. Take $(q, \mathbf{z}, \mathbf{g}) \in \mathcal{B}$ such that $|q| = \min\{|q'| \in \NN\cup\{0\} \mid (q', \mathbf{z'}, \mathbf{g'}) \in \mathcal{B}\}$. By $n=d_{\hGammae}(a,b)$ and minimality of $|q|$, the subpath $q_{[z_{i-1},z_i]}$ is geodesic in $\Gamma^e$ for any $i \ge 1$ and the path $q$ has no self-intersection. By $n = d_{\hGammae}(a,b)$, for any $i\in\{1,\cdots,n-1\}$, no $h \in \Gamma\G$ satisfies $q_{[z_{i-1},z_{i+1}]} \subset h.\Gamma$. Hence, $q$ is admissible with respect to $\mathbf{z}$.

    We claim that $q$ is straight (see Definition \ref{def:straight}). Indeed, let $r$ be a subpath of $q$ such that $r \subset h.\Gamma$ for some $h \in \Gamma\G$. If $|r| \le 6$, then $r$ is geodesic in $\Gamma^e$ by $\girth(\Gamma^e) > 20$ and Corollary \ref{cor:Gamma is embedded into Gammae}. Hence, we assume $|r| > 6$. By Remark \ref{rem:subpath for straightness}, there exists $i \in \{1, \cdots, n-2\}$ such that $r$ is a subpath of $q_{[z_{i-1}, z_{i+2}]}$. Hence, if $h \notin \{g_i, g_{i+1}, g_{i+2}\}$, then we have $|r| \le \sum_{k = i}^{i+2} \diam_{\Gamma^e}(g_{k}.\Gamma \cap h.\Gamma) \le 6$ by Remark \ref{rem:intersection of stabilizers} (3), which contradicts our assumption $|r| > 6$. Hence, there exists $i_0 \in \{i,i+1,i+2\}$ such that $h = g_{i_0}$. By $n = d_{\hGammae}(a,b)$ and Corollary \ref{cor:Gamma is embedded into Gammae}, we can see that $q \cap g_{i_0}.\Gamma$ is a subpath of $q$ satisfying $q_{[z_{i_0 - 1}, z_{i_0}]} \subset q \cap g_{i_0}.\Gamma \subset q_{[z_{i_0 - 2}, z_{i_0 + 1}]}$. Define $w_-, w_+ \in V(q)$ by $w_- = (q \cap g_{i_0}.\Gamma)_-$ and $w_+ = (q \cap g_{i_0}.\Gamma)_+$ for brevity. Since we have $(q, (z_0,\cdots,z_{i_0-2},w_-,w_+,z_{i_0+1},\cdots,z_n), \mathbf{g}) \in \mathcal{B}$, the subpath $q \cap g_{i_0}.\Gamma$ is geodesic in $\Gamma^e$ by minimality of $|q|$. Hence, $r$ is geodesic in $\Gamma^e$ since $r$ is a subpath of $q \cap g_{i_0}.\Gamma$ by $r \subset h.\Gamma = g_{i_0}.\Gamma$. Thus, $q$ is straight.

    By this and Remark \ref{rem:geodesic is a straight admissible path}, both $p$ and $q$ are straight admissible paths without self-intersection. Hence, by Lemma \ref{lem:subpath of straight admissible path} and Proposition \ref{prop:paths must penetrate long}, there exist $k\in\NN$, a subsequence $(a=)x_0,\cdots,x_k(=b)$ of $V(p)$, and a subsequence $(a=)y_0,\cdots,y_k(=b)$ of $V(q)$ such that for any $i \in \{1,\cdots,k\}$, either (A1) or (A2) holds.
    \begin{itemize}
        \item[(A1)]
        $p_{[x_{i-1},x_i]} = q_{[y_{i-1},y_i]}$.
        \item[(A2)]
        $\min\{p_{[x_{i-1},x_i]}, q_{[y_{i-1},y_i]}\} \ge 7$ and $p_{[x_{i-1},x_i]} \cup q_{[y_{i-1},y_i]} \subset h.\Gamma$ with some $h \in \Gamma\G$.
    \end{itemize}
    Note that the subpaths $p_{[x_{i-1},x_i]}$ and $q_{[y_{i-1},y_i]}$ are geodesic in $\Gamma^e$ for any $i \ge 1$ since $p$ and $q$ are straight. We will show that there exists a subsequence $\mathbf{z'} = (z'_0,\cdots,z'_n)$ of $V(q)$ with $z'_0 = a$ and $z'_n = b$ such that $(q, \mathbf{z'}, \mathbf{g}) \in \mathcal{B}$ and for any $j \in \{0,\cdots,n\}$, either (B1) or (B2) holds.
    \begin{itemize}
        \item[(B1)]
        There exists $i \in \{1,\cdots,k\}$ satisfying (A1) and $z'_j \in q_{[y_{i-1},y_i]}$.
        \item[(B2)]
        There exists $i \in \{0,\cdots,k\}$ such that $z'_j = y_i$.
    \end{itemize}
    We define $z'_j$ inductively on $j$. Note that $z'_0$ defined by $z'_0 = a$ satisfies (B2) by $z'_0=a=y_0$. Assume that $z'_0, \cdots, z'_{j-1}$ with $j \in \{1,\cdots,n\}$ were defined so that (B1) or (B2) holds for any $z'_\ell$ with $\ell \in \{0,\cdots,j-1\}$ and $\mathbf{z'_{j-1}} = (z'_0, \cdots, z'_{j-1},z_j, \cdots, z'_n)$ is a subsequence of $V(q)$ satisfying $(q,\mathbf{z'_{j-1}},\mathbf{g}) \in \mathcal{B}$. Note that $z'_0, \cdots, z'_{j-1},z_j, \cdots, z'_n$ are all distinct by $n = d_{\hGammae}(a,b)$. There exists $i \in \{1,\cdots,k\}$ such that $z_j \in q_{[y_{i-1},y_i]}$.
    
    When $i$ satisfies (A1), define $z'_j$ by $z'_j = z_j$. 

    When $i$ doesn't satisfy (A1), $i$ satisfies (A2). Let $h \in \Gamma\G$ satisfy $p_{[x_{i-1},x_i]} \cup q_{[y_{i-1},y_i]} \subset h.\Gamma$. Since $z'_{j-1}$ satisfies one of (B1) or (B2) and $i$ doesn't satisfy (A1), we have $z'_{j-1} \in q_{[a,y_{i-1}]}$. We'll discuss two cases, (i) $z_{j+1} \in q_{[y_i,b]}$ and (ii) $z_{j+1} \in q_{[y_{i-1}, y_i]}$. 
    
    (i) When $z_{j+1} \in q_{[y_i,b]}$, we have $h \in \{g_j, g_{j+1}\}$. Indeed, if $h \notin \{g_j, g_{j+1}\}$, then $|q_{[y_{i-1},y_i]}| = |q_{[y_{i-1}, z_j]}| + |q_{[z_j, y_i]}| \le 2+2$ by Remark \ref{rem:intersection of stabilizers} (3), which contradicts the condition $|q_{[y_{i-1},y_i]}| \ge 7$ in (A2). Here, we used $q_{[y_{i-1}, z_j]} \subset h.\Gamma \cap g_j.\Gamma$ and $q_{[z_j, y_i]} \subset h.\Gamma \cap g_{j+1}.\Gamma$, which follow from $(q,\mathbf{z'_{j-1}},\mathbf{g}) \in \mathcal{B}$. Define $z'_j$ by $z'_j = y_i$ if $h = g_j$, and by $z'_j = y_{i-1}$ if $h = g_{j+1}$. Then, $z'_j$ satisfies (B2) and $\mathbf{z'_j} = (z'_0, \cdots, z'_j, z_{j+1}, \cdots, z'_n)$ is a subsequence of $V(q)$ satisfying $(q,\mathbf{z'_j},\mathbf{g}) \in \mathcal{B}$. 
    
    (ii) When $z_{j+1} \in q_{[y_{i-1}, y_i]}$, we have $h \notin \{g_j,g_{j+2}\}$. Indeed, if $h = g_j$, then by $q_{[z_j, z_{j+1}]} \subset q_{[y_{i-1},y_i]} \subset h.\Gamma = g_j.\Gamma$, we have $q_{[z'_{j-1}, z_{j+1}]} \subset g_j.\Gamma$, which contradicts $d_{\hGammae}(z'_{j-1}, z_{j+1}) = 2$. Similarly, if $h = g_{j+2}$, then $q_{[z_j, z_{j+2}]} \subset g_{j+2}.\Gamma$, which contradicts $d_{\hGammae}(z_j, z_{j+2}) = 2$. Also, we have $z_{j+2} \in q_{[y_i, b]}$. Indeed, if $z_{j+2} \in q_{[y_{i-1}, y_i]}$, then $q_{[z_j,z_{j+2}]} \subset h.\Gamma$, which contradicts $d_{\hGammae}(z_j, z_{j+2}) = 2$. Hence, we have $|q_{[y_{i-1}, z_j]}| \le \diam_{\Gamma^e}(h.\Gamma \cap g_i.\Gamma) \le 2$ and $|q_{[z_{j+1}, y_i]}| \le \diam_{\Gamma^e}(h.\Gamma \cap g_{i+2}.\Gamma) \le 2$ by Remark \ref{rem:intersection of stabilizers} (3). This and $|q_{[y_{i-1},y_i]}| \ge 7$ imply $|q_{[z_j, z_{j+1}]}| \ge 3$. Hence, $h = g_{j+1}$. Define $z'_j$ by $z'_j = y_{i-1}$, then $z'_j$ satisfies (B2) and $\mathbf{z'_j} = (z'_0, \cdots, z'_j, z_{j+1}, \cdots, z'_n)$ is a subsequence of $V(q)$ satisfying $(q,\mathbf{z'_j},\mathbf{g}) \in \mathcal{B}$.

    Thus, we've shown the existence of $\mathbf{z'}$ above. For each $j \in \{0,\cdots,n\}$, define $w_j \in V(p)$ by $w_j = z'_j$ if $z'_j$ satisfies (B1) and by $w_j = y_i$ if $z'_j$ satisfies (B2) with $z'_j = x_i$, where $i \in \{0,\cdots,k\}$. This is well-defined, that is, $w_j$ becomes the same vertex when $z'_j$ satisfies both (B1) and (B2). This is because if $i$ satisfies (A1), then we have $x_{i-1} = y_{i-1}$ and $x_i = y_i$. We can see that $\widetilde{p} = (w_0, \cdots, w_n)$ is a subsequence of $V(p)$.

    Let $j \in \{1,\cdots,n\}$. We'll show $p_{[w_{j-1}, w_j]} \subset g_j.\Gamma$. When $i \in \{1,\cdots,k\}$ satisfies (A1) and $p_{[x_{i-1}, x_i]} \subset p_{[w_{j-1}, w_j]}$, we have $p_{[x_{i-1}, x_i]} = q_{[y_{i-1}, y_i]} \subset q_{[z'_{j-1}, z'_j]} \subset g_j.\Gamma$. Similarly, when $w_{j-1} \in p_{[x_{i-1}, x_i]}$ with some $i$ satisfying (A1), we have $p_{[x_{i-1}, x_i]} \cap p_{[w_{j-1}, w_j]} = q_{[y_{i-1}, y_i]} \cap q_{[z'_{j-1}, z'_j]} \subset g_j.\Gamma$. The same argument holds when $w_j \in p_{[x_{i-1}, x_i]}$ with some $i$ satisfying (A1). When $i \in \{1,\cdots,k\}$ satisfies (A2) and $p_{[x_{i-1}, x_i]} \subset p_{[w_{j-1}, w_j]}$, we have $q_{[y_{i-1}, y_i]} \subset q_{[z'_{j-1}, z'_j]} \subset g_j.\Gamma$ by $p_{[x_{i-1}, x_i]} \subset p_{[w_{j-1}, w_j]}$. By (A2), there exists $h \in \Gamma\G$ such that $p_{[x_{i-1},x_i]} \cup q_{[y_{i-1},y_i]} \subset h.\Gamma$. This and Remark \ref{rem:intersection of stabilizers} (3) imply $h = g_j$ since we have $|q_{[y_{i-1},y_i]}| \ge 7$ by (A2). Hence, $p_{[x_{i-1},x_i]} \subset h.\Gamma = g_j.\Gamma$. Thus, $p_{[w_{j-1}, w_j]} \subset g_j.\Gamma$ for any $j \in \{1,\cdots,n\}$.

    This implies $(p, \widetilde{p}, \mathbf{g}) \in \mathcal{B}$. Hence, $\widetilde{p}$ is a geodesic path in $\hGammae$ from $a$ to $b$ by $n = d_{\hGammae}(a,b)$ and the Hausdorff distance of $\alpha$ and $\widetilde{p}$ in $\hGammae$ is at most 1 by the definition of $\mathcal{B}$.
\end{proof}

We are now ready to prove Theorem \ref{thm:asymptotic dimension}. Before this, we introduce the notion of a geodesic spanning tree and Lemma \ref{lem:geodesic spanning tree exists}.

\begin{defn}
    Let $X$ be a connected graph. A subgraph $T$ of $X$ is called a \emph{geodesic spanning tree of} $X$ \emph{rooted at} $x \in V(X)$ if $T$ is a tree with $V(T)=V(X)$ and satisfies $d_X(x,y)=d_T(x,y)$ for any $y \in V(X)$.
\end{defn}

\begin{lem}\label{lem:geodesic spanning tree exists}
    For any connected graph $X$ and $x \in V(X)$, a geodesic spanning tree of $X$ rooted at $x$ exists.
\end{lem}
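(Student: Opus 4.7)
The plan is to build $T$ by choosing, for each vertex $y \neq x$, a ``parent'' that is one step closer to $x$. Concretely, since $X$ is connected, every $y \in V(X) \setminus \{x\}$ satisfies $d_X(x,y) \ge 1$ and admits some neighbor $y' \in \lk_X(y)$ with $d_X(x,y') = d_X(x,y) - 1$; using the axiom of choice (or well-ordering $V(X)$) pick one such $y'$ and set $\pi(y) = y'$. Define $T$ to be the subgraph with $V(T) = V(X)$ and $E(T) = \{(y, \pi(y)), (\pi(y), y) \mid y \in V(X) \setminus \{x\}\}$.

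The first step is to verify that $T$ is connected and that for every $y \in V(X)$, the sequence $(y, \pi(y), \pi^2(y), \dots)$ reaches $x$ after exactly $d_X(x,y)$ steps: this is immediate because $d_X(x, \pi^k(y))$ strictly decreases in $k$ as long as $\pi^k(y) \neq x$, so the sequence terminates at $x$ after $d_X(x,y)$ steps. This yields a path in $T$ from $y$ to $x$ of length $d_X(x,y)$, so $T$ is connected and $d_T(x, y) \le d_X(x,y)$. The reverse inequality $d_T(x,y) \ge d_X(x,y)$ holds because $T$ is a subgraph of $X$.

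The step that requires care (since $X$ may be infinite, so edge-counting does not apply) is verifying that $T$ has no circuit. Suppose $p = (p_0, \dots, p_n)$ were a circuit in $T$. Pick $i \in \{0, \dots, n-1\}$ maximizing $d_X(x, p_i)$. Each edge of $T$ incident to $p_i$ has the form either $\{p_i, \pi(p_i)\}$ (so the other endpoint is closer to $x$) or $\{z, \pi(z)\}$ with $p_i = \pi(z)$ (so the other endpoint $z$ is strictly farther from $x$). By maximality of $d_X(x,p_i)$, the second possibility is excluded for the two neighbors $p_{i-1}$ and $p_{i+1}$ of $p_i$ in $p$ (indices modulo $n$). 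Hence both edges are of the first type, forcing $p_{i-1} = \pi(p_i) = p_{i+1}$; but the definition of a circuit requires $p_{i-1} \neq p_{i+1}$, contradiction. Thus $T$ is a tree, and combined with the previous paragraph it is the desired geodesic spanning tree of $X$ rooted at $x$. The main technical subtlety is exactly this circuit argument, which replaces the usual finite-cardinality counting proof.
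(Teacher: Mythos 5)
Your proof is correct, but it takes a different route from the paper. The paper builds the tree by a Zorn's lemma argument: it considers the poset of all subtrees containing $x$ that are geodesic (i.e.\ realize $d_X(x,\cdot)$ on their vertex set), takes a maximal element, and shows it must span $V(X)$ by extending it along the tail of a geodesic to any missed vertex. You instead construct the tree explicitly as a ``shortest-path forest'': choose for each $y \neq x$ a parent $\pi(y)$ one step closer to $x$, and take the edges $\{y,\pi(y)\}$. In your approach the spanning property and the distance equality are immediate (iterating $\pi$ strictly decreases distance, so it hits $x$ in exactly $d_X(x,y)$ steps, and $d_T \ge d_X$ since $T \subset X$), and the only step needing care is acyclicity, which you handle correctly: on a putative circuit, a vertex $p_i$ maximizing $d_X(x,\cdot)$ forces both incident circuit edges to be $\{p_i,\pi(p_i)\}$, so $p_{i-1}=p_{i+1}$, contradicting that a circuit has $n\ge 3$ distinct vertices $p_0,\dots,p_{n-1}$. (One tacit point worth a half-sentence: the maximizer satisfies $p_i \neq x$, since a circuit contains vertices at positive distance from $x$; otherwise ``edges of the first type'' would not exist at $p_i$.) Both arguments invoke the axiom of choice — yours to select parents, the paper's via Zorn — so neither is more elementary in that respect; yours is more explicit and makes the geodesic property transparent, while the paper's avoids any acyclicity verification because its partial objects are trees by definition.
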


\begin{proof}
    Define $\mathcal{T}$ to be the set of all subgraphs $T$ of $X$ such that $T$ is a tree with $x \in V(T)$ and satisfies $d_X(x,y)=d_T(x,y)$ for any $y \in V(T)$. The set $\mathcal{T}$ is nonempty by $\{x\} \in \mathcal{T}$. Define the order on $\mathcal{T}$ by inclusion. Since every chain in $\mathcal{T}$ has an upper bound by taking their union, there exists a maximal element $T_0 \in \mathcal{T}$ by Zorn's lemma. Suppose for contradiction that there exists $y \in V(X)\setminus V(T_0)$. Take a geodesic $p$ in $X$ from $x$ to $y$ and take $z \in V(p)$ satisfying $z \in V(T_0)$ and $d_X(x,z)=\max\{d_X(x,z') \mid z' \in V(T_0)\cap V(p)\}$. We can show that the subgraph $T_1$ of $X$ defined by $V(T_1) = V(T_0) \cup V(p_{[z,y]})$ and $E(T_1) = E(T_0) \cup E(p_{[z,y]})$ satisfies $T_1 \in \mathcal{T}$ and $T_0 \subsetneq T_1$. This contradicts maximality of $T_0$. Hence, $V(T_0)=V(X)$. Thus, $T_0$ is a geodesic spanning tree rooted at $x$.
\end{proof}

We now prove Theorem \ref{thm:asymptotic dimension} using Corollary \ref{cor:coned off extension graph is quasitree} and the natural contraction $\Gamma^e \to \hGammae$. See Section \ref{subsec:Asymptotic dimension of metric spaces} for relevant notions. Lemma \ref{lem:uniformly asdim} is an auxiliary lemma for Theorem \ref{thm:asymptotic dimension}.

\begin{lem}\label{lem:uniformly asdim}
    Let $(S_\alpha, d_\alpha)_{\alpha \in \A}$ be a family of metric spaces and $(A_\alpha)_{\alpha \in \A}$ be a family of subsets of $(S_\alpha)_{\alpha \in \A}$ (i.e. $\forall\,\alpha \in \A,\,A_\alpha \subset S_\alpha$) satisfying $\asdim \le n$ uniformly with $n \in \NN\cup\{0\}$. Then, the following hold.
    \begin{itemize}
        \item[(1)]
        For any $t \ge 0$, the family $\big(\N_{S_\alpha}(A_\alpha,t)\big)_{\alpha \in \A}$ satisfies $\asdim \le n$ uniformly.
        
        \item[(2)] 
        Any family $(B_\alpha)_{\alpha \in \A}$ of subsets of $(A_\alpha)_{\alpha \in \A}$ satisfies $\asdim \le n$ uniformly.
    \end{itemize}
\end{lem}

\begin{proof}
    (1) Let $r > 0$. Since $(A_\alpha)_{\alpha \in \A}$ satisfies $\asdim \le n$ uniformly, there exist $\mathcal{U}^0_\alpha, \cdots, \mathcal{U}^n_\alpha \subset 2^{A_\alpha}$ for each $\alpha \in \A$ satisfying the three conditions of Definition \ref{def:uniformly asdim} for $r+2t$. Note $\inf\{d_{\alpha}(U,V) \mid U,V \in \mathcal{U}^i_\alpha, U\neq V \}>r+2t$ for any $i\in \{0,\cdots,n\}$ and $\alpha \in \A$. The family $\mathcal{V}^0_\alpha, \cdots, \mathcal{V}^n_\alpha \subset 2^{\N_{S_\alpha}(A_\alpha,t)}$ defined by $\mathcal{V}^i_\alpha = \big\{\N_{S_\alpha}(U,t) \in 2^{\N_{S_\alpha}(A_\alpha,t)} \mid U \in\mathcal{U}^i_\alpha \big\}$ for each $i$ and $\alpha$ satisfies the three conditions of Definition \ref{def:uniformly asdim} for $r$. Hence,$\big(\N_{S_\alpha}(A_\alpha,t)\big)_{\alpha \in \A}$ satisfies $\asdim \le n$ uniformly.

    (2) Let $r > 0$. Since $(A_\alpha)_{\alpha \in \A}$ satisfies $\asdim \le n$ uniformly, there exist $\mathcal{U}^0_\alpha, \cdots, \mathcal{U}^n_\alpha \subset 2^{A_\alpha}$ for each $\alpha \in \A$ satisfying the three conditions of Definition \ref{def:uniformly asdim} for $r$. The family $\mathcal{V}^0_\alpha, \cdots, \mathcal{V}^n_\alpha \subset 2^{B_\alpha}$ defined by $\mathcal{V}^i_\alpha = \big\{U \cap B_\alpha \in 2^{B_\alpha} \mid U \in\mathcal{U}^i_\alpha \big\}$ for each $i$ and $\alpha$ satisfies the three conditions of Definition \ref{def:uniformly asdim} for $r$. Hence, $(B_\alpha)_{\alpha \in \A}$ satisfies $\asdim \le n$ uniformly.
\end{proof}

\begin{thm}\label{thm:asymptotic dimension}
    Suppose that $\Gamma$ is a connected simplicial graph with $\girth(\Gamma) > 20$ and that $\{G_v\}_{v \in V(\Gamma)}$ is a collection of non-trivial groups. If $\asdim(\Gamma) \le n$ with $n \in \NN \cup \{0\}$, then $\asdim(\Gamma^e) \le n+1$.
\end{thm}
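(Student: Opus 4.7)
The approach is to apply the Bell-Dranishnikov Hurewicz theorem (Theorem~\ref{thm:Hurewicz Theorem}) to the identity map $f \colon \Gamma^e \to \hGammae$ on vertices. Since every edge of $\Gamma^e$ is an edge of $\hGammae$, the map $f$ is $1$-Lipschitz, and $\asdim(\hGammae) \le 1$ by Corollary~\ref{cor:coned off extension graph is quasitree}. Thus it suffices to show that for every $R \in \NN$, the family of preimages $\bigl\{\N_{\hGammae}(y, R)\bigr\}_{y \in V(\hGammae)}$, viewed as subspaces of $\Gamma^e$, satisfies $\asdim \le n$ uniformly; granting this, Theorem~\ref{thm:Hurewicz Theorem} yields $\asdim(\Gamma^e) \le 1 + n$.

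To verify this uniform bound, fix $R$ and a ball $B = \N_{\hGammae}(y, R)$. Every vertex $z \in B$ lies in some translate $g.\Gamma$ (choose $g \in \Gamma\G$ with $z = g.v(z)$), so $B$ is covered by the family $\{Y_g\}_{g \in \mathcal{G}}$, where $\mathcal{G} = \{g \in \Gamma\G : V(g.\Gamma) \cap B \neq \emptyset\}$ and $Y_g = V(g.\Gamma) \cap B$. By the $\Gamma\G$-translate of Corollary~\ref{cor:Gamma is embedded into Gammae}, each $g.\Gamma$ is convex in $\Gamma^e$, so $Y_g$ is isometric to a subset of $\Gamma$; since $\asdim(\Gamma) \le n$, the family $\{Y_g\}_{g \in \mathcal{G}}$ satisfies $\asdim \le n$ uniformly. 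I would then invoke the Union Theorem (Theorem~\ref{thm:union theorem}) on this cover to deduce $\asdim(B) \le n$ with constants independent of $y$.

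The main technical work is verifying the second hypothesis of Theorem~\ref{thm:union theorem}: given $r > 0$, one must exhibit subsets $Z_{r,g} \subset Y_g$ with $(Z_{r,g})_{g}$ of uniformly bounded $\asdim \le n$, such that the residuals $Y_g \setminus Z_{r,g}$ and $Y_{g'} \setminus Z_{r,g'}$ are $r$-separated whenever $g \neq g'$. The natural choice is to take $Z_{r,g}$ to be the $r$-neighborhood, measured inside $g.\Gamma$, of the set of ``overlap points'' $\bigcup_{g' \in \mathcal{G} \setminus \{g\}} (g.\Gamma \cap g'.\Gamma) \cap Y_g$, which by Remark~\ref{rem:intersection of stabilizers}(3) consists of sets of diameter at most $2$. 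The hard part will be preventing two distinct copies from becoming $r$-close in the $\Gamma^e$-metric via some round-about path; here Lemma~\ref{lem:geodesic decomposition for hGammae exists} and Proposition~\ref{prop:paths must penetrate long} are the decisive tools, since they force any $\Gamma^e$-geodesic between points of distinct copies to penetrate each intermediate copy by at least $7$ and to correspond to a genuine $\hGammae$-geodesic. This rigidity---a shadow of the quasi-tree structure of $\hGammae$ from Corollary~\ref{cor:coned off extension graph is quasitree}---is what makes the separation condition verifiable uniformly in $y$ and $R$, and is the principal obstacle to overcome.
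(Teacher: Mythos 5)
Your high-level frame is the same as the paper's: push forward along the identity map $f \colon \Gamma^e \to \hGammae$, use $\asdim(\hGammae) \le 1$ from Corollary~\ref{cor:coned off extension graph is quasitree}, and reduce via Theorem~\ref{thm:Hurewicz Theorem} to showing that the preimages of $\hGammae$-balls satisfy $\asdim \le n$ uniformly, with the Union Theorem and convexity of the copies $g.\Gamma$ (Corollary~\ref{cor:Gamma is embedded into Gammae}) doing the work inside each ball. The gap is in how you plan to verify the second hypothesis of Theorem~\ref{thm:union theorem}. That theorem requires, for each ball $B$, a \emph{single} carve-out set $Y_{r,B} \subset B$ such that the family of these sets (over all balls) satisfies $\asdim \le n$ uniformly and the pieces become $r$-separated after removing it. Your per-piece sets $Z_{r,g}$ do not fit this format, and the obvious repair, taking $Y_{r,B} = \bigcup_g Z_{r,g}$, gives a set whose asymptotic dimension you have no handle on: it is an $r$-neighborhood of the entire branching locus of the ball, and bounding its $\asdim$ is essentially the original problem. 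Worse, with your choice of overlap points the carve-out typically degenerates: when $\diam(\Gamma) \ge 3$ the setwise stabilizer of $\Gamma$ is trivial, so through every vertex $x \in B$ there pass at least two (usually infinitely many) distinct copies, all of which meet $B$; hence every point of $Y_g$ is itself an overlap point, $Z_{r,g} = Y_g$, and the separation statement is vacuous while the carve-out is all of $B$. If instead you restrict which copies count (say only those meeting $B$, overlaps taken inside $B$), separation genuinely fails near the boundary: two copies can intersect at a vertex $w$ with $d_{\hGammae}(y,w) = R+1 \notin B$ while each contains a point of $B$ adjacent to $w$, giving residual points of distinct pieces at distance $2$.

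What is missing is the paper's induction on the radius $R$, which is exactly what makes the carve-out admissible: the carve-out inside the $(R+1)$-ball is taken to be $\N_{\Gamma^e}\bigl(f^{-1}(\N_{\hGammae}(o,R)),\,3r+12\bigr)$, i.e.\ a controlled neighborhood of the preimage of the $R$-ball, and its uniform $\asdim \le n$ is precisely the inductive hypothesis --- no per-piece neighborhoods of overlap points appear at all. The paper also replaces your highly redundant cover $\{g.\Gamma \cap B\}$ by a canonical one: it fixes a geodesic spanning tree $T_o$ rooted at the center (Lemma~\ref{lem:geodesic spanning tree exists}), converts each tree geodesic $T(o,x)$ into an $\hGammae$-geodesic via Lemma~\ref{lem:geodesic decomposition for hGammae exists}, fixes an element of $\A_0(T(o,x),p(o,x))$, and groups points into pieces $Q_o(k,g) \subset g.\Gamma$ according to the $\hGammae$-distance $k$ and the product $g_1(o,x)\cdots g_k(o,x)$; the $r$-separation of distinct pieces outside the carve-out is then proved by applying Proposition~\ref{prop: classification of geodesic triangle} to the tripod formed by two tree geodesics and a short connecting geodesic, together with Remark~\ref{rem:intersection of stabilizers}~(3). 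You correctly identify Lemma~\ref{lem:geodesic decomposition for hGammae exists} and Proposition~\ref{prop:paths must penetrate long} as the relevant rigidity tools, but without the induction on $R$ and a coherent (tree-based) assignment of points to copies, the Union Theorem cannot be invoked, so the proposal as it stands does not close.
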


\begin{proof}
    Define the graph homomorphism $f \colon \Gamma^e \to \hGammae$ by $f(x)=x$ for any $x \in V(\Gamma^e)$. We claim that for any $R \in \NN\cup\{0\}$, the family $\big(\, f^{-1}(\N_{\hGammae}(o,R)) \,\big)_{o \in V(\Gamma^e)}$ satisfies $\asdim \le n$ uniformly. We show this claim by induction on $R$.
    
    When $R=0$, we have $f^{-1}(\N_{\hGammae}(o,0))=\{o\}$ for any $o \in V(\Gamma^e)$ and the family $(\{o\})_{o \in V(\Gamma^e)}$ satisfies $\asdim \le 0 \,(\le n)$ uniformly. Hence, the claim holds for $R=0$. 
    
    Next, assume that the family $\big( \, f^{-1}(\N_{\hGammae}(o,R)) \,\big)_{o \in V(\Gamma^e)}$ satisfies $\asdim \le n$ uniformly for $R \in \NN\cup\{0\}$ and we'll show the claim for $R+1$. For each $o \in V(\Gamma)$, take a geodesic spanning tree $T_o$ of $\Gamma^e$ rooted at $o$, which exists by Lemma \ref{lem:geodesic spanning tree exists}. For each $(o,x) \in V(\Gamma^e)^2$, let $T(o,x)$ be the unique geodesic in $T_o$ from $o$ to $x$. Note $T(o,x) \in \geo_{\Gamma^e}(o,x)$. Also, fix a subsequence $p(o,x)=(p_0(o,x),\cdots,p_m(o,x))$ of $V(T(o,x))$ with $p_0(o,x)=o$ and $p_m(o,x)=x$ such that $p(o,x)$ is a geodesic in $\hGammae$ from $o$ to $x$ (i.e. $m = d_{\hGammae}(o,x)$ and $\forall\, i \ge 1, d_{\hGammae}(p_{i-1}(o,x),p_i(o,x))=1$), which exists by Lemma \ref{lem:geodesic decomposition for hGammae exists}. Since $p(o,x)$ is geodesic in $\hGammae$, the geodesic path $T(o,x)$ in $\Gamma^e$ is admissible with respect to $p(o,x)$. Fix $g(o,x)=(g_1(o,x),\cdots,g_m(o,x)) \in \A_0(T(o,x), p(o,x))$ for each $(o,x) \in V(\Gamma^e)^2$ (see Definition \ref{def: admissible decomposition}).
    
    For each $(k,g) \in (\NN\cup\{0\})\times \Gamma\G$ and $o \in V(\Gamma^e)$, define $Q_o(k,g) \subset V(\Gamma^e)$ by
    \[
    Q_o(k,g)=\{ x \in V(\Gamma^e) \mid \text{$d_{\hGammae}(o,x)=k$ and $g_1(o,x)\cdots g_k(o,x)=g$} \}.
    \]
    Define $X_o \subset V(\Gamma^e)$ and $A_o \subset 2^{X_o}$ by 
    \begin{align*}
     X_o &= f^{-1}(\N_{\hGammae}(o,R+1)), \\
     A_o &= \{Q_o(k,g) \in 2^{X_o} \mid 0 \le k \le R+1, g \in \Gamma\G \}.
    \end{align*}
    We can see $X_o =\bigcup_{0 \le k \le R+1, g \in \Gamma\G} Q_o(k,g) \,(=\! \bigcup_{U \in A_o}U)$. For any $o \in V(\Gamma^e)$ and $(k,g) \in (\NN\cup\{0\})\times \Gamma\G$, we have $Q_o(k,g) \subset g.\Gamma$ and $g.\Gamma$ is isometric to $\Gamma$. Hence, the family $\bigcup_{o \in V(\Gamma^e)}A_o$ satisfies $\asdim \le n$ uniformly by $\asdim(\Gamma) \le n$.

    Let $r \in \NN$. For each $o \in V(\Gamma^e)$, define $Y_{r,o}$ by $Y_{r,o} = \N_{\Gamma^e}(f^{-1}(\N_{\hGammae}(o,R)), 3r+12) \cap X_o$. Since the family $\big( \, f^{-1}(\N_{\hGammae}(o,R)) \, \big)_{o \in V(\Gamma^e)}$ satisfies $\asdim \le n$ uniformly by our assumption of induction, the family $(Y_{r,o})_{o\in V(\Gamma^e)}$ also satisfies $\asdim \le n$ uniformly by Lemma \ref{lem:uniformly asdim}. (Here, we first applied Lemma \ref{lem:uniformly asdim} (1) with $\A=V(\Gamma^e)$, $S_\alpha \equiv \Gamma^e$, and $t=3r+12$, and then applied Lemma \ref{lem:uniformly asdim} (2) by using $Y_{r,o} \subset \N_{\Gamma^e}(f^{-1}(\N_{\hGammae}(o,R)), 3r+12)$.) 
    
    Let $o \in V(\Gamma^e)$. Suppose that $(k,g), (\ell,h) \in  \{0, \cdots, R+1\} \times \Gamma\G$ satisfy $Q_o(k,g) \neq Q_o(\ell,h)$ and that we have $a \in Q_o(k,g) \setminus Y_{r,o}$ and $b \in Q_o(\ell,h) \setminus Y_{r,o}$. By $\{a,b\} \subset X_o \setminus Y_{r,o}$, we have $d_{\hGammae}(o,a)=d_{\hGammae}(o,b)=R+1$, hence $k=\ell=R+1$. By this and $Q_o(k,g) \neq Q_o(\ell,h)$, we have $g \neq h$. By $a,b\notin \N_{\Gamma^e}(f^{-1}(\N_{\hGammae}(o,R)), 3r+12)$, we also have $\min\{d_{\Gamma^e}(a,p_R(o,a)), d_{\Gamma^e}(b,p_R(o,b))\} > 3r+12$.
    
    Suppose $d_{\Gamma^e}(a,b) \le r$ for contradiction. Since $T$ is a geodesic spanning tree, $T(o,a)$ and $T(o,b)$ form a tripod, that is, there exists $y \in V(T(o,a)) \cap V(T(o,b))$ such that $T(o,a)_{[o,y]}=T(o,b)_{[o,y]}$ and $V(T(o,a)_{[y,a]})\cap V(T(o,b)_{[y,b]}) = \{y\}$. Fix $\alpha \in \geo_{\Gamma^e}(a,b)$. By Proposition \ref{prop: classification of geodesic triangle}, there exist $N \in \NN\cup\{0\}$, a subsequence $(y =)\, z_0,\cdots z_N,z_{N+1}$ of $V(T(o,a)_{[y,a]})$, a subsequence $(y =)\, w_0,\cdots w_N,w_{N+1}$ of $V(T(o,b)_{[y,b]})$, a sequence $h_1,\cdots,h_{N+1}$ in $\Gamma\G$, and $a',b' \in V(\alpha)$ that satisfy the four conditions (i)-(iv) below.
        \begin{itemize}
            \item[(i)]
            $T(o,a)_{[z_{i-1},z_i]} \cup T(o,b)_{[w_{i-1},w_i]} \subset h_i.\Gamma$ and $h_i \neq h_{i+1}$ for any $i \in \{1,\cdots,N\}$.
            \item[(ii)]
            $\alpha_{[a',b']} \cup T(o,a)_{[z_N,z_{N+1}]} \cup T(o,b)_{[w_N,w_{N+1}]} \subset h_{N+1}.\Gamma$. 
            \item[(iii)]
            $\min\{|T(o,a)_{[z_{i-1},z_i]}|, |T(o,b)_{[w_{i-1},w_i]}|\} \ge 7$ for any $i \in \{1,\cdots,N\}$.
            \item[(iv)]
            $\max\{d_{\Gamma^e}(z_{N+1}, a'), d_{\Gamma^e}(w_{N+1}, b')\} \le 2$
        \end{itemize}
        By $d_{\Gamma^e}(z_{N+1}, a') \le 2$ and the assumption $d_{\Gamma^e}(a,b) \le r$, we have $d_{\Gamma^e}(a,z_{N+1}) \le r+2$. Similarly, we also have $d_{\Gamma^e}(b,w_{N+1}) \le r+2$. We'll discuss three cases, (A1) when $|T(o,a)_{[z_N, z_{N+1}]}| > 2r + 7$, (A2) when $|T(o,b)_{[w_N, w_{N+1}]}| > 2r + 7$, (A3) when $\max\{ |T(o,a)_{[z_N, z_{N+1}]}|, |T(o,b)_{[w_N, w_{N+1}]}| \} \le 2r + 7$.
        
        In case (A1), the path $T(o,a)_{[p_R(o,a), a]}$ contains a subpath $q$ of $T(o,a)_{[z_N, z_{N+1}]}$ with $|q|\ge 3$ by $d_{\Gamma^e}(a,p_R(o,a)) > 3r+12$ and $d_{\Gamma^e}(a,z_{N+1}) \le r+2$. Hence, we have $h_{N+1}=g_1(o,a)\cdots g_{R+1}(o,a)=g$ by $a \in Q_o(R+1,g)$ and Remark \ref{rem:intersection of stabilizers} (3). On the other hand, by $d_{\Gamma^e}(a,z_N) \ge |T(o,a)_{[z_N, z_{N+1}]}| > 2r + 7$, we have
        \[
        d_{\Gamma^e}(b,w_N) \ge d_{\Gamma^e}(a,z_N) - d_{\Gamma^e}(z_N,w_N) - d_{\Gamma^e}(a,b) > (2r+7)-2-r = r+5.
        \]
        This and $d_{\Gamma^e}(b,w_{N+1}) \le r+2$ imply $d_{\Gamma^e}(w_N,w_{N+1}) > (r+5)-(r+2)=3$. Hence, we can see $h_{N+1}=g_1(o,b)\cdots g_{R+1}(o,b)=h$ in the same way as $T(o,a)$ by using $d_{\Gamma^e}(b,p_R(o,b)) > 3r+12$ and $d_{\Gamma^e}(b,w_{N+1}) \le r+2$. Hence, we have $g=h=h_{N+1}$, which contradicts $g \neq h$. 
        
        In case (A2), we can see $g=h=h_{N+1}$ in the sane way as case (A1), hence get contradiction.

        In case (A3), by $d_{\Gamma^e}(a,p_R(o,a)) > 3r+12$ and $d_{\Gamma^e}(a,z_{N+1}) \le r+2$, we have $d_{\Gamma^e}(a,z_N) < d_{\Gamma^e}(a,p_R(o,a))$ and $d_{\Gamma^e}(z_N, p_R(o,a)) > (3r+12) - (r+2) - (2r+7) = 3$. Similarly, we can also see $d_{\Gamma^e}(b,w_N) < d_{\Gamma^e}(b,p_R(o,b))$ and $d_{\Gamma^e}(w_N, p_R(o,b)) > 3$ by $d_{\Gamma^e}(b,p_R(o,b)) > 3r+12$ and $d_{\Gamma^e}(b,w_{N+1}) \le r+2$.
        
        When $N \ge 1$, by $d_{\Gamma^e}(z_N, p_R(o,a)) >3$ and $|T(o,a)_{[z_{N-1},z_N]}| \ge 7$, the path $T(o,a)_{[p_R(o,a), a]}$ contains a subpath $q$ of $T(o,a)_{[z_{N-1}, z_N]}$ with $|q|\ge 3$. Hence, we have $h_N=g_1(o,a)\cdots g_{R+1}(o,a)=g$ by $a \in Q_o(R+1,g)$ and Remark \ref{rem:intersection of stabilizers} (3). Similarly, we also have $h_N=g_1(o,b)\cdots g_{R+1}(o,b)=h$. Hence, we have $g=h=h_N$, which contradicts $g \neq h$.
        
        When $N=0$, we have $T(o,a)_{[p_R(o,a), z_N]}=T(o,a)_{[p_R(o,a), y]}$ and $T(o,b)_{[p_R(o,b), w_N]}=T(o,b)_{[p_R(o,b), y]}$. By this and $T(o,a)_{[o,y]}=T(o,b)_{[o,y]}$, a subpath $q$ of $T(o,a)_{[o,y]} \,(=T(o,b)_{[o,y]})$ with $|q|\ge 3$ is contained in both $T(o,a)_{[p_R(o,a), a]}$ and $T(o,a)_{[p_R(o,a), a]}$. Hence, we have $g=h$ by Remark \ref{rem:intersection of stabilizers} (3), which contradicts $g \neq h$.

        Thus, we've shown $\inf\{d_{\Gamma^e}(Q_o(k,g) \setminus Y_{r,o}, Q_o(\ell,h) \setminus Y_{r,o}) \mid (k,g), (\ell,h) \in \{0,\cdots,R+1\}\times \Gamma\G, Q_o(k,g)\neq Q_o(\ell,h) \} \ge r$ for any $o \in V(\Gamma^e)$. By Theorem \ref{thm:union theorem}, the family $(X_o)_{o \in V(\Gamma^e)}=\big (f^{-1}(\N_{\hGammae}(o,R+1)) \big)_{o \in V(\Gamma^e)}$ satisfies $\asdim \le n$ uniformly. Hence, the claim holds for any $R \in \NN\cup\{0\}$ by induction. Note $\asdim(\hGammae) \le 1$ by Corollary \ref{cor:coned off extension graph is quasitree}. Hence, we have $\asdim(\Gamma^e) \le n+\asdim(\hGammae) = n+1$ by Theorem \ref{thm:Hurewicz Theorem}.
\end{proof}

\section{Hyperbolicity, tightness, and fineness of the extension graph}
\label{sec:Hyperbolicity, tightness, and fineness of the extension graph}

The goal of this section is to prove Theorem \ref{thm:intro extension graph is tight} (1), (2), (3), which correspond to Proposition \ref{prop:extension graph is hyperbolic}, Proposition \ref{prop:extension graph is tight}, and Proposition \ref{prop:extension graph is fine} respectively. Throughout Section \ref{sec:Hyperbolicity, tightness, and fineness of the extension graph}, suppose that $\Gamma$ is a connected simplicial graph with $\girth(\Gamma)>20$ and $\G = \{G_v\}_{v \in V(\Gamma)}$ is a collection of non-trivial groups.

\subsection{Hyperbolicity}

\begin{prop}\label{prop:extension graph is hyperbolic}
    $\Gamma$ is hyperbolic if and only if $\Gamma^e$ is hyperbolic.
\end{prop}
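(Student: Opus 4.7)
My plan is to prove the two implications separately. For the easy direction, if $\Gamma^e$ is hyperbolic then $\Gamma$ is hyperbolic, because Corollary \ref{cor:Gamma is embedded into Gammae} exhibits $\Gamma$ as an isometrically embedded convex subgraph of $\Gamma^e$, and convex subspaces inherit hyperbolicity. The substantive direction is the converse.

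Assume $\Gamma$ is $\delta$-hyperbolic. By Convention \ref{conv:Gamma is embedded to Gammae} every translate $g.\Gamma \subset \Gamma^e$ is isometric to $\Gamma$, and by Corollary \ref{cor:Gamma is embedded into Gammae} each such $g.\Gamma$ is convex, hence itself $\delta$-hyperbolic as a subspace. I will verify the thin-triangle criterion for $\Gamma^e$ by exploiting the classification of geodesic triangles provided by Proposition \ref{prop: classification of geodesic triangle}. First reduce to the case where a geodesic triangle $pqr^{-1}$ is a circuit: if two sides share a prefix at a common vertex, trim to the last common point, which introduces only tripod legs (which are trivially $0$-thin) and leaves a residual circuit-triangle to analyze.

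Given a circuit-triangle, Proposition \ref{prop: classification of geodesic triangle} produces three ``ladder'' regions (one for each pair of sides), each consisting of successive pieces such as $p_{[x_{i-1},x_i]} \cup r_{[z_{i-1},z_i]} \subset f_i.\Gamma$ with $f_{i-1}\ne f_i$, plus a central region contained in a single translate $k.\Gamma$. At every junction between consecutive translates $f_{i-1}.\Gamma$ and $f_i.\Gamma$ (and at the interface between $k.\Gamma$ and the first translate of each ladder), both corresponding endpoints on $p$ and $r$ lie in the intersection of two distinct translates, which by Remark \ref{rem:intersection of stabilizers}(3) has diameter at most $2$. Hence each pair $p_{[x_{i-1},x_i]}$, $r_{[z_{i-1},z_i]}$ consists of geodesics in the $\delta$-hyperbolic space $f_i.\Gamma$ whose endpoints are within distance $2$ of each other, so they $(2\delta+2)$-fellow travel. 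The central piece is a geodesic triangle in the $\delta$-hyperbolic space $k.\Gamma$, hence is $\delta$-thin. Concatenating these bounds along the three ladders and through the center yields a uniform constant $\delta'$, depending only on $\delta$, such that every point on any side of the triangle lies within $\delta'$ of the union of the other two sides; this establishes hyperbolicity of $\Gamma^e$.

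The main obstacle I anticipate is the bookkeeping of how a point on one side gets matched with points on the other two sides as one passes from a ladder region into the central $k.\Gamma$ region and out the other side: the ``fellow traveler'' of a point on $p$ switches between $r$ (in the $p,r$ ladder) and $q$ (in the $p,q$ ladder) as one crosses the center, and one must check that no unbounded drift accumulates at the three hand-offs. The distance-$2$ bounds of Remark \ref{rem:intersection of stabilizers}(3) at every interface, combined with the $\delta$-thinness of the central triangle inside $k.\Gamma$, should absorb all mismatches into a single additive constant, so that the whole argument reduces to a finite composition of fellow-traveling estimates.
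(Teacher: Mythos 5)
Your proposal is correct and follows essentially the same route as the paper: the easy direction via the convexity of $\Gamma$ in $\Gamma^e$ (Corollary \ref{cor:Gamma is embedded into Gammae}), and the hard direction by decomposing a geodesic triangle via Proposition \ref{prop: classification of geodesic triangle} (together with Corollary \ref{cor:two geodesics with the same endpoints travel through common translations of Gamma} to handle non-simple triangles) into ladder pieces and a central piece, each in a single translate $g.\Gamma$, then using $\delta$-hyperbolicity of each translate together with the diameter-$2$ bound on intersections of distinct translates from Remark \ref{rem:intersection of stabilizers}(3). The paper's writeup is slightly more direct than the fellow-traveling picture you sketch — it fixes a vertex $v$ on one side and shows $v$ is within $3\delta+2$ of the other two sides, rather than concatenating fellow-traveling estimates piece by piece — but the underlying argument is the same and the hand-off concern you flag does not in fact cause drift, precisely for the reason you identify.
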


\begin{proof}
    If $\Gamma^e$ is hyperbolic, then $\Gamma$ is hyperbolic by Corollary \ref{cor:Gamma is embedded into Gammae}. In the following, we assume that $\Gamma$ is $\delta$-hyperbolic with $\delta \in \NN$ and show hyperbolicity of $\Gamma^e$. Let $a,b,c \in V(\Gamma^e)$ and let $p \in \geo_{\Gamma^e}(a,b)$, $q \in \geo_{\Gamma^e}(b,c)$, and $r \in \geo_{\Gamma^e}(a,c)$. By Corollary \ref{cor:two geodesics with the same endpoints travel through common translations of Gamma} and Proposition \ref{prop: classification of geodesic triangle}, for any $v\in V(r)$, one of (1)-(4) holds, (1) $v \in V(p) \cup V(q)$, (2) there exist $g\in \Gamma\G$, a subpath $r'$ of $r$, and a subpath $p'$ of $p$ such that $v\in V(r')$, $r' \cup p' \subset g.\Gamma$, and $\max\{d_{\Gamma^e}(r'_-,p'_-), d_{\Gamma^e}(r'_+,p'_+)\} \le 2$, (3) there exist $g\in \Gamma\G$, a subpath $r'$ of $r$, and a subpath $q'$ of $q$ such that $v\in V(r')$, $r' \cup q' \subset g.\Gamma$, and $\max\{d_{\Gamma^e}(r'_-,q'_-), d_{\Gamma^e}(r'_+,q'_+)\} \le 2$, (4) there exist $g\in \Gamma\G$, a subpath $r'$ of $r$, a subpath $p'$ of $p$, and a subpath $q'$ of $q$ such that $v\in V(r')$, $r' \cup p' \cup q' \subset g.\Gamma$, and $\max\{d_{\Gamma^e}(r'_-,p'_-), d_{\Gamma^e}(p'_+,q'_-), d_{\Gamma^e}(r'_+,q'_+)\} \le 2$.
    
    In case (2), since $\Gamma$ is $\delta$-hyperbolic, there exists $w\in V(p')$ such that $d_{\Gamma^e}(v,w) \le 2\delta+2$. In case (3), similarly there exists $w\in V(q')$ such that $d_{\Gamma^e}(v,w) \le 2\delta+2$. In case (4), since $\Gamma$ is $\delta$-hyperbolic, there exists $w\in V(p')\cup V(q')$ such that $d_{\Gamma^e}(v,w) \le 3\delta+2$.
\end{proof}

\subsection{Tightness in the sense of Bowditch}

Recall that $\Gamma$ is a connected simplicial graph with $\girth(\Gamma)>20$ and $\G = \{G_v\}_{v \in V(\Gamma)}$ is a collection of non-trivial groups as assumed at the beginning of Section \ref{sec:Hyperbolicity, tightness, and fineness of the extension graph}. We first introduce the notion describing how a geodesic in $\Gamma^e$ travels though copies of $\Gamma$ in Definition \ref{def:S(p;n)} and study its property in Lemma \ref{lem:bound the number of planes}. This notion plays an important role in the proof of Proposition \ref{prop:extension graph is tight}.

\begin{defn}\label{def:S(p;n)}
     For $g \in \Gamma\G$ and a geodesic path $p$ in $\Gamma^e$ with $g.\Gamma \cap V(p) \neq \emptyset$, the subgraph $g.\Gamma \cap p$ is a subpath of $p$ by Corollary \ref{cor:Gamma is embedded into Gammae}, hence we denote the initial vertex of $g.\Gamma \cap p$ by $p_{in}(g.\Gamma) \in V(p)$ and the terminal vertex of $g.\Gamma \cap p$ by $p_{out}(g.\Gamma)$. For a geodesic path $p$ in $\Gamma^e$, we define $S(p\,;n) \subset \Gamma\G$ by
    \[
    S(p\,;n) = \{g \in \Gamma\G \mid \text{$g.\Gamma \cap V(p) \neq \emptyset$ and $d_{\Gamma^e}(p_{in}(g.\Gamma),p_{out}(g.\Gamma)) \ge n$ }\}.
    \]
\end{defn}

\begin{rem}
    Note $d_{\Gamma^e}(p_-, p_{in}(g.\Gamma)) \le d_{\Gamma^e}(p_-, p_{out}(g.\Gamma))$.
\end{rem}

\begin{lem}\label{lem:bound the number of planes}
    Let $p$ be a geodesic path in $\Gamma^e$. Then, for any $k\in \NN$ and $c \in V(p)$, we have
    \[
    |\{g \in S(p\,;3) \mid d_{\Gamma^e}(c,\,g.\Gamma \cap p) \le k\}|
    \le
    2(k+1)+4.
    \]
\end{lem}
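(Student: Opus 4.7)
The plan is to parametrize each $g \in S(p;3)$ by the pair of integers $a_g := d_{\Gamma^e}(p_-, p_{in}(g.\Gamma))$ and $b_g := d_{\Gamma^e}(p_-, p_{out}(g.\Gamma))$, encoding the positions along $p$ of the endpoints of the subpath $p \cap g.\Gamma$. Since $p$ is geodesic in $\Gamma^e$, $\Gamma^e$-distances between vertices of $p$ coincide with their path-distances along $p$; hence $b_g - a_g \geq 3$ for every $g \in S(p;3)$, and the hypothesis $d_{\Gamma^e}(c, g.\Gamma \cap p) \leq k$ becomes $a_g \leq \gamma + k$ and $b_g \geq \gamma - k$, where $\gamma := d_{\Gamma^e}(p_-, c)$. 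The interval $[a_g, b_g]$ is well-defined because $g.\Gamma$ is convex in $\Gamma^e$ (by Corollary \ref{cor:Gamma is embedded into Gammae} translated by $g$), so $p \cap g.\Gamma$ is a single subpath, as already recorded in Definition \ref{def:S(p;n)}.

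The structural input is an overlap bound: for distinct $g_1, g_2 \in S(p;3)$, the intersection of $[a_{g_1}, b_{g_1}]$ and $[a_{g_2}, b_{g_2}]$ has length at most $2$. Indeed, Remark \ref{rem:intersection of stabilizers} (3) yields $\diam_{\Gamma^e}(g_1.\Gamma \cap g_2.\Gamma) \leq 2$; hence the endpoints $u, v$ of the (possibly empty) subpath $p \cap g_1.\Gamma \cap g_2.\Gamma$ satisfy $d_{\Gamma^e}(u,v) \leq 2$, and the length of this subpath equals $d_{\Gamma^e}(u,v)$ because $p$ is geodesic. Together with $b_g - a_g \geq 3$, this immediately forces the map $g \mapsto a_g$ to be injective (equal $a$'s would place one interval inside the other, yielding overlap $\geq 3$) and monotone: $a_{g_1} < a_{g_2}$ implies $b_{g_1} < b_{g_2}$.

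Finally I split $S := \{g \in S(p;3) : d_{\Gamma^e}(c, g.\Gamma \cap p) \leq k\}$ as $T_1 \sqcup T_2$, where $T_1 = \{g : a_g \in [\gamma - k, \gamma + k]\}$ and $T_2 = \{g : a_g < \gamma - k\}$; the case $a_g > \gamma + k$ is excluded since it would make $[a_g,b_g]$ miss $[\gamma - k, \gamma + k]$. Injectivity gives $|T_1| \leq 2k + 1$. If $|T_2| \geq 4$, index four elements of $T_2$ so that $a_{g_1} < a_{g_2} < a_{g_3} < a_{g_4} < \gamma - k \leq b_{g_1}$; then $a_{g_2}, a_{g_3}, a_{g_4}$ are three distinct integers each at most $b_{g_1} - 1$, forcing $a_{g_2} \leq b_{g_1} - 3$ and hence an overlap $[a_{g_2},b_{g_1}]$ of $[a_{g_1}, b_{g_1}]$ and $[a_{g_2}, b_{g_2}]$ of length $b_{g_1} - a_{g_2} \geq 3$, a contradiction. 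Thus $|T_2| \leq 3$ and $|S| \leq 2k + 4 \leq 2(k+1) + 4$. The only delicate step is extracting the concrete integer constraint on interval overlaps from the abstract $\Gamma^e$-diameter bound of Remark \ref{rem:intersection of stabilizers} (3) via the geodesy of $p$; once that is in hand, the remainder is elementary pigeonholing.
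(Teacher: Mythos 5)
Your proof is correct. The essential geometric input is the same as in the paper's argument, namely Remark \ref{rem:intersection of stabilizers} (3) combined with the fact that $p$ is geodesic, which forces the traces on $p$ of two distinct translates $g_1.\Gamma$ and $g_2.\Gamma$ to share at most a segment of length $2$; what differs is the bookkeeping. The paper never states this interval-overlap bound explicitly: it splits the set into three pieces (translates whose trace ends at or before $c$, starts at or after $c$, or contains $c$), bounds the first two by $k+1$ each via injectivity of $g\mapsto p_{out}(g.\Gamma)$ resp.\ $g\mapsto p_{in}(g.\Gamma)$, and bounds the third by $4$ through a short case analysis on $d_{\Gamma^e}(c,p_{out}(g.\Gamma))$. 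You instead encode each trace by its interval $[a_g,b_g]$ of coordinates along $p$, record the overlap bound once (your appeal to the geodesy of $p$ to convert $\diam_{\Gamma^e}(g_1.\Gamma\cap g_2.\Gamma)\le 2$ into an overlap of coordinate intervals of length at most $2$ is exactly the delicate point, and you handle it correctly), and then partition according to whether $a_g$ lies in $[\gamma-k,\gamma+k]$ (at most $2k+1$ elements, by injectivity of $g\mapsto a_g$) or $a_g<\gamma-k$ (at most $3$ elements, by pigeonholing three start points into a window where the overlap bound is violated). The two decompositions are different but equally elementary; yours has the mild advantage of isolating the geometry in a single reusable overlap statement, and it in fact yields the slightly sharper bound $2k+4$, which of course implies the stated $2(k+1)+4$.
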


\begin{proof}
    Define $A=\{g \in S(p\,;3) \mid d_{\Gamma^e}(c,\,g.\Gamma \cap p) \le k\}$ and also define $A_1,A_2,A_3 \subset \Gamma\G$ by
    \begin{align*}
        A_1 &= \{g \in A \mid d_{\Gamma^e}(p_-,p_{out}(g.\Gamma)) \le d_{\Gamma^e}(p_-,c)\}, \\
        A_2 &= \{g \in A \mid d_{\Gamma^e}(p_-,c) \le d_{\Gamma^e}(p_-,p_{in}(g.\Gamma))\}, \\
        A_3 &= \{g \in A \mid c\in V(g.\Gamma \cap p) \}.
    \end{align*}
    We have $A=A_1\cup A_2 \cup A_3$. Let $g,h \in S(p\,;3)$. If $p_{in}(g.\Gamma)=p_{in}(h.\Gamma)$ or $p_{out}(g.\Gamma)=p_{out}(h.\Gamma)$, then we have $g=h$ by $\diam_{\Gamma^e}(g.\Gamma \cap h.\Gamma) \ge 3$ and Remark \ref{rem:intersection of stabilizers} (3). Hence, the map $\phi_1 \colon A_1 \to \{0,\cdots,k\}$ defined by $\phi_1(g)=d_{\Gamma^e}(c,p_{out}(g.\Gamma))$ is injective. This implies $|A_1|\le k+1$. Similarly, we also get $|A_2|\le k+1$ since the map $\phi_2 \colon A_2 \to \{0,\cdots,k\}$ defined by $\phi_2(g)=d_{\Gamma^e}(c,p_{in}(g.\Gamma))$ is injective. Finally, we claim $|A_3| \le 4$. For each $i=0,1,2$, define $A_3^i$ by $A_3^i=\{g \in A_3 \mid d_{\Gamma^e}(c,p_{out}(g.\Gamma))=i\}$ and define $A_3^3$ by $A_3^3 = \{g \in A_3 \mid d_{\Gamma^e}(c,p_{out}(g.\Gamma))\ge 3\}$. We have $A_3 = \bigsqcup_{i=0}^3 A_3^i$. By the same argument as above, we have $|A_3^i| \le 1$ for any $i \in \{0,1,2\}$. When $A_3^3\neq \emptyset$, let $c'\in V(p)$ satisfy $d_{\Gamma^e}(p_-,c) < d_{\Gamma^e}(p_-,c')$ and $|p_{[c,c']}|=3$. For any $g, h \in A_3^3$, we have $p_{[c,c']} \subset g.\Gamma \cap h.\Gamma$. This implies $g=h$ by Remark \ref{rem:intersection of stabilizers} (3). Hence, $|A_3^3|\le 1$. Thus, we get $|A_3| \le 4$ and eventually $|A| \le |A_1|+|A_2|+|A_3| \le 2(k+1)+4$.
\end{proof}

We are now ready to prove Proposition \ref{prop:extension graph is tight}, which corresponds to Theorem \ref{thm:intro extension graph is tight} (2). In Proposition \ref{prop:extension graph is tight}, given $a,b\in V(\Gamma^e)$ and $r\in \NN$, we define $V(a,b), V(a,b\,;r) \subset V(\Gamma^e)$ by
\begin{align*}
V(a,b) &= \bigcup\{V(p) \mid p \in \geo_{\Gamma^e}(a,b) \}, \\
V(a,b\,;r) &= \bigcup\{ V(a',b') \mid a'\in \N_{\Gamma^e}(a,r),\, b'\in \N_{\Gamma^e}(b,r) \}.    
\end{align*}

\begin{prop}\label{prop:extension graph is tight}
    If $\Gamma$ is uniformly fine and $\delta$-hyperbolic with $\delta\in\NN$, then $\Gamma^e$ satisfies (1) and (2) below. In particular, $\Gamma^e$ is tight in the sense of Bowditch.
    \begin{itemize}
        \item [(1)]
        $\forall\, k\in\NN, \exists\, P_0 \in\NN, \forall\, a,b \in V(\Gamma^e), \forall\, c \in V(a,b),\, |V(a,b) \cap \N_{\Gamma^e}(c,k)| \le P_0$.
        \item [(2)] 
        $\forall\, k\in\NN, \exists\, P_1,k_1 \in\NN, \forall\, r \in\NN, \forall\, a,b \in V(\Gamma^e), \text{for all $c \in V(a,b)$ with $d_{\Gamma^e}(c,\{a,b\}) \ge r+k_1$}, 
        \newline
        |V(a,b\,;r) \cap \N_{\Gamma^e}(c,k)| \le P_1$.
    \end{itemize}
\end{prop}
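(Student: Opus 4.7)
The strategy is to combine two tools from the preceding sections: the decomposition of $\Gamma^e$-geodesics into segments lying in translates of $\Gamma$---via Corollary \ref{cor:two geodesics with the same endpoints travel through common translations of Gamma} for same-endpoint pairs and Lemma \ref{lem:two geodesics form a ladder} for near-endpoint pairs---with the plane-counting estimate of Lemma \ref{lem:bound the number of planes}. Within each individual translate, the count is then reduced to a ``local count'' inside $\Gamma$ itself, which is provided by uniform fineness together with hyperbolicity.

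For (1), fix $a, b \in V(\Gamma^e)$, $c \in V(a,b)$, and a reference geodesic $p_0 \in \geo_{\Gamma^e}(a,b)$ through $c$. The at most $2k+1$ vertices of $V(p_0) \cap \N_{\Gamma^e}(c,k)$ account for $v \in V(p_0)$. For $v \notin V(p_0)$, pick $p \in \geo_{\Gamma^e}(a,b)$ through $v$ and let $x, y \in V(p) \cap V(p_0)$ be the last and first vertices of $p$ on $p_0$ before and after $v$ respectively; then $p_{[x,y]} p_{0,[x,y]}^{-1}$ is a circuit, so Corollary \ref{cor:two geodesics with the same endpoints travel through common translations of Gamma} produces translates $g_1.\Gamma, \ldots, g_n.\Gamma$ through which both sub-geodesics traverse subpaths of length $\ge 7$. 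In particular, $v \in g_i.\Gamma$ for some $g_i \in S(p_0;7)$. Using hyperbolicity of $\Gamma^e$ (Proposition \ref{prop:extension graph is hyperbolic}) and convexity of $g_i.\Gamma$ in $\Gamma^e$ (Corollary \ref{cor:Gamma is embedded into Gammae}), one checks that the projection of $c$ to $g_i.\Gamma$ lies at bounded distance from the subpath $p_0 \cap g_i.\Gamma$, giving $d_{\Gamma^e}(c, g_i.\Gamma \cap p_0) \le k + M_1$ for a constant $M_1$ depending only on the hyperbolicity constant of $\Gamma^e$. Lemma \ref{lem:bound the number of planes} then bounds the number of such $g_i$ by $2(k + M_1 + 1) + 4$. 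Inside each $g_i.\Gamma$, the subpath $p \cap g_i.\Gamma$ is a $\Gamma$-geodesic between two vertices within $\Gamma$-distance $2$ of the fixed pair $(p_{0,in}(g_i.\Gamma), p_{0,out}(g_i.\Gamma))$ by Remark \ref{rem:intersection of stabilizers}, so the local count lemma stated below bounds the contribution of each plane uniformly.

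For (2), I would run the same reduction using Lemma \ref{lem:two geodesics form a ladder} in place of Corollary \ref{cor:two geodesics with the same endpoints travel through common translations of Gamma}. Choose $k_1$ depending only on $k$ and large enough that both $r + k_1 > r + 22 + k$ and $d_{\Gamma^e}(a,b) \ge 2r + 32$ hold automatically (the latter from $c \in V(a,b)$ and $d_{\Gamma^e}(c, \{a,b\}) \ge r + k_1$). For any $a' \in \N_{\Gamma^e}(a, r)$, $b' \in \N_{\Gamma^e}(b, r)$, and $q \in \geo_{\Gamma^e}(a', b')$ through a target vertex $v \in \N_{\Gamma^e}(c, k)$, apply Lemma \ref{lem:two geodesics form a ladder} to $(p_0, q)$; if $V(p_0) \cap V(q) \neq \emptyset$, first decompose at the common vertices into disjoint sub-ladders, then apply Corollary \ref{cor:two geodesics with the same endpoints travel through common translations of Gamma} to any resulting $(a, b)$-bigon. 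The choice of $k_1$ keeps $v$ outside the width-$(r + 22)$ endpoint zones in Lemma \ref{lem:two geodesics form a ladder}, placing $v$ on a plane $g_i.\Gamma$ shared between $p_0$ and $q$, after which the bound of (1) carries over.

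The main obstacle is the local count lemma inside $\Gamma$ on which both arguments rely: for a uniformly fine $\delta$-hyperbolic graph $\Gamma$ with fineness function $f$, for any $k', r' \in \NN$ there is a constant $M(k', r', \delta, f)$ such that for every $x, y \in V(\Gamma)$, every $x' \in \N_\Gamma(x, r')$ and $y' \in \N_\Gamma(y, r')$, and every $c' \in V_\Gamma(x, y)$, one has $|V_\Gamma(x', y') \cap \N_\Gamma(c', k')| \le M(k', r', \delta, f)$. This is the $\Gamma$-internal analogue of the statement being proved and follows from a bigon-decomposition argument: slim bigons in hyperbolic graphs force any two geodesics with nearby endpoints to fellow-travel within $\delta + O(r')$, and uniform fineness bounds how many distinct circuits of bounded length can appear through any one edge; but tracking explicit dependence of $M$ on $f$ needs care. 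Once this lemma is available, (1) and (2) follow by combining plane decomposition (Section \ref{sec:Geometry of the extension graph}), the plane count (Lemma \ref{lem:bound the number of planes}), and the local count inside $\Gamma$.
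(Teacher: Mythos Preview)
Your high-level strategy matches the paper's exactly: localize $v$ to a translate $g.\Gamma$ via the bigon/ladder decomposition, bound the number of relevant translates by Lemma~\ref{lem:bound the number of planes}, and then count inside each translate using uniform fineness. The divergence is in how the last step is executed. The paper does not defer to a separate ``local count lemma''; instead, inside each relevant $g_0.\Gamma$ it uses $\delta$-hyperbolicity of $g_0.\Gamma$ to shortcut the bigon near $v$ and build an explicit circuit of length at most $24\delta+32$ (resp.\ $32\delta+32$ in part~(2)) through $v$ and through a specific edge $e$ of the reference geodesic $p$. Uniform fineness then bounds the vertices on such circuits by $f(24\delta+32)$, and the total is $(\text{edges of }p\text{ near }c)\times(\text{planes near }c)\times f(24\delta+32)$. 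Your ``local count lemma'' is the right abstraction---it is exactly tightness for $\Gamma$ itself---and the paper's circuit construction is its proof, done in-line rather than separately.

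There is one genuine gap in the lemma you state. As written, it bounds $|V_\Gamma(x',y')\cap\N_\Gamma(c',k')|$ for \emph{each} pair $(x',y')$ with $x'\in\N_\Gamma(x,r')$, $y'\in\N_\Gamma(y,r')$. What the reduction actually needs is a bound on the \emph{union} $\bigcup_{x',y'}V_\Gamma(x',y')\cap\N_\Gamma(c',k')$, i.e.\ precisely condition~(2) of the proposition applied to $\Gamma$. Since $\Gamma$ is not assumed locally finite, you cannot simply multiply your bound by the number of admissible pairs $(x',y')$; that number can be infinite. The fix is either to state the local lemma in the stronger form---which then has essentially the same content as what you are trying to prove, and whose proof is the circuit-through-a-fixed-edge argument the paper carries out---or to bypass the lemma and construct the circuit directly.

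A smaller point: your claim that the endpoints of $p\cap g_i.\Gamma$ lie within distance $2$ of $(p_{0,in}(g_i.\Gamma),p_{0,out}(g_i.\Gamma))$ is essentially right, but the justification is not Remark~\ref{rem:intersection of stabilizers} alone. Corollary~\ref{cor:two geodesics with the same endpoints travel through common translations of Gamma} gives $d(y_{i-1},x_{i-1})\le 2$; one then needs the additional observation that $d(x_{i-1},p_{0,in}(g_i.\Gamma))\le 2$, which follows because $p_{0,[p_{0,in}(g_i.\Gamma),x_{i-1}]}\subset g_{i-1}.\Gamma\cap g_i.\Gamma$ with $g_{i-1}\neq g_i$. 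So the constant is $4$ rather than $2$, which is harmless once the local lemma is stated correctly.
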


\begin{proof}
    For $e\in E(\Gamma^e)$, $n \in \NN$ and $g \in \Gamma\G$, define $C(e,n,g)\subset V(\Gamma^e)$ by
    \[
    C(e,n,g)=\bigcup\{ V(\gamma) \mid \gamma \in \C_{\Gamma^e}(e,n), \gamma \subset g.\Gamma \}
    \]
    (see Definition \ref{def:concepts in graph theory} for $\C_{\Gamma^e}(e,n)$). Since $\Gamma$ is uniformly fine, there exists a map $f \colon \NN\cup\{0\} \to \NN$ such that $\sup_{e \, \in E(\Gamma^e),\, g \, \in \Gamma\G} |C(e,n,g)| \le f(n)$ for any $n\in\NN$.
    
    (1) Let $k \in \NN$, $a,b \in V(\Gamma^e)$, and $c\in V(a,b)$. We claim
    \begin{equation}\label{eq:P_0 for (1)}
        P_0=2k+1 +2(k+2\delta+3) \cdot (2(k+2\delta+3)+4) \cdot f(24\delta+32).
    \end{equation}
    Let $v\in V(a,b) \cap \N_{\Gamma^e}(c,k)$ and let $p,q \in \geo_{\Gamma^e}(a,b)$ satisfy $c \in V(p)$ and $v \in V(q)$. By Corollary \ref{cor:two geodesics with the same endpoints travel through common translations of Gamma}, either (i) or (ii) holds, (i) $v \in V(p)\cap V(q)$, (ii) there exist $g_0\in \Gamma\G$, a subpath $p'$ of $p$, a subpath $q'$ of $q$, $\alpha \in \geo_{\Gamma^e}(p_-, q_-)$, and $\beta \in \geo_{\Gamma^e}(p_+, q_+)$ such that the loop $p'\beta q^{\prime-1}\alpha^{-1}$ is a circuit in $g_0.\Gamma$ and satisfies $v\in V(q')$, $\min\{|p'|,|q'|\} \ge 5$, and $\max\{|\alpha|,|\beta|\} \le 2$. Define $A \subset E(\Gamma^e)$, $B \subset \Gamma\G$, and $C \subset V(\Gamma^e)$ by
    \begin{align*}
    A &=\{e \in E(p) \mid d_{\Gamma^e}(c,e) \le k+2\delta+2 \}, \\
    B &=\{g \in S(p\,;5) \mid d_{\Gamma^e}(c,g.\Gamma \cap p) \le k+2\delta+2\}, \\
    C &=\{v \in V(\Gamma^e) \mid \exists\,e\in A, \exists\,g \in B {\rm~s.t.~} v\in C(e,24\delta+32,g)\}.
    \end{align*}
    
    In case (i), we have $v \in \N_{\Gamma^e}(c,k)\cap V(p)$. 
    
    In case (ii), there exists $w \in V(p')$ such that $d_{\Gamma^e}(v,w) \le 2\delta+2$ since $g_0.\Gamma$ is $\delta$-hyperbolic. By $|p'|\ge 5$ and $d_{\Gamma^e}(c,w) \le d_{\Gamma^e}(c,v) + d_{\Gamma^e}(v,w) \le k+2\delta+2$, we have $g_0 \in B$. 
    
    If $d_{\Gamma^e}(p'_-, w) > 6\delta+6$, then we can take $w_0 \in V(p'_{[p'_-, w]})$ satisfying $d_{\Gamma^e}(w_0,w) = 4\delta + 3$. Note $d_{\Gamma^e}(p'_-, w_0) > 2\delta+3$. By $\delta$-hyperbolicity of $g_0.\Gamma$, there exists $v_0 \in q'_{[q'_-, v]}$ such that $d_{\Gamma^e}(v_0,w_0) \le 2\delta$. Take $\alpha_0 \in \geo_{\Gamma^e}(w_0, v_0)$. If there exists $z \in V(\alpha_0)\cap V(q'_{[v,q'_+]})$, then the path $\alpha_{0[w_0,z]}q^{\prime -1}_{[z,v_0]}$ becomes a geodesic in $\Gamma^e$ from $w_0$ to $v_0$, hence $d_{\Gamma^e}(w_0, w) \le d_{\Gamma^e}(w_0, v)+d_{\Gamma^e}(v,w) \le d_{\Gamma^e}(w_0, v_0)+d_{\Gamma^e}(v,w) \le 2\delta+ (2\delta+2)$, which contradicts $d_{\Gamma^e}(w_0, w)=4\delta+3$. Hence, $V(\alpha_0)\cap V(q'_{[v,q'_+]})=\emptyset$. We can take a subpath $\alpha_1$ of $\alpha_0$ from $w_1 \in V(p'_{[w_0,w]})$ to $v_1 \in V(q'_{[v_0,v]})$ such that the path $p^{\prime -1}_{[p'_+, w_1]} \alpha_1 q'_{[v_1,q'_+]}$ has no self-intersection. Note $|\alpha_1| \le |\alpha| \le 2\delta$. 
    
    If $d_{\Gamma^e}(p'_-, w) \le 6\delta+6$, then define $w_1,v_1,\alpha_1$ by $w_1=p'_-$, $v_1=q'_-$, and $\alpha_1=\alpha$. 
    
    Similarly, we can take $w_2 \in V(p'_{[w,p'_+]})$, $v_2 \in V(q'_{[v,q'_+]})$, and a geodesic $\beta_1$ in $\Gamma^e$ from $w_2$ to $v_2$ such that the loop $\gamma$ defined by $\gamma=p'_{[w_1,w_2]} \beta_1 q^{\prime -1}_{[v_2,v_1]}\alpha_1^{-1}$ is a circuit and satisfies $|\gamma| \le 4(6\delta+8)$. Take $e \in E(p'_{[w_1,w_2]})$ one of whose endpoints is $w$, then we have $e \in A$ and $v \in C(e,24\delta+32,g_0)$ since we have $\gamma \subset g_0.\Gamma$ by Corollary \ref{cor:Gamma is embedded into Gammae}. 
    
    Hence, $v\in C$ in case (ii). This implies $V(a,b) \cap \N_{\Gamma^e}(c,k) \subset (\N_{\Gamma^e}(c,k)\cap V(p)) \cup C$. By Lemma \ref{lem:bound the number of planes}, we have $|A| \le 2(k+2\delta+3)$ and $|B| \le 2(k+2\delta+3)+4$. Hence, we get $|C| \le |A|\cdot|B|\cdot f(24\delta+32) \le 2(k+2\delta+3)(2(k+2\delta+3)+4)f(24\delta+32)$. Thus, the constant $P_0$ in \eqref{eq:P_0 for (1)} satisfies the condition.

    (2) Let $k\in \NN$. We claim
    \begin{align*}
        P_1 &= 2k+1 + 2(k+4\delta+3)\cdot(2(k+4\delta+3)+4)\cdot f(32\delta+32), \\
        k_1 &= k+12\delta+23.
    \end{align*}
    Let $r \in \NN$ and $a,b \in V(\Gamma^e)$. Let $c \in V(a,b)$ satisfy $d_{\Gamma^e}(c,\{a,b\}) \ge r+k_1$. Take $p \in \geo_{\Gamma^e}(a,b)$ satisfying $c \in V(p)$. Note that the existence of $c$ implies $d_{\Gamma^e}(a,b) = d_{\Gamma^e}(a,c) + d_{\Gamma^e}(c,b) \ge 2(r+k_1) \ge 2r + 32$. Define $A \subset E(\Gamma^e)$, $B \subset \Gamma\G$, and $C \subset V(\Gamma^e)$ by
    \begin{align*}
    A &=\{e \in E(p) \mid d_{\Gamma^e}(c,e) \le k+4\delta+2 \}, \\
    B &=\{g \in S(p\,;3) \mid d_{\Gamma^e}(c,g.\Gamma \cap p) \le k+4\delta+2\}, \\
    C &=\{v \in V(\Gamma^e) \mid \exists\,e\in A, \exists\,g \in B {\rm~s.t.~} v \in C(e, 32\delta+32 ,g) \}.
    \end{align*}
    Let $v\in V(a,b\,;r) \cap \N_{\Gamma^e}(c,k)$ and let $a'\in \N_{\Gamma^e}(a,r)$, $b'\in \N_{\Gamma^e}(b,r)$, and $q \in \geo_{\Gamma^e}(a',b')$ satisfy $v \in V(q)$. By applying Corollary \ref{cor:two geodesics with the same endpoints travel through common translations of Gamma} and Proposition \ref{prop: classification of geodesic triangle} when $V(p)\cap V(q) \neq \emptyset$ and by applying Lemma \ref{lem:two geodesics form a ladder} when $V(p)\cap V(q) = \emptyset$, we can see that either (i) or (ii) holds.
    \begin{itemize}
        \item[(i)]
        $v \in V(p)\cap V(q)$.
        \item[(ii)]
        There exist $g_0\in \Gamma\G$, a subpath $p'$ of $p$, a subpath $q'$ of $q$, $\alpha \in \geo_{\Gamma^e}(p_-,q_-)$, and $\beta \in \geo_{\Gamma^e}(p_+,q_+)$ satisfying $p'\cup q' \subset g_0.\Gamma$ and $v \in V(q')$ such that one of (ii-1)-(ii-4) holds.
        \begin{itemize}
            \item[(ii-1)]
            The loop $p'\beta q^{\prime-1}\alpha^{-1}$ is a circuit and satisfies $\min\{|p'|,|q'|\} \ge 3$ and $\min\{|\alpha|,|\beta|\} \le 2$.
            \item[(ii-2)]
            The path $p'\beta q^{\prime -1}$ has no self-intersection and satisfies $|\beta| \le 2$ and $\max\{d_{\Gamma^e}(a,p'_-), d_{\Gamma^e}(a,q'_-)\}\le r+22$.
            \item[(ii-3)]
            The path $p^{\prime -1}\alpha q'$ has no self-intersection and satisfies $|\alpha| \le 2$ and $\max\{d_{\Gamma^e}(b,p'_+), d_{\Gamma^e}(b,q'_+)\} \le r+22$.
            \item[(ii-4)]
            $V(p')\cap V(q')=\emptyset$ and $\max\{d_{\Gamma^e}(a,p'_-), d_{\Gamma^e}(a,q'_-),d_{\Gamma^e}(b,p'_+), d_{\Gamma^e}(b,q'_+)\}\le r+22$.
        \end{itemize}
    \end{itemize}
    
    In case (i), we have $v \in \N_{\Gamma^e}(c,k)\cap V(p)$. 
    
    In case (ii-1), we can see $v \in C$ in the same way as the proof of Proposition \ref{prop:extension graph is tight} (1). 
    
    In case (ii-2), take $\epsilon \in \geo_{\Gamma^e}(a, q'_-)$, then we have $\alpha \subset \N_{\Gamma^e}(\epsilon \cup p_{[a,p'_-]}, \delta)$. This and $\max\{d_{\Gamma^e}(a,p'_-), d_{\Gamma^e}(a,q'_-)\}\le r+22$ imply $\alpha \subset \N_{\Gamma^e}(a,r+22+\delta)$. Suppose for contradiction that there exists $z \in V(\alpha)$ such that $d_{\Gamma^e}(z,v) \le 11\delta$, then
    \[
    d_{\Gamma^e}(a,c) \le d_{\Gamma^e}(a,z) + d_{\Gamma^e}(z,v) + d_{\Gamma^e}(v,c) \le (r+22+\delta)+11\delta+k,
    \]
    which contradicts $d_{\Gamma^e}(a,c)\ge r+k_1$. Hence, $\forall\, z \in V(\alpha),\, d_{\Gamma^e}(z,v) > 11\delta$. In particular, we can take $v_0 \in V(q'_{[q'_-, v]})$ with $d_{\Gamma^e}(v_0, v) = 9\delta$ and there exists $w_0 \in V(p')$ such that $d_{\Gamma^e}(v_0,w_0)\le 2\delta$ by $\delta$-hyperbolicity of $g_0.\Gamma$. Take $\alpha_0 \in \geo_{\Gamma^e}(w_0, v_0)$. Note $V(\alpha_0) \cap V(q'_{[v,q'_+]}\beta^{-1})=\emptyset$ by $d_{\Gamma^e}(v_0, v) = 9\delta$, $|\alpha_0|\le 2\delta$, and $|\beta| \le 2 \le 2\delta$. Hence, we can take a subpath $\alpha_1$ of $\alpha_0$ from $w_1 \in V(p'_{[w_0,p'_+]})$ to $v_1 \in V(q'_{[v_0,v]})$ such that the loop $p'_{[w_1,p'_+]} \beta (q'_{[v_1,q'_+]})^{-1}\alpha_1^{-1}$ is a circuit. If $d_{\Gamma^e}(v,q'_+)>5\delta+2$, then we can take $v'_0 \in V(q'_{[v,q'_+]})$ with $d_{\Gamma^e}(v,v'_0) = 3\delta$ and there exists $w'_0 \in V(p'_{[w_1,p'_+]})$ such that $d_{\Gamma^e}(v'_0,w'_0) \le 2\delta$ by $\delta$-hyperbolicity of $g_0.\Gamma$. Take $\beta_0 \in \geo_{\Gamma^e}(w'_0, v'_0)$. We can take a subpath $\beta_1$ of $\beta_0$ from $w_2 \in V(p'_{[w_1,w'_0]})$ to $v_2 \in V(q'_{[v,v'_0]})$ such that the loop $p'_{[w_1,w_2]} \beta_1 (q'_{[v_1,v_2]})^{-1}\alpha_1^{-1}$ is a circuit. If $d_{\Gamma^e}(v,q'_+) \le 5\delta+2$, then we define $w_2,v_2,\beta_1$ by $w_2=p'_+$, $v_2=q'_+$, and $\beta_1=\beta$. In either case, the loop $\gamma$ defined by $\gamma=p'_{[w_1,w_2]} \beta_1 (q'_{[v_1,v_2]})^{-1}\alpha_1^{-1}$ is a circuit satisfying $|\gamma| \le 2(11\delta+5\delta+4)$ and $v \in V(\gamma)$ and there exists $w \in V(p'_{[w_1, w_2]})$ such that $d_{\Gamma^e}(w,v) \le 4\delta$. Note $d_{\Gamma^e}(w,c) \le d_{\Gamma^e}(w,v) + d_{\Gamma^e}(v,c) \le 4\delta+k$. We also have $g_0 \in S(p\,;3)$ by $d_{\Gamma^e}(w_1,w_2) \ge 9\delta-4\delta > 3$. Hence, by taking $e \in E(p'_{[w_1, w_2]})$ one of whose endpoints is $w$, we have $e\in A$, $g_0 \in B$, and $v \in C(e,32\delta+8,g_0)$. Hence, $v \in C$.

    In case (ii-3), we can show $v \in C$ in the same way as case (ii-2). 
    
    In case (ii-4), in the same way as case (ii-2), we can show $\forall\, z \in V(\alpha)\cup V(\beta),\, d_{\Gamma^e}(z,v) > 11\delta$ by $\max\{d_{\Gamma^e}(a,p'_-), d_{\Gamma^e}(a,q'_-),d_{\Gamma^e}(b,p'_+), d_{\Gamma^e}(b,q'_+)\}\le r+22$. In particular, we can take $v_0 \in V(q'_{[q'_-, v]})$ and $v'_0 \in V(q'_{[v,q'_+]})$ satisfying $d_{\Gamma^e}(v_0,v) = d_{\Gamma^e}(v,v'_0) = 5\delta$ and there exist $w_0 \in V(p')$ and $w'_0 \in V(p'_{[w_0, p'_+]})$ such that $\max\{d_{\Gamma^e}(w_0,v_0), d_{\Gamma^e}(w'_0,v'_0)\} \le 2\delta$ by $\delta$-hyperbolicity of $g_0.\Gamma$. Take $\alpha_0 \in \geo_{\Gamma^e}(w_0, v_0)$ and $\beta_0 \in \geo_{\Gamma^e}(w'_0, v'_0)$. By $d_{\Gamma^e}(v_0, v'_0)=10\delta$ and $\max\{|\alpha_0|,|\beta_0|\} \le 2\delta$, we can take a subpath $\alpha_1$ of $\alpha_0$ from $w_1 \in V(p'_{[w_0,w'_0]})$ to $v_1 \in V(q'_{[v_0,v]})$ and a subpath $\beta_1$ of $\beta_0$ from $w_2 \in V(q'_{[w_1,w'_0]})$ to $v_2 \in V(q'_{[v, v'_0]})$ such that the loop $\gamma$ defined by $\gamma=p'_{[w_1,w_2]} \beta_1 (q'_{[v_1,v_2]})^{-1}\alpha_1^{-1}$ is a circuit. Note $|\gamma| \le 4\cdot 7\delta = 28\delta$ and $v \in V(\gamma)$. There exists $w \in V(p'_{[w_1, w_2]})$ such that $d_{\Gamma^e}(w,v) \le 4\delta$. Note $d_{\Gamma^e}(w,c) \le d_{\Gamma^e}(w,v) + d_{\Gamma^e}(v,c) \le 4\delta+k$. We also have $g_0 \in S(p\,;3)$ by $d_{\Gamma^e}(w_1,w_2) \ge 10\delta-4\delta > 3$. Hence, by taking $e \in E(p'_{[w_1, w_2]})$ one of whose endpoints is $w$, we have $e\in A$, $g_0 \in B$, and $v \in C(e,28\delta,g_0)$. Hence, $v \in C$.

    Thus, we have $V(a,b) \cap \N_{\Gamma^e}(c,k) \subset (\N_{\Gamma^e}(c,k)\cap V(p)) \cup C$. By Lemma \ref{lem:bound the number of planes}, we have $|A| \le 2(k+4\delta+3)$ and $|B| \le 2(k+4\delta+3)+4$. By this and $|C| \le |A|\cdot|B|\cdot f(32\delta+32)$, the constants $P_1$ and $k_1$ satisfy the condition.
   \end{proof}

In Corollary \ref{cor:action on extension graph is acylindrical} below, we record one application of Proposition \ref{prop:extension graph is tight}. Proposition \ref{prop:extension graph is tight} has another application for proving an analytic property of graph product in the upcoming paper.

\begin{cor}\label{cor:action on extension graph is acylindrical}
   If $\Gamma$ is uniformly fine and hyperbolic, then the action $\Gamma\G \act \Gamma^e$ is acylindrical.
\end{cor}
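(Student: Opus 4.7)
The plan is to deduce acylindricity from three ingredients developed in the paper: hyperbolicity of $\Gamma^e$ (Proposition \ref{prop:extension graph is hyperbolic}), the quantitative tightness bound (Proposition \ref{prop:extension graph is tight}(2)), and the trivial intersection of stabilizers of sufficiently separated vertices (Corollary \ref{cor:stabilizer of two far vertices is trivial}(1)). Fix $\epsilon \in \NN$ and let $\delta \in \NN$ be a hyperbolicity constant of $\Gamma^e$. Put $k = \epsilon + 4\delta$, and let $P_1, k_1$ be the constants supplied by Proposition \ref{prop:extension graph is tight}(2) for this value of $k$. I will show that the constants $R = 2(\epsilon + k_1) + 5$ and $N = P_1^2$ witness acylindricity: for every pair $a, b \in V(\Gamma^e)$ with $d_{\Gamma^e}(a,b) \ge R$, the set $F(a,b) = \{ g \in \Gamma\G \mid d_{\Gamma^e}(a, g.a) \le \epsilon,\, d_{\Gamma^e}(b, g.b) \le \epsilon \}$ satisfies $|F(a,b)| \le N$.

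The first step is to fix a geodesic $p \in \geo_{\Gamma^e}(a,b)$ and choose two vertices $c_1, c_2 \in V(p)$ with $d_{\Gamma^e}(c_1, c_2) \ge 5$ and $d_{\Gamma^e}(c_i, \{a,b\}) \ge \epsilon + k_1$; the length bound $d_{\Gamma^e}(a,b) \ge R$ makes this possible. For any $g \in F(a,b)$, the image $g.p$ is a geodesic from $g.a$ to $g.b$, so $g.c_i \in V(g.a, g.b) \subset V(a, b\,; \epsilon)$ by $\Gamma\G$-equivariance.

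The second and main step is a fellow-traveling argument in the $\delta$-hyperbolic quadrilateral with vertices $a, b, g.b, g.a$: since two of its sides have length at most $\epsilon$ while $d_{\Gamma^e}(c_i, \{a,b\}) > \epsilon + 2\delta$, slim-quadrilateral thinness forces the existence of $x_i \in V(g.p)$ with $d_{\Gamma^e}(c_i, x_i) \le 2\delta$. Parametrizing $g.p$ from $g.a$ and comparing $d_{\Gamma^e}(g.a, x_i)$ with $d_{\Gamma^e}(g.a, g.c_i) = d_{\Gamma^e}(a, c_i)$ via two applications of the triangle inequality gives $|d_{\Gamma^e}(g.a, x_i) - d_{\Gamma^e}(g.a, g.c_i)| \le \epsilon + 2\delta$, hence $d_{\Gamma^e}(x_i, g.c_i) \le \epsilon + 2\delta$ and finally $d_{\Gamma^e}(c_i, g.c_i) \le \epsilon + 4\delta = k$. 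Consequently $g.c_i \in V(a, b\,; \epsilon) \cap \N_{\Gamma^e}(c_i, k)$ for $i = 1, 2$, so by Proposition \ref{prop:extension graph is tight}(2) each $g.c_i$ takes at most $P_1$ values.

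Finally, the map $g \mapsto (g.c_1, g.c_2)$ is injective on $F(a,b)$: if $g_1.c_i = g_2.c_i$ for $i = 1, 2$, then $g_2^{-1}g_1 \in \stab_{\Gamma\G}(c_1) \cap \stab_{\Gamma\G}(c_2)$, but $d_{\Gamma^e}(c_1, c_2) \ge 5 > 4$, so Corollary \ref{cor:stabilizer of two far vertices is trivial}(1) forces this intersection to be trivial and $g_1 = g_2$. Combining with the previous paragraph yields $|F(a,b)| \le P_1^2 = N$. The main obstacle is calibrating the constants in the fellow-traveling step so that $d_{\Gamma^e}(c_i, g.c_i)$ is bounded by precisely the value of $k$ used to invoke Proposition \ref{prop:extension graph is tight}(2); once the quadrilateral geometry is set up correctly, this is a routine hyperbolic-space estimate.
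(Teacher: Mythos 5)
Your proposal is correct, and it uses the same three ingredients the paper relies on: hyperbolicity of $\Gamma^e$ (Proposition \ref{prop:extension graph is hyperbolic}), the quantitative bound in Proposition \ref{prop:extension graph is tight}(2), and the trivial-intersection-of-distant-stabilizers statement of Corollary \ref{cor:stabilizer of two far vertices is trivial}(1). The difference is that the paper compresses the final deduction into a citation of Bowditch's Lemma 3.3, whereas you reprove that lemma from scratch in this setting: the fellow-traveling estimate in the slim quadrilateral forces $g.c_i$ into $V(a,b;\epsilon)\cap\N_{\Gamma^e}(c_i,k)$, tightness caps the number of such points at $P_1$, and the injectivity of $g\mapsto(g.c_1,g.c_2)$ (coming from $d_{\Gamma^e}(c_1,c_2)>4$) finishes the count. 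What your version buys is transparency about how the constants $P_1$ and $k_1$ interact with the $\epsilon$-coarse fixing of the endpoints; the paper's version buys brevity by treating Bowditch's lemma as a black box. They are the same argument at two levels of unpacking.

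Two small calibrations worth making explicit. First, the fellow-traveling step needs $d_{\Gamma^e}(c_i,\{a,b\})>\epsilon+2\delta$, whereas you only impose $d_{\Gamma^e}(c_i,\{a,b\})\ge\epsilon+k_1$; this is harmless when $k_1>2\delta$ (which holds for the $k_1$ actually produced in the proof of Proposition \ref{prop:extension graph is tight}(2)), but for a self-contained argument you should instead require $d_{\Gamma^e}(c_i,\{a,b\})\ge\epsilon+\max(k_1,2\delta+1)$ and enlarge $R$ accordingly. Second, the exact thinness constant for quadrilaterals depends on the precise form of $\delta$-hyperbolicity used and may be $4\delta$ rather than $2\delta$; since you then feed this bound into $k$ before invoking Proposition \ref{prop:extension graph is tight}(2), this only shifts the numerical value of $k$ and does not affect the structure of the argument.
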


\begin{proof}
    This follows from Corollary \ref{cor:stabilizer of two far vertices is trivial} (1), Proposition \ref{prop:extension graph is tight}, and \cite[Lemma 3.3]{Bow08}.
\end{proof}

When $\Gamma$ is finite, the statement of Corollary \ref{cor:action on extension graph is acylindrical} essentially follows from \cite[Corollary C]{Val21} since the contact graph and crossing graph are quasi-isometric (see \cite[Proposition 3.3]{Gen25} and Proposition \ref{prop:connection to crossing and contact graphs}).

\subsection{Fineness}

Recall that $\Gamma$ is a connected simplicial graph with $\girth(\Gamma)>20$ and $\G = \{G_v\}_{v \in V(\Gamma)}$ is a collection of non-trivial groups as assumed at the beginning of Section \ref{sec:Hyperbolicity, tightness, and fineness of the extension graph}. We first prove Lemma \ref{lem:Greenlinger} below, which enables us to run induction on the length of a circuit in the proof of Proposition \ref{prop:extension graph is fine}. Lemma \ref{lem:Greenlinger} is similar to Greenlinger's lemma for small cancellation groups.

\begin{lem}\label{lem:Greenlinger}
    If $p$ is a circuit in $\Gamma^e$, then there exist $a,b \in V(p)$ and $g \in\Gamma\G$ such that $p_{[a,b]} \subset g.\Gamma$, $d_{\Gamma^e}(a,b) \le 4$, and letting $q$ be a unique geodesic in $\Gamma^e$ from $a$ to $b$, the loop $p_{[a,b]}q^{-1}$ is a circuit in $g.\Gamma$.
\end{lem}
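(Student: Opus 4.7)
The plan is to distinguish two cases based on whether $p$ is contained in a single translate $g.\Gamma$ of $\Gamma$ in $\Gamma^e$.

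In the first case, where $p \subset g.\Gamma$ for some $g \in \Gamma\G$, the circuit $p$ is a simple cycle in $g.\Gamma \cong \Gamma$, and its length is at least $\girth(\Gamma) > 20$. I would pick $a, b \in V(p)$ at cyclic distance $4$ along one arc of $p$ and take $p_{[a,b]}$ to be the complementary long arc, of length at least $17$. Because $d_{\Gamma^e}(a, b) \le 4$ and $\girth(\Gamma) > 20$, the geodesic from $a$ to $b$ in $\Gamma$ of length at most $4$ is strictly unique (a non-unique pair would yield a cycle of length $\le 8 < 20$). By Corollary \ref{cor:Gamma is embedded into Gammae}, this coincides with the unique geodesic $q$ in $\Gamma^e$, which sits inside $g.\Gamma$ and equals the short arc of $p$. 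The loop $p_{[a,b]} q^{-1}$ then retraces $p$ (up to orientation), hence is a circuit in $g.\Gamma$.

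For the second case, in which no single translate contains all of $p$, I would use the bigon classification together with the convexity of translates. Pick any edge $e \in E(p)$ and a translate $g.\Gamma$ containing $e$; by convexity (Corollary \ref{cor:Gamma is embedded into Gammae}), the maximal connected subpath of $p \cap g.\Gamma$ containing $e$ is of the form $p_{[a,b]}$, and $p_{[a,b]} \subset g.\Gamma$. To bound $d_{\Gamma^e}(a, b) \le 4$, my plan is to argue by induction on $|p|$. If $d_{\Gamma^e}(a, b) > 4$, I would take a geodesic $q \in \geo_{\Gamma^e}(a, b)$ and consider the loop formed by $q$ and the complementary arc $p_{[b, a]}$ of $p$; this loop has length $|q| + |p_{[b,a]}|$, strictly less than $|p|$ whenever $p_{[a,b]}$ is non-geodesic in $g.\Gamma$. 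If this shorter loop is a legitimate circuit, the inductive hypothesis applies to it and the resulting short-chord data transfers back to $p$; otherwise, a self-intersection of the loop supplies an even shorter circuit inside $p$ to which induction applies.

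The main obstacle will be the second case, specifically verifying the existence of the shorter circuit in each subcase and transferring the inductive data back to $p$ correctly. I expect to use Remark \ref{rem:intersection of stabilizers}(3) to control the diameter of intersections of distinct translates (bounded by $2$), Proposition \ref{prop:Gamma is embedded into Gammae} to rule out admissible-path detours returning through the wrong vertices, and Proposition \ref{prop:paths must penetrate long} to decompose the bigons that arise into common-translate pieces. The base case of the induction is supplied by Case 1, since a minimal-length circuit in $\Gamma^e$ has $|p| = \girth(\Gamma)$ and must lie in a single translate, by the same reasoning used in Lemma \ref{lem:doubling is not true}.
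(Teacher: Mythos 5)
Your Case 1 is fine, though it differs from the paper's treatment (the paper simply takes $a=b=v$ so that $q$ is trivial and the loop $p_{[a,b]}q^{-1}$ is $p$ itself); both work.

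Case 2 has genuine gaps. First, after fixing an edge $e$ and a translate $g.\Gamma\ni e$, the maximal subpath $p_{[a,b]}\subset g.\Gamma$ containing $e$ may well be \emph{geodesic} with $d_{\Gamma^e}(a,b)>4$ (the circuit can enter $g.\Gamma$, travel along a long geodesic, and exit). In that case the replacement loop $q\cup p_{[b,a]}$ has $|q|=|p_{[a,b]}|$ and hence the \emph{same} length as $p$; your induction does not decrease, and you have no mechanism to pick a better $e$ or $g$. Second, even when $p_{[a,b]}$ is non-geodesic and the new loop is shorter, the inductive hypothesis hands you short-chord data $(a',b',g')$ for the \emph{new} loop, whose vertex set includes the geodesic $q$. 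There is no reason $a',b'$ should lie on $V(p)$ nor that the long arc from $a'$ to $b'$ avoids $q$, so the data does not obviously transfer back to $p$. Third, the base case you propose ("a minimal-length circuit in $\Gamma^e$ lies in a single translate, by the reasoning in Lemma \ref{lem:doubling is not true}") is circular: Lemma \ref{lem:doubling is not true} invokes Corollary \ref{cor:girth of exetnsion graph is the same}, which is itself deduced from Lemma \ref{lem:Greenlinger}.

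The paper avoids induction on $|p|$ entirely. It minimizes the number $n$ of pieces in a decomposition of $p$ into single-translate subpaths, and supposes for contradiction that every piece is either geodesic or has $d_{\Gamma^e}(x_{i-1},x_i)\ge 5$. Replacing the non-geodesic pieces by geodesics then yields a loop satisfying the admissibility conditions of Definition \ref{def: admissible path}, which contradicts Proposition \ref{prop:Gamma is embedded into Gammae} (an admissible closed loop at a vertex cannot exist). This forces some piece $p_{[x_{i-1},x_i]}$ to be simultaneously short ($d\le 4$) and non-geodesic, from which the short-chord $(a,b)$ is extracted directly on $p$ with no back-transfer issue. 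If you want to salvage an induction-flavored argument, you would at minimum need to (i) show you can always choose $e,g$ to make $p_{[a,b]}$ non-geodesic, and (ii) argue carefully that any chord returned by the induction on the replacement loop can be pulled back into $V(p)$; I don't see how to do either cheaply, and the minimality-plus-admissibility route sidesteps both problems.
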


\begin{proof}
    Note that we can set an initial (and terminal) vertex of the circuit $p$ at any vertex in $V(p)$. Define $\mathcal{I}$ to be the set of all triples $(v,n,\mathbf{x})$, where $v \in V(p)$, $n \in \NN$, and $\mathbf{x}=(x_0,\cdots, x_n)$ with $x_0 = x_n = v$ is a subsequence of $V(p)$, which is considered as a circuit with $p_-=p_+=v$, such that for any $i \in \{1,\cdots,n\}$, there exists $g \in \Gamma\G$ satisfying $p_{[x_{i-1}, x_i]} \subset g.\Gamma$. Take $(v,n,\mathbf{x}) \in \mathcal{I}$ satisfying $n = \min\{n' \in \NN \mid (v',n',\mathbf{x'}) \in \mathcal{I} \}$. When $n=1$, $p$ is a circuit in $g.\Gamma$ for some $g \in \Gamma\G$. Hence, the statement follows by setting $a=v$ and $b=v$. 
    
    In the following, assume $n\ge 2$. For each $i \in \{1,\cdots,n\}$, let $g_i \in \Gamma\G$ satisfy $p_{[x_{i-1}, x_i]} \subset g_i.\Gamma$. By minimality of $n$, the path $p$ with the subsequence $\mathbf{x}$ satisfies Definition \ref{def: admissible path} (2). Define $I_1,I_2$ by $I_1=\{i \in \{1,\cdots,n\} \mid \text{$p_{[x_{i-1},x_i]}$ is geodesic in $\Gamma^e$}\}$ and $I_2 = \{i \in \{1,\cdots,n\} \mid d_{\Gamma^e}(x_{i-1}, x_i) \ge 5 \}$. Suppose $\{1,\cdots,n\} = I_1 \cup I_2$ for contradiction. Define a loop $p'$ passing through all $x_0,\cdots,x_n$ by concatenating each subpath $p'_{[x_{i-1}, x_i]}$ defined as follows. If $i\in I_1$, then define $p'_{[x_{i-1}, x_i]}=p_{[x_{i-1}, x_i]}$. If $i\in \{1,\cdots,n\} \setminus I_1$, then take $q_i \in \geo_{\Gamma^e}(x_{i_1},x_i)$ and define $p'_{[x_{i-1}, x_i]}=q_i$. Note $q_i \subset g_i.\Gamma$ by Corollary \ref{cor:Gamma is embedded into Gammae}. 
    
    We claim that $p'$ is admissible. Definition \ref{def: admissible path} (1) follows from the definition of $p'$. Suppose for contradiction that there exist $g \in \Gamma\G$ and $i \in \{1,\cdots,n-1\}$ such that $p'_{[x_{i-1},x_{i+1}]} \subset g.\Gamma$. If $i\in I_1$, then we have $p_{[x_{i-1},x_i]} = p'_{[x_{i-1},x_i]} \subset g.\Gamma$. If $i \in I_2$, then we have $g_i=g$ by Remark \ref{rem:intersection of stabilizers} (3) since we have $d_{\Gamma^e}(x_{i-1}, x_i) \ge 5$ and $\{x_{i-1}, x_i\} \subset g_i.\Gamma \cap g.\Gamma$. This implies $p_{[x_{i-1},x_i]} \subset g_i.\Gamma = g.\Gamma$. By arguing similarly for $p_{[x_i,x_{i+1}]}$ as well, we get $p_{[x_{i-1},x_{i+1}]} \subset g.\Gamma$, which contradicts that the path $p$ with $\mathbf{x}$ satisfies Definition \ref{def: admissible path} (2). Hence, $p'$ also satisfies Definition \ref{def: admissible path} (2). Let $i\in\{1,\cdots,n-1\}$. If $\{i,i+1\} \subset I_1$, then the subpath $p'_{[x_{i-1},x_{i+1}]}$ has no backtracking by $p'_{[x_{i-1},x_{i+1}]}=p_{[x_{i-1},x_{i+1}]}$ since $p$ has no backtracking. If $i\in I_2$ or $i_{i+1} \in I_2$, then we have $\max\{ d_{\Gamma^e}(x_{i-1}, x_i), d_{\Gamma^e}(x_i, x_{i+1}) \} \ge 5$. Hence, $p'$ satisfies Definition \ref{def: admissible path} (3). Thus, $p'$ is admissible. This contradicts Proposition \ref{prop:Gamma is embedded into Gammae} by $x_0=x_n=v$ and $n \ge 2$.

    Hence, there exists $i \in \{1, \cdots, n\}$ such that $p_{[x_{i-1}, x_i]}$ is not geodesic in $\Gamma^e$ and $d_{\Gamma^e}(x_{i-1}, x_i) \le 4$. Take $q \in \geo_{\Gamma^e}(x_{i-1}, x_i)$. By $\{x_{i-1}, x_i\} \subset g_i. \Gamma$ and Corollary \ref{cor:Gamma is embedded into Gammae}, we have $q \subset g_i.\Gamma$. Also, $q$ is a unique geodesic in $\Gamma^e$ from $x_{i-1}$ to $x_i$ by $\girth(\Gamma) > 20$. For each $v \in V(q)$, there exists at most one vertex $w \in V(p_{[x_{i-1}, x_i]})$ such that $v=w$ since $p_{[x_{i-1}, x_i]}$ has no self-intersection by $n \ge 2$. Since $p_{[x_{i-1}, x_i]}$ is not geodesic in $\Gamma^e$, we can see that there exist a subpath $p''$ of $p_{[x_{i-1}, x_i]}$ and a subpath $q''$ of $q$ such that we have $p''_-=q''_-$ and $p''_+=q''_+$ and the loop $p''q^{\prime\prime -1}$ is a circuit in $g_i.\Gamma$. By $|q''|\le |q| \le 4$, the subpath $p''$ satisfies the statement by setting $a = p''_-$ and $b = p''_+$.
\end{proof}

Before proving Proposition \ref{prop:extension graph is fine}, we present an immediate corollary of Lemma \ref{lem:Greenlinger} below.

\begin{cor}\label{cor:girth of exetnsion graph is the same}
    We have $\girth(\Gamma^e) = \girth(\Gamma)$. Moreover, if $p$ is a circuit in $\Gamma^e$ of length $\girth(\Gamma)$ i.e. $|p|=\girth(\Gamma)$, then there exists $g \in \Gamma\G$ such that $p \subset g.\Gamma$.
\end{cor}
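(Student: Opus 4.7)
The plan is to deduce both assertions directly from Lemma \ref{lem:Greenlinger}, which provides a strong structural statement about arbitrary circuits in $\Gamma^e$. The inequality $\girth(\Gamma^e) \le \girth(\Gamma)$ is essentially free: by Convention \ref{conv:Gamma is embedded to Gammae}, $\Gamma$ sits in $\Gamma^e$ as an induced subgraph, so any shortest circuit in $\Gamma$ is a circuit in $\Gamma^e$ of the same length. The substance of the corollary is the reverse inequality together with the rigidity statement.

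To prove $|p| \ge \girth(\Gamma)$ for every circuit $p$ in $\Gamma^e$, I would apply Lemma \ref{lem:Greenlinger} to obtain $a,b \in V(p)$, $g \in \Gamma\G$, and the unique geodesic $q \in \geo_{\Gamma^e}(a,b)$ with $p_{[a,b]} \subset g.\Gamma$, $d_{\Gamma^e}(a,b) \le 4$, and such that $p_{[a,b]}q^{-1}$ is a circuit contained in $g.\Gamma$. If $a=b$, then the subpath $p_{[a,b]}$ is already all of $p$ and we directly get $p \subset g.\Gamma$, so $|p| \ge \girth(g.\Gamma) = \girth(\Gamma)$. If $a \neq b$, write $p$ as the concatenation $p_{[a,b]} \cdot p_{[b,a]}$ of its two subarcs between $a$ and $b$. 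Since $p_{[a,b]}q^{-1}$ is a circuit in $g.\Gamma \cong \Gamma$, we have $|p_{[a,b]}| + |q| \ge \girth(\Gamma)$; on the other hand $|p_{[b,a]}| \ge d_{\Gamma^e}(a,b) = |q|$ because it is a path between $a$ and $b$. Adding these two inequalities gives $|p| = |p_{[a,b]}| + |p_{[b,a]}| \ge \girth(\Gamma)$, as required.

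For the moreover part, suppose $|p| = \girth(\Gamma)$. The two inequalities in the previous paragraph must then be equalities, so in particular $|p_{[b,a]}| = |q| = d_{\Gamma^e}(a,b)$, meaning $p_{[b,a]}^{-1}$ is itself a geodesic from $a$ to $b$. Invoking the uniqueness of the geodesic $q$ asserted in Lemma \ref{lem:Greenlinger} (which ultimately relies on $\girth(\Gamma) > 20$), I conclude $p_{[b,a]}^{-1} = q$, so $V(p_{[b,a]}) = V(q) \subset g.\Gamma$. Combined with $V(p_{[a,b]}) \subset g.\Gamma$, this yields $V(p) \subset g.\Gamma$, and because $g.\Gamma$ is an induced subgraph of $\Gamma^e$ (translations of $\Gamma$ under $\Gamma\G$ are graph automorphisms sending induced subgraphs to induced subgraphs), the whole path $p$ lies in $g.\Gamma$.

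I do not anticipate any serious obstacle: Lemma \ref{lem:Greenlinger} already contains the hard content, and what remains is a short bookkeeping argument together with the uniqueness of short geodesics in $\Gamma^e$, which is built into the statement of that lemma. The only mild subtlety is remembering that $g.\Gamma$ is \emph{induced}, so that containment at the level of vertex sets upgrades to containment as subgraphs.
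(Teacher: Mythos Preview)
Your proposal is correct and follows essentially the same argument as the paper's proof: both apply Lemma~\ref{lem:Greenlinger}, split the circuit $p$ into the arc $p_{[a,b]}$ and its complementary arc, combine the inequality $|p_{[a,b]}|+|q|\ge\girth(\Gamma)$ with $|p_{[b,a]}|\ge|q|$, and then use uniqueness of the short geodesic $q$ to force equality in the minimal case. Your explicit treatment of the case $a=b$ is a harmless addition; the paper's argument absorbs it into the same inequality chain.
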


\begin{proof}
   Since $\Gamma$ is an induced subgraph of $\Gamma^e$ by Convention \ref{conv:Gamma is embedded to Gammae}, we have $\girth(\Gamma^e) \le \girth(\Gamma)$. To show $\girth(\Gamma) \le \girth(\Gamma^e)$, let $p$ be a circuit in $\Gamma^e$. By Lemma \ref{lem:Greenlinger}, there exist $a,b \in V(p)$ and $g \in\Gamma\G$ such that $p_{[a,b]} \subset g.\Gamma$, $d_{\Gamma^e}(a,b) \le 4$, and letting $q$ be a unique geodesic in $\Gamma^e$ from $a$ to $b$, the loop $p_{[a,b]}q^{-1}$ is a circuit in $g.\Gamma$. Let $p_0$ be the subpath of $p$ from $a$ to $b$ that complements $p_{[a,b]}$ to form $p$ i.e. $p=p_{[a,b]}p_0^{-1}$. Since $p_{[a,b]}q^{-1}$ is a circuit in $g.\Gamma$, we have $|p_{[a,b]}|+|q| \ge \girth(\Gamma)$. We also have $|p_0| \ge |q|$ since $q$ is a geodesic in $\Gamma^e$ from $a$ to $b$. Hence, $|p|=|p_{[a,b]}|+|p_0| \ge |p_{[a,b]}|+|q| \ge \girth(\Gamma)$. This implies $\girth(\Gamma^e) \ge \girth(\Gamma)$ since $p$ is arbitrary. In the above argument, if $p$ satisfies $|p|=\girth(\Gamma)$ in addition, then we have $\girth(\Gamma)=|p| \ge |p_{[a,b]}|+|p_0| \ge |p_{[a,b]}|+|q| \ge \girth(\Gamma)$. This implies $|p_0|=|q|$, hence, $p_0$ is geodesic. Hence, we have $p_0 = q$ since $q$ is a unique geodesic in $\Gamma^e$ from $a$ to $b$. This implies $p=p_{[a,b]}p_0^{-1}=p_{[a,b]}q^{-1} \subset g.\Gamma$.
\end{proof}

We are now ready to prove Proposition \ref{prop:extension graph is fine}, which corresponds to Theorem \ref{thm:intro extension graph is tight} (3). In Proposition \ref{prop:extension graph is fine} below, the assumption that $\{G_v\}_{v\in V(\Gamma)}$ is a collection of finite groups is essential. Indeed, if a vertex group $G_v$ is infinite and there exists a circuit of length $n$ in $\Gamma$ containing an edge $e\in E(\Gamma)$ with $e_- = v$, then there exist infinitely many circuits of length $n$ in $\Gamma^e$ containing $e$.

\begin{prop}\label{prop:extension graph is fine}
    If $\Gamma$ is fine and $\G=\{G_v\}_{v\in V(\Gamma)}$ is a collection of non-trivial finite groups, then $\Gamma^e$ is fine.
\end{prop}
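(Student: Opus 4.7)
The plan is strong induction on $n$, using Lemma \ref{lem:Greenlinger} and Corollary \ref{cor:girth of exetnsion graph is the same} as the key tools. The base case $n < \girth(\Gamma)$ is vacuous. For $n = \girth(\Gamma)$, every circuit through $e$ of this length sits inside a single translate $g.\Gamma$ by Corollary \ref{cor:girth of exetnsion graph is the same}; only finitely many such translates contain $e$ because $\stab_{\Gamma\G}(e_-) \cap \stab_{\Gamma\G}(e_+)$ is finite under the finite-vertex-groups hypothesis (Remark \ref{rem:intersection of stabilizers}), and fineness of $\Gamma$ bounds the number of such circuits in each translate.

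For the inductive step I would apply Lemma \ref{lem:Greenlinger} to $p \in \C_{\Gamma^e}(e, n)$ to produce $a, b \in V(p)$, $g \in \Gamma\G$, and the unique geodesic $q$ from $a$ to $b$ with $p_{[a,b]} \subset g.\Gamma$, $d_{\Gamma^e}(a,b) \le 4$, and $p_{[a,b]}q^{-1}$ a circuit in $g.\Gamma$. Since $|p_{[a,b]}q^{-1}| \ge \girth(\Gamma) > 20$ and $|q| \le 4$ force $|p_{[a,b]}| \ge 17$, the complementary sub-arc $p'$ of $p$ from $b$ to $a$ has $|p'| \le n - 17$, so the closed walk $L := q \cdot p'$ has length strictly less than $n$. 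I would then decompose $L$ into edge-disjoint simple subcycles $C_1, \ldots, C_k$ via the standard first-repeated-vertex extraction algorithm; since $q$ is geodesic (hence simple) and $p'$ is simple as a sub-arc of the circuit $p$, each extracted $C_i$ must contain at least one edge of $q$, which forces $k \le |E(q)| \le 4$ and each $|C_i| < n$.

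The argument would then split into Case A ($e \in E(p_{[a,b]})$) and Case B ($e \in E(p')$). In Case A, the translate $g$ lies among the finitely many translates containing $e$; fineness of $\Gamma$ bounds the number of circuits $p_{[a,b]}q^{-1}$ in $g.\Gamma$ through $e$ of length at most $n+4$, which determines $(a,b,g,q,p_{[a,b]})$; each $C_i$ is then a simple cycle of length less than $n$ through one of the at most four edges of $q$, hence in a finite set by the inductive hypothesis; and $p'$ is recovered uniquely as the simple $b$-to-$a$ path whose edge set is $\bigcup_i E(C_i) \setminus E(q)$. In Case B, the subcycle $C_{i_0}$ containing $e$ has length less than $n$ and passes through $e$, so is bounded by the inductive hypothesis; with $C_{i_0}$ fixed, I would recover the Greenlinger data by identifying the subpath of $C_{i_0}$ lying in some $g.\Gamma$ (pinning down $g$ and a portion of $q$ via the rigidity of translate intersections in Remark \ref{rem:intersection of stabilizers}, combined with finite vertex groups to handle short overlap cases), extending to the full geodesic $q$ through finitely many short extensions, bounding $p_{[a,b]}$ by fineness of $\Gamma$, and finally bounding each remaining $C_j$ by the inductive hypothesis applied to an edge of the now-fixed $q$.

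The main obstacle is precisely the bookkeeping in Case B: the subcycles $C_j$ and the unknown Greenlinger data $(a, b, g, q)$ must be enumerated in a coordinated way that avoids circularity, because a priori the vertices $a, b$ need not lie in $V(C_{i_0})$ and the edges of $q$ used to anchor the IH on the other $C_j$ depend on $q$ itself. This is resolved by starting the enumeration from the fixed $C_{i_0}$ and using the rigidity supplied by Remark \ref{rem:intersection of stabilizers}—namely that a subpath of diameter at least $3$ determines its translate uniquely, while short overlaps give only finitely many translates because of finite vertex groups—to pin down $g$ and $q$ before recovering the remaining cycles and finally reassembling $p$.
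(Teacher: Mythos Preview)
Your overall strategy matches the paper's: strong induction on $n$, Greenlinger (Lemma~\ref{lem:Greenlinger}) to peel off a long arc $p_{[a,b]}$ in a single translate, and decomposition of the remaining short loop $q\cdot p'$ into at most four small circuits. Your first-repeated-vertex extraction is essentially the paper's subdivision of $p_0$ by the points $x_0,\ldots,x_k \in V(p_0)\cap V(q)$, though you should also allow for degenerate backtracks when $p'$ and $q$ share an edge; this is the paper's case~(A1), which feeds into its sub-case~(B2) and is handled exactly like your Case~A once you observe that such a shared edge lies in~$q$.

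The genuine gap is in Case~B, at the step ``extending to the full geodesic $q$ through finitely many short extensions.'' Fineness of $\Gamma$ does not imply local finiteness---an infinite-valence tree is fine---so given only a known edge $e_1\in q_{i_0}$ and the translate $g$, there may be infinitely many geodesics of length $\le 4$ in $g.\Gamma$ through $e_1$, and you cannot enumerate the candidates for~$q$ this way. The paper never tries to recover $(a,b,q)$. Instead it observes that any edge $e_1\in E(q_{i_0})$ lies in the finite set $L(e,N-12)$ (by the inductive hypothesis), and that the \emph{large} circuit $p_{[a,b]}q^{-1}$ lies in one of the finitely many translates containing $e_1$ and has length $\le N$; hence all of $E(p_{[a,b]})\cup E(q)$ lies in $M(e_1,N)$, a finite set by fineness of $\Gamma$. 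Taking the union over $e_1\in L(e,N-12)$ recovers every edge of $q$ as a member of a fixed finite set, after which the remaining small circuits are bounded by the inductive hypothesis applied to those edges. The paper thus bounds the edge set $L(e,N)=\bigcup\{E(\gamma):\gamma\in\C_{\Gamma^e}(e,N)\}$ rather than enumerating the circuits themselves, and this coarser bookkeeping is exactly what makes Case~B go through without any recovery or extension argument.
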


\begin{proof}
    For $e\in E(\Gamma^e)$ and $n \in \NN$, define $L(e,n), M(e,n)\subset E(\Gamma^e)$ and $P(e)\subset \Gamma\G$ by
    \begin{align*}
        L(e,n)&=\bigcup\{ E(\gamma) \mid \gamma \in \C_{\Gamma^e}(e,n) \}, \\
        P(e)&=\{g \in \Gamma\G \mid e \subset g.\Gamma\}, \\
        M(e,n)
        &=\bigcup\{E(\gamma) \mid \text{$\gamma \in \C_{\Gamma^e}(e,n)$ s.t. $\exists\,g\in P(e)$ with $\gamma \subset g.\Gamma$} \}.
    \end{align*}
    Let $e' \in E(\Gamma)$ satisfy $e \in \Gamma\G.e'$. We have $|P(e)| \le |\stab_{\Gamma\G}(e'_-)\cap\stab_{\Gamma\G}(e'_+)| = |G_{e'_-}\times G_{e'_+}|<\infty$ by Remark \ref{rem:intersection of stabilizers} (1), because vertex groups are finite. This implies $|M(e,n)| < \infty$ for any $e \in E(\Gamma^e)$ and $n\in\NN$ since $\Gamma$ is fine. In the following, we will show that for any $n\in\NN$,
    \begin{align}\tag{$\ast$}\label{eq:fine graph}
        \forall\, e \in E(\Gamma^e),\, |\C_{\Gamma^e}(e,n)| < \infty
    \end{align}
    holds by induction on $n$. By Corollary \ref{cor:girth of exetnsion graph is the same}, the statement \eqref{eq:fine graph} is true for any $n \in \NN$ with $n \le \girth(\Gamma)$. Given $N\ge \girth(\Gamma)$, assume that the statement \eqref{eq:fine graph} is true for any $n$ with $n < N$. Let $e\in E(\Gamma^e)$ and $p \in \C_{\Gamma^e}(e,N)$. By Lemma \ref{lem:Greenlinger}, there exist $a,b \in V(p)$ and $g \in\Gamma\G$ such that $p_{[a,b]} \subset g.\Gamma$, $d_{\Gamma^e}(a,b) \le 4$, and letting $q$ be a unique geodesic in $\Gamma^e$ from $a$ to $b$, the loop $p_{[a,b]}q^{-1}$ is a circuit in $g.\Gamma$. Let $p_0$ be the subpath of $p$ from $a$ to $b$ that complements $p_{[a,b]}$ to form $p$ i.e. $p=p_{[a,b]}p_0^{-1}$. Note $|p_{[a,b]}| + |q^{-1}| \le |p_{[a,b]}| + |p_0| \le N$. 
    
    If $|p_0|=0$ i.e. $a=b$, then we have $p \subset g.\Gamma$, hence $E(p) \subset M(e,N)$. 
    
    If $|p_0|>0$, then $p_0$ has no self-intersection. Hence, there exists a subsequence $(a \! =)\,x_0, \cdots, x_k \,(= \! b)$ of $V(p_0)$ with $\forall i \ge 1, x_{i-1} \neq x_i$ such that $\{x_0,\cdots,x_k\}\subset q$ and for every $i \in \{1,\cdots,k\}$, letting $q_i$ be the subpath of $q$ or $q^{-1}$ from $x_{i-1}$ to $x_i$, either (A1) or (A2) holds, (A1) $p_{0[x_{i-1},x_i]}=q_i$, (A2) the loop $p_{[x_{i-1},x_i]}q_i^{-1}$ is a circuit. In case (A2), by $d_{\Gamma^e}(a,b) \le 4$ and $|p_{[a,b]}q^{-1}|>20$, we have $|p_{[x_{i-1},x_i]}q_i^{-1}| < |p|-16+4\le N-12$. Note $k\le 4$ by $|q| \le 4$ since $p_0$ has no self-intersection. We'll discuss three cases (B1)-(B3), (B1) when $e \in E(p_{[a,b]})$, (B2) when $e \in p_{0[x_{i_0-1},x_{i_0}]}$ for some $i_0\in\{1,\cdots,k\}$ and case (A1) holds for $p_{0[x_{i_0-1},x_{i_0}]}$, (B3) when $e \in p_{0[x_{i_0-1},x_{i_0}]}$ for some $i_0\in\{1,\cdots,k\}$ and case (A2) holds for $p_{0[x_{i_0-1},x_{i_0}]}$.

    In case (B1), we have $E(p_{[a,b]}q^{-1}) \subset M(e,N)$ by $|p_{[a,b]}q^{-1}| \le |p_{[a,b]}| + |p_0| \le N$. For each $i \in \{1,\cdots,k\}$, in case (A1), we have $E(p_{0[x_{i-1},x_i]}) \subset M(e,N)$ and in case (A2), we have $p_{0[x_{i-1},x_i]}q_i^{-1} \in \bigcup_{e_1 \in M(e,N)} \C_{\Gamma^e}(e_1,N-12)$. Thus,
    \[
    E(p) \subset M(e,N)\cup \bigcup\nolimits_{e_1 \in M(e,N)}L(e_1,N-12).
    \]
    
    In case (B2), we have $E(p_{[a,b]}q^{-1}) \subset M(e,N)$. Hence, in the same way as case (A1), we can see $E(p) \subset M(e,N)\cup \bigcup_{e_1 \in M(e,N)}L(e_1,N-12)$. 
    
    In case (B3), we have $p_{[x_{i_0-1},x_{i_0}]}q_{i_0}^{-1} \in \C_{\Gamma^e}(e,N-12)$. This implies $E(p_{[a,b]}q^{-1}) \subset \bigcup_{e_1 \in L(e,N-12)}M(e_1,N)$. Hence, for each $i \in \{1,\cdots,k\}$, we have
    \begin{align*}
        &\text{$E(p_{0[x_{i-1},x_i]}) \subset \bigcup\nolimits_{e_1 \in L(e,N-12)}M(e_1,N)$ in case (A1)     and} \\
        &\text{$p_{0[x_{i-1},x_i]}q_i^{-1} \in \bigcup \big\{ \C_{\Gamma^e}(e_2,N-12) \,\big|\, e_2 \in \bigcup\nolimits_{e_1 \in L(e,N-12)}M(e_1,N)\big\}$ in case (A2).}
    \end{align*}
    Hence, $E(p) \subset \bigcup_{e_1 \in L(e,N-12)}M(e_1,N) \cup \bigcup \{L(e_2,N-12) \mid e_2 \in \bigcup_{e_1 \in L(e,N-12)}M(e_1,N)\}$.

    Thus, in any case, we have $E(p) \subset M(e,N)\cup \bigcup_{e_1 \in M(e,N)} L(e_1,N-12) \cup \bigcup_{e_1 \in L(e,N-12)}M(e_1,N) \cup \bigcup \{L(e_2,N-12) \mid e_2 \in \bigcup_{e_1 \in L(e,N-12)}M(e_1,N)\}$. This implies $|L(e,N)|<\infty$ since for any $e' \in E(\Gamma^e)$, we have $|M(e',N)|<\infty$ and $|\C_{\Gamma^e}(e',N-12)|<\infty$, where the latter follows from the assumption of induction. Thus, the statement \eqref{eq:fine graph} is true for $n=N$.
\end{proof}

Proposition \ref{prop:extension graph becomes uniformly fine} below is a variant of Proposition \ref{prop:extension graph is fine}.

\begin{prop}\label{prop:extension graph becomes uniformly fine}
    If $\Gamma$ is uniformly fine and $\G=\{G_v\}_{v\in V(\Gamma)}$ is a collection of non-trivial finite groups with $\sup_{v \in V(\Gamma)}|G_v| < \infty$, then $\Gamma^e$ is uniformly fine.
\end{prop}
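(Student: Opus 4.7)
The plan is to re-run the induction from the proof of Proposition \ref{prop:extension graph is fine}, but this time tracking the bounds quantitatively and checking that every estimate can be made uniform in the edge $e \in E(\Gamma^e)$. First, since $\Gamma$ is uniformly fine, fix a function $f_\Gamma \colon \NN \to \NN$ such that $|\C_\Gamma(e', n)| \le f_\Gamma(n)$ for every $e' \in E(\Gamma)$ and every $n \in \NN$. Let $K = \sup_{v \in V(\Gamma)} |G_v|^2 < \infty$. As in the previous proof, for $e \in E(\Gamma^e)$ take $e' \in E(\Gamma)$ with $e \in \Gamma\G.e'$; by Remark \ref{rem:intersection of stabilizers} (1) we have $|P(e)| \le |G_{e'_-} \times G_{e'_+}| \le K$, with a bound that does not depend on $e$. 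Consequently, defining $L(e,n)$, $M(e,n)$ as in the proof of Proposition \ref{prop:extension graph is fine}, the uniform bound $|M(e,n)| \le K \cdot n \cdot f_\Gamma(n)$ holds, since every circuit contributing to $M(e,n)$ lies in some $g.\Gamma$ with $g \in P(e)$, contains $e$, and has length at most $n$ (and each such circuit contributes at most $n$ edges).

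Next, I would prove by induction on $n$ the uniform statement
\begin{equation*}
    \exists\, F(n) \in \NN,\, \forall\, e \in E(\Gamma^e),\, |\C_{\Gamma^e}(e, n)| \le F(n).
\end{equation*}
The base case $n \le \girth(\Gamma)$ follows from Corollary \ref{cor:girth of exetnsion graph is the same}, since any circuit of length $\le \girth(\Gamma)$ through $e$ is contained in some $g.\Gamma$ with $g \in P(e)$, giving the uniform bound $|\C_{\Gamma^e}(e,n)| \le K \cdot f_\Gamma(n)$. For the inductive step, given $N > \girth(\Gamma)$ and assuming $F(n)$ exists for all $n < N$, I would revisit the case analysis (B1)-(B3) from the proof of Proposition \ref{prop:extension graph is fine}. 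Each circuit $p \in \C_{\Gamma^e}(e,N)$ was shown to satisfy
\begin{equation*}
    E(p) \subset M(e,N) \cup \!\!\!\bigcup_{e_1 \in M(e,N)}\!\!\! L(e_1, N-12) \cup \!\!\!\bigcup_{e_1 \in L(e,N-12)}\!\!\! M(e_1, N) \cup \!\!\!\bigcup\bigl\{ L(e_2,N-12) \bigm| e_2 \in \!\!\!\bigcup_{e_1 \in L(e,N-12)}\!\!\! M(e_1,N) \bigr\}.
\end{equation*}
Each of the ingredients $|M(\cdot, N)|$ is bounded by $K \cdot N \cdot f_\Gamma(N)$, and $|L(\cdot, N-12)| \le N \cdot F(N-12)$ by the inductive hypothesis. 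Combining these yields an explicit, edge-independent bound on $|E(p)|$ over all $p \in \C_{\Gamma^e}(e,N)$, hence on the number of edges belonging to some such circuit; to convert this into a bound on $|\C_{\Gamma^e}(e,N)|$ itself I would note that any circuit of length $N$ through $e$ is determined by its edge set (a subset of the bounded edge set above, of size $N$), so the count is bounded by a function of $N$ alone, which I would take as $F(N)$.

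The main obstacle I expect is bookkeeping rather than new ideas: the passage from ``the edges of all circuits lie in a finite set'' to ``there are only finitely many circuits'' uses $|\C_{\Gamma^e}(e,N)| \le \binom{|E(p) \subset \cdots|}{N}$ (or a similar combinatorial estimate), and one must be a little careful to state this as an absolute bound depending only on $N$. Once that is set up, the explicit recursion $F(N) \le G(N, F(N-12), K, f_\Gamma)$ for an elementary function $G$ finishes the induction and establishes uniform fineness of $\Gamma^e$.
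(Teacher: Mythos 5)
Your proposal is correct and follows essentially the same route as the paper, which likewise re-runs the induction of Proposition \ref{prop:extension graph is fine} after noting the uniform bounds $\sup_e|P(e)| \le (\sup_v|G_v|)^2$ and $\sup_e|M(e,n)| < \infty$; your version simply makes the quantitative bookkeeping (the recursion for $F(N)$ and the passage from a bounded edge set to a bounded circuit count) explicit. The only minor caveat, which does not affect correctness, is that a circuit is determined by its edge set only up to basepoint and orientation, contributing an extra factor depending only on $N$.
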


\begin{proof}
    Since $\Gamma$ is uniformly fine, there exists $f \colon \NN \to \NN$ such that for any $n \in \NN$ and $e \in E(\Gamma)$, we have $|\C_\Gamma(e,n)| \le f(n)$. Let $P(e)$ and $M(e,n)$ be as in the proof of Proposition \ref{prop:extension graph is fine}, then we have $\sup_{e \in E(\Gamma^e)}|P(e)| \le (\sup_{v \in V(\Gamma)}|G_v|)^2$ and $\sup_{(e,n) \in E(\Gamma^e)\times \NN}|M(e,n)| \le f(n)\cdot (\sup_{v \in V(\Gamma)}|G_v|)^2$. Hence, we can show that $\Gamma^e$ is uniformly fine by induction on the length of circuits in the same way as the proof of Proposition \ref{prop:extension graph is fine}.
\end{proof}

In Lemma \ref{lem:doubling is not true} below, we verify that \cite[Lemma 22]{KK13}, which is a key lemma in \cite{KK13}, is not true anymore in general. The \emph{double of} a graph $X$ \emph{along} $L \subset V(X)$ is the graph obtained by taking two copies of $X$ and gluing them along the
 copies of the induced subgraph on $L$.

\begin{lem}\label{lem:doubling is not true}
    Suppose that $\Gamma$ is a circuit of length more than 20 and there exists $v \in V(\Gamma)$ such that $|G_v| \in 2\NN+1$. Then, there exists a finite induced subgraph $\Lambda$ of $\Gamma^e$ such that for any $\ell > 0$, any sequence $v_1,\cdots,v_\ell$ in $V(\Gamma^e)$, and any sequence of finite induced subgraphs $\Gamma=\Gamma_0 \subset \Gamma_1 \subset \cdots \subset \Gamma_\ell$ of $\Gamma^e$, where $\Gamma_i$ is obtained by taking the double of $\Gamma_{i-1}$ along $\st_{\Gamma_{i-1}} (v_i)$ for each $i = 1,\cdots,\ell$, we have $\Lambda \not\subset \Gamma_\ell$. 
\end{lem}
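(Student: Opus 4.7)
The plan is to exhibit $\Lambda$ as a finite ``book'' of $m = |G_v|$ cycle pages sharing a common $2$-path and to obstruct $\Lambda \subset \Gamma_\ell$ by a power-of-two count of $\Gamma$-translates through $\st_\Gamma(v)$ inside $\Gamma_\ell$. Label the vertices of the cycle $\Gamma$ as $x_0 = v, x_1, \ldots, x_{n-1}$ in cyclic order, where $n = |V(\Gamma)| > 20$. Define
\[
\Lambda := \text{the induced subgraph of } \Gamma^e \text{ on } V\Bigl(\bigcup_{h \in G_v} h.\Gamma\Bigr).
\]
By Corollary~\ref{cor:Stab(v)} each $h \in G_v$ pointwise fixes $\st_\Gamma(v) = \{v, x_1, x_{n-1}\}$, so each $h.\Gamma$ is a cycle of length $n$ through the path $x_1 - v - x_{n-1}$. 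The support analysis underlying Lemma~\ref{lem:conjugation of vertex group} shows that no non-trivial element of $G_v$ fixes any other vertex of $\Gamma$, which forces $V(h.\Gamma) \cap V(h'.\Gamma) = \st_\Gamma(v)$ for distinct $h, h' \in G_v$, and an analogous support computation using Lemma~\ref{lem:edges of Gammae come from Gamma} rules out any $\Gamma^e$-edges between interior vertices of distinct pages (such an edge would force a syllable cancellation with $v$ in the support, impossible for interior vertices of the cycle). Hence $\Lambda$ is the $m$-page book described.

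For any induced subgraph $X \subset \Gamma^e$ containing $\Gamma$, write $H(X) := \{h \in G_v : h.\Gamma \subset X\}$, so $H(\Lambda) = G_v$ and $H(\Gamma_0) = \{1\}$. The main claim is that $|H(\Gamma_i)|$ is a power of $2$ for every $i$, proved by induction on $i$. Writing $\Gamma_i = \Gamma_{i-1} \cup \Gamma'_{i-1}$ where $\Gamma'_{i-1}$ is the mirror copy meeting $\Gamma_{i-1}$ in the induced subgraph on $S_i := \st_{\Gamma_{i-1}}(v_i)$, a rigidity argument using Corollary~\ref{cor:vertex orbits are disjoint} and the intersection formulas of Remark~\ref{rem:intersection of stabilizers} should identify $\Gamma'_{i-1}$ with $g.\Gamma_{i-1}$ for some $g \in \Gamma\G$ fixing $S_i$ pointwise. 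Any new page $h.\Gamma \subset \Gamma_i$ with $h \in G_v \setminus H(\Gamma_{i-1})$ must then sit inside $g.\Gamma_{i-1}$ as $g h'.\Gamma$ for some $h' \in H(\Gamma_{i-1})$; since $H(\Gamma_{i-1}) \subset G_v$, the requirement $gh' \in G_v$ forces $g \in G_v$. When $g \notin G_v$ we get $H(\Gamma_i) = H(\Gamma_{i-1})$, and when $g \in G_v$ the constraint that $\Gamma'_{i-1} \cap \Gamma_{i-1}$ equals the induced subgraph on $S_i$ (and hence contains no full page $h.\Gamma$) forces $g H(\Gamma_{i-1}) \cap H(\Gamma_{i-1}) = \emptyset$, giving $|H(\Gamma_i)| = 2 |H(\Gamma_{i-1})|$. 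Either way the count remains a power of $2$.

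Since $m$ is odd and at least $3$, $m$ is not a power of $2$, and combining with $|H(\Gamma_\ell)| \le |G_v| = m$ yields $|H(\Gamma_\ell)| < m$, so some $h_0 \in G_v$ satisfies $h_0.\Gamma \subset \Lambda$ but $h_0.\Gamma \not\subset \Gamma_\ell$, whence $\Lambda \not\subset \Gamma_\ell$. The main obstacle I anticipate is the rigidity step: showing that each mirror copy $\Gamma'_{i-1}$ inside $\Gamma^e$ is realized by a single $\Gamma\G$-translate of $\Gamma_{i-1}$ rather than an exotic graph-isomorphic embedding, and in parallel ruling out ``mixed'' cycles $h.\Gamma$ that sit in $\Gamma_i$ while being contained in neither half. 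Both should reduce to propagating the pairwise-stabilizer computations from Section~\ref{subsec:Definition and basic properties of the extension graph of graph products of groups} across the induction, using that $\Gamma\G$-translates of $\Gamma$ passing through three nearby anchor vertices are already determined by those anchors via the intersections in Remark~\ref{rem:intersection of stabilizers}.
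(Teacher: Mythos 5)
Your choice of $\Lambda$ coincides with the paper's ($\Lambda=\bigcup_{h\in G_v}h.\Gamma$), but the obstruction you run on it has a genuine gap, and it is exactly the point you flag at the end: the whole induction on $|H(\Gamma_i)|$ rests on the claim that the mirror half of each double, as it sits inside $\Gamma^e$, is a single translate $g.\Gamma_{i-1}$ with $g$ fixing the gluing star pointwise. The hypothesis of the lemma only provides that $\Gamma_i$ is a finite induced subgraph of $\Gamma^e$ which is \emph{abstractly} the double of $\Gamma_{i-1}$ along $\st_{\Gamma_{i-1}}(v_i)$; the mirror copy is therefore just some graph-isomorphically embedded copy of $\Gamma_{i-1}$, and since $\Gamma_{i-1}$ is an increasingly complicated finite union (not a translate of $\Gamma$), none of the tools of Section \ref{subsec:Definition and basic properties of the extension graph of graph products of groups} (Corollary \ref{cor:vertex orbits are disjoint}, Corollary \ref{cor:Stab(v)}, Remark \ref{rem:intersection of stabilizers}) applies to it directly. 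Even granting that rigidity, you would still have to exclude ``mixed'' pages $h.\Gamma\subset\Gamma_i$ contained in neither half, and to check that no further pages appear, before you can conclude the exact dichotomy $|H(\Gamma_i)|\in\{|H(\Gamma_{i-1})|,\,2|H(\Gamma_{i-1})|\}$. These are the crux of the argument, not routine verifications, so as written the power-of-two invariant is unproven.

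The paper's proof avoids rigidity altogether by counting a different kind of object: circuits of length exactly $\girth(\Gamma)$ containing the length-$2$ path $\st_\Gamma(v)$. By Corollary \ref{cor:girth of exetnsion graph is the same} every such circuit in $\Gamma^e$ lies in a single translate $g.\Gamma$, and containing $\st_\Gamma(v)$ forces $g\in G_v$ by Corollary \ref{cor:Stab(v)}; hence this count equals $|G_v|$ both in $\Gamma^e$ and in $\Lambda$, an odd number $\ge 3$. On the other hand, an elementary induction on the abstract doubles (using only the involution swapping the two halves, with no information about how the mirror sits in $\Gamma^e$) shows that the number of girth-length circuits through any backtracking-free $2$-path of $\Gamma_i$ lies in $\{0,1\}\cup 2\NN$, and $\Lambda\subset\Gamma_\ell$ would squeeze that number to be exactly $|G_v|$ --- a contradiction. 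This is the same parity idea as yours (odd versus $\{0,1\}\cup$ even, rather than odd versus powers of $2$), but applied to shortest circuits instead of full translates of $\Gamma$: that substitution is precisely what makes the count computable in $\Gamma^e$ and, at the same time, controllable under purely abstract doubling. To salvage your version you would need an embedding-rigidity theorem for the doubled subgraphs, which is a substantially harder statement than the lemma itself; I recommend reformulating your invariant in terms of girth-length circuits through $\st_\Gamma(v)$.
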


\begin{proof}
    Note that $\st_{\Gamma}(v)$ is a path of length 2 without backtracking (i.e. $\st_{\Gamma}(v)=\{u,v,w\}$ and $\{(u,v), (v,w)\}\subset E(\Gamma)$) since $\Gamma$ is a circuit. For an induced subgraph $X$ of $\Gamma^e$ and a path $q$ of length 2 in $X$ without backtracking, we define $C(q,X) \subset 2^{V(\Gamma^e)}$ by
    \begin{align*}
        C(q,X)
        =
        \{p \in 2^{V(\Gamma^e)} \mid \text{$p$ is a circuit in $X$ such that $|p|=\girth(\Gamma)$ and $q \subset V(p)$} \}.
    \end{align*}
    (For example, $C(\st_\Gamma(v),\Gamma)=\{\Gamma\}$.) Let $p \in C(\st_\Gamma(v),\Gamma^e)$. By $|p|=\girth(\Gamma)$ and Corollary \ref{cor:girth of exetnsion graph is the same}, there exists $g \in \Gamma\G$ such that $p \subset g.\Gamma$. This implies $\st_\Gamma(v) \subset p \subset g.\Gamma$. Hence, $g \in G_v$ by Corollary \ref{cor:Stab(v)}. We also have $p=g.\Gamma$ since $g.\Gamma$ is the only circuit in $g.\Gamma$. Thus, we have $C(\st_\Gamma(v),\Gamma^e)=\{g.\Gamma \mid g\in G_v\}$. Define the induced subgraph $\Lambda$ of $\Gamma^e$ by $\Lambda = \bigcup_{g \in G_v} g.\Gamma$. $\Lambda$ is finite by $|\Gamma|<\infty$ and $|G_v|<\infty$. We also have $C(\st_\Gamma(v),\Lambda)=\{g.\Gamma \mid g\in G_v\}=C(\st_\Gamma(v),\Gamma^e)$. This implies $|C(\st_\Gamma(v),\Lambda)|=|C(\st_\Gamma(v),\Gamma^e)|=|G_v| \in 2\NN+1$. Suppose for contradiction that there exist $\ell > 0$, a sequence $v_1,\cdots,v_\ell$ in $\Gamma^e$, and a sequence of finite induced subgraphs $\Gamma=\Gamma_0 \subset \Gamma_1 \subset \cdots \subset \Gamma_\ell$ of $\Gamma^e$, where $\Gamma_i$ is obtained by taking the double of $\Gamma_{i-1}$ along $\st_{\Gamma_{i-1}} (v_i)$ for each $i = 1,\cdots,\ell$, such that $\Lambda \subset \Gamma_\ell$. By induction on $i$, it's not difficult to see that for any $i \in \{0,\cdots,\ell\}$ and any path $q$ of length 2 without backtracking in $\Gamma_i$, we have $|C(q,\Gamma_i)| \in \{0,1\}\cup 2\NN$. In particular, $|C(\st_\Gamma(v),\Gamma_\ell)|\in \{0,1\}\cup 2\NN$. On the other hand, by $\Lambda \subset \Gamma_\ell \subset \Gamma^e$, we have $|G_v|=|C(\st_\Gamma(v),\Lambda)| \le |C(\st_\Gamma(v),\Gamma_\ell)| \le |C(\st_\Gamma(v),\Gamma^e)|=|G_v|$. This contradicts $|G_v| \in 2\NN+1$.
\end{proof}

\section{Relative hyperbolicity of graph wreath product}
\label{sec:Relative hyperbolicity of semi-direct product}

In this section, we present one application of the extension graph of graph product of groups as discussed in Section \ref{sec:Introduction}. More applications to analytic properties of graph product will be presented in the forthcoming paper. The goal of this section is to prove Theorem \ref{thm:intro semidirect product is relatively hyperbolic} and Corollary \ref{cor:intro semidirect product becomes hyperbolic}, which corresponds to Theorem \ref{thm:semidirect product is relatively hyperbolic} and Corollary \ref{cor:semidirect product becomes hyperbolic} respectively.
\begin{defn}
    Given a group $G$ acting on a simplicial graph $\Gamma$ and another group $H$, we can assign vertex groups $\G = \{G_v\}_{v \in V(\Gamma)}$ by setting $G_v = H$ for every $v \in V(\Gamma)$. For each $g \in G$, the identity map $G_v=H \ni h \mapsto h \in H=G_{gv}$ defined on each $v \in V(\Gamma)$ extends to the group automorphism $\alpha_g \colon \Gamma\G \to \Gamma\G$. The map
\begin{align}\label{eq:Aut(GammaG)}
    \alpha \colon G \ni g \mapsto \alpha_g \in \mathrm{Aut}(\Gamma\G)
\end{align}
is a group homomorphism. Hence, $\alpha$ defines the semi-direct product $\Gamma\G \rtimes G$, which is called \emph{graph wreath product}.
\end{defn}
This construction interpolates between wreath product and free product as discussed in Section \ref{sec:Introduction}. Note that in Theorem \ref{thm:semidirect product is relatively hyperbolic} and Corollary \ref{cor:semidirect product becomes hyperbolic}, the group action on a graph can invert edges.
\begin{thm}\label{thm:semidirect product is relatively hyperbolic}
    Suppose that $\Gamma$ is a fine hyperbolic graph with $\girth(\Gamma)>20$ and a finitely generated group $G$ acts on $\Gamma$ satisfying the following conditions.
    \begin{itemize}
        \item[(1)]
        $E(\Gamma) / G$ is finite and for any $e \in E(\Gamma)$, $\stab_G(e) \,(=\stab_G(e_-) \cap \stab_G(e_+))$ is finite.
        \item[(2)]
        For any $v \in V(\Gamma)$, $\stab_G(v)$ is finitely generated.
    \end{itemize}
    Let $H$ be a finite group and define $\G=\{G_v\}_{v \in V(\Gamma)}$ by $G_v=H$ for any $v \in V(\Gamma)$. Then, there exists a finite set $F \subset V(\Gamma)$ such that $\Gamma\G \rtimes G$ is hyperbolic relative to the collection $\{\, \la \stab_G(v), G_w \mid w \in \st_\Gamma(v) \ra \,\}_{v \in F}$.
\end{thm}

\begin{proof}
    By Proposition \ref{prop:extension graph is hyperbolic} and Proposition \ref{prop:extension graph is fine}, the extension graph $\Gamma^e$ is a fine hyperbolic graph. For any $g \in G$, $h \in \Gamma\G$, and $v \in V(\Gamma)$, we have $\alpha_g(h G_v h^{-1}) = \alpha_g(h) \cdot G_{gv} \cdot \alpha_g(h)^{-1} \in V(\Gamma^e)$, where $\alpha_g$ is as in \eqref{eq:Aut(GammaG)}. Hence, $\alpha_g$ induces the graph automorphism $\widetilde{\alpha}_g \colon \Gamma^e \to \Gamma^e$ such that $\widetilde{\alpha}_g (h.v) = \alpha_g(h).gv$ for any $h \in \Gamma\G$ and $v \in V(\Gamma)$. This defines the action $\widetilde{\alpha} \colon G \act \Gamma^e$. For brevity, we'll denote $\widetilde{\alpha}_g (x)$ by $gx$ for $g \in G$ and $x \in V(\Gamma^e)$. Since we can see $\forall\, g \in G, \forall\, h \in \Gamma\G, \forall\, x \in V(\Gamma^e),\, g(h.x) = \alpha_g(h).gx$, the actions of $G$ and $\Gamma\G$ on $\Gamma^e$ extend to the action $\Gamma\G \rtimes G \act \Gamma^e$. 
    
    In the following, we'll show that the action $\Gamma\G \rtimes G \act \Gamma^e$ satisfies the conditions in Theorem \ref{thm:relatively hyperbolic groups}. By $|E(\Gamma) / G| < \infty$, there exists a finite set $A \subset E(\Gamma)$ such that $E(\Gamma) = \bigcup_{g \in G}gA$. By this and $\Gamma^e = \bigcup_{g \in \Gamma\G} g.\Gamma$, we have $E(\Gamma^e) = \bigcup_{g \in \Gamma\G \rtimes G} gA$. Hence, the edge orbit set $E(\Gamma^e) \,/\,  (\Gamma\G \rtimes G)$ is finite. Let $e \in E(\Gamma)$. For any $fg \in \Gamma\G \rtimes G$, where $f \in\Gamma\G$ and $g \in G$, satisfying $fge=e$, we have $ge=e$ by Corollary \ref{cor:vertex orbits are disjoint}, which also implies $f.e=fge=e$. Hence, we have $\stab_{\Gamma\G \rtimes G}(e) = \la \stab_{\Gamma\G}(e), \stab_G(e) \ra$. Note $\stab_{\Gamma\G}(e) = G_{e_-} \times G_{e_+}$ by Remark \ref{rem:intersection of stabilizers} (1). For any $g \in \stab_G(e)$, we have $gG_{e_-}g^{-1}=G_{ge_-}=G_{e_-}$ in $\Gamma\G \rtimes G$ and this conjugation is the identity map from $G_{e_-}$ to $G_{e_-}$. The same is true for $G_{e_+}$. This implies $\la \stab_{\Gamma\G}(e), \stab_G(e) \ra \cong G_{e_-} \times G_{e_+} \times \stab_G(e)$. Hence, $\stab_{\Gamma\G \rtimes G}(e)$ is finite for any $e \in E(\Gamma)$. By $E(\Gamma^e) = \bigcup_{g \in \Gamma\G \rtimes G} gA$, the edge stabilizer of every edge in $E(\Gamma^e)$ is finite.

    Let $v \in V(\Gamma)$. For any $fg \in \Gamma\G \rtimes G$, where $f \in\Gamma\G$ and $g \in G$, satisfying $fgv=v$, we have $gv=v$ by Corollary \ref{cor:vertex orbits are disjoint}, which also implies $f.v=fgv=v$. Hence, we have $\stab_{\Gamma\G \rtimes G}(v) = \la \stab_{\Gamma\G}(v), \stab_G(v) \ra$. By $|E(\Gamma) / G| < \infty$, there exists a finite set $F_v \subset \lk_\Gamma(v)$ such that $\lk_\Gamma(v) = \bigcup_{g \in \stab_G(v)} gF_v$. Hence, $\la \stab_{\Gamma\G}(v), \stab_G(v) \ra = \la \stab_G(v), G_v, G_w \mid w \in F_v \ra$. Since $\stab_G(v)$ is finitely generated, this and $|H|<\infty$ imply that $\stab_{\Gamma\G \rtimes G}(v)$ is finitely generated for any $v \in V(\Gamma)$. 
    
    By $|E(\Gamma) / G| < \infty$, there exists a finite set $F_0 \subset V(\Gamma)$ such that $V(\Gamma) = \bigcup_{g \in G} gF_0$. Define $F$ by $F=\{v \in F_0 \mid |\stab_{\Gamma\G \rtimes G}(v)|=\infty\}$. Since $V(\Gamma) = \bigcup_{g \in G} gF_0$ implies $V(\Gamma^e) = \bigcup_{g \in \Gamma\G \rtimes G} gF_0$ as above, for any $x\in V(\Gamma^e)$, the group $\stab_{\Gamma\G \rtimes G}(x)$ is either finite or conjugate to $\stab_{\Gamma\G \rtimes G}(v)$ for some $v \in F$. Also, $\Gamma\G \rtimes G$ is finitely generated since we have $V(\Gamma) = \bigcup_{g \in G} gF_0$ and $G$ is finitely generated. By Theorem \ref{thm:relatively hyperbolic groups}, $\Gamma\G \rtimes G$ is hyperbolic relative to the collection $\{\, \la \stab_G(v), G_w \mid w \in \st_\Gamma(v) \ra \,\}_{v \in F} \,(= \{\stab_{\Gamma\G \rtimes G} (v)\}_{v \in F})$.
\end{proof}

Before deducing Corollary \ref{cor:semidirect product becomes hyperbolic} from Theorem \ref{thm:semidirect product is relatively hyperbolic}, we have to prepare Lemma \ref{lem:free product semidirect product finite group is hyperbolic} below.

\begin{lem}\label{lem:free product semidirect product finite group is hyperbolic}
    Suppose that a finite group $G$ acts on a graph $\Gamma$ satisfying $|V(\Gamma)| < \infty$ and $E(\Gamma) = \emptyset$. Let $H$ be a finite group and define $\G=\{G_v\}_{v \in V(\Gamma)}$ by $G_v=H$ for any $v \in V(\Gamma)$. Then, $\Gamma\G \rtimes G$ is hyperbolic.
\end{lem}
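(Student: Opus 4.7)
The plan is to reduce to the classical fact that a free product of finitely many finite groups is hyperbolic, and then exploit that finite extensions of hyperbolic groups are hyperbolic.

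First, I would observe that because $E(\Gamma) = \emptyset$, all the commutation relations in Definition \ref{def:graph product of groups} are vacuous, so the graph product degenerates to a free product:
\[
\Gamma\G \;=\; \ast_{v \in V(\Gamma)} G_v \;=\; \ast_{v \in V(\Gamma)} H.
\]
Since $|V(\Gamma)| < \infty$ and $|H| < \infty$, this is a free product of finitely many finite groups. The degenerate cases $|V(\Gamma)| \le 1$ are immediate: then $\Gamma\G$ is finite, hence $\Gamma\G \rtimes G$ is finite and trivially hyperbolic.

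Second, I would invoke the classical fact that a free product of finitely many finite groups is virtually free, hence word-hyperbolic. This can be seen directly via the Bass--Serre tree of the free product decomposition: $\Gamma\G$ acts on a simplicial tree with finite vertex stabilizers (conjugates of the $H$-factors) and trivial edge stabilizers, and any group acting on a tree with finite vertex stabilizers and finitely many orbits of cells is virtually free by Karrass--Pietrowski--Solitar. In particular, $\Gamma\G$ is hyperbolic.

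Third, I would use that $|G| < \infty$ to conclude that the inclusion $\Gamma\G \hookrightarrow \Gamma\G \rtimes G$ realizes $\Gamma\G$ as a finite-index subgroup of $\Gamma\G \rtimes G$ (of index $|G|$). A group and any of its finite-index subgroups are quasi-isometric via the orbit map for the left multiplication action, and hyperbolicity is a quasi-isometry invariant of finitely generated groups; therefore $\Gamma\G \rtimes G$ is hyperbolic. I do not anticipate any genuine obstacle here: the lemma is the degenerate base case of Corollary \ref{cor:intro semidirect product becomes hyperbolic}, and all the ingredients (free products of finite groups being virtually free, and QI-invariance of hyperbolicity) are entirely classical.
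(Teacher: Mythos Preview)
Your proposal is correct and follows essentially the same route as the paper: both reduce to $\Gamma\G = \ast_{v\in V(\Gamma)} H$ being hyperbolic and then pass to the finite extension via quasi-isometry invariance. The only cosmetic difference is that the paper makes the quasi-isometry explicit by choosing the generating sets $S=\bigcup_{v}G_v$ and $T=S\cup G$ and observing that $gSg^{-1}=S$ for $g\in G$ forces the inclusion $(\Gamma\G,d_S)\hookrightarrow(\Gamma\G\rtimes G,d_T)$ to be isometric, whereas you invoke the general finite-index quasi-isometry principle directly.
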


\begin{proof}
    Note $\Gamma\G = \ast_{v \in V(\Gamma)} G_v$ by $E(\Gamma) = \emptyset$. Define $K$ by $K = \Gamma\G \rtimes G$ for brevity. The free product $\ast_{v \in V(\Gamma)} G_v$ is hyperbolic by $\max\{|V(\Gamma)|,|H|\} < \infty$ and the set $S \subset \Gamma\G$ defined by $S = \bigcup_{v \in V(\Gamma)} G_v$ is a finite generating set of $\Gamma\G$. The set $T \subset K$ defined by $T = S\cup G$ is a finite generating set of $K$ and the embedding $\iota \colon (\Gamma\G,d_S) \to (K,d_T)$ defined by $\forall\, x\in \Gamma\G,\iota(x)=x$ is isometric (i.e. $\forall\, x\in \Gamma\G, \|x\|_T=\|x\|_S$) since we have $gSg^{-1}=S$ for any $g \in G$. By this and $|K/\Gamma\G|<\infty$, the map $\iota$ is quasi-isometric. Hence, $K$ is hyperbolic as well.
\end{proof}

\begin{cor}\label{cor:semidirect product becomes hyperbolic}
    Suppose that $\Gamma$ is a locally finite hyperbolic graph with $\girth(\Gamma)>20$ and a group $G$ acts on $\Gamma$ properly and cocompactly. Let $H$ be a finite group and define $\G=\{G_v\}_{v \in V(\Gamma)}$ by $G_v=H$ for any $v \in V(\Gamma)$. Then, $\Gamma\G \rtimes G$ is hyperbolic.
\end{cor}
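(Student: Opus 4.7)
The plan is to apply Theorem \ref{thm:semidirect product is relatively hyperbolic} and then verify that every peripheral subgroup in the resulting relatively hyperbolic structure is itself hyperbolic; by the standard fact that a finitely generated group which is hyperbolic relative to a finite collection of hyperbolic subgroups is itself hyperbolic (see e.g.\ \cite{Osi06}), this will give the conclusion.

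First I would check the hypotheses of Theorem \ref{thm:semidirect product is relatively hyperbolic}. Since $G$ acts properly and cocompactly on the connected geodesic space $\Gamma$, the \v{S}varc--Milnor lemma gives that $G$ is finitely generated. Properness yields that every edge and vertex stabilizer is finite, so in particular $\stab_G(e)$ is finite for each $e \in E(\Gamma)$ and $\stab_G(v)$ is (trivially) finitely generated for each $v \in V(\Gamma)$; cocompactness gives $|E(\Gamma)/G| < \infty$. Local finiteness immediately implies that $\Gamma$ is fine, since any circuit of bounded length through a fixed edge is contained in a bounded ball, which in a locally finite graph contains only finitely many vertices. Hyperbolicity and $\girth(\Gamma)>20$ are hypotheses. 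Hence Theorem \ref{thm:semidirect product is relatively hyperbolic} supplies a finite set $F \subset V(\Gamma)$ such that $\Gamma\G \rtimes G$ is hyperbolic relative to $\{P_v\}_{v \in F}$, where $P_v = \la \stab_G(v), G_w \mid w \in \st_\Gamma(v) \ra$.

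The main step is to show that each $P_v$ is hyperbolic. Set $K = \stab_G(v)$, which is finite. By Corollary \ref{cor:Stab(v)}, $N := \la G_w \mid w \in \st_\Gamma(v) \ra = \stab_{\Gamma\G}(v)$, and inside $\Gamma\G \rtimes G$ we have $P_v = N \rtimes K$. Because $\girth(\Gamma)>3$, no two vertices of $\lk_\Gamma(v)$ are adjacent in $\Gamma$, so the induced subgraph on $\st_\Gamma(v)$ is a star with centre $v$, and the normal form theorem gives
\[
    N \;\cong\; G_v \times F, \qquad F := \ast_{w \,\in\, \lk_\Gamma(v)} G_w.
\]
From the definition of $\alpha$ in \eqref{eq:Aut(GammaG)}, an element $g \in K$ acts on $G_v = H = G_{gv}$ by the identity of $H$ and permutes the free factors $\{G_w\}_{w \in \lk_\Gamma(v)}$ according to the action of $K$ on $\lk_\Gamma(v)$; thus $P_v \cong G_v \times (F \rtimes K)$. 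Since both $K$ and $\lk_\Gamma(v)$ are finite, Lemma \ref{lem:free product semidirect product finite group is hyperbolic} applied to the $K$-action on the edgeless graph with vertex set $\lk_\Gamma(v)$ gives that $F \rtimes K$ is hyperbolic, so $P_v$ is hyperbolic as a direct product of a finite group with a hyperbolic group. Combining the two steps, $\Gamma\G \rtimes G$ is hyperbolic relative to a finite family of hyperbolic subgroups and is therefore hyperbolic.

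The only bookkeeping subtlety, which I do not expect to be a serious obstacle, is verifying that $K$ acts trivially on $G_v$ and that the extension structure of $P_v$ really decouples into $G_v \times (F \rtimes K)$; both statements follow directly from the definition of $\alpha_g$ and from the fact that the girth hypothesis forces the induced subgraph on $\st_\Gamma(v)$ to be a star.
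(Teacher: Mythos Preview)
Your proof is correct and follows essentially the same route as the paper: verify the hypotheses of Theorem~\ref{thm:semidirect product is relatively hyperbolic}, identify each peripheral subgroup as $G_v \times \big((\ast_{w\in\lk_\Gamma(v)}G_w)\rtimes \stab_G(v)\big)$, apply Lemma~\ref{lem:free product semidirect product finite group is hyperbolic}, and finish with \cite[Corollary~2.41]{Osi06}. The only cosmetic issue is that you use the symbol $F$ for two different things (the finite set of vertices from Theorem~\ref{thm:semidirect product is relatively hyperbolic} and the free product $\ast_{w\in\lk_\Gamma(v)}G_w$); renaming one of them would avoid confusion.
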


\begin{proof}
    Note that $G$ is finitely generated since $G$ acts on $\Gamma$ properly and cocompactly. Also, we have $|E(\Gamma) / G|<\infty$ and $|\stab_G(v)|<\infty$ for any $v \in V(\Gamma)$. By Theorem \ref{thm:semidirect product is relatively hyperbolic}, there exists a finite set $F \subset V(\Gamma)$ such that $\Gamma\G \rtimes G$ is hyperbolic relative to the collection $\{\, \la \stab_G(v), G_w \mid w \in \st_\Gamma(v) \ra \,\}_{v \in F}$. For any $v\in F$, we have
    \[
    \stab_{\Gamma\G \rtimes G}(v)=\la \stab_G(v), G_w \mid w \in \st_\Gamma(v) \ra = G_v \times \big( (\ast_{w \in \lk_\Gamma(v)}G_w)\rtimes \stab_G(v) \big).
    \]
    Hence, $\stab_{\Gamma\G \rtimes G}(v)$ is hyperbolic by $\max\{ |\lk_\Gamma(v)|, |H|, |\stab_G(v)| \}<\infty$ and Lemma \ref{lem:free product semidirect product finite group is hyperbolic}. By \cite[Corollary 2.41]{Osi06}, $\Gamma\G \rtimes G$ is hyperbolic.
\end{proof}

In fact, Theorem \ref{thm:semidirect product is relatively hyperbolic}, Lemma \ref{lem:free product semidirect product finite group is hyperbolic}, and Corollary \ref{cor:semidirect product becomes hyperbolic} can be generalized respectively to Theorem \ref{thm:semidirect product is relatively hyperbolic more general}, Lemma \ref{lem:free product semidirect product finite group is hyperbolic more general}, and Corollary \ref{cor:semidirect product becomes hyperbolic more general} by varying the finite group $H$ from one orbit of the action $G \act \Gamma$ to another. We omit their proofs since they can be proven in the same way. Indeed, in the proofs of Theorem \ref{thm:semidirect product is relatively hyperbolic}, Lemma \ref{lem:free product semidirect product finite group is hyperbolic}, and Corollary \ref{cor:semidirect product becomes hyperbolic}, we used the fact that vertex groups are all the same just to make sure that the semi-direct product $\Gamma\G \rtimes G$ is well-defined. The semi-direct product $\Gamma\G \rtimes G$ in Theorem \ref{thm:semidirect product is relatively hyperbolic more general}, Lemma \ref{lem:free product semidirect product finite group is hyperbolic more general}, and Corollary \ref{cor:semidirect product becomes hyperbolic more general} can be defined similarly by permuting vertex groups, because vertex groups in the same orbit are the same.

\begin{thm}\label{thm:semidirect product is relatively hyperbolic more general}
    Suppose that $\Gamma$ is a fine hyperbolic graph with $\girth(\Gamma)>20$ and a finitely generated group $G$ acts on $\Gamma$ satisfying the conditions (1) and (2) in Theorem \ref{thm:semidirect product is relatively hyperbolic}. Let $\G=\{G_v\}_{v \in V(\Gamma)}$ be a collection of finite groups such that $G_{gv}=G_v$ for any $g \in G$ and $v \in V(\Gamma)$. Then, there exists a finite set $F \subset V(\Gamma)$ such that $\Gamma\G \rtimes G$ is hyperbolic relative to the collection $\{\, \la \stab_G(v), G_w \mid w \in \st_\Gamma(v) \ra \,\}_{v \in F}$.
\end{thm}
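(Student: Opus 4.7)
The plan is to run the exact same argument as in Theorem~\ref{thm:semidirect product is relatively hyperbolic}, checking that every step goes through once one observes that the equivariance hypothesis $G_{gv}=G_v$ is precisely what is needed to make the construction of the semi-direct product and of the action on $\Gamma^e$ go through. First I would note that because $G_{gv}=G_v$ (as subgroups of $\Gamma\G$, not merely as abstract isomorphic groups), the identity map $G_v \ni h \mapsto h \in G_{gv}$ on each vertex group extends to a well-defined automorphism $\alpha_g \colon \Gamma\G \to \Gamma\G$ sending the defining relator $[h_v,h_w]$ for $(v,w)\in E(\Gamma)$ to $[h_v,h_w]$ for the edge $(gv,gw)\in E(\Gamma)$. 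The assignment $g \mapsto \alpha_g$ is a group homomorphism $G \to \mathrm{Aut}(\Gamma\G)$, which defines the semi-direct product $\Gamma\G \rtimes G$.

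Next I would verify that $\alpha_g$ induces a graph automorphism $\widetilde{\alpha}_g$ of $\Gamma^e$ via $\widetilde{\alpha}_g(h.v) = \alpha_g(h).gv$; well-definedness follows from Corollary~\ref{cor:Stab(v)} since $\alpha_g$ maps $\stab_{\Gamma\G}(v) = \langle G_w \mid w \in \st_\Gamma(v) \rangle$ onto $\stab_{\Gamma\G}(gv)$. Combining this with the translation action of $\Gamma\G$ on $\Gamma^e$, one obtains an action $\Gamma\G \rtimes G \act \Gamma^e$. By Proposition~\ref{prop:extension graph is hyperbolic} and Proposition~\ref{prop:extension graph is fine}, the graph $\Gamma^e$ is fine and hyperbolic (the finiteness hypothesis on vertex groups in Proposition~\ref{prop:extension graph is fine} holds by assumption).

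To apply Theorem~\ref{thm:relatively hyperbolic groups}, I would then check the two conditions exactly as in the proof of Theorem~\ref{thm:semidirect product is relatively hyperbolic}. Using a finite set $A\subset E(\Gamma)$ with $E(\Gamma)=\bigcup_{g\in G}gA$ (from $|E(\Gamma)/G|<\infty$) and $\Gamma^e=\bigcup_{f\in \Gamma\G}f.\Gamma$, we get $E(\Gamma^e)=\bigcup_{h\in\Gamma\G\rtimes G}hA$, so $E(\Gamma^e)/(\Gamma\G\rtimes G)$ is finite. For an edge $e\in E(\Gamma)$, Corollary~\ref{cor:vertex orbits are disjoint} forces any $fg$ fixing $e$ to satisfy $ge=e$ and $f.e=e$, whence $\stab_{\Gamma\G\rtimes G}(e)=\langle \stab_{\Gamma\G}(e), \stab_G(e)\rangle \cong (G_{e_-}\times G_{e_+}) \rtimes \stab_G(e)$ by Remark~\ref{rem:intersection of stabilizers}(1); this is finite since $|G_{e_\pm}|<\infty$ and $|\stab_G(e)|<\infty$. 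Similarly for $v\in V(\Gamma)$, $\stab_{\Gamma\G\rtimes G}(v)=\langle \stab_G(v), G_w \mid w \in \st_\Gamma(v)\rangle$; choosing a finite set $F_v\subset \lk_\Gamma(v)$ with $\lk_\Gamma(v)=\bigcup_{g\in\stab_G(v)}gF_v$, this stabilizer is generated by $\stab_G(v)\cup G_v\cup\bigcup_{w\in F_v}G_w$, hence is finitely generated. Finally, pick a finite $F_0\subset V(\Gamma)$ with $V(\Gamma)=\bigcup_{g\in G}gF_0$ and let $F=\{v\in F_0 \mid |\stab_{\Gamma\G\rtimes G}(v)|=\infty\}$; every vertex stabilizer is then either finite or conjugate to $\stab_{\Gamma\G\rtimes G}(v)=\langle \stab_G(v),G_w \mid w\in\st_\Gamma(v)\rangle$ for some $v\in F$, and the conclusion follows from Theorem~\ref{thm:relatively hyperbolic groups}.

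There is no real obstacle here since the argument is strictly parallel to the one already given; the only place where "all $G_v$ equal $H$" was used in the original proof was to guarantee that $\alpha_g$ was an automorphism, and the hypothesis $G_{gv}=G_v$ supplies exactly that. All appeals to Corollary~\ref{cor:vertex orbits are disjoint}, Remark~\ref{rem:intersection of stabilizers}(1), Propositions~\ref{prop:extension graph is hyperbolic} and \ref{prop:extension graph is fine}, and Theorem~\ref{thm:relatively hyperbolic groups} use only the girth and finiteness hypotheses that are still in force.
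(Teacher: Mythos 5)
Your proposal is correct and is essentially the paper's own argument: the paper explicitly omits the proof of Theorem \ref{thm:semidirect product is relatively hyperbolic more general}, stating that it is proved exactly as Theorem \ref{thm:semidirect product is relatively hyperbolic} because the hypothesis $G_{gv}=G_v$ is all that is needed to make $\alpha_g$ and hence $\Gamma\G\rtimes G$ well-defined, which is precisely what you verify. Your only cosmetic deviation is writing the edge stabilizer as $(G_{e_-}\times G_{e_+})\rtimes\stab_G(e)$ where the paper notes the conjugation is trivial (since $\stab_G(e)$ fixes both endpoints and $\alpha_g$ is the identity on each vertex group), but finiteness is all that is used, so this changes nothing.
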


\begin{lem}\label{lem:free product semidirect product finite group is hyperbolic more general}
    Suppose that a finite group $G$ acts on a graph $\Gamma$ satisfying $|V(\Gamma)| < \infty$ and $E(\Gamma) = \emptyset$. Let $\G=\{G_v\}_{v \in V(\Gamma)}$ be a collection of finite groups such that $G_{gv}=G_v$ for any $g \in G$ and $v \in V(\Gamma)$. Then, $\Gamma\G \rtimes G$ is hyperbolic.
\end{lem}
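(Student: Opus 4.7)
The plan is to mimic the proof of Lemma \ref{lem:free product semidirect product finite group is hyperbolic} almost verbatim; the only role of the hypothesis $G_v \equiv H$ in that earlier argument was to ensure that the generating set $S = \bigcup_{v} G_v$ is stable under conjugation by $G$, and the hypothesis $G_{gv}=G_v$ is exactly what gives this stability in the present setting.

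First I would note that $\Gamma\G = \ast_{v \in V(\Gamma)}G_v$ since $E(\Gamma) = \emptyset$, and that $S := \bigcup_{v \in V(\Gamma)} G_v \subset \Gamma\G$ is a finite generating set by $|V(\Gamma)| < \infty$ and $|G_v| < \infty$. A free product of finitely many finite groups is a virtually free group, hence hyperbolic, so $(\Gamma\G, d_S)$ is a hyperbolic metric space. Setting $K = \Gamma\G \rtimes G$, the set $T := S \cup G$ is then a finite generating set of $K$.

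The key step is to verify that $gSg^{-1} = S$ for every $g \in G$. By the definition of the action $\alpha \colon G \to \mathrm{Aut}(\Gamma\G)$, one has $g G_v g^{-1} = G_{gv}$ inside $K$, and the hypothesis $G_{gv} = G_v$ (equality of subgroups of $\Gamma\G$) yields $g G_v g^{-1} = G_v \subset S$; taking the union over $v$ gives the claim. This invariance implies that the inclusion $\iota \colon (\Gamma\G, d_S) \hookrightarrow (K, d_T)$ is isometric: given $x \in \Gamma\G$ with $T$-length $n$, write $x = t_1 \cdots t_n$ and, using $gSg^{-1}=S$ to shuffle $G$-letters to the right, rewrite as $x = s_1 \cdots s_m \cdot g_0$ with $s_i \in S$, $g_0 \in G$, $m \le n$; since $x \in \Gamma\G$, the factor $g_0$ must be trivial, so $\|x\|_S \le \|x\|_T$, and the reverse inequality is immediate from $S \subset T$.

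Finally, since $[K : \Gamma\G] = |G| < \infty$, the isometric embedding $\iota$ is quasi-surjective and hence a quasi-isometry, so hyperbolicity of $(\Gamma\G, d_S)$ transfers to $(K, d_T)$, proving that $\Gamma\G \rtimes G$ is hyperbolic. There is no genuine obstacle — the only subtlety is the bookkeeping of conjugations in the shuffling step, which is routine once $gSg^{-1} = S$ is in hand.
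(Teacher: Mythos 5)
Your proposal is correct and is essentially the paper's own argument: the paper proves the special case $G_v\equiv H$ this way and then states that the general statement follows by the same proof, the hypothesis $G_{gv}=G_v$ serving exactly to make the permutation action and the invariance $gSg^{-1}=S$ work. One small imprecision: inside $\Gamma\G=\ast_{v}G_v$ the conjugate $gG_vg^{-1}$ equals the free factor $G_{gv}$, which is in general a \emph{different} subgroup of $\Gamma\G$ from the factor at $v$ (the hypothesis only identifies the abstract vertex groups), but since $G_{gv}\subset S$ for every $v$, taking the union over $v$ still yields $gSg^{-1}=S$, so your isometric-embedding and finite-index argument goes through unchanged.
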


\begin{cor}\label{cor:semidirect product becomes hyperbolic more general}
    Suppose that $\Gamma$ is a locally finite hyperbolic graph with $\girth(\Gamma)>20$ and a group $G$ acts on $\Gamma$ properly and cocompactly. Let $\G=\{G_v\}_{v \in V(\Gamma)}$ be a collection of finite groups such that $G_{gv}=G_v$ for any $g \in G$ and $v \in V(\Gamma)$. Then, $\Gamma\G \rtimes G$ is hyperbolic.
\end{cor}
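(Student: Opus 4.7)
The strategy mirrors the proof of Corollary \ref{cor:semidirect product becomes hyperbolic}: apply Theorem \ref{thm:semidirect product is relatively hyperbolic more general} to present $\Gamma\G \rtimes G$ as relatively hyperbolic with respect to a finite collection of peripheral subgroups, verify that each peripheral is itself hyperbolic, and conclude via \cite[Corollary 2.41]{Osi06}. The only genuinely new bookkeeping compared with the case $G_v \equiv H$ is that the vertex groups vary, so one must check that the action of $\stab_G(v)$ on the link $\lk_\Gamma(v)$ is compatible with the free-product structure of $\la G_w \mid w\in \lk_\Gamma(v)\ra$ in such a way that Lemma \ref{lem:free product semidirect product finite group is hyperbolic more general} applies.

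First I would check the hypotheses of Theorem \ref{thm:semidirect product is relatively hyperbolic more general}. Because $G$ acts properly and cocompactly on the locally finite graph $\Gamma$, the group $G$ is finitely generated, $E(\Gamma)/G$ is finite, every edge stabilizer (and more strongly every vertex stabilizer) is finite, and in particular $\stab_G(v)$ is finitely generated for every $v\in V(\Gamma)$. The graph $\Gamma$ is automatically fine (locally finite hyperbolic graphs are fine), and we are assuming $\girth(\Gamma)>20$. Hence Theorem \ref{thm:semidirect product is relatively hyperbolic more general} produces a finite subset $F\subset V(\Gamma)$ with $\Gamma\G \rtimes G$ hyperbolic relative to $\{\, \la \stab_G(v), G_w \mid w \in \st_\Gamma(v) \ra \,\}_{v \in F}$.

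Next I would identify the structure of each peripheral subgroup. As in the proof of Theorem \ref{thm:semidirect product is relatively hyperbolic more general}, for each $v \in F$ one has $\stab_{\Gamma\G \rtimes G}(v) = \la \stab_{\Gamma\G}(v), \stab_G(v) \ra$. Since $\girth(\Gamma)>20>3$, the set $\lk_\Gamma(v)$ is an independent set in $\Gamma$, so $\la G_w \mid w \in \lk_\Gamma(v)\ra \cong \ast_{w \in \lk_\Gamma(v)} G_w$. The element $v$ is adjacent to each $w \in \lk_\Gamma(v)$, so $G_v$ commutes with every $G_w$; moreover every $g \in \stab_G(v)$ fixes $v$, and the hypothesis $G_{gv}=G_v$ guarantees that the automorphism $\alpha_g$ restricts to the identity on $G_v$, so $G_v$ commutes with $\stab_G(v)$ as well. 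Therefore
\[
\stab_{\Gamma\G \rtimes G}(v) \;\cong\; G_v \,\times\, \bigl( (\ast_{w \in \lk_\Gamma(v)} G_w) \rtimes \stab_G(v)\bigr),
\]
where $\stab_G(v)$ acts on $\ast_{w \in \lk_\Gamma(v)} G_w$ by permuting the free factors according to its action on $\lk_\Gamma(v)$, consistently with the vertex-group identifications since $G_{gw}=G_w$ for every $g \in \stab_G(v)$.

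Finally I would apply Lemma \ref{lem:free product semidirect product finite group is hyperbolic more general} to the discrete graph on the finite vertex set $\lk_\Gamma(v)$ (no edges), equipped with the action of the finite group $\stab_G(v)$ and the invariant collection of finite vertex groups $\{G_w\}_{w \in \lk_\Gamma(v)}$: this yields that $(\ast_{w \in \lk_\Gamma(v)} G_w) \rtimes \stab_G(v)$ is hyperbolic. Taking the direct product with the finite group $G_v$ preserves hyperbolicity (the inclusion is a quasi-isometry), so each $\stab_{\Gamma\G \rtimes G}(v)$ with $v \in F$ is hyperbolic. Since $F$ is finite, \cite[Corollary 2.41]{Osi06} then upgrades the relative hyperbolicity of $\Gamma\G \rtimes G$ to honest hyperbolicity. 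The main potential obstacle is verifying the semidirect-product decomposition in the presence of varying vertex groups, but this reduces cleanly to the observation that $\alpha_g$ acts as the identity on $G_v$ whenever $g$ fixes $v$, which is exactly what the hypothesis $G_{gv}=G_v$ provides.
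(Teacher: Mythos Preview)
Your proposal is correct and follows exactly the approach the paper indicates: the paper omits the proof of this corollary, stating it ``can be proven in the same way'' as Corollary \ref{cor:semidirect product becomes hyperbolic}, and your argument is precisely that proof transported to the setting of varying vertex groups, with the extra bookkeeping (that $\alpha_g$ restricts to the identity on $G_v$ when $gv=v$) handled correctly.
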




\begin{thebibliography}{CRKdlNG24}

\bibitem[Akh05]{Ak05}
Azer Akhmedov, \emph{The girth of groups satisfying {T}its alternative}, J. Algebra \textbf{287} (2005), no.~2, 275--282. \MR{2134144}

\bibitem[Akh25]{Ak25}
\bysame, \emph{Girth alternative for subgroups of {$PL_O(I)$}}, Glasg. Math. J. \textbf{67} (2025), no.~1, 1--10. \MR{4857503}

\bibitem[AM15]{AM15}
Yago Antol\'in and Ashot Minasyan, \emph{Tits alternatives for graph products}, J. Reine Angew. Math. \textbf{704} (2015), 55--83. \MR{3365774}

\bibitem[BBF15]{BBF15}
Mladen Bestvina, Ken Bromberg, and Koji Fujiwara, \emph{Constructing group actions on quasi-trees and applications to mapping class groups}, Publ. Math. Inst. Hautes \'Etudes Sci. \textbf{122} (2015), 1--64. \MR{3415065}

\bibitem[BD06]{BD06}
Gregory~C. Bell and Alexander~N. Dranishnikov, \emph{A {H}urewicz-type theorem for asymptotic dimension and applications to geometric group theory}, Trans. Amer. Math. Soc. \textbf{358} (2006), no.~11, 4749--4764. \MR{2231870}

\bibitem[BD08]{BD08}
\bysame, \emph{Asymptotic dimension}, Topology Appl. \textbf{155} (2008), no.~12, 1265--1296. \MR{2423966}

\bibitem[BF08]{BelF08}
Gregory~C. Bell and Koji Fujiwara, \emph{The asymptotic dimension of a curve graph is finite}, J. Lond. Math. Soc. (2) \textbf{77} (2008), no.~1, 33--50. \MR{2389915}

\bibitem[BH99]{BH99}
Martin~R. Bridson and Andr\'e Haefliger, \emph{Metric spaces of non-positive curvature}, Grundlehren der mathematischen Wissenschaften [Fundamental Principles of Mathematical Sciences], vol. 319, Springer-Verlag, Berlin, 1999. \MR{1744486}

\bibitem[BMW94]{BMW94}
Hans-J\"urgen Bandelt, Henry~Martyn Mulder, and Elke Wilkeit, \emph{Quasi-median graphs and algebras}, J. Graph Theory \textbf{18} (1994), no.~7, 681--703. \MR{1297190}

\bibitem[Bow08]{Bow08}
Brian~H. Bowditch, \emph{Tight geodesics in the curve complex}, Invent. Math. \textbf{171} (2008), no.~2, 281--300. \MR{2367021}

\bibitem[Bow12]{Bow12}
\bysame, \emph{Relatively hyperbolic groups}, Internat. J. Algebra Comput. \textbf{22} (2012), no.~3, 1250016, 66. \MR{2922380}

\bibitem[CRKdlNG24]{CRKdlNG24}
Montserrat Casals-Ruiz, Ilya Kazachkov, and Javier de~la Nuez~Gonz\'alez, \emph{On the elementary theory of graph products of groups}, Dissertationes Math. \textbf{595} (2024), 92. \MR{4792087}

\bibitem[Dah03a]{Dah03}
Fran\c~cois Dahmani, \emph{Classifying spaces and boundaries for relatively hyperbolic groups}, Proc. London Math. Soc. (3) \textbf{86} (2003), no.~3, 666--684. \MR{1974394}

\bibitem[Dah03b]{Dah03b}
\bysame, \emph{Les groupes relativement hyperboliques et leurs bords}, Pr\'epublication de l'Institut de Recherche Math\'ematique Avanc\'ee [Prepublication of the Institute of Advanced Mathematical Research], vol. 2003/13, Universit\'e{} Louis Pasteur, D\'epartement de Math\'ematique, Institut de Recherche Math\'ematique Avanc\'ee, Strasbourg, 2003, Th\`ese, l'Universit\'e{} Louis Pasteur (Strasbourg I), Strasbourg, 2003. \MR{2105643}

\bibitem[EH24]{EH24}
Amandine Escalier and Camille Horbez, \emph{Graph products and measure equivalence: classification, rigidity, and quantitative aspects}, 2024, Preprint, ArXiv:2401.04635.

\bibitem[Gen17]{Gen17}
Anthony Genevois, \emph{Cubical-like geometry of quasi-median graphs and applications to geometric group theory}, 2017, Thèse de doctorat dirigée par Haïssinsky, Peter Mathématiques Aix-Marseille 2017.

\bibitem[Gen25]{Gen25}
Anthony Genevois, \emph{Translation lengths in crossing and contact graphs of quasi-median graphs}, Groups Geom. Dyn. \textbf{19} (2025), no.~1, 343--391. \MR{4862336}

\bibitem[Gre90]{Gre}
Elisabeth~Ruth Green, \emph{Graph products of groups}, 1990, Thesis, The University of Leeds.

\bibitem[Gru57]{Gru57}
Karl~W. Gruenberg, \emph{Residual properties of infinite soluble groups}, Proc. London Math. Soc. (3) \textbf{7} (1957), 29--62. \MR{87652}

\bibitem[Kid08]{Ki08}
Yoshikata Kida, \emph{The mapping class group from the viewpoint of measure equivalence theory}, Mem. Amer. Math. Soc. \textbf{196} (2008), no.~916, viii+190. \MR{2458794}

\bibitem[KK13]{KK13}
Sang-hyun Kim and Thomas Koberda, \emph{Embedability between right-angled {A}rtin groups}, Geom. Topol. \textbf{17} (2013), no.~1, 493--530. \MR{3039768}

\bibitem[KK14]{KK14}
Sang-Hyun Kim and Thomas Koberda, \emph{The geometry of the curve graph of a right-angled {A}rtin group}, Internat. J. Algebra Comput. \textbf{24} (2014), no.~2, 121--169. \MR{3192368}

\bibitem[KM16]{KM16}
Peter~H. Kropholler and Armando Martino, \emph{Graph-wreath products and finiteness conditions}, J. Pure Appl. Algebra \textbf{220} (2016), no.~1, 422--434. \MR{3393469}

\bibitem[MM00]{MM00}
Howard~A. Masur and Yair~N. Minsky, \emph{Geometry of the complex of curves. {II}. {H}ierarchical structure}, Geom. Funct. Anal. \textbf{10} (2000), no.~4, 902--974. \MR{1791145}

\bibitem[Nak14]{Nak14}
Kei Nakamura, \emph{The girth alternative for mapping class groups}, Groups Geom. Dyn. \textbf{8} (2014), no.~1, 225--244. \MR{3209709}

\bibitem[Osi06]{Osi06}
Denis~V. Osin, \emph{Relatively hyperbolic groups: intrinsic geometry, algebraic properties, and algorithmic problems}, Mem. Amer. Math. Soc. \textbf{179} (2006), no.~843, vi+100. \MR{2182268}

\bibitem[Val21]{Val21}
Motiejus Valiunas, \emph{Acylindrical hyperbolicity of groups acting on quasi-median graphs and equations in graph products}, Groups Geom. Dyn. \textbf{15} (2021), no.~1, 143--195. \MR{4235751}

\bibitem[Yam11]{Yam11}
Saeko Yamagata, \emph{The girth of convergence groups and mapping class groups}, Osaka J. Math. \textbf{48} (2011), no.~1, 233--249. \MR{2802600}

\end{thebibliography}

\providecommand{\bysame}{\leavevmode\hbox to3em{\hrulefill}\thinspace}
\providecommand{\MR}{\relax\ifhmode\unskip\space\fi MR }
\providecommand{\MRhref}[2]{%
  \href{http://www.ams.org/mathscinet-getitem?mr=#1}{#2}
}
\providecommand{\href}[2]{#2}

\vspace{5mm}

\noindent  Department of Mathematics, Vanderbilt University, Nashville 37240, U.S.A.

\noindent E-mail: \emph{koichi.oyakawa@vanderbilt.edu}

\end{document}